\setlist[enumerate]{label=\emph{(\roman*)}}
\newtheorem{theorem}{Theorem}[section]
\newtheorem{lemma}[theorem]{Lemma}
\newtheorem{proposition}[theorem]{Proposition}
\theoremstyle{definition}
\newtheorem{remark}[theorem]{Remark}
\numberwithin{equation}{section}
\newcommand{\R}{\mathbb{R}}
\def\RR{\mathbb{R}}
\def\bell{\boldsymbol{\ell}}
\def\ba{\boldsymbol{a}}
\def\bb{\boldsymbol{b}}
\def \d {{\rm{d}}}
\def \vp{\varphi_{1}}
\def \vpp{\varphi_{2}}
\def \E{\mathcal{H}}
\def \pt{\partial_{t}}
\begin{document}

\parindent=0pt

	\title[Multi-solitons for the 4D cubic wave equation]{Construction of excited multi-solitons for the focusing 4D cubic wave equation}
	\author{XU YUAN}
	
	\address{CMLS, \'Ecole polytechnique, CNRS, Institut Polytechnique de Paris, F-91128 Palaiseau Cedex, France.}
	
	\email{xu.yuan@polytechnique.edu}
	\begin{abstract}
    Consider the focusing 4D cubic wave equation
	\begin{equation*}
		\partial_{tt}u-\Delta u-u^{3}=0,\quad \mbox{on}\ (t,x)\in [0,\infty)\times \RR^{4}.
	\end{equation*}
	 The main result states the existence in energy space $\dot{H}^{1}\times L^{2}$ of multi-solitary waves where each traveling wave is generated by Lorentz transform from a specific excited state, with different but collinear Lorentz speeds. The specific excited state is deduced from the non-degenerate sign-changing state constructed in Musso-Wei~\cite{MW}. The proof is inspired by the techniques developed for the 5D energy-critical wave equation and the nonlinear Klein-Gordon equation in a similar context by Martel-Merle~\cite{MM} and C\^ote-Martel~\cite{CMTAMS}. 
	 
	 The main difficulty originates from the strong interactions between solutions in the 4D case compared to other dispersive and wave-type models. To overcome the difficulty, a sharp understanding of the asymptotic behavior of the excited states involved and of the kernel of their linearized operator is needed. 
	\end{abstract}
	\maketitle
\section{Introduction}
\subsection{Main result}
Consider the focusing 4D cubic wave equation,
\begin{equation}\label{equ:wave}
\left\{ \begin{aligned}
&\partial_{tt} u - \Delta u-u^{3} = 0,\ \ (t,x)\in [0,\infty)\times \RR^{4},\\
& (u,\partial_t u)_{|t=0} = (u_0,u_{1})\in  \dot{H}^1\times L^{2}.
\end{aligned}\right.
\end{equation}
Recall that the Cauchy problem for equation~\eqref{equ:wave} is locally well-posed in the energy space $\dot{H}^{1}\times L^{2}$. See \emph{e.g.}~\cite{KMACTA} and references therein. For any $\dot{H}^{1}\times L^{2}$ solution, the energy $E$ and momentum $P$ are conserved, where
\begin{equation*}
E(u,\pt u)=\frac{1}{2}\int_{\RR^{4}}\left(|\nabla u|^{2}+(\pt u)^{2}-\frac{1}{4}u^{4}\right)\d x,\quad 
P(u,\pt u)=\int_{\RR^{4}}\pt u\nabla u\d x.
\end{equation*}
Denote by $\Sigma$ the set of non-zero stationary solutions of~\eqref{equ:wave},
\begin{equation}\label{def:Sig}
\Sigma=\left\{q\in \dot{H}^{1}(\RR^{4})\setminus 0:-\Delta q-q^{3}=0\ \  \mbox{on}\ \RR^{4}\right\}.
\end{equation}
For $q\in \Sigma$ and $\bell\in \RR^{4}$ with $|\bell|<1$, let 
\begin{equation*}
q_{\bell}(x)=q\left(\left(\frac{1}{\sqrt{1-|\bell|^{2}}}-1\right)\frac{\bell(\bell\cdot x)}{|\bell|^{2}}+x\right),
\end{equation*}
then $u(t,x)=q_{\bell}(x-\bell t)$ is a global, bounded solution of~\eqref{equ:wave}.

Standard elliptic arguments (see~\emph{e.g.}~\cite{GT}) show that if $q\in \Sigma$, then $q$ is of class $C^{\infty}$. It is known by the results developed in \cite{Aub,Caff,Obata,Talen} that up to dilation and translation, the only \emph{positive} stationary solution of~\eqref{equ:wave} is the \emph{ground state} $W$, where $W$ has the explicit form
\begin{equation*}
W(x)=\left(1+\frac{|x|^{2}}{8}\right)^{-1}\quad \mbox{on}\ \RR^{4}.
\end{equation*}
Functions $q\in \Sigma$ with $q\ne W$ (up to sign, dilation and translation) are called \emph{excited states}.
The existence of excited states that are nonradial, sign-changing and with arbitrary large energy was first proved by Ding~\cite{Ding} using variational arguments. Later, more explicit constructions of sign-changing solutions have been obtained by del Pino-Musso-Pacard-Pistoia~\cite{PMPP1}. More specifically, they constructed solutions with a centered soliton crowned by negative spikes (rescaled solitons) at the vertices of a regular polygon of radius 1. Then, following a similar general strategy in~\cite{PMPP1}, they constructed in~\cite{PMPP2} sign changing, non radial solutions to the Yamabe-type equation on the sphere $\mathbb{S}^d$ ($d \ge 4$) whose energy is concentrated along special submanifolds of $\mathbb{S}^d$.

For any $q\in \Sigma$, we denote the linearized operator around $q$ by 
\begin{equation*}
\mathcal{L}_{q}=-\Delta-3q^{2}.
\end{equation*}
Set 
\begin{equation*}
\mathcal{Z}_{q}=\left\{f\in \dot{H}^{1}\ \mbox{such that}\ \mathcal{L}_{q}f=0\right\},
\end{equation*}
and
\begin{multline*}
\qquad \widetilde{{\mathcal{Z}}}_{q}=
{\rm{Span}} \bigg\{ q+x\cdot \nabla q;\ 
(x_{i}\partial_{x_{j}}q-x_{j}\partial_{x_{i}}q),1\le i<j\le 4;\\
\partial_{x_{i}}q ; \  -2x_{i}q+|x|^{2}\partial_{x_{i}}q-2x_{i} (x\cdot \nabla q),1\le i\le 4\bigg\}.\qquad\qquad
\end{multline*}
The function space $\widetilde{\mathcal{Z}}_{q}\subset \mathcal{Z}_{q}$ is the null space of $\mathcal{L}_{q}$  that is generated by a family of explicit transformations (see~\cite[Lemma 3.8]{DKMASNSP} and \S\ref{SS:trans}).
Following Duyckaerts-Kenig-Merle~\cite{DKMASNSP}, we call $q\in \Sigma$ a \emph{non-degenerate state} if $\mathcal{Z}_{q}=\widetilde{{\mathcal{Z}}}_{q}$. The first nontrivial example of non-degenerate sign-changing state was obtained in Musso-Wei~\cite{MW} who proved that the sign-changing states constructed in~\cite{PMPP1} are non-degenerate. Later, following a similar strategy, Medina-Musso-Wei~\cite{MMW} constructed a non-degenerate state with maximal rank,~\emph{i.e.} ${\rm{dim}}\widetilde{\mathcal{Z}}_{q}=15$.

\smallskip

The main goal of this article is to construct multi-solitons of~\eqref{equ:wave} based on a specific non-degenerate excited state.
\begin{theorem}[Existence of excited multi-solitons]\label{the:main}
	There exists a non-degenerate excited state $Q\in \Sigma$ such that the following is true. Let $N\ge 2$ and $\boldsymbol{\rm{e}}_{1}$ be the first vector of the canonical basis of $\RR^{4}$. For $n=1,\dots,N$, let $\tau_{n}=\pm 1$ and $\boldsymbol{\ell}_{n}\in \RR^{4}$ be such that 
	\begin{equation*}
	\boldsymbol{\ell}_{n}=\ell_{n}\boldsymbol{\rm{e}}_{1}\ \mbox{with}\ -1<\ell_{1}<\ell_{2}<\dots<\ell_{N}<1.
	\end{equation*}
	Then, there exist $T_{0}>0$ and a solution $\vec{u}=(u,\partial_{t}u)$ of~\eqref{equ:wave} on $[T_{0},+\infty)$ in the energy space $\dot{H}^{1}\times L^{2}$ such that 
	\begin{equation}\label{equ:limu}
	\begin{aligned}
	&\lim_{t\to+\infty}\left\|u(t)-\sum_{n=1}^{N}\tau_{n}Q_{\bell_{n}}(\cdot-\bell_{n}t)\right\|_{\dot{H}^{1}}=0,\\
	&\lim_{t\to+\infty}\left\|\partial_{t}u(t)+\sum_{n=1}^{N}\tau_{n}\ell_{n}\partial_{x_{1}}Q_{\bell_{n}}(\cdot-\bell_{n}t)\right\|_{L^{2}}=0.
	\end{aligned}
	\end{equation}
\end{theorem}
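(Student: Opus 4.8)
The plan is to construct the multi-soliton by a now-standard compactness argument: solve the equation backwards from a large time $T_n \to +\infty$ with data exactly equal to the sum of the $N$ traveling waves at time $T_n$, obtain uniform-in-$n$ estimates on the difference on a time interval $[T_0, T_n]$, and pass to the limit. Concretely, let $\vec{R}(t) = \big(\sum_n \tau_n Q_{\bell_n}(\cdot - \bell_n t),\, -\sum_n \tau_n \ell_n \partial_{x_1} Q_{\bell_n}(\cdot - \bell_n t)\big)$ be the approximate solution, let $\vec{u}_n$ solve \eqref{equ:wave} with $\vec{u}_n(T_n) = \vec{R}(T_n)$, and set $\vec{\varepsilon}_n = \vec{u}_n - \vec{R}$. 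The goal is a uniform bound of the form $\|\vec{\varepsilon}_n(t)\|_{\dot H^1 \times L^2} \lesssim t^{-\kappa}$ for some $\kappa > 0$ and all $t \in [T_0, T_n]$, with $T_0$ independent of $n$; then Arzel\`a--Ascoli / weak compactness in the energy space extracts a limiting solution $\vec u$ on $[T_0,+\infty)$ satisfying \eqref{equ:limu}.

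The heart of the matter is the bootstrap estimate for $\vec\varepsilon_n$. First I would record the decay and interaction estimates: each $Q_{\bell_n}(\cdot - \bell_n t)$ is a fixed profile translated at constant velocity $\ell_n \mathbf{e}_1$, so since the velocities are distinct and collinear, the spatial separation between the $n$-th and $m$-th bubbles grows like $|\ell_n - \ell_m|\, t$. The key quantitative input — and here is where the sharp asymptotics of the excited state $Q$ from Musso--Wei are essential — is the rate at which $Q(x)$ decays as $|x|\to\infty$; in dimension $4$ with the critical cubic nonlinearity, stationary solutions decay like $|x|^{-2}$, which is slow enough that the cross terms in the nonlinearity, $\int (\sum \tau_n Q_{\bell_n})^3 - \sum \tau_n^3 Q_{\bell_n}^3$, are only polynomially small in $t$ rather than exponentially small. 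I would quantify the error term $\vec{S}(t) := \partial_t \vec{R} - (\text{flow vector field at } \vec R)$ and show $\|\vec S(t)\|_{L^2} \lesssim t^{-\gamma}$ for an explicit $\gamma$; this forces the choice of $\kappa$ in the bootstrap.

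Next I would set up the energy method adapted to the moving multi-soliton. Writing the equation for $\vec\varepsilon_n$ and using the conservation laws together with a localized virial-type / Lyapunov functional built from the energy and momentum evaluated near each bubble, one obtains a differential inequality $\frac{d}{dt}\mathcal{F}(t) \gtrsim -\,(\text{error}) - (\text{interaction}) $, with $\mathcal{F}(t) \sim \|\vec\varepsilon_n(t)\|^2 - (\text{controlled corrections})$, provided the linearized operator $\mathcal{L}_Q$ is understood well enough to establish coercivity of the quadratic form modulo the null directions. This is exactly where non-degeneracy of $Q$, i.e. $\mathcal{Z}_Q = \widetilde{\mathcal{Z}}_Q$, enters: the kernel is spanned by the explicit generators (scaling, translations, rotations, special conformal), and coercivity holds after projecting them out. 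Because we are constructing (not classifying) the solution, I do not need modulation parameters — the bubbles move on prescribed trajectories — but I must still control the projection of $\vec\varepsilon_n$ onto these $\dim\widetilde{\mathcal{Z}}_Q$ directions for each bubble, which I would do by choosing $\vec\varepsilon_n$ orthogonal to them via the final data or by absorbing their (small, polynomially decaying) contributions. Integrating the Lyapunov inequality from $t$ to $T_n$ and using $\vec\varepsilon_n(T_n)=0$ closes the bootstrap, yielding the uniform decay.

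The main obstacle, as the abstract itself flags, is the \emph{strong interaction} between bubbles caused by the slow $|x|^{-2}$ decay in 4D: the interaction terms are only polynomially small, so the Lyapunov functional must be engineered so that the gain from coercivity strictly dominates these terms at the chosen rate $t^{-\kappa}$, and this is delicate — it requires the precise leading-order asymptotic constant in $Q(x)\sim c|x|^{-2}$ and precise decay of the elements of $\widetilde{\mathcal{Z}}_Q$ (some of which, like the special conformal generators $-2x_i q + |x|^2 \partial_{x_i} q - 2x_i(x\cdot\nabla q)$, grow rather than decay, requiring extra care in the localization). A secondary difficulty is making $T_0$ genuinely independent of $n$, which hinges on closing the bootstrap with a strictly smaller constant than the a priori bound; this is where choosing $\kappa$ correctly relative to $\gamma$ from the error estimate, and ordering the bubbles by speed so the geometry is clean, both matter.
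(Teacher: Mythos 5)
Your high-level skeleton (backward solve from time $T_n$, uniform bootstrap, weak compactness) matches the paper's, but the proposal has a genuine gap that would make the argument fail as written: you take for granted that $Q$ decays like $|x|^{-2}$, which is indeed the generic decay of stationary solutions of $-\Delta q = q^3$ in dimension $4$ (this is the rate of the ground state $W$). With that decay the two-soliton interaction $\int Q_n^2 Q_{n'}$ is of size $t^{-2}$, which is exactly the regime the paper flags as \emph{too strong to close the bootstrap}; the paper even conjectures that no multi-soliton based on $W$ exists. The entire point of the construction is the choice of a \emph{specific} excited $Q$: starting from the non-degenerate sign-changing state of Musso--Wei, one translates to a zero $x_0$ of $q$ with $\nabla q(x_0) \neq 0$ (Hopf's lemma), rotates so the gradient points in the $x_4$-direction, and then applies the Kelvin transform. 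The resulting $Q$ satisfies $|Q(x) - x_4/|x|^4| \lesssim |x|^{-4}$, i.e.\ $Q$ decays like $|x|^{-3}$ generically and faster in the plane $x_4 = 0$; since the solitons are separated along $\mathbf{e}_1$, this pushes the interaction down to $t^{-4}$ (the paper's Lemma~\ref{le:Q1Q2}), which is what makes the energy method close. Without this step your $\gamma$ in the source estimate is too small relative to any admissible $\kappa$.

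Two further points that are not mere technicalities. First, prescribing $\vec{u}_n(T_n) = \vec R(T_n)$ cannot work because $\mathcal{L}_Q$ has $J \geq 1$ negative eigenvalues; the corresponding directions $\vec Z_{n,j}^{\pm}$ produce exponential instabilities under the backward flow, and the paper controls them by preparing data with small, adjustable components along $\vec Z_{n,j}^+$ and running a Brouwer-type topological argument to find at least one admissible choice. Second, you dismiss modulation too quickly: although the trajectories are prescribed, the projections of the error onto the kernel directions are not preserved by the flow, so you cannot simply impose orthogonality at $t = T_n$ and hope it persists. The paper instead absorbs these projections into the approximate solution itself via parameters $\boldsymbol a, \boldsymbol b$ multiplying translated kernel elements, with orthogonality maintained dynamically. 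This introduces a further twist you have not anticipated: one element of the kernel, $\psi = \widetilde\Omega_4 Q$, decays only like $|x|^{-2}$ (again the slow rate), and the resulting quadratic source term $G_2 \sim a_n^2 Q_n \Psi_n^2$ is not integrable in time at the naive order, forcing the introduction of two correction functionals $\mathcal G$ and $\mathcal J$ in the energy and a refined modulation equation for $\dot a_n$ involving a $\log t$ weight. None of these mechanisms is visible in your sketch, and they are exactly what the abstract calls the main difficulty.
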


\begin{remark}
	The specific non-degenerate state $Q$ involved in Theorem~\ref{the:main} is obtained from the non-degenerate sign-changing state constructed in~\cite{PMPP1,MW}. Indeed, by rotation, translation and Kelvin transformation on a non-degenerate sign-changing state, we obtain a non-degenerate excited state that decays fast enough at least along $\boldsymbol{\rm{e}}_{1}$ (see more details in \S\ref{SS:trans}). The interaction of two such traveling waves with different speeds along $\boldsymbol{\rm{e}}_{1}$, is formally of order $t^{-4}$ for large time. This rate allows us to close the energy estimates (see more details in \S\ref{SS:Boot} and \S\ref{SS:ener}).
	However, the slow algebraic decay of the linearized operator's kernel leads to another technical difficulty. To overcome it, we need to refine the energy estimates (see more details in \S\ref{SS:dec} and \S\ref{SS:ener}).
\end{remark}

\begin{remark}
	 The Theorem~\ref{the:main} is also true for the more general case where each traveling wave is based on non-degenerate $q$ with the same asymptotic behavior as $Q$ (see Lemma~\ref{le:asyQ} and Lemma~\ref{le:asyker}). In particular, we do not need each traveling wave to be based on the same non-degenerate state.
\end{remark}

\begin{remark}
	Following~\cite[\S1.2]{MM}, we conjecture that in the 4D case, there exists no multi-soliton based on ground state $W$ in the sence of Theorem~\ref{the:main}, for any $N\ge 2$. Heuristically, from $W(x)\sim |x|^{-2}$ as $|x|\to \infty$, the interaction between two solitons of different speeds is $t^{-2}$. Following the modulation and energy method, we find these interactions are too strong and formally that can create diverging terms of the geometric parameters. However, such nonexistence result is open problem for any space dimension $d\ge 3$.
\end{remark}

Historically, the existence of multi-soliton solutions for non-integrable cases was first studied by Merle~\cite{Mnls} for the $L^{2}$ critical nonlinear Schr\"odinger equation (NLS) and Martel~\cite{Ma} for the subcritical and critical generalized Korteweg-de Vries equations (gKdV). Later, the strategy of these works was extended to other dispersive or wave-type models (see~\cite{BGL,CMM,CMkg,MMnls,MRT,XY,Y5De}). In particular, the present work is inspired by the proof of the existence of multi-soliton solutions for the nonlinear Klein-Gordon and 5D energy-critical wave equations in~\cite{CMTAMS,MM}. Technically, such constructions of multi-solitons of the energy-critical wave equation are more challenging than for other dispersive and wave-type models, because of the weak algebraic decay of solitons.

\smallskip

To our knowledge, this article proves the first statement of the existence of multi-soliton solutions for the 4D energy-critical wave equation. The construction of such solutions for the energy-critical wave equation is especially motivated by the soliton resolution conjecture, fully solved in the 3D radial case by Duyckaerts-Kenig-Merle~\cite{DKM},  and for a subsequence of time in 3, 4 and 5D by Duyckaerts-Jia-Kenig-Merle~\cite{DJKM}. Recently, the conjecture in the 4D radial case is proved by Duyckaerts-Kenig-Martel-Merle~\cite{DKMM}. Indeed, the existence of such multi-solitons complements the statements in~\cite{DJKM} by exhibiting examples of solutions containing $N$ solitons at $t\to +\infty$ for any $N\ge 2$. In the same direction, recall that for the 6D energy-critical wave equation, Jendrej~\cite{JJ} proved the existence of radial two-bubble solutions. Later, for the 5D energy-critical wave equation, Jendrej-Martel~\cite{JM} proved the existence of $N$-bubble solutions for any $N\ge 2$. For the 3 and 5D cases, Krieger-Nakanishi-Schlag~\cite{KNS} constructed a center-stable manifold of the ground state. Recall also that, for the 4D case, Hillairet-Rapha\"el~\cite{HiRa} proved the existence of type-II finite time blow-up solutions containing single soliton.

\smallskip

We refer to~\cite{delMW,delMW2,delMW3} for results of the existence of type-II blow up solutions containing single or multi-bubble of the energy-critical heat equation. Particularly related to the present article, the blow up profile in~\cite{delMW3} is the non-degenerate sign-changing state constructed in~\cite{PMPP1,MW} rather than the ground state $W$.

\smallskip

The article is organized as follows. Section~\ref{S:Pre} introduces the technical tools involved in the choice of specific excited state $Q$: the spectral theory of the linearized operator, the asymptotic behavior of $Q$ and of the kernel of its linearized operator. Section~\ref{S:dem} introduces the technical tools involved in a dynamical approach to the $N$-soliton problem for~\eqref{equ:wave}: estimates of the nonlinear interactions between solitons, decomposition by modulation and parameter estimates. Finally, Theorem~\ref{the:main} is proved in Section~\ref{S:Thm} by energy estimates and a suitable compactness argument.
\subsection{Notation}
We fix a smooth, even function $\xi:\RR\to \RR$ satisfying:
\begin{equation}\label{def:xi}
\xi\equiv 1\ \mbox{on}\ [0,1],\quad 
\xi\equiv 0\ \mbox{on}\ [2,\infty),\quad 
\mbox{and}\quad  \xi'\le 0\le \xi\le 1\ \mbox{on}\ [0,\infty).
\end{equation}
For future reference, we state the first-order Taylor expansion involving $C^{\infty}$ function.
Let $f:\RR^{4}\to \RR$ be a $C^{\infty}$ function, 
\begin{equation}\label{equ:tay}
f(x)=f(0)+\sum_{i=1}^{4}\partial_{x_{i}}f(0)x_{i}+O(|x|^{2})\quad \mbox{for all}\ |x|<1.
\end{equation}
Recall that for two $C^{\infty}$ functions $f_{1},f_{2}:\RR^{4}\to \RR$, Leibniz’s formula writes:
\begin{equation}\label{equ:Lei}
\partial_{x}^{\alpha}(f_{1}f_{2})=\sum_{\beta+\gamma=\alpha}C_{\alpha}^{\beta}(\partial_{x}^{\beta}f_{1})
(\partial_{x}^{\gamma}f_{2}),\quad \mbox{for all}\ \alpha\in \mathbb{N}^{4}.
\end{equation}
We denote 
\begin{equation*}
\left( f,g \right)_{L^{2}}:=\int_{\RR^{4}}f(x)g(x)\d x,\quad 
\left(f,g\right)_{\dot{H}^{1}}:=\int_{\RR^{4}}\nabla f(x)\cdot \nabla g(x)\d x.
\end{equation*}
For
\begin{equation*}
\vec{f}=\left( \begin{array}{c}
f_{1}\\ f_{2}
\end{array}\right),\quad \vec{g}=\left( \begin{array}{c}
g_{1}\\ g_{2}
\end{array}\right),
\end{equation*}
denote
\begin{equation*}
\left( \vec{f},{\vec{g}}\right)_{L^{2}}:=\sum_{k=1,2}\left( f_{k},g_{k}\right)_{L^{2}},\quad 
\|\vec{f}\|^{2}_{\mathcal{H}}:=\|f_{1}\|_{\dot{H}^{1}}^{2}+\|f_{2}\|_{L^{2}}^{2}.
\end{equation*}
When $x_{1}$ is seen as the specific coordinate, denote
\begin{equation*}
\bar{x}=(x_{2},x_{3},x_{4}),\quad \overline{\nabla}=(\partial_{x_{2}},\partial_{x_{3}},\partial_{x_{4}})\quad \mbox{and}\quad 
\bar{\Delta}=\sum_{i=2}^{4}\partial_{x_{i}}^{2}.
\end{equation*}
For any $-1<\ell<1$ and $f:\RR^{4}\mapsto \RR$, set 
\begin{equation*}
f_{\ell}(x)=f(x_{\ell})\quad \mbox{where}\quad 
x_{\ell}=\left(\frac{x_{1}}{\sqrt{1-\ell^{2}}},\bar{x}\right).
\end{equation*}
We set the $\Omega_{ij}$ are the angular derivatives that are
\begin{equation*}
\Omega_{ij}=x_{i}\partial_{x_{j}}-x_{j}\partial_{x_{i}}\quad \mbox{for}\ 1\le i<j\le 4.
\end{equation*}
We also set 
\begin{equation*}
\Lambda= {\rm{Id}}+x\cdot \nabla\quad \mbox{and}\quad  
\widetilde{\Omega}_{i}=-2x_{i}+|x|^{2}\partial_{x_{i}}-2x_{i}x\cdot\nabla \quad \mbox{for}\ i=1,\dots,4.
\end{equation*}
Note that, for $q\in \Sigma$,
\begin{equation*}
\widetilde{\mathcal{Z}}_{q}={\rm{Span}}\left\{\Lambda q;\partial_{x_{i}}q,\widetilde{\Omega}_{i}q,1\le i\le 4;\Omega_{ij}q,1\le i<j\le 4\right\}.
\end{equation*}
Denote $\langle x \rangle =\left(1+|x|^{2}\right)^{\frac{1}{2}}$ for $x\in \RR^{4}$. For some $0<\gamma\ll 1$ to be fixed, set 
\begin{equation}\label{def:zeta}
\zeta (x)=\langle x \rangle^{-\gamma}\quad \mbox{on}\ \RR^{4}.
\end{equation}

For $J\in \mathbb{N}^{+}$ and $r>0$, we denote by $\mathcal{B}_{\RR^{J}}(r)$ (respectively, $\mathcal{S}_{\RR^{J}}(r)$) be the ball (respectively, the sphere) of $\RR^{J}$ of center $0$ and of radius $r$.

Recall the Sobolev and Hardy inequalities in dimension 4, for any $f\in \dot{H}^{1}$,
\begin{align}
\|f\|_{L^{4}(\RR^{4})}&\lesssim \|\nabla f\|_{L^{2}(\RR^{4})},\label{est:Sobo}\\
\left\|\frac{f}{|x|}\right\|_{L^{2}(\RR^{4})}&\lesssim \|\nabla f\|_{L^{2}(\RR^{4})}.\label{est:Hardy}
\end{align}
\subsection*{Acknowledgements}
The author would like to thank his advisor, Professor Yvan Martel, for his generous help,
encouragement, and guidance related to this work.

\section{Preliminaries}\label{S:Pre}
\subsection{Transforms and Asymptotic behavior}\label{SS:trans}
Following~\cite{DKMASNSP}, we first recall that $\Sigma$ is invariant under the following four transformations:
\begin{enumerate}
	\item [${\rm{(i)}}$] translation: If $q\in \Sigma$ then $q(x+x_{0})\in \Sigma$ for all $x_{0}\in \RR^{4}$;
	\item [${\rm{(ii)}}$] dilation: If $q\in \Sigma$ then $\lambda q(\lambda x)\in \Sigma$ for all $\lambda\in (0,\infty)$;
	\item [${\rm{(iii)}}$] orthogonal transformation: If $q\in \Sigma$ then $q\left(Ox\right)\in \Sigma$ for all $O\in \mathcal{O}_{4}$.
	\item [${\rm{(iv)}}$] Kelvin transformation: If $q\in \Sigma$ then $\mathrm{K}q\in \Sigma$ where $\mathrm{K}q(x)=\frac{1}{|x|^{2}}q\left(\frac{x}{|x|^{2}}\right)$.
\end{enumerate}
We denote by $\mathcal{M}$ the group of isometries of $\dot{H}^{1}$ generated by the preceding transformations.
From~\cite[Section 3]{DKMASNSP}, it is known $\mathcal{M}$ defines a 15-parameter family of transformations in a
neighborhood of the identity. 

For $\theta=(\theta_{ij})_{1\le i<j\le 4}\in \RR^{6}$, set 
\begin{equation*}
A_{\theta}=\left[a_{i,j}^{\theta}\right]_{1\le i\le j\le 4},\quad \mathcal{R}_{\theta}=e^{A_{\theta}}=\sum_{n=0}^{\infty}\frac{A_{\theta}^{n}}{n!},
\end{equation*}
where $a_{i,i}^{\theta}=0$, $a_{i,j}^{\theta}=\theta_{ij}$ if $i<j$, $a^{\theta}_{j,i}=-\theta_{ij}$ if $i<j$.

For $\mathcal{A}=(\lambda,z,\xi,\theta)\in (0,\infty)\times \RR^{4}\times \RR^{4}\times \RR^{6}$, we introduce the following explicit formula of transform $\mathcal{T}_{\mathcal{A}}\in \mathcal{M}$,
\begin{equation*}
\mathcal{T}_{\mathcal{A}}(f)(x)=\lambda\left|\frac{x}{|x|}-z|x|\right|^{-2}f\left(\xi+\frac{\lambda \mathcal{R}_{\theta}(x-z|x|^{2})}{1-2\langle z,x\rangle +|z|^{2}|x|^{2}}\right)\quad \mbox{for}\ f\in \dot{H}^{1}.
\end{equation*}
Recall that for all $q\in \Sigma$, $\mathcal{\widetilde{Z}}_{q}$ is generated by taking partial derivatives of $\mathcal{T}_{\mathcal{A}}(q)$ with respect to $\mathcal{A}=(\lambda,z,\xi,\theta)$ at $\mathcal{A}=(1,0,0,0)$.
Also, following~\cite[Page 734]{DKMASNSP}, we see that if $q$ is non-degenerate then $\mathcal{T}q$ is also non-degenerate for any $\mathcal{T}\in \mathcal{M}$, \emph{i.e.} if ${\mathcal{Z}_{q}}=\mathcal{\widetilde{Z}}_{q}$ then ${\mathcal{Z}_{\mathcal{T}q}}=\mathcal{\widetilde{Z}}_{\mathcal{T}q}$ for any $\mathcal{T}\in \mathcal{M}$.

We first recall the following existence result proved in~\cite{MW}.
\begin{theorem}[\cite{MW}]\label{thm:q}
	There exists $q\in \Sigma$ which is a non-degenerate sign-changing state.
\end{theorem}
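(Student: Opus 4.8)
The plan is to follow Musso--Wei~\cite{MW}: first produce a sign-changing element of $\Sigma$ by a gluing (Lyapunov--Schmidt) construction as in del Pino--Musso--Pacard--Pistoia~\cite{PMPP1}, then establish its non-degeneracy by a blow-up analysis of the kernel of the linearized operator. For the construction, I would fix a large integer $k$ and seek a solution of $-\Delta q-q^{3}=0$ near the ansatz
\[
\mathcal{W}_{k}=W-\sum_{j=1}^{k}W_{\mu,\xi_{j}},\qquad W_{\mu,\xi}(x)=\frac{1}{\mu}\,W\!\Big(\frac{x-\xi}{\mu}\Big),
\]
where the $\xi_{j}$ are the vertices of a regular $k$-gon of radius $1$ lying in a fixed coordinate $2$-plane and $\mu=\mu_{k}\to 0$. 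Writing $q=\mathcal{W}_{k}+\psi$ with $\psi$ small and orthogonal to the approximate kernel directions at each bubble, one solves for $\psi$ by a fixed-point argument — working throughout in the subspace of functions invariant under the symmetry group of the configuration, which both shrinks the reduced problem and controls $\psi$ — and then solves the remaining finite-dimensional balancing system for the scale $\mu_{k}$ and the polygon radius; the balance between the center--spike interaction ($\sim\mu_{k}$) and the nearest-neighbor spike interaction ($\sim(\mu_{k}k)^{2}$) forces $\mu_{k}\sim k^{-2}$, and the system is solvable for $k$ large. The resulting $q=q_{k}$ is non-radial and sign-changing, hence $q_{k}\ne W$ up to sign, dilation and translation, so $q_{k}$ is an excited state.

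For the non-degeneracy, since $\widetilde{\mathcal{Z}}_{q}\subset\mathcal{Z}_{q}$ always and $\widetilde{\mathcal{Z}}_{q_{k}}$ is finite-dimensional, it is enough to show that for $k$ large, $\phi\in\mathcal{Z}_{q_{k}}$ with $\phi\perp\widetilde{\mathcal{Z}}_{q_{k}}$ in $\dot{H}^{1}$ forces $\phi=0$. I would argue by contradiction and compactness: if this fails along a sequence $k\to\infty$, pick $\phi_{k}\in\mathcal{Z}_{q_{k}}$ with $\|\phi_{k}\|_{\dot{H}^{1}}=1$ and $\phi_{k}\perp\widetilde{\mathcal{Z}}_{q_{k}}$. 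By the classical non-degeneracy of the ground state, $\ker\mathcal{L}_{W}$ is exactly ${\rm{Span}}\{\Lambda W,\partial_{x_{i}}W:1\le i\le 4\}$ and $\mathcal{L}_{W}$ is coercive on its $\dot{H}^{1}$-orthogonal complement. Since the mutual $\dot{H}^{1}$-interactions of the $k+1$ bubbles composing $q_{k}$ vanish as $k\to\infty$, localizing and rescaling $\phi_{k}$ around the central bubble and around each spike, and using this coercivity off the bubbles' kernel directions, shows that $\phi_{k}$ is, up to an $o(1)$ error in $\dot{H}^{1}$, a linear combination of the bubble kernel modes $\Lambda W_{\mu_{j},\xi_{j}}$ and $\partial_{x_{i}}W_{\mu_{j},\xi_{j}}$.

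It remains to show all the coefficients vanish. Decomposing $\phi_{k}$ along the irreducible representations of the configuration's symmetry group, the combinations corresponding to genuine symmetries (global translation and dilation, rotation within the polygon plane, Kelvin inversion) are killed by $\phi_{k}\perp\widetilde{\mathcal{Z}}_{q_{k}}$, while each remaining ``internal'' combination (rescaling a spike, dilating the crown relative to the center, etc.) is constrained: inserting the approximate form of $\phi_{k}$ into $\mathcal{L}_{q_{k}}\phi_{k}=0$ and projecting onto the kernel modes yields a homogeneous linear system for these coefficients whose matrix is, to leading order, the Jacobian at its critical point of the reduced interaction functional from~\cite{PMPP1}. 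Showing this critical point is non-degenerate forces the internal coefficients to vanish, so $\phi_{k}=o(1)$, contradicting $\|\phi_{k}\|_{\dot{H}^{1}}=1$; hence $q_{k}$ is non-degenerate for $k$ large. The step I expect to be the main obstacle is precisely this last one: it demands sharp asymptotics for $W$, its derivatives and $\ker\mathcal{L}_{W}$, a precise evaluation of the leading-order bubble--bubble interactions, and a careful treatment of the many near-kernel modes created by the crown — separating the symmetric and non-symmetric sectors and controlling the intermediate scales between $\mu_{k}$ and $k^{-1}$. This asymptotic analysis is the technical core of~\cite{MW}; closely related information on $Q$ and on $\ker\mathcal{L}_{Q}$ will be recorded in Lemmas~\ref{le:asyQ} and~\ref{le:asyker}.
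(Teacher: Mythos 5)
The paper does not prove this statement at all: Theorem~\ref{thm:q} is stated as an external result, cited directly from Musso--Wei~\cite{MW}, and the only ``proof'' in the paper is the bracketed citation. There is therefore no in-paper argument against which to compare your proposal. What the paper actually does with this theorem is take the existence of some non-degenerate sign-changing $q$ as a black box and then, in Lemma~\ref{le:asyQ}, apply explicit transformations (translation, rotation, dilation, Kelvin) to manufacture a $Q$ with the specific asymptotics~\eqref{est:asQ}--\eqref{est:aspQ} needed downstream.

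As a blind reconstruction of the argument in~\cite{MW} and~\cite{PMPP1}, your outline is broadly faithful: the Lyapunov--Schmidt gluing around $W-\sum_j W_{\mu,\xi_j}$ with spikes on a regular $k$-gon, the symmetry reduction, the balance $\mu_k\sim k^{-2}$ between center--spike and nearest-neighbor interactions, and the contradiction/compactness scheme for non-degeneracy are all correct in spirit. A few cautions if you were to actually carry this out. First, in the compactness step you must rule out loss of mass at scales other than $O(1)$ and $O(\mu_k)$ and at spatial infinity; for an energy-critical linearized problem this requires a genuine profile-decomposition or concentration-compactness input, not just the coercivity of $\mathcal{L}_W$ on $(\ker\mathcal{L}_W)^{\perp}$. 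Second, the identification of the ``internal'' modes with the Jacobian of the reduced energy is the heart of~\cite{MW} and is where the real work lies --- one has to track $O(k)$ near-kernel directions with nearly degenerate eigenvalues, and the leading-order linear system is not finite-dimensional in a fixed sense as $k\to\infty$. Your proposal correctly flags this as the main obstacle but does not supply the estimates; that is acceptable here precisely because the paper itself leaves all of this to the cited reference.
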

The non-degenerate sign-changing state $q$ plays a crucial role in our proof. Indeed, the specific non-degenerate excited state $Q$ mentioned in Theorem~\ref{the:main} is constructed by $q$ via rotation, translation, and Kelvin transformation.
More precisely:
\begin{lemma}\label{le:asyQ} 
	There exists non-degenerate excited state $Q\in \Sigma$ such that the following estimates hold. 
	\begin{enumerate}
		\item \emph{Asymptotic behavior of $Q$}. For all $|x|>1$,
		\begin{equation}\label{est:asQ}
		\left|Q(x)-\frac{x_{4}}{|x|^{4}}\right|\lesssim \frac{1}{|x|^{4}}.
		\end{equation}
		\item \emph{Bound of $\partial_{x}^{\alpha}Q$}. For all $\alpha\in \mathbb{N}^{4}$, 
		\begin{equation}\label{est:aspQ}
		\left|\partial_{x}^{\alpha}Q(x)\right|\lesssim \langle x\rangle ^{-\left(3+|\alpha|\right)}\quad \mbox{on}\ \  \RR^{4}.
		\end{equation}
	\end{enumerate}
\end{lemma}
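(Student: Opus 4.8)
The plan is to produce $Q$ from the non-degenerate sign-changing state $q$ of Theorem~\ref{thm:q} by applying a carefully chosen composition of the four symmetries, the last one being the Kelvin transform. The role of the Kelvin inversion $\mathrm{K}$ is to trade the slow decay of $q$ at infinity for the $C^{\infty}$ regularity of $q$ near a well-chosen point: if $g\in\Sigma$ satisfies $g(0)=0$ and $\nabla g(0)=\boldsymbol{\rm{e}}_{4}$, then for $|x|>1$ the point $y=x/|x|^{2}$ lies in the unit ball, so the Taylor expansion \eqref{equ:tay} gives $g(y)=y_{4}+O(|y|^{2})$, whence $\mathrm{K}g(x)=|x|^{-2}g(x/|x|^{2})=x_{4}|x|^{-4}+O(|x|^{-4})$, which is exactly the shape of \eqref{est:asQ}. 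So it suffices to bring $q$, by the measure-preserving symmetries (i)--(iii), to some $q_{3}\in\Sigma$ with $q_{3}(0)=0$ and $\nabla q_{3}(0)=\boldsymbol{\rm{e}}_{4}$, and then set $Q=\mathrm{K}q_{3}$.

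First I would locate a zero of $q$ at which the gradient does not vanish. Since $q$ changes sign and $-\Delta q=q^{3}$, the set $\{q>0\}$ is a nonempty proper open subset of $\RR^{4}$; picking $x_{1}\in\{q>0\}$ and a nearest point $x_{0}$ of the closed nonempty set $\{q=0\}$, the open ball centered at $x_{1}$ of radius $|x_{1}-x_{0}|$ is contained in $\{q>0\}$ (being connected, disjoint from $\{q=0\}$, and meeting $\{q>0\}$), so on that ball $q$ is positive and, since $\Delta q=-q^{3}<0$, superharmonic, while $q(x_{0})=0$. Hopf's lemma then forces $\nabla q(x_{0})\neq 0$. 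Translating by $x_{0}$, then applying an orthogonal transformation sending $\nabla q(x_{0})$ to a positive multiple of $\boldsymbol{\rm{e}}_{4}$, and finally a dilation normalizing that multiple to $1$, I obtain the desired $q_{3}\in\Sigma$; the last three operations fix the origin, so the normalizations $q_{3}(0)=0$ and $\nabla q_{3}(0)=\boldsymbol{\rm{e}}_{4}$ are compatible. Set $Q:=\mathrm{K}q_{3}\in\Sigma$; as an element of $\Sigma$ it is automatically $C^{\infty}$ on $\RR^{4}$, and it is bounded near the origin because $q_{3}$ decays at infinity.

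Estimate \eqref{est:asQ} is then immediate from the computation of the first paragraph applied to $g=q_{3}$. For the derivative bounds \eqref{est:aspQ}, on a fixed bounded neighborhood of the origin they follow from $Q\in C^{\infty}$ together with the positive lower bound of $\langle x\rangle^{-(3+|\alpha|)}$ there; for $|x|>1$ I would expand $\partial_{x}^{\alpha}Q=\partial_{x}^{\alpha}\big(|x|^{-2}\,q_{3}(x/|x|^{2})\big)$ using Leibniz's formula \eqref{equ:Lei} and the chain rule. The inversion $x\mapsto x/|x|^{2}$ is homogeneous of degree $-1$, so its $\beta$-th derivative is $O(|x|^{-1-|\beta|})$; since $q_{3}(0)=0$ one has $|q_{3}(x/|x|^{2})|\lesssim|x|^{-1}$ and, more generally, $|\partial_{x}^{\gamma}(q_{3}(x/|x|^{2}))|\lesssim|x|^{-1-|\gamma|}$; combining this with $|\partial_{x}^{\beta}(|x|^{-2})|\lesssim|x|^{-2-|\beta|}$ yields $|\partial_{x}^{\alpha}Q(x)|\lesssim|x|^{-3-|\alpha|}$, matching \eqref{est:aspQ}.

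Finally, non-degeneracy of $Q$ is inherited from that of $q$, because translation, dilation, orthogonal transformation and Kelvin transform all lie in $\mathcal{M}$, under which non-degeneracy is preserved (as recalled after the definition of $\mathcal{M}$); and $Q$ is genuinely an excited state, since the Kelvin transform of a sign-changing function is sign-changing, so $Q$ changes sign and cannot coincide with $W$ up to sign, dilation and translation. I expect the only delicate point to be the need for $\nabla q_{3}(0)\neq 0$, i.e.\ for a zero of $q$ with nonvanishing gradient: without it the dipole term in the expansion of $\mathrm{K}q_{3}$ would vanish and the right-hand side of \eqref{est:asQ} would be meaningless. This is precisely what the Hopf-lemma step provides; everything else is routine bookkeeping of derivatives through the inversion.
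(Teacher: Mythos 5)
Your proposal is correct and follows essentially the same route as the paper: locate a zero $x_0$ of $q$ with $\nabla q(x_0)\neq 0$ via Hopf's lemma on a ball where $q>0$ is superharmonic, normalize by translation/rotation/dilation so that $q$ vanishes at the origin with gradient $\boldsymbol{\rm{e}}_4$, apply the Kelvin transform, and read off the asymptotics from the Taylor expansion of the inner function together with Leibniz and chain-rule bookkeeping for the derivative bounds. The only cosmetic difference is how the Hopf point is found (you take a nearest point of $\{q=0\}$ to an arbitrary $x_1\in\{q>0\}$, whereas the paper takes the maximal ball $\mathcal{B}_{\RR^4}(r)$ around the origin after normalizing $q(0)>0$); both are valid, and your remark that the resulting $Q$ is sign-changing, hence genuinely excited, is a small but correct addition not spelled out in the paper.
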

\begin{proof}
Let $q\in \Sigma$ be the non-degenerate sign-changing state in~Theorem~\ref{thm:q}. Without loss of generality, we assume $q(0)>0$. Set 
\begin{equation*}
r=\sup\left\{R>0;q(x)>0\ \mbox{holds on}\ \mathcal{B}_{\RR^{4}}(R)\right\}.
\end{equation*}
Since $q$ is sign-changing state, we know that $r>0$ is well-defined finite, and there exists $x_{0}\in \mathcal{S}_{\RR^{4}}(r)$ such that $q(x_{0})=0$. Moreover, by the Hopf's Lemma (see for instance~\cite[Lemma 3.4]{GT}), we have $\nabla q(x_{0})\ne 0$.
Considering translation $x\to x+x_{0}$, dilation and rotating
for $q$, we construct a non-degenerate excited state $\tilde{q}\in \Sigma$ satisfying 
\begin{equation}\label{equ:q0}
\tilde{q}(0)=\partial_{x_{1}}\tilde{q}(0)=\partial_{x_{2}}\tilde{q}(0)=\partial_{x_{3}}\tilde{q}(0)=0\quad \mbox{and}\quad 
\partial_{x_{4}}\tilde{q}(0)=1.
\end{equation}
Next, we consider the Kelvin transform $\mathrm{K}$ to $\tilde{q}$ and denote it by $Q$, \emph{i.e.}
\begin{equation*}
Q(x)=\left(\mathrm{K}\tilde{q}\right)(x)=\frac{1}{|x|^{2}}\tilde{q}\left(\frac{x}{|x|^{2}}\right)\quad \mbox{for}\ x\ne 0.
\end{equation*}
Following~\cite[Section 3]{DKMASNSP}, we know that $Q$ is also a non-degenerate state, \emph{i.e.}
\begin{equation*}
-\Delta Q-Q^{3}=0\ \ \mbox{with}\ Q\in C^{\infty},\quad \mbox{and}\quad 
\mathcal{Z}_{Q}=\widetilde{\mathcal{Z}}_{Q}.
\end{equation*}
	Moreover, from~\cite[Lemma 3.8]{DKMASNSP} and $\mathrm{K}^{2}={\rm{Id}}$, we have, for $i=1,\dots,4$, 
	\begin{equation}\label{equ:OiQ}
	\widetilde{\Omega}_{i}Q(x)=\left(\mathrm{K}\left(\partial_{x_{i}}\tilde{q}\right)\right)(x)=\frac{1}{|x|^{2}}\left((\partial_{x_{i}}\tilde{q})\left(\frac{x}{|x|^{2}}\right)\right)\quad \mbox{for}\ x\ne 0.
	\end{equation}
	Now we start to prove $Q$ satisfying (i) and (ii).
	
	Proof of (i). Applied the Taylor expansion~\eqref{equ:tay} of $\tilde{q}$ at $x=0$ and then using~\eqref{equ:q0}, we have 
	\begin{equation*}
	\tilde{q}(x)=\tilde{q}(0)+\sum_{i=1}^{4}\partial_{x_{i}}\tilde{q}(0)x_{i}+O(|x|^{2})=x_{4}+O(|x|^{2})\quad \forall\ |x|<1.
	\end{equation*}
	Replace $x$ with $\frac{x}{|x|^{2}}$ in the above identity,
	\begin{equation}\label{est:q0}
	\tilde{q}\left(\frac{x}{|x|^{2}}\right)=\frac{x_{4}}{|x|^{2}}+O\left(\frac{1}{|x|^{2}}\right)\quad \forall\ |x|>1.
	\end{equation}
	Therefore, from the definition of Kelvin transform in~\S\ref{SS:trans}, we have 
	\begin{equation*}
	Q(x)=\frac{1}{|x|^{2}}\tilde{q}\left(\frac{x}{|x|^{2}}\right)= \frac{x_{4}}{|x|^{4}}+O\left(\frac{1}{|x|^{4}}\right)\quad \forall \ |x|>1,
	\end{equation*}
	which means~\eqref{est:asQ}.
	
	Proof of (ii). On the one hand, from~\eqref{est:q0}, we know that 
	\begin{equation}\label{est:qx}
	\left|\tilde{q}\left(\frac{x}{|x|^{2}}\right)\right|\lesssim \frac{|x_{4}|}{|x|^{2}}+O\left(\frac{1}{|x|^{2}}\right)\lesssim \langle x \rangle^{-1},\quad \forall \ |x|>1.
	\end{equation}
	Then, from $\tilde{q}\in C^{\infty}$ and chain rule, for any $i=1,\dots,4$ and $|x|>1$, we have 
	\begin{equation*}
	\left|\partial_{x_{i}}\tilde{q}\left(\frac{x}{|x|^{2}}\right)\right|=\left|\sum_{j=1}^{4}\left((\partial_{x_{j}}\tilde{q})\left(\frac{x}{|x|^{2}}\right)\right)\partial_{x_{i}}\left(\frac{x_{j}}{|x|^{2}}\right)\right|\lesssim \langle x\rangle^{-2}.
	\end{equation*}
	Using a similar argument as above for all $\gamma\in \mathbb{N}^{4}$ and combining with~\eqref{est:qx},
	\begin{equation}\label{est:pxq}
	\left|\partial^{\gamma}_{x}\tilde{q}\left(\frac{x}{|x|^{2}}\right)\right|\lesssim \langle x \rangle^{-(1+|\gamma|)}\quad \forall\ |x|>1.
	\end{equation}
	On the other hand, by direct computation and chain rule,
	\begin{equation}\label{est:pxx2}
	\left|\partial_{x}^{\beta}\left(\frac{1}{|x|^{2}}\right)\right|\lesssim \langle x \rangle^{-(2+|\beta|)}
	\quad \forall\ \beta\in \mathbb{N}^{4}\ \mbox{and}\ |x|>1.
	\end{equation}
	Based on~\eqref{est:pxq},~\eqref{est:pxx2} and Leibniz’s formula~\eqref{equ:Lei}, for any $\alpha\in \mathbb{N}^{4}$ and $|x|>1$,
	\begin{equation*}
	\begin{aligned}
	\partial_{x}^{\alpha}Q(x)
	&=\sum_{\beta+\gamma=\alpha}C_{\alpha}^{\beta}\left(\partial^{\gamma}_{x}\tilde{q}\left(\frac{x}{|x|^{2}}\right)\right)\left(\partial_{x}^{\beta}\left(\frac{1}{|x|^{2}}\right)\right)\\
	&=\sum_{\beta+\gamma=\alpha}O\left(\langle x\rangle^{-(1+|\gamma|)}\langle x\rangle^{-(2+|\beta|)}\right)
	=O\left(\langle x\rangle^{-(3+|\alpha|)}\right).
	\end{aligned}
	\end{equation*}
	Combining above estimates with $Q\in C^{\infty}$, we complete the proof of (ii).
\end{proof}

Next, we introduce the following sharp understanding of the kernel of linearized operator around $Q$. 
\begin{lemma}[Asymptotic behavior of kernel functions]\label{le:asyker}
	There exist $K\in \mathbb{N}^{+}$ and linear independent $C^{\infty}$ functions  $\psi,\phi_{1},\dots,\phi_{K}\in \dot{H}^{1}$ such that 
	\begin{equation*}
	\widetilde{\mathcal{Z}}_{Q}={\rm{Span}}\left\{\psi,\phi_{1},\dots,\phi_{K}\right\}.
	\end{equation*}
	Moreover, we have 
	\begin{equation}\label{est:phi}
	\left|\psi(x)-|x|^{-2}\right|\lesssim \langle x\rangle ^{-3}\quad \forall\ |x|\ge 1,
	\end{equation}
	\begin{equation} \label{est:phipsi}
	\langle x\rangle^{-1} |\partial_{x}^{\alpha}\psi(x)|+	\sum_{k=1}^{K}\left|\partial_{x}^{\alpha}\phi_{k}(x)\right|\lesssim \langle x \rangle ^{-(3+\alpha)}\quad \forall\ \alpha\in\mathbb{N}^{4},\ x\in \RR^{4}.
	\end{equation}
\end{lemma}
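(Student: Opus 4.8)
I would prove Lemma~\ref{le:asyker} by exhibiting $\psi$ explicitly as one of the standard generators of $\widetilde{\mathcal{Z}}_{Q}$ and checking that, among those generators, it is the \emph{only} one failing to decay like $\langle x\rangle^{-3}$; the remaining generators will then furnish the $\phi_{k}$. Concretely, the plan is to set $\psi:=\widetilde{\Omega}_{4}Q$. By the identity~\eqref{equ:OiQ} one has $\psi(x)=\frac{1}{|x|^{2}}(\partial_{x_{4}}\tilde{q})(x/|x|^{2})$, and the normalization~\eqref{equ:q0} gives $\partial_{x_{4}}\tilde{q}(0)=1$. A first-order Taylor expansion of $\partial_{x_{4}}\tilde{q}$ at the origin, exactly as in the proof of Lemma~\ref{le:asyQ}, then yields $(\partial_{x_{4}}\tilde{q})(x/|x|^{2})=1+O(1/|x|)$ for $|x|>1$, hence $\psi(x)=|x|^{-2}+O(|x|^{-3})$ for $|x|>1$, which is~\eqref{est:asQ}'s analogue~\eqref{est:phi} (the case $|x|=1$ being covered by continuity).

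For the derivative bounds on $\psi$, I would repeat the Leibniz/chain-rule computation of Lemma~\ref{le:asyQ}(ii): writing $\psi=\frac{1}{|x|^{2}}(\partial_{x_{4}}\tilde{q})(x/|x|^{2})$, using that the derivatives of $\partial_{x_{4}}\tilde{q}$ are bounded near $0$, together with $|\partial_{x}^{\beta}(1/|x|^{2})|\lesssim\langle x\rangle^{-(2+|\beta|)}$ from~\eqref{est:pxx2} and $|\partial_{x}^{\gamma}(x/|x|^{2})|\lesssim\langle x\rangle^{-(1+|\gamma|)}$, one gets $|\partial_{x}^{\gamma}[(\partial_{x_{4}}\tilde{q})(x/|x|^{2})]|\lesssim\langle x\rangle^{-|\gamma|}$ for $|x|>1$ (this is one power weaker than~\eqref{est:pxq} precisely because $\partial_{x_{4}}\tilde{q}(0)=1\neq0$), and therefore $|\partial_{x}^{\alpha}\psi(x)|\lesssim\langle x\rangle^{-(2+|\alpha|)}$ for $|x|>1$. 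Since $\psi\in C^{\infty}$, this holds on all of $\RR^{4}$, so $\langle x\rangle^{-1}|\partial_{x}^{\alpha}\psi|\lesssim\langle x\rangle^{-(3+|\alpha|)}$, the $\psi$-part of~\eqref{est:phipsi}. The loss of one power is genuine — by~\eqref{est:phi}, $\psi$ does not decay faster than $|x|^{-2}$ — and is exactly what the prefactor $\langle x\rangle^{-1}$ in~\eqref{est:phipsi} is there to absorb.

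Next I would treat the remaining fourteen generators of $\widetilde{\mathcal{Z}}_{Q}$, namely $\Lambda Q$, $\partial_{x_{i}}Q$ ($1\le i\le4$), $\widetilde{\Omega}_{i}Q$ ($1\le i\le3$) and $\Omega_{ij}Q$ ($1\le i<j\le4$), and show each satisfies $|\partial_{x}^{\alpha}(\,\cdot\,)(x)|\lesssim\langle x\rangle^{-(3+|\alpha|)}$ on $\RR^{4}$. For $\partial_{x_{i}}Q$, $\Omega_{ij}Q$ and $\Lambda Q$ this is immediate from the pointwise bound~\eqref{est:aspQ}: when $\partial_{x}^{\alpha}$ hits a product $x_{i}\partial_{x_{j}}Q$ one gets $x_{i}\partial_{x}^{\alpha}\partial_{x_{j}}Q$ plus lower-order terms, and the extra power of $|x|$ from multiplication by a coordinate is absorbed by the extra derivative on $Q$. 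The delicate case is $\widetilde{\Omega}_{i}Q$ for $i=1,2,3$: its individual summands $x_{i}Q$, $|x|^{2}\partial_{x_{i}}Q$, $x_{i}(x\cdot\nabla Q)$ only decay like $\langle x\rangle^{-2}$, so the claimed rate can only come from cancellation among them. This cancellation is made visible by~\eqref{equ:OiQ}, $\widetilde{\Omega}_{i}Q(x)=\frac{1}{|x|^{2}}(\partial_{x_{i}}\tilde{q})(x/|x|^{2})$, together with the crucial input that~\eqref{equ:q0} gives $\partial_{x_{i}}\tilde{q}(0)=0$ for $i=1,2,3$; hence $|(\partial_{x_{i}}\tilde{q})(y)|\lesssim|y|$ near $0$, and~\eqref{est:pxq} applies verbatim with $\tilde{q}$ replaced by $\partial_{x_{i}}\tilde{q}$. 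Feeding this into the same Leibniz/chain-rule estimate as above gains one power of $\langle x\rangle$ relative to the $\psi$ computation and gives $|\partial_{x}^{\alpha}(\widetilde{\Omega}_{i}Q)(x)|\lesssim\langle x\rangle^{-(3+|\alpha|)}$, first for $|x|>1$ and then, by smoothness, on $\RR^{4}$.

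Finally I would assemble the basis: let $\phi_{1},\dots,\phi_{K}$ be any maximal linearly independent subfamily of those fourteen generators, so that $\mathrm{Span}\{\psi,\phi_{1},\dots,\phi_{K}\}=\widetilde{\mathcal{Z}}_{Q}$ and each $\phi_{k}$ satisfies~\eqref{est:phipsi} by the previous step. Linear independence of $\{\psi,\phi_{1},\dots,\phi_{K}\}$ reduces to $\psi\notin\mathrm{Span}\{\phi_{1},\dots,\phi_{K}\}$, which follows by comparing asymptotics: a relation $\psi=\sum_{k}b_{k}\phi_{k}$ would force $|x|^{-2}+O(|x|^{-3})=O(\langle x\rangle^{-3})$ as $|x|\to\infty$, a contradiction (and $\psi\neq0$ since $\partial_{x_{4}}\tilde{q}(0)=1$ and $\mathrm{K}$ is injective). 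The same comparison shows $\partial_{x_{1}}Q$ — a nonzero element of $\widetilde{\mathcal{Z}}_{Q}$ decaying like $\langle x\rangle^{-4}$ by~\eqref{est:aspQ} — is not proportional to $\psi$, whence $\dim\widetilde{\mathcal{Z}}_{Q}\ge2$ and $K\in\mathbb{N}^{+}$. The main obstacle throughout is the hidden cancellation in $\widetilde{\Omega}_{i}Q$ for $i\le3$ and, dually, the sharp non-improvable rate $|x|^{-2}$ of $\psi=\widetilde{\Omega}_{4}Q$: these are precisely what dictate the normalization~\eqref{equ:q0} of $\tilde{q}$, and recovering the correct decay $\langle x\rangle^{-(3+|\alpha|)}$ for these directions — rather than the naive $\langle x\rangle^{-2}$ — is the whole content of the lemma.
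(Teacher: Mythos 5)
Your proposal is correct and follows essentially the same route as the paper: set $\psi=\widetilde{\Omega}_{4}Q$, use the Kelvin-transform identity~\eqref{equ:OiQ} together with the normalization~\eqref{equ:q0}, apply Taylor expansion at the origin plus the Leibniz/chain-rule estimates from Lemma~\ref{le:asyQ}(ii), and take the remaining fourteen explicit generators (handled via~\eqref{est:aspQ} and the vanishing $\partial_{x_i}\tilde{q}(0)=0$ for $i\le 3$) to furnish the $\phi_k$. The only addition is your explicit check of linear independence via asymptotic comparison, which the paper leaves implicit but which is indeed needed since the lemma asserts it.
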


\begin{remark}
	 Indeed, whatever is the decay on $Q$, its kernel contains the function $\psi$ which has the slow algebraic decay $|x|^{-2}$. This is the main difficulty in our proof. Similar difficulty in~\cite{HiRa} is due to the function $\Lambda W$ where $W\in \Sigma$ is the ground state. In~\cite{LJR}, for the wave maps, such similar difficulty is due to $\Lambda {Q}$ where ${Q}$ is the harmonic map.
\end{remark}
\begin{proof}
	We choose $\psi=\widetilde{\Omega}_{4}Q$ and $\left\{\phi_{1},\dots,\phi_{K}\right\}$ to be a basis of the function space
	\begin{equation*}
	{\rm{Span}}\left\{\Lambda Q;\partial_{x_{i}}Q,1\le i\le 4;\Omega_{ij}Q,1\le i<j\le 4; \widetilde{\Omega}_{i}Q,1\le i\le 3\right\}.
	\end{equation*}
	Note that, from the definition of $\psi$ and $\left\{\phi_{1},\dots,\phi_{K}\right\}$, we have 
	\begin{equation*}
	\widetilde{\mathcal{Z}}_{Q}={\rm{Span}}\left\{\psi,\phi_{1},\dots,\phi_{K}\right\}.
	\end{equation*}
	It remains to show that the $C^{\infty}$ functions $\left\{\psi,\phi_{1},\dots,\phi_{K}\right\}$ satisfy~\eqref{est:phi} and~\eqref{est:phipsi}.
	
	First, from (ii) of Lemma~\ref{le:asyQ}, for any kernel function 
	$$\phi \in {\rm{Span}}\left\{\Lambda Q;\partial_{x_{i}}Q,1\le i\le 4;\Omega_{ij}Q,1\le i<j\le 4\right\},$$ $\alpha\in \mathbb{N}^{4}$ and $x\in \RR^{4}$, we have 
	\begin{equation}\label{est:ker1}
	\left|\partial_{x}^{\alpha}\phi(x)\right|\lesssim \left|\partial_{x}^{\alpha}Q(x)\right|+\langle x\rangle \sum_{i=1}^{4}\left|\partial_{x}^{\alpha}\partial_{x_{i}}Q(x)\right|\lesssim \langle x\rangle^{-\left(3+|\alpha|\right)}.
	\end{equation}
	Second, we claim 
	\begin{equation}\label{est:OmQ}
	\left|\widetilde{\Omega}_{4}Q(x)-|x|^{-2}\right|\lesssim |x|^{-3}\quad \forall \ |x|>1,
	\end{equation}
	and for any $\alpha\in \mathbb{N}^{4}$,
	\begin{equation}\label{est:pxOmQ}
	\sum_{i=1}^{3}\left|\partial_{x}^{\alpha}\widetilde{\Omega}_{i}Q(x)\right|+\langle x \rangle^{-1}\left|\partial_{x}^{\alpha}\widetilde{\Omega}_{4}Q(x)\right|\lesssim \langle x \rangle^{-(3+|\alpha|)}\quad \mbox{on}\ \RR^{4}.
	\end{equation} 
	Applied the Taylor expansion~\eqref{equ:tay} for $\partial_{x_{i}}\tilde{q}$ at $x=0$ and using~\eqref{equ:q0}, we have, for $i=1,\dots,4$,
	\begin{equation}\label{est:pq}
	\left(\partial_{x_{i}}\tilde{q}\right)\left(\frac{x}{|x|^{2}}\right)=\delta_{i4}+O\left(\frac{1}{|x|}\right)\quad 
	\forall\ |x|>1.
	\end{equation}
	Moreover, for $i=1,\dots,4$ and any $\gamma\in \mathbb{N}^{4}$, we have 
	\begin{equation}\label{est:paq}
	\left|\partial_{x}^{\gamma}\left(\left(\partial_{x_{i}}\tilde{q}\right)\left(\frac{x}{|x|^{2}}\right)\right)\right|
	\lesssim \langle x\rangle ^{-(1+|\gamma|)+\delta_{i4}}\quad \forall\ |x|>1.
	\end{equation}
	Recall that, for $i=1,\dots,4$,
	\begin{equation*}
	\widetilde{\Omega}_{i}Q(x)=\frac{1}{|x|^{2}}\left((\partial_{x_{i}}\tilde{q})\left(\frac{x}{|x|^{2}}\right)\right)\quad \forall\ |x|>1.
	\end{equation*}
	Based on above identity and~\eqref{est:pq}, 
	\begin{equation*}
	\widetilde{\Omega}_{4}Q(x)=|x|^{-2}+O\left(|x|^{-3}\right)\quad \forall\ |x|>1,
	\end{equation*}
	which means~\eqref{est:OmQ}. Then, using~\eqref{est:paq} and Leibniz's formula~\eqref{equ:Lei}, for any $\alpha\in \mathbb{N}^{4}$, $|x|>1$ and $i=1,\dots,4$,
	\begin{equation*}
	\begin{aligned}
	\left|\partial_{x}^{\alpha}\widetilde{\Omega}_{i}Q(x)\right|
	&=\sum_{\beta+\gamma=\alpha}C_{\alpha}^{\beta}\partial_{x}^{\beta}\left(\frac{1}{|x|^{2}}\right)\partial_{x}^{\gamma}\left(\left(\partial_{x_{i}}\tilde{q}\right)\left(\frac{x}{|x|^{2}}\right)\right)\\
	&=\sum_{\beta+\gamma=\alpha}O\left(\langle x\rangle^{-(2+|\beta|)} \langle x\rangle^{-(1+\gamma)+\delta_{i4}}\right)=O\left( \langle x\rangle^{-(3+\alpha)+\delta_{i4}}\right).
	\end{aligned}
	\end{equation*}
	Based on above estimates and $Q\in C^{\infty}$, we obtain~\eqref{est:pxOmQ}. Note that, the $C^{\infty}$ functions $\left\{\psi,\phi_{1},\dots,\phi_{K} \right\}$ satisfying~\eqref{est:phi} and~\eqref{est:phipsi} is consequence of~\eqref{est:ker1},~\eqref{est:OmQ} and~\eqref{est:pxOmQ}. 
\end{proof}

\subsection{Spectral theory}\label{SS:spec}
In this section, we introduce the spectral properties of the linearized operator around $Q$ and $Q_{\ell}$. For $-1<\ell<1$, we define
\begin{equation*}
\mathcal{L}=-\Delta-3Q^{2},\quad \mathcal{L}_{\ell}=-(1-\ell^{2})\partial_{x_{1}}^{2}-\bar{\Delta}-3Q_{\ell}^{2}.
\end{equation*}

\begin{lemma}[Spectral properties of $\mathcal{L}$] ${\rm{(i)}}$ \emph{Spectrum}. The self-adjoint operator $\mathcal{L}$ has essential spectrum $[0,\infty)$, a finite number $J\ge 1$ of negative eigenvalues $($counted with multiplicity$)$, and its kernel is $\mathcal{\widetilde{Z}}_{Q}$. Let $\left(Y_{j}\right)_{j=1,\dots,J}$ be an $L^{2}$ orthogonal family of eigenvectors of $\mathcal{L}$ corresponding to the eigenvalues $(-\lambda_{j}^{2})_{j=1,\dots,J}$, i.e. for $j,j'=1,\dots,J$, 
	\begin{equation*}
	\left(Y_{j},Y_{j'}\right)_{L^{2}}=\delta_{jj'}\quad \mbox{and}\quad 
	\mathcal{L}Y_{j}=-\lambda_{j}^{2}Y_{j},\quad \lambda_{j}>0.
	\end{equation*}
	It holds, for all $j=1,\dots,J$ and $\alpha\in \mathbb{N}^{4}$, 
	\begin{equation}\label{est:Yj}
	\left|\partial_{x}^{\alpha}Y_{j}(x)\right|\lesssim e^{-\lambda_{j} |x|}\quad \mbox{on}\ \RR^{4}.
	\end{equation}

${\rm{(ii)}}$ \emph{Cancellation.} We have 
\begin{equation}\label{equ:can}
\left(f_{1}f_{2}f_{3},Q\right)_{L^{2}}=0\quad \mbox{for all}\ f_{1},f_{2},f_{3}\in \mathcal{\widetilde{Z}}_{Q}.
\end{equation}
\end{lemma}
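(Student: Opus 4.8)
The plan is to prove the two assertions separately. Part ${\rm(i)}$ will be assembled from standard spectral theory together with the decay and non-degeneracy of $Q$ already at our disposal, while part ${\rm(ii)}$ — the substantive one — will exploit the conformal invariance of the stationary equation. For ${\rm(i)}$: since $|Q(x)|\lesssim \langle x\rangle^{-3}$ by~\eqref{est:aspQ}, the potential $3Q^{2}$ is bounded, vanishes at infinity, and $Q^{2}\in L^{2}(\RR^{4})$, so multiplication by $3Q^{2}$ is a relatively compact perturbation of $-\Delta$ on $L^{2}(\RR^{4})$ and Weyl's theorem gives $\sigma_{{\rm{ess}}}(\mathcal{L})=\sigma_{{\rm{ess}}}(-\Delta)=[0,\infty)$. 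I would then deduce finiteness of the negative spectrum from the Cwikel--Lieb--Rozenblum bound, whose right-hand side is $\lesssim \int_{\RR^{4}}Q^{4}=\|Q\|_{L^{4}}^{4}<\infty$ by $\dot{H}^{1}\hookrightarrow L^{4}$; that $J\ge 1$ follows by testing the quadratic form on $Q$ itself, where the Nehari identity $\int|\nabla Q|^{2}=\int Q^{4}$ gives $(\mathcal{L}Q,Q)_{L^{2}}=-2\int Q^{4}<0$. The kernel, read as the $\dot{H}^{1}$ solution space $\mathcal{Z}_{Q}$, equals $\widetilde{\mathcal{Z}}_{Q}$ by the non-degeneracy of $Q$ recorded in Lemma~\ref{le:asyQ} (and elliptic regularity applied to $-\Delta f=3Q^{2}f$ makes every such $f$ smooth). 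For~\eqref{est:Yj}, writing $(-\Delta+\lambda_{j}^{2})Y_{j}=3Q^{2}Y_{j}$ with $\lambda_{j}>0$, an elliptic bootstrap first makes $Y_{j}$ smooth and bounded, and then an Agmon-type argument — using that the resolvent kernel of $-\Delta+\lambda_{j}^{2}$ on $\RR^{4}$ decays like $e^{-\lambda_{j}|x|}$, together with the fast decay of $Q$ — yields the pointwise bound, the bounds on $\partial_{x}^{\alpha}Y_{j}$ following by differentiating the equation.

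For ${\rm(ii)}$, I would fix $f\in\widetilde{\mathcal{Z}}_{Q}$ and, using the description of $\widetilde{\mathcal{Z}}_{Q}$ recalled in \S\ref{SS:trans}, choose a smooth curve $\mathcal{A}(s)$ in parameter space with $\mathcal{A}(0)=(1,0,0,0)$ and $\frac{\d}{\d s}\mathcal{T}_{\mathcal{A}(s)}(Q)=f$ at $s=0$. Set $u(s)=\mathcal{T}_{\mathcal{A}(s)}(Q)$; then $s\mapsto u(s)$ is a $C^{2}$ curve in $\dot{H}^{1}$ near $s=0$ with $u(0)=Q$, $u'(0)=f$, and $u(s)\in\Sigma$, i.e. $-\Delta u(s)=u(s)^{3}$ in $\dot{H}^{-1}$, for all small $s$. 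Since $v\mapsto v^{3}$ is a smooth map $\dot{H}^{1}\to\dot{H}^{-1}$ (Sobolev in dimension $4$) and $\mathcal{L}$ is bounded and symmetric from $\dot{H}^{1}$ to $\dot{H}^{-1}$, I would differentiate this identity in $s$: the first derivative at $s=0$ returns $\mathcal{L}f=0$, and the second gives $\mathcal{L}g=6Qf^{2}$ in $\dot{H}^{-1}$, where $g:=u''(0)\in\dot{H}^{1}$. Pairing the latter with $f$ and using the symmetry of $\mathcal{L}$ together with $\mathcal{L}f=0$,
\begin{equation*}
6\int_{\RR^{4}}Qf^{3}\,\d x=\langle \mathcal{L}g,f\rangle=\langle g,\mathcal{L}f\rangle=0.
\end{equation*}
Applying this with $f$ replaced by $t_{1}f_{1}+t_{2}f_{2}+t_{3}f_{3}\in\widetilde{\mathcal{Z}}_{Q}$ for arbitrary $f_{1},f_{2},f_{3}\in\widetilde{\mathcal{Z}}_{Q}$ and $t_{i}\in\RR$, and reading off the coefficient of $t_{1}t_{2}t_{3}$, gives~\eqref{equ:can}.

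The hard part is not in ${\rm(i)}$ — the only delicate point there is that the kernel identification must be read in $\dot{H}^{1}$, not in $L^{2}$: the generator $\psi=\widetilde{\Omega}_{4}Q\in\widetilde{\mathcal{Z}}_{Q}$ decays only like $|x|^{-2}$ by~\eqref{est:phi}, so $\psi\notin L^{2}(\RR^{4})$ (the integral $\int|x|^{-4}\,\d x$ diverges logarithmically at infinity in dimension $4$) and $0$ is a zero-resonance rather than an $L^{2}$-eigenvalue, which is precisely the slow-decay difficulty flagged after Lemma~\ref{le:asyker}. The real obstacle is in ${\rm(ii)}$: it is the justification that the transformation family $\mathcal{T}_{\mathcal{A}}$ produces a genuine $C^{2}$ curve in $\dot{H}^{1}$ of solutions, so that $g=u''(0)$ actually lies in $\dot{H}^{1}$ and the second-order relation $\mathcal{L}g=6Qf^{2}$ is legitimate in $\dot{H}^{-1}$. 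This is exactly the place where the energy-critical (cubic, $d=4$) structure is used, since it is what makes $v\mapsto v^{3}$ smooth from $\dot{H}^{1}$ into $\dot{H}^{-1}$ and $\mathcal{L}$ a bounded symmetric map between these spaces; once that smoothness is in hand the cancellation is a two-line computation as above.
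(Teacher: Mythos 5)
Your proposal is correct and follows the same route as the paper: part (ii) is exactly the argument the paper delegates to Yuan's reference [Y5De, Lemma 2.1(iii)] — differentiate $-\Delta\mathcal{T}_{\mathcal{A}}(Q)=\left(\mathcal{T}_{\mathcal{A}}(Q)\right)^{3}$ twice in $\mathcal{A}$ at $(1,0,0,0)$, pair the second-order identity with a kernel element using the symmetry of $\mathcal{L}$ on $\dot{H}^{1}\times\dot{H}^{-1}$, and polarize — and part (i) simply spells out the Weyl/CLR/Agmon ingredients that the paper outsources to [DKMASNSP, Claim 3.5], [Davies, Theorem 5.8.1] and [Agmon, Theorem 3.2]. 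Your side remark on the kernel is a genuine sharpening worth recording: since $\psi=\widetilde{\Omega}_{4}Q\sim|x|^{-2}$ lies in $\dot{H}^{1}\setminus L^{2}(\RR^{4})$ while $\phi_{1},\dots,\phi_{K}\sim|x|^{-3}$ do lie in $L^{2}$, the $L^{2}$-kernel of the self-adjoint operator $\mathcal{L}$ is the strictly smaller subspace $\mathrm{Span}\{\phi_{1},\dots,\phi_{K}\}$ and $\psi$ is a zero resonance, so the phrase ``its kernel is $\widetilde{\mathcal{Z}}_{Q}$'' must be read as the $\dot{H}^{1}$ non-degeneracy statement $\mathcal{Z}_{Q}=\widetilde{\mathcal{Z}}_{Q}$ rather than as an $L^{2}$-spectral one — harmless downstream because every later use of the kernel is phrased via $\dot{H}^{1}$-orthogonality against the explicit generators, but a point that the lemma's wording elides.
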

\begin{proof}
	Proof of (i). The spectral properties of $\mathcal{L}$ are consequence of~\cite[Claim 3.5]{DKMASNSP} and ~\cite[Theorem 5.8.1]{Davies}. The exponential decay of eigenfunctions $Y_{j}$ follows from
	standard elliptic arguments, see~\cite[Theorem 3.2]{Agmon}.
	
	Proof of (ii). The cancellation properties~\eqref{equ:can} follows from taking twice partial derivatives of
	the identity $-\mathcal{T}_{\mathcal{A}}Q-(\mathcal{T}_{\mathcal{A}}Q)^{3}=0$ with respect to $\mathcal{A}=(\lambda,z,\xi,\theta)$ at $\mathcal{A}=(1,0,0,0)$. See details in the proof of~\cite[Lemma 2.1 (iii)]{Y5De}.
	\end{proof}

For $-1<\ell<1$, let 
\begin{equation*}
\begin{aligned}
\mathcal{H}_{\ell}=\left(\begin{array}{cc}
-\Delta-3Q_{\ell}^{2} & -\ell\partial_{x_{1}}\\
\ell\partial_{x_{1}} & 1
\end{array}\right),\quad 
{\mathrm{J}}=\left(\begin{array}{cc}
0 & 1\\
-1 & 0
\end{array}\right).
\end{aligned}
\end{equation*}
Similarly, for $k=,1\dots,K$,
\begin{equation*}
\vec{\psi}_{\ell}=\left(\begin{array}{c}
\psi(x_{\ell})\\
-\ell\partial_{x_{1}} \psi(x_{\ell})
\end{array}\right)\quad \mbox{and}\quad \vec{\phi}_{k,\ell}=\left(\begin{array}{c}
\phi_{k}(x_{\ell})\\
-\ell\partial_{x_{1}} \phi_{k}(x_{\ell})
\end{array}\right),
\end{equation*}
and for $j=1,\dots,J$,
\begin{equation*}
Y_{j,\ell}=Y_{j}(x_{\ell}),\quad 
\vec{\Upsilon}_{j,\ell}^{\pm}=\left(\begin{array}{c}
\Upsilon_{j,\ell}^{\pm,1}\\[3pt]
\Upsilon_{j,\ell}^{\pm,2}
\end{array}\right),\quad 
\vec{Z}_{j,\ell}^{\pm}=\mathcal{H}_{\ell}\vec{\Upsilon}^{\pm}_{j,\ell},
\end{equation*}
where
\begin{equation*}
\Upsilon_{j,\ell}^{\pm,1}=Y_{j,\ell}e^{\mp\frac{\ell\lambda_{j}}{\sqrt{1-\ell^{2}}}x_{1}},\quad 
\Upsilon_{j,\ell}^{\pm,2}=-\left(\ell\partial_{x_{1}}Y_{j,\ell}\mp\frac{\lambda_{j}}{\sqrt{1-\ell^{2}}}Y_{j,\ell}\right)e^{\mp\frac{\ell\lambda_{j}}{\sqrt{1-\ell^{2}}}x_{1}}.
\end{equation*}
We recall the following spectral properties of $\mathcal{H}_{\ell}$.
\begin{lemma} Let $-1<\ell<1$.
	\begin{enumerate}
	\item \emph{Decay properties of $\Upsilon_{j,\ell}^{\pm,1}$ and $\Upsilon_{j,\ell}^{\pm,2}$.}
	We have, on $\mathbb{R}^{4}$, for $j=1,\dots,J$,
	\begin{equation}\label{est:decY}
	\left|\Upsilon_{j,\ell}^{\pm,1}\right|+\left|\Upsilon_{j,\ell}^{\pm,2}\right|\lesssim e^{-\frac{1}{2}(1-|\ell|)^{\frac{1}{2}}\lambda_{j}|x|}.
	\end{equation}
	
	\item \emph{Kernel of $\mathcal{H}_{\ell}$}. We have 
	\begin{equation}\label{equ:ker}
	{\rm{Ker}}\mathcal{H}_{\ell}={\rm{Span}}\left(\vec{\psi}_{\ell},\vec{\phi}_{1,\ell},\dots,\vec{\phi}_{K,\ell}\right).
	\end{equation}
	\item \emph{Identities of $\vec{Z}_{j,\ell}^{\pm}$ and $\vec{Y}_{j,\ell}^{\pm}$.} For $j=1,\dots,J$, we have 
	\begin{equation}\label{equ:zy}
	\vec{Z}_{j,\ell}^{\pm}=\mp \lambda_{j}(1-\ell^{2})^{\frac{1}{2}}{\mathrm{J}}\vec{\Upsilon}_{j,\ell}^{\pm}.
	\end{equation}
	\item \emph{Coercivity of $\mathcal{H}_{\ell}$.} There exists $c>0$ such that, for all $\vec{v}\in \dot{H}^{1}\times L^{2}$,
	\begin{equation}\label{est:coer}
	\left(\mathcal{H}_{\ell}\vec{v},\vec{v}\right)_{L^{2}}\ge c \|\vec{v}\|_{\mathcal{H}}^{2}-c^{-1}\left(\sum_{\phi\in \mathcal{\widetilde{Z}}_{Q}}\left(\vec{v},\vec{\phi}_{\ell}\right)^{2}_{\mathcal{H}}+\sum_{\pm,j=1}^{J}\left(\vec{v},\vec{Z}^{\pm}_{j,\ell}\right)_{L^{2}}^{2}\right).
	\end{equation}
\end{enumerate}
\end{lemma}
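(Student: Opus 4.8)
The plan is to prove the four spectral properties of $\mathcal{H}_{\ell}$ by transporting known facts about the static operator $\mathcal{L}$ and the matrix operator $\mathcal{H}_{0}$ through the Lorentz-type change of variables $x\mapsto x_{\ell}$, as is done in the 5D setting of~\cite{MM,Y5De}. The decay estimate~\eqref{est:decY} should come directly from the exponential bound~\eqref{est:Yj} on $Y_{j}$: substituting $x_{\ell}$ rescales only the $x_{1}$-variable by $(1-\ell^{2})^{-1/2}$, so $|Y_{j}(x_{\ell})|\lesssim e^{-\lambda_{j}|x_{\ell}|}\lesssim e^{-\lambda_{j}(1-\ell^{2})^{-1/2}|x_{1}|}e^{-\lambda_{j}|\bar{x}|}$; combining this with the conjugating exponential factor $e^{\mp\ell\lambda_{j}(1-\ell^{2})^{-1/2}x_{1}}$ and the elementary inequality $|x_{1}|-|\ell||x_{1}|\ge (1-|\ell|)|x_{1}|\gtrsim (1-|\ell|)^{1/2}|x|$ on the relevant region, together with the analogous bound for $\partial_{x_{1}}Y_{j}$, yields~\eqref{est:decY}. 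The factor $\tfrac12$ is just slack to absorb constants.

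For the kernel identity~\eqref{equ:ker}, I would verify that $\mathcal{H}_{\ell}\vec{\phi}_{\ell}=\vec{0}$ for each $\phi\in\{\psi,\phi_{1},\dots,\phi_{K}\}$ by direct computation: the second component forces $v_{2}=-\ell\partial_{x_{1}}v_{1}$, and plugging this into the first component gives $(-\Delta-3Q_{\ell}^{2})v_{1}-\ell\partial_{x_{1}}v_{2}=(-(1-\ell^{2})\partial_{x_{1}}^{2}-\bar\Delta-3Q_{\ell}^{2})v_{1}=\mathcal{L}_{\ell}v_{1}$, and one checks that $\mathcal{L}_{\ell}(f(x_{\ell}))=(\mathcal{L}f)(x_{\ell})$, so $\mathcal{L}_{\ell}$ annihilates $\phi(x_{\ell})$ precisely because $\mathcal{L}$ annihilates $\phi\in\widetilde{\mathcal{Z}}_{Q}$. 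The reverse inclusion follows because the substitution $x\mapsto x_{\ell}$ is a linear isomorphism of $\dot H^{1}$, so $\dim\mathrm{Ker}\,\mathcal{L}_{\ell}=\dim\mathrm{Ker}\,\mathcal{L}=\dim\widetilde{\mathcal{Z}}_{Q}=K+1$, and the first-component map $\vec v\mapsto v_{1}$ identifies $\mathrm{Ker}\,\mathcal{H}_{\ell}$ with $\mathrm{Ker}\,\mathcal{L}_{\ell}$. The identity~\eqref{equ:zy} is a routine but careful computation: apply $\mathcal{H}_{\ell}$ to $\vec{\Upsilon}_{j,\ell}^{\pm}$, using that $Y_{j,\ell}$ solves $(-(1-\ell^2)\partial_{x_1}^2-\bar\Delta-3Q_\ell^2)Y_{j,\ell}=-\lambda_j^2 Y_{j,\ell}$ and that conjugation by $e^{\mp\mu x_{1}}$ with $\mu=\ell\lambda_{j}(1-\ell^{2})^{-1/2}$ shifts the operator; the exponential weight is chosen exactly so that the resulting expression collapses to a multiple of $\mathrm{J}\vec{\Upsilon}_{j,\ell}^{\pm}$, with the constant $\mp\lambda_{j}(1-\ell^{2})^{1/2}$ emerging from $\mu^{2}(1-\ell^{2})-\lambda_{j}^{2}=-\lambda_{j}^{2}(1-\ell^2)^{-1}\cdot(1-\ell^2)\cdot$... — the algebra is exactly as in~\cite[Lemma 2.2]{Y5De}.

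The coercivity estimate~\eqref{est:coer} is the substantive point and the main obstacle. The strategy is: first establish coercivity for $\mathcal{L}$ itself, namely that $(\mathcal{L}f,f)_{L^{2}}\gtrsim \|\nabla f\|_{L^{2}}^{2}$ modulo the finitely many negative directions $Y_{j}$ and the kernel $\widetilde{\mathcal{Z}}_{Q}$ — this is classical from the spectral picture in part (i) of the previous lemma, using that the essential spectrum starts at $0$ and there are finitely many eigenvalues below it. Then one transfers this to $\mathcal{H}_{\ell}$ via the change of variables, noting that $(\mathcal{H}_{\ell}\vec v,\vec v)_{L^{2}}$ can be rewritten, after completing the square in the $v_{2}$-variable, as $\|v_{2}+\ell\partial_{x_{1}}v_{1}\|_{L^{2}}^{2}+(\mathcal{L}_{\ell}v_{1},v_{1})_{L^{2}}$; the first term controls part of the $L^{2}$-norm of $v_{2}$, and for the second one changes variables back to reduce to coercivity of $\mathcal{L}$, at the cost that the negative eigendirections $Y_{j}$ of $\mathcal{L}$ become, after undoing the Lorentz boost and reinserting the $v_{2}$-component, precisely the directions $\vec{Z}_{j,\ell}^{\pm}$ (this is where~\eqref{equ:zy} is used — the two signs $\pm$ account for the fact that a single negative mode of the static problem splits into two under the boost). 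One must also handle the orthogonality conditions carefully: showing that controlling $(\vec v,\vec\phi_{\ell})_{\mathcal{H}}$ for $\phi\in\widetilde{\mathcal{Z}}_{Q}$ and $(\vec v,\vec Z_{j,\ell}^{\pm})_{L^{2}}$ is equivalent (up to the explicit isomorphism and bounded corrections) to the orthogonality to the kernel and negative modes of $\mathcal{L}$ in the boosted frame. The delicate bookkeeping is keeping all the weights, the $(1-\ell^{2})$ factors, and the coupling between $v_{1}$ and $v_{2}$ consistent; I would follow the proof of~\cite[Lemma 2.3]{Y5De} or~\cite[Lemma 4 / Appendix]{MM} essentially line by line, the only genuine difference from the 5D case being dimensional (Sobolev/Hardy exponents), which does not affect the structure of the argument since the functional-analytic input — finitely many negative eigenvalues and essential spectrum $[0,\infty)$ — is identical.
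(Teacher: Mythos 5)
Your sketches for parts (ii), (iii), and (iv) are in the same spirit as the paper, which likewise reduces them to elementary computations and the coercivity argument of~\cite{CMcorr,Y5De,MM} and omits the details; your outline of the completion-of-square identity $(\mathcal{H}_{\ell}\vec v,\vec v)_{L^{2}}=\|v_{2}+\ell\partial_{x_{1}}v_{1}\|_{L^{2}}^{2}+(\mathcal{L}_{\ell}v_{1},v_{1})_{L^{2}}$ and the identification of $\mathrm{Ker}\,\mathcal{H}_{\ell}$ with $\mathrm{Ker}\,\mathcal{L}_{\ell}$ via the first component are both correct.

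Part (i) is where there is a genuine gap. Your chain begins with $e^{-\lambda_{j}|x_{\ell}|}\lesssim e^{-\lambda_{j}(1-\ell^{2})^{-1/2}|x_{1}|}e^{-\lambda_{j}|\bar{x}|}$, i.e.\ with the inequality $|x_{\ell}|\ge(1-\ell^{2})^{-1/2}|x_{1}|+|\bar{x}|$. But $|x_{\ell}|=\sqrt{A^{2}+B^{2}}$ with $A=(1-\ell^{2})^{-1/2}|x_{1}|$, $B=|\bar x|$, and $\sqrt{A^{2}+B^{2}}\le A+B$, so your inequality points the wrong way. The best separable lower bound is $|x_{\ell}|\ge\tfrac{1}{\sqrt2}(A+B)$, and if you substitute that instead and then multiply by the conjugating weight $e^{|\ell|\lambda_{j}(1-\ell^{2})^{-1/2}|x_{1}|}$ you are left with the exponent $\lambda_{j}\big(|\ell|-\tfrac{1}{\sqrt2}\big)(1-\ell^{2})^{-1/2}|x_{1}|-\tfrac{\lambda_{j}}{\sqrt2}|\bar x|$, whose $x_{1}$-coefficient is \emph{positive} once $|\ell|>1/\sqrt2$. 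So the factorized approach simply does not give decay for $|\ell|$ close to $1$, and the factor $1/2$ in the statement is not ``slack'' covering this — no constant can fix a sign. The paper avoids the separable split entirely: it estimates the quantity $\big(|x_{\ell}|-|\ell|(1-\ell^{2})^{-1/2}|x_{1}|\big)^{2}$ directly, expanding and bounding the cross term by observing $2a^{2}+b^{2}-2a\sqrt{a^{2}+b^{2}}=(a-\sqrt{a^{2}+b^{2}})^{2}\ge 0$, which gives the lower bound $\frac{(1-|\ell|)^{2}}{1-\ell^{2}}x_{1}^{2}+(1-|\ell|)|\bar{x}|^{2}>\frac{1}{4}(1-|\ell|)|x|^{2}$ uniformly in $\ell$. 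You should replace your opening inequality with an estimate of the full combination $|x_{\ell}|-|\ell|(1-\ell^{2})^{-1/2}|x_{1}|$ along these lines.
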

\begin{proof}
	Proof of (i). We prove (i) for $\Upsilon_{j,\ell}^{\pm,1}$; the estimate for $\Upsilon_{j,\ell}^{\pm,2}$ are proved similarly. From the exponential decay property of $Y_{j}$ in~\eqref{est:Yj} and the definition of $\Upsilon_{j,\ell}^{\pm,1}$,
	\begin{equation*}
	\left|\Upsilon_{j,\ell}^{\pm,1}\right|\lesssim e^{-\lambda_{j}\left(\left(\frac{x^{2}_{1}}{1-\ell^{2}}+|\bar{x}|^{2}\right)^{\frac{1}{2}}-\frac{|\ell|}{\sqrt{1-\ell^{2}}}|x_{1}|\right)}, \quad \mbox{on}\ \RR^{4}.
	\end{equation*}
	By an elementary computation, we have 
	\begin{equation*}
	\begin{aligned}
	&\left(\left(\frac{x^{2}_{1}}{1-\ell^{2}}+|\bar{x}|^{2}\right)^{\frac{1}{2}}-\frac{|\ell|}{\sqrt{1-\ell^{2}}}|x_{1}|\right)^{2}\\
	&=\frac{1+\ell^{2}}{1-\ell^{2}}x_{1}^{2}+|\bar{x}|^{2}-2|\ell|\frac{|x_{1}|}{\sqrt{1-\ell^{2}}}\left(\frac{x^{2}_{1}}{1-\ell^{2}}+|\bar{x}|^{2}\right)^{\frac{1}{2}}\\
	&\ge \frac{(1-|\ell|)^{2}}{1-\ell^{2}}x_{1}^{2}+(1-|\ell|)|\bar{x}|^{2}> \frac{1}{4}(1-|\ell|)|x|^{2},
	\end{aligned}
	\end{equation*}
	which implies~\eqref{est:decY} for $\Upsilon_{j,\ell}^{\pm,1}$.
	
	Proof of (ii) and (iii). The proof of the identities directly follows from an elementary computation (see~\emph{e.g.}~\cite[Lemma 2.4 (i)]{Y5De} and~\cite[Claim 1 (ii)]{MM}) and we omit it.
	
	Proof of (iv). The proof of the coercivity property relies on an argument based on the spectral theory of $\mathcal{L}$ and compactness argument (see \emph{e.g.}~\cite{CMcorr} and~\cite[Proposition 2.3]{Y5De}), and we omit it.
	\end{proof}

Third, we introduce the following localized coercivity property of $\mathcal{H}_{\ell}$.
The proof is similar to~\cite[Lemma 2.2 (ii)]{MM}, but it is given for the sake of completeness and the readers' convenience.
\begin{lemma}[Localized coercivity of $\mathcal{H}_{\ell}$] \label{le:locacoer}
	For $\gamma>0$ small enough in~\eqref{def:zeta}, there exists $\mu>0$ such that for any $\vec{v}=(v,z)\in \dot{H}^{1}\times L^{2}$, 
	\begin{equation}\label{est:localcoer}
	\begin{aligned}
	&\int_{\RR^{4}}\left(|\nabla v|^{2}\zeta^{2}-3Q_{\ell}^{2}v^{2}+z^{2}\zeta^{2}+2\ell (\partial_{x_{1}} v)z \zeta^{2}\right) \d x\\
	&\ge \mu \int_{\RR^{4}}\left(|\nabla v|^{2}+z^{2}\right)\zeta^{2}\d x-\mu^{-1}\left(\sum_{\phi\in \mathcal{\widetilde{Z}}_{Q}}\left(\vec{v},\vec{\phi}_{\ell}\right)^{2}_{\mathcal{H}}+\sum_{\pm,j=1}^{J}\left(\vec{v},\vec{Z}^{\pm}_{j,\ell}\right)_{L^{2}}^{2}\right).
	\end{aligned}
	\end{equation}
\end{lemma}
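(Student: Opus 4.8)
The plan is to reduce~\eqref{est:localcoer} to the global coercivity~\eqref{est:coer} by the standard conjugation trick, following~\cite[Lemma 2.2 (ii)]{MM}. Set $\vec{w}=(\zeta v,\zeta z)\in\dot{H}^{1}\times L^{2}$. I would rewrite the weighted quadratic form on the left-hand side of~\eqref{est:localcoer} as $(\mathcal{H}_{\ell}\vec{w},\vec{w})_{L^{2}}$ plus commutator terms, each of which, because $\zeta=\langle x\rangle^{-\gamma}$ varies slowly, carries a factor $\gamma$ and can be absorbed once $\gamma$ is small. The pointwise facts used throughout are $|\nabla\zeta|\lesssim\gamma\langle x\rangle^{-1}\zeta$, $|\Delta\zeta|\lesssim\gamma\langle x\rangle^{-2}\zeta$, $0\le 1-\zeta\le 2\gamma\log\langle x\rangle$ and $\langle x\rangle^{-1}\le|x|^{-1}$, together with the Hardy inequality~\eqref{est:Hardy}.

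First I would carry out the algebra. From $\nabla(\zeta v)=\zeta\nabla v+v\nabla\zeta$ and an integration by parts, $\int|\nabla(\zeta v)|^{2}=\int\zeta^{2}|\nabla v|^{2}-\int v^{2}\zeta\Delta\zeta$; a parallel computation comparing $2\ell\int\partial_{x_{1}}(\zeta v)\,\zeta z$ with $2\ell\int\zeta^{2}(\partial_{x_{1}}v)z$ produces an extra term $2\ell\int\zeta(\partial_{x_{1}}\zeta)vz$. Collecting everything,
\[
\int_{\RR^{4}}\Big(|\nabla v|^{2}\zeta^{2}-3Q_{\ell}^{2}v^{2}+z^{2}\zeta^{2}+2\ell(\partial_{x_{1}}v)z\,\zeta^{2}\Big)\d x
=(\mathcal{H}_{\ell}\vec{w},\vec{w})_{L^{2}}-3\int Q_{\ell}^{2}(1-\zeta^{2})v^{2}\d x+\mathrm{Err},
\]
with $\mathrm{Err}=\int v^{2}\zeta\Delta\zeta\,\d x-2\ell\int\zeta(\partial_{x_{1}}\zeta)vz\,\d x$. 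Using the bounds above and applying~\eqref{est:Hardy} to $\zeta v$ (factoring out a $|x|^{-1}\zeta$ and checking the residual weight lies in $L^{2}(\RR^{4})$) one gets $|\mathrm{Err}|\le C\gamma\big(\int\zeta^{2}|\nabla v|^{2}+\int\zeta^{2}z^{2}\big)$; for the $Q_{\ell}$ term one additionally invokes $|Q_{\ell}(x)|\lesssim\langle x\rangle^{-3}$, which follows from~\eqref{est:aspQ} and $\langle x_{\ell}\rangle\ge\langle x\rangle$, so that $Q_{\ell}^{2}(1-\zeta^{2})\lesssim\gamma\langle x\rangle^{-6}\log\langle x\rangle\lesssim\gamma|x|^{-2}\zeta^{2}$ and hence $3\int Q_{\ell}^{2}(1-\zeta^{2})v^{2}\le C\gamma\|\zeta v\|_{\dot{H}^{1}}^{2}$. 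The same manipulation shows $\|\vec{w}\|_{\mathcal{H}}^{2}=\int|\nabla(\zeta v)|^{2}+\int\zeta^{2}z^{2}\ge(1-C\gamma)\int(|\nabla v|^{2}+z^{2})\zeta^{2}$.

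Next I would match the projections. The estimate~\eqref{est:coer} applied to $\vec{w}$ involves $(\vec{w},\vec{\phi}_{\ell})_{\mathcal{H}}$ and $(\vec{w},\vec{Z}^{\pm}_{j,\ell})_{L^{2}}$, while~\eqref{est:localcoer} asks for the same quantities with $\vec{v}$. Writing $\vec{w}-\vec{v}=((\zeta-1)v,(\zeta-1)z)$, using $|\zeta-1|\lesssim\gamma\log\langle x\rangle$, and using the decay of the kernel functions — $|\partial_{x}^{\alpha}\phi_{k}(x)|\lesssim\langle x\rangle^{-(3+|\alpha|)}$ and $|\partial_{x}^{\alpha}\psi(x)|\lesssim\langle x\rangle^{-(2+|\alpha|)}$ from~\eqref{est:phipsi}, so that $|\nabla\psi|\lesssim\langle x\rangle^{-3}$ even though $\psi$ itself decays only like $|x|^{-2}$, plus the exponential decay of $\vec{Z}^{\pm}_{j,\ell}$ from~\eqref{est:decY}--\eqref{equ:zy}, all of which transfer to $\vec{\phi}_{\ell},\vec{Z}^{\pm}_{j,\ell}$ by the chain rule and $\langle x_{\ell}\rangle\ge\langle x\rangle$ — Cauchy--Schwarz and~\eqref{est:Hardy} give
\[
\sum_{\phi\in\widetilde{\mathcal{Z}}_{Q}}\big|(\vec{w}-\vec{v},\vec{\phi}_{\ell})_{\mathcal{H}}\big|+\sum_{\pm,j=1}^{J}\big|(\vec{w}-\vec{v},\vec{Z}^{\pm}_{j,\ell})_{L^{2}}\big|\le C\gamma\Big(\int(|\nabla v|^{2}+z^{2})\zeta^{2}\Big)^{1/2},
\]
so each squared projection onto $\vec{w}$ is at most twice the corresponding one onto $\vec{v}$ plus $C\gamma^{2}\int(|\nabla v|^{2}+z^{2})\zeta^{2}$. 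Plugging all this into~\eqref{est:coer}, the left-hand side of~\eqref{est:localcoer} is bounded below by $(c-C\gamma)\int(|\nabla v|^{2}+z^{2})\zeta^{2}-2c^{-1}\big(\sum_{\phi}(\vec{v},\vec{\phi}_{\ell})^{2}_{\mathcal{H}}+\sum_{\pm,j}(\vec{v},\vec{Z}^{\pm}_{j,\ell})^{2}_{L^{2}}\big)$, and fixing $\gamma>0$ small enough that $c-C\gamma\ge c/2$ gives~\eqref{est:localcoer} with $\mu=c/2$.

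The main obstacle is the error bookkeeping in the two middle steps: one must check that \emph{every} commutator and every projection-mismatch term is controlled by $\gamma$ times the \emph{weighted} energy $\int(|\nabla v|^{2}+z^{2})\zeta^{2}$, not merely by the full norm $\|\vec{v}\|_{\mathcal{H}}^{2}$ (which would be useless here). This forces one, at each step, to factor out a $|x|^{-1}\zeta$ so as to apply~\eqref{est:Hardy} to $\zeta v$ or $\zeta z$ while leaving a square-integrable residual weight; it is precisely at this point that the fast decay $|Q_{\ell}|\lesssim\langle x\rangle^{-3}$ and $|\nabla\psi|\lesssim\langle x\rangle^{-3}$ — rather than the mere $|x|^{-2}$ decay of $\psi$ — is essential.
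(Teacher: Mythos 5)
Your proposal is correct and follows essentially the same route as the paper: both apply the global coercivity~\eqref{est:coer} to $\vec{v}\zeta$, expand $\left(\mathcal{H}_{\ell}(\vec{v}\zeta),\vec{v}\zeta\right)_{L^{2}}$ to isolate the target quadratic form plus commutator errors, and absorb those errors together with the projection mismatches using the Hardy inequality and the decay of $Q$ and the kernel functions, choosing $\gamma$ small. The only cosmetic difference is that the paper bounds the $Q_{\ell}^{2}v^{2}(1-\zeta^{2})$ term by a soft $C(\gamma)\to 0$ argument, whereas you extract an explicit factor $\gamma$ via $1-\zeta^{2}\lesssim\gamma\log\langle x\rangle$; both are valid.
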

\begin{proof}
    First, we apply~\eqref{est:coer} on $\vec{v}\zeta=(v\zeta,z\zeta)\in \dot{H}^{1}\times L^{2}$,
	\begin{equation}\label{est:coerHzeta}
	\left(\mathcal{H}_{\ell}(\vec{v}\zeta),\vec{v}\zeta\right)_{L^{2}}\ge c \|\vec{v}\zeta \|_{\E}^{2}-c^{-1}\left(\sum_{\phi\in \mathcal{\widetilde{Z}}_{Q}}\left(\vec{v}\zeta,\vec{\phi}_{\ell}\right)^{2}_{\mathcal{H}}+\sum_{\pm,j=1}^{J}\left(\vec{v}\zeta,\vec{Z}^{\pm}_{j,\ell}\right)_{L^{2}}^{2}\right).
	\end{equation}
	Note that, from the definition of $\mathcal{H}_{\ell}$ and integration by parts, 
	\begin{equation}\label{equ:Hvzeta}
	\begin{aligned}
	\left(\mathcal{H}_{\ell}(\vec{v}\zeta),\vec{v}\zeta\right)_{L^{2}}
	=&\int_{\RR^{4}}\left(|\nabla v|^{2}\zeta^{2}-3Q_{\ell}^{2}v^{2}+z^{2}\zeta^{2}+2\ell (\partial_{x_{1}} v)z \zeta^{2}\right) \d x\\
	&-\int_{\RR^{4}}v^{2}\zeta \Delta \zeta \d x+2\ell\int_{\RR^{4}}v z \zeta \partial_{x_{1}} \zeta \d x+3\int_{\RR^{4}}Q_{\ell}^{2}v^{2}(1-\zeta^{2})\d x.
	\end{aligned}
	\end{equation}
	Note also that, from the definition of $\zeta$ in~\eqref{def:zeta}, 
	\begin{equation*}
	\partial_{x_{1}}\zeta =-\gamma \frac{x_{1}}{1+|x|^{2}}\zeta\quad \mbox{and}\quad 
	\Delta \zeta =-\gamma((2-\gamma)|x|^{2}+4)\frac{\zeta}{(1+|x|^{2})^{2}}.
	\end{equation*}
	Therefore, by the Hardy inequality~\eqref{est:Hardy}, we have 
	\begin{equation*}
    \begin{aligned}
    \left|\int_{\RR^{4}}v^{2}\zeta \Delta \zeta \d x\right|&\lesssim \gamma \int_{\RR^{4}}\frac{(v\zeta)^{2}}{\langle x\rangle^{2}}\d x\lesssim \gamma \|v\zeta\|^{2}_{\dot{H}^{1}},\\
    \left|\int_{\RR^{4}}v z \zeta \partial_{x_{1}} \zeta \d x\right|&\lesssim \gamma \int_{\RR^{4}}\left|\frac{v\zeta}{\langle x \rangle}\right||z\zeta|\d x\lesssim \gamma \|\vec{v}\zeta\|_{\E}^{2}.
    \end{aligned}
    \end{equation*}
    Then, by the decay property of $Q$ in Lemma~\ref{le:asyQ} and the Hardy inequality~\eqref{est:Hardy},
    \begin{equation*}
    \begin{aligned}
    \left|\int_{\RR^{4}}Q_{\ell}^{2}v^{2}(1-\zeta^{2})\d x\right|
    &\lesssim \|Q_{\ell}^{2}\langle x\rangle^{2+\gamma} (1-(1+|x|^{2})^{-\gamma})\|_{L^{\infty}}\int_{\RR^{4}} \frac{(v\zeta)^{2}}{\langle x\rangle^{2}}\d x\\
    &\lesssim C(\gamma)\|v\zeta\|_{\dot{H}^{1}}^{2}\quad \mbox{where}\ C(\gamma)\to 0\ \mbox{as}\ \gamma\to 0.
    \end{aligned}
	\end{equation*}
	Combining above three estimates, for $\gamma>0$ small enough, we have 
	\begin{equation}\label{est:zetaerr}
	\left|-\int_{\RR^{4}}v^{2}\zeta \Delta \zeta \d x+2\ell\int_{\RR^{4}}v z \zeta \partial_{x_{1}} \zeta \d x+3\int_{\RR^{4}}Q_{\ell}^{2}v^{2}(1-\zeta^{2})\d x\right|\le \frac{c}{2}\|\vec{v}\zeta\|_{\E}^{2}.
	\end{equation}
	Using similar arguments to $\|\vec{v}\zeta\|_{\E}^{2}$, $(\vec{v}\zeta, \vec{\phi}_{\ell})^{2}_{\E}$ and $(\vec{v}\zeta,\vec{Z}^{\pm}_{j,\ell})^{2}_{L^{2}}$, we have
	\begin{equation}\label{est:vzeta}
	\begin{aligned}
	\int_{\RR^{4}}(|\nabla v|^{2}+z^{2})\zeta^{2}\d x&\le 2\|\vec{v}\zeta\|_{\E}^{2},\\
	(\vec{v}\zeta, \vec{\phi}_{\ell})_{\E}^{2}+(\vec{v}\zeta,\vec{Z}^{\pm}_{j,\ell})^{2}_{L^{2}}&\le 
	2(\vec{v}, \vec{\phi}_{\ell})_{\E}^{2}+2(\vec{v},\vec{Z}^{\pm}_{j,\ell})^{2}_{L^{2}}+\frac{c^{2}}{4}\|\vec{v}\zeta\|_{\E}^{2},
	\end{aligned}
	\end{equation}
	for $\gamma$ small enough. Combining~\eqref{est:coerHzeta},~\eqref{equ:Hvzeta},~\eqref{est:zetaerr} and~\eqref{est:vzeta}, we obtain~\eqref{est:localcoer}.
	\end{proof}

\section{Dynamics close to the sum of $N$ solitons }\label{S:dem}
Let $Q$ be the non-degenerate state chosen in Lemma~\ref{le:asyQ} and $N\ge 2$. For any $n=1,\dots,N$, let $\tau_{n}=\pm 1$ and $\bell_{n}=\ell_{n}\boldsymbol{\rm{e}}_{1}$ where $-1<\ell_{n}<1$ and $\ell_{n}\ne \ell_{n'}$ for any $n\ne n'$. Denote by $I$ and $I^{0}$ the following two sets of indices,
\begin{equation*}
\begin{aligned}
I&=\left\{(n,j):n=1,\dots,N,j=1,\dots,J\right\},\quad \ \ |I|={\rm{Card}} I=NJ,\\
I^{0}&=\left\{(n,k):n=1,\dots,N,k=1,\dots,K\right\},\quad |I^{0}|={\rm{Card}} I^{0}=NK.
\end{aligned}
\end{equation*}
Define
\begin{equation*}
Q_{n}(t,x)=\tau_{n}Q_{\ell_{n}}(x-\bell_{n}t),\quad 
\vec{Q}_{n}=\left(
\begin{array}{c}
Q_{n}\\
-\ell_{n}\partial_{x_{1}}Q_{n}
\end{array}
\right).
\end{equation*}
Similarly, we set,
\begin{equation*}
\Psi_{n}(t,x)=\psi_{\ell_{n}}(x-\bell_{n}t),\quad \vec{\Psi}_{n}(t,x)=\vec{\psi}_{\ell_{n}}(x-\bell_{n}t),
\end{equation*}
for $(n,k)\in I^{0}$,
\begin{equation*}
\Phi_{n,k}(t,x)=\phi_{k,\ell_{n}}(x-\bell_{n}t),\quad \vec{\Phi}_{n,k}(t,x)=\vec{\phi}_{k,\ell_{n}}(x-\bell_{n}t),
\end{equation*}
and for $(n,j)\in I$, 
\begin{equation*}
\vec{Z}_{n,j}^{\pm}(t,x)=\vec{Z}_{j,\ell_{n}}^{\pm}(x-\bell_{n} t)=\mp \lambda_{j}(1-\ell_{n}^{2})^{\frac{1}{2}}{\rm{J}}\vec{\Upsilon}_{j,\ell_{n}}^{\pm}(x-\bell_{n} t).
\end{equation*}
Consider time dependent $C^{1}$ functions $\boldsymbol{a}$ and $\boldsymbol{b}$ of the forms,
\begin{equation*}
\boldsymbol{a}=(a_{n})_{n=1,\dots,N}\in \RR^{N},\quad 
\boldsymbol{b}=(b_{n,k})_{(n,k)\in I^{0}}\in \RR^{NK}\quad \mbox{with}\ |\boldsymbol{a}|+|\boldsymbol{b}|\ll 1.
\end{equation*}
We introduce
\begin{equation*}
R=\sum_{n=1}^{N}Q_{n},\quad U=\sum_{n=1}^{N}a_{n}\Psi_{n},\quad V=\sum_{(n,k)\in I^{0}}b_{n,k}\Phi_{n,k},
\end{equation*}
and nonlinear interactions term
\begin{equation}\label{def:G}
G=\left(R+U+V\right)^{3}-\sum_{n=1}^{N}Q_{n}^{3}-3\sum_{n=1}^{N}a_{n}Q_{n}^{2}\Psi_{n}-3\sum_{(n,k)\in I^{0}}b_{n,k}Q_{n}^{2}\Phi_{n,k}.
\end{equation}
\subsection{Nonlinear interactions}
In this section, we introduce the estimates on the nonlinear interaction terms. In particular, due to sufficient decay along with the direction $\textbf{e}_{1}$, the order of the main term of interactions is $t^{-4}$ (see Lemma~\ref{le:Q1Q2}).

First, we expand $G$ of the components of the $R$, $U$ and $V$ by an elementary computation.
\begin{lemma}\label{le:G}
We have 
\begin{equation}\label{equ:G}
G=G_{1}+G_{2}+G_{3}=G_{1}+\sum_{n=1}^{N}G_{2,n}+\sum_{i=1}^{4}G_{3,i},
\end{equation}
where
\begin{equation*}
\begin{aligned}
G_{1}&=3\sum_{n\ne n'}Q_{n}^{2}Q_{n'}+6\sum_{n_{1}<n_{2}<n_{3}}Q_{n_{1}}Q_{n_{2}}Q_{n_{3}},\\
G_{2,n}&=3Q_{n}\bigg(a_{n}\Psi_{n}+\sum_{k=1}^{K}b_{n,k}\Phi_{n,k}\bigg)^{2}\quad  \mbox{for}\ n=1,\dots,N,
\end{aligned}
\end{equation*}
and
\begin{align*}
G_{3,1}&=3\sum_{n\ne n'}Q_{n}\bigg(a_{n'}\Psi_{n'}+\sum_{k=1}^{K}b_{n',k}\Phi_{n',k}\bigg)^{2},\\
G_{3,2}&=(U+V)^{3}=U^{3}+V^{3}+3U^{2}V+3UV^{2},\\
G_{3,3}&=3\sum_{n\ne n'}Q_{n}^{2}\bigg(a_{n'}\Psi_{n'}+\sum_{k=1}^{K}b_{n',k}\Phi_{n',k}\bigg)+3\sum_{n\ne n'}Q_{n}Q_{n'}(U+V),\\
G_{3,4}&=3R\sum_{n\ne n'}\bigg(a_{n}\Psi_{n}+\sum_{k=1}^{K}b_{n,k}\Phi_{n,k}\bigg)\bigg(a_{n'}\Psi_{n'}+\sum_{k'=1}^{K}b_{n',k'}\Phi_{n',k'}\bigg).
\end{align*}
\end{lemma}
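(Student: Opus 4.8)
The statement is an algebraic identity, so the plan is simply to expand the cube $(R+U+V)^{3}$ and regroup the monomials, checking that the three quantities subtracted in the definition~\eqref{def:G} of $G$ are exactly the ``diagonal'' contributions of the expansion. It is convenient to abbreviate $W_{n}=a_{n}\Psi_{n}+\sum_{k=1}^{K}b_{n,k}\Phi_{n,k}$ for $n=1,\dots,N$, so that $R=\sum_{n=1}^{N}Q_{n}$ and $U+V=\sum_{n=1}^{N}W_{n}$, and to write
\begin{equation*}
(R+U+V)^{3}=R^{3}+3R^{2}(U+V)+3R(U+V)^{2}+(U+V)^{3},
\end{equation*}
treating the four terms in turn.

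For the first term, the multinomial expansion of $R^{3}=\bigl(\sum_{n}Q_{n}\bigr)^{3}$, grouped according to whether the three indices are all equal, exactly two equal, or all distinct, gives $R^{3}=\sum_{n}Q_{n}^{3}+3\sum_{n\ne n'}Q_{n}^{2}Q_{n'}+6\sum_{n_{1}<n_{2}<n_{3}}Q_{n_{1}}Q_{n_{2}}Q_{n_{3}}$, hence $R^{3}-\sum_{n}Q_{n}^{3}=G_{1}$. For the second term, I would use $R^{2}=\sum_{n}Q_{n}^{2}+\sum_{n\ne n'}Q_{n}Q_{n'}$ and split $U+V=\sum_{n'}W_{n'}$ into its diagonal part ($n'=n$) and off-diagonal part; the diagonal contribution is $3\sum_{n}Q_{n}^{2}W_{n}=3\sum_{n}a_{n}Q_{n}^{2}\Psi_{n}+3\sum_{(n,k)\in I^{0}}b_{n,k}Q_{n}^{2}\Phi_{n,k}$, which is precisely what is subtracted in~\eqref{def:G}, while the remaining pieces $3\sum_{n\ne n'}Q_{n}^{2}W_{n'}$ and $3\sum_{n\ne n'}Q_{n}Q_{n'}(U+V)$ assemble into $G_{3,3}$. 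For the third term, I would write $(U+V)^{2}=\sum_{n}W_{n}^{2}+\sum_{n\ne n'}W_{n}W_{n'}$, multiply by $R=\sum_{m}Q_{m}$, and again split $3\sum_{m}\sum_{n}Q_{m}W_{n}^{2}$ into its diagonal part $3\sum_{n}Q_{n}W_{n}^{2}=\sum_{n}G_{2,n}=G_{2}$ and its off-diagonal part $3\sum_{n\ne n'}Q_{n}W_{n'}^{2}=G_{3,1}$, the cross term $3R\sum_{n\ne n'}W_{n}W_{n'}$ being $G_{3,4}$. Finally, the fourth term $(U+V)^{3}=U^{3}+V^{3}+3U^{2}V+3UV^{2}$ is $G_{3,2}$. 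Collecting all the leftover monomials yields $G=G_{1}+G_{2}+\sum_{i=1}^{4}G_{3,i}$, which is~\eqref{equ:G}.

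There is no genuine obstacle here: the identity is pure bookkeeping. The only points that require care are to keep the index conventions consistent --- sums of the form $\sum_{n\ne n'}$ range over all \emph{ordered} pairs with $n\ne n'$, so that e.g. $\sum_{n\ne n'}W_{n}W_{n'}=2\sum_{n<n'}W_{n}W_{n'}$ --- and to identify correctly, in each of the four terms above, the diagonal monomials that are cancelled by the three quantities subtracted in~\eqref{def:G}, so that nothing is double-counted or omitted.
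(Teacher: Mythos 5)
Your proof is correct and follows essentially the same route as the paper's: expand $(R+U+V)^{3}$ via the binomial theorem, treat $R^{3}$, $3R^{2}(U+V)$, $3R(U+V)^{2}$, and $(U+V)^{3}$ separately, and in the first, second, and third of these identify the diagonal monomials $\sum_{n}Q_{n}^{3}$, $3\sum_{n}Q_{n}^{2}W_{n}$, and $3\sum_{n}Q_{n}W_{n}^{2}$ that are cancelled or extracted, matching the leftover off-diagonal pieces to $G_{1}$, $G_{3,3}$, $G_{2}$, $G_{3,1}$, $G_{3,4}$, and $G_{3,2}$. The abbreviation $W_{n}$ and the remark on ordered-pair index conventions are harmless expository additions, not a change of method.
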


\begin{remark}\label{re:G1G2G3}
	Note that $G_{1}$ is a pure interaction term and $G_{3}$ is a combination of the interaction terms and cubic terms, while $G_{2}$ is a pure non-interaction quadratic term and each component $G_{2,n}$ satisfies the traveling equation $\partial_{t} G_{2,n}\approx -\ell_{n}\partial_{x_{1}}G_{2,n}$ at the main order.
\end{remark}
\begin{proof}
	First, we decompose
	\begin{equation*}
	\begin{aligned}
	G
	=&(U+V)^{3}+\bigg(R^{3}-\sum_{n=1}^{N}Q_{n}^{3}\bigg)+3R(U+V)^{2}\\
	&+3\bigg(R^{2}(U+V)-\sum_{n=1}^{N}a_{n}Q_{n}^{2}\Psi_{n}
	-\sum_{(n,k)\in I^{0}}b_{n,k}Q_{n}^{2}\Phi_{n,k}\bigg).
	\end{aligned}
	\end{equation*}
	For the first two terms, by an elementary computation, 
	\begin{equation*}
	\begin{aligned}
	(U+V)^{3}&=U^{3}+V^{3}+3U^{2}V+3UV^{2},\\
	R^{3}-\sum_{n=1}^{N}Q_{n}^{3}&=3\sum_{n\ne n'}Q_{n}^{2}Q_{n'}+6\sum_{n_{1}<n_{2}<n_{3}}Q_{n_{1}}Q_{n_{2}}Q_{n_{3}}.
	\end{aligned}
	\end{equation*}
	For the third term, by the definition of $U$ and $V$, we decompose
	\begin{equation*}
	\begin{aligned}
	(U+V)^{2}=&\sum_{n=1}^{N}\bigg(a_{n}\Psi_{n}+\sum_{k=1}^{K}b_{n,k}\Phi_{n,k}\bigg)^{2}\\
	&+\sum_{n\ne n'}\bigg(a_{n}\Psi_{n}+\sum_{k=1}^{K}b_{n,k}\Phi_{n,k}\bigg)\bigg(a_{n'}\Psi_{n'}+\sum_{k'=1}^{K}b_{n',k'}\Phi_{n',k'}\bigg).
	\end{aligned}
	\end{equation*}
	Therefore,
	\begin{equation*}
	\begin{aligned}
	3R(U+V)^{2}
	=&3\sum_{n=1}^{N}Q_{n}\bigg(a_{n}\Psi_{n}+\sum_{k=1}^{K}b_{n,k}\Phi_{n,k}\bigg)^{2}\\
	&+3\sum_{n\ne n'}Q_{n}\bigg(a_{n'}\Psi_{n'}+\sum_{k=1}^{K}b_{n',k}\Phi_{n',k}\bigg)^{2}\\
	&+3R\sum_{n\ne n'}\bigg(a_{n}\Psi_{n}+\sum_{k=1}^{K}b_{n,k}\Phi_{n,k}\bigg)
	\bigg(a_{n'}\Psi_{n'}+\sum_{k'=1}^{K}b_{n',k'}\Phi_{n',k'}\bigg).
	\end{aligned}
	\end{equation*}
	For the last term, we have 
	\begin{equation*}
	\begin{aligned}
	R^{2}(U+V)
	&=\sum_{n=1}^{N}Q_{n}^{2}(U+V)+\sum_{n\ne n'}Q_{n}Q_{n'}(U+V).
	\end{aligned}
	\end{equation*}
	Thus, from the definition of $U$ and $V$, 
	\begin{equation*}
	\begin{aligned}
	&3\bigg(R^{2}(U+V)-\sum_{n=1}^{N}a_{n}Q_{n}^{2}\Psi_{n}
	-\sum_{(n,k)\in I^{0}}b_{n,k}Q_{n}^{2}\Phi_{n,k}\bigg)\\
	&=3\sum_{n\ne n'}Q_{n}^{2}\bigg(a_{n'}\Psi_{n'}+\sum_{k=1}^{K}b_{n',k}\Phi_{n',k}\bigg)+3\sum_{n\ne n'}Q_{n}Q_{n'}(U+V).
	\end{aligned}
	\end{equation*}
	Combining the above identities, we obtain the decomposition~\eqref{equ:G}.
\end{proof}
Second, we introduce a technical lemma for future reference.
\begin{lemma}\label{le:int}
	Let $f_{1}$ and $f_{2}$ be continuous functions such that 
	\begin{equation}\label{est:f1f2}
	|f_{1}(x)|+|f_{2}(x)|\lesssim \langle x \rangle ^{-2}\quad \mbox{on}\ \RR^{4}.
	\end{equation}
	Define
	\begin{equation*}
	f_{n_{1}}(t,x)=f_{1}\left(x-\bell_{n_{1}}t\right),\quad 
	f_{n_{2}}(t,x)=f_{2}\left(x-\bell_{n_{1}}t\right).
	\end{equation*}
	Let $0<\alpha_{1}\le \alpha_{2}$ be such that $\alpha_{1}+\alpha_{2}>2$. There exists $T_{0}\gg 1$ such that, for all $n_{1},n_{2}=1,\dots,N$ with $n_{1}\ne n_{2}$ and $t\ge T_{0}$, the following hold.
	\begin{enumerate}
		\item If $\alpha_{2}>2$,
		\begin{equation}\label{est:tech1}
		\int_{\RR^{4}}\left|f_{n_{1}}\right|^{\alpha_{1}}\left|f_{n_{2}}\right|^{\alpha_{2}}\d x\lesssim t^{-2\alpha_{1}}.
		\end{equation}
		
		\item If $\alpha_{2}<2$,
		\begin{equation}\label{est:tech2}
		\int_{\RR^{4}}\left|f_{n_{1}}\right|^{\alpha_{1}}\left|f_{n_{2}}\right|^{\alpha_{2}}\d x\lesssim t^{4-2\left( \alpha_{1}+\alpha_{2} \right) }.
		\end{equation}
		
			\item If $\alpha_{2}=2$,
		\begin{equation}\label{est:tech3}
		\int_{\RR^{4}}\left|f_{n_{1}}\right|^{\alpha_{1}}\left|f_{n_{2}}\right|^{\alpha_{2}}\d x\lesssim t^{-2\alpha_{1}}\log t.
		\end{equation}
	\end{enumerate}
\end{lemma}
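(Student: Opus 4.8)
The plan is to split the integral over $\RR^4$ according to proximity to the two "centers" $\bell_{n_1}t$ and $\bell_{n_2}t$, exploiting the fact that these centers are separated by a distance comparable to $t$. Write $d = |\bell_{n_1} - \bell_{n_2}|\,t = |\ell_{n_1}-\ell_{n_2}|\,t \gtrsim t$ (this uses $\ell_{n_1}\ne\ell_{n_2}$ and is where the $-1<\ell_n<1$ collinearity along $\boldsymbol{\rm{e}}_1$ enters, though only through the lower bound $d\gtrsim t$). Let $A_1 = \{x : |x-\bell_{n_1}t| \le d/2\}$, $A_2=\{x: |x-\bell_{n_2}t|\le d/2\}$, and $A_3 = \RR^4\setminus(A_1\cup A_2)$; these cover $\RR^4$ for $t\ge T_0$ large. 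On $A_1$ we have $|f_{n_2}(t,x)| = |f_2(x-\bell_{n_2}t)|\lesssim \langle x-\bell_{n_2}t\rangle^{-2}\lesssim d^{-2}\lesssim t^{-2}$, so the contribution is $\lesssim t^{-2\alpha_2}\int_{\RR^4}|f_1(y)|^{\alpha_1}\,dy$; symmetrically on $A_2$ the contribution is $\lesssim t^{-2\alpha_1}\int_{\RR^4}|f_2(y)|^{\alpha_2}\,dy$; on $A_3$ both factors are $\lesssim t^{-2}$, giving something even smaller after one uses that at least one of the $f_i$ is integrable to some power (handled like the $A_1,A_2$ cases). The only subtlety is whether these integrals $\int |f_i|^{\alpha_i}\,dy$ converge: since $|f_i(y)|\lesssim \langle y\rangle^{-2}$, the integrand decays like $\langle y\rangle^{-2\alpha_i}$, which is integrable over $\RR^4$ precisely when $2\alpha_i>4$, i.e. $\alpha_i>2$.

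This immediately handles case (i): when $\alpha_2>2$, also $\alpha_1$ could still be $\le 2$, so one keeps the region $A_2$-estimate $\lesssim t^{-2\alpha_1}\int|f_2|^{\alpha_2}$ (finite since $\alpha_2>2$) as the dominant term, and on $A_1$ one writes the $f_{n_1}$-factor's integral over $|y|\le d/2$: if $\alpha_1>2$ it converges and we get $t^{-2\alpha_2}$ which is $\lesssim t^{-2\alpha_1}$; if $\alpha_1\le 2$ the truncated integral $\int_{|y|\le d/2}\langle y\rangle^{-2\alpha_1}\,dy$ grows like $d^{4-2\alpha_1}$ (or $\log d$ if $\alpha_1=2$), and multiplying by $t^{-2\alpha_2}$ and using $d\lesssim t$, $\alpha_1+\alpha_2>2$ gives a bound still dominated by $t^{-2\alpha_1}$. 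Hence \eqref{est:tech1}. For cases (ii) and (iii), since $\alpha_1\le\alpha_2\le 2$, \emph{neither} $\int|f_i|^{\alpha_i}$ converges a priori, so one must truncate both: on $A_2$, $\int_{|y|\le d/2}|f_1(y)|^{\alpha_1}|f_{n_2}(t,\cdot)$-part — wait, more carefully, on $A_2$ bound $|f_{n_1}|^{\alpha_1}\lesssim t^{-2\alpha_1}$ and integrate $|f_{n_2}|^{\alpha_2}\lesssim \langle x-\bell_{n_2}t\rangle^{-2\alpha_2}$ over $|x-\bell_{n_2}t|\le d/2$, which for $\alpha_2<2$ is $\lesssim d^{4-2\alpha_2}\lesssim t^{4-2\alpha_2}$, giving $t^{-2\alpha_1}\cdot t^{4-2\alpha_2}$; that is \emph{not} quite the claimed $t^{4-2(\alpha_1+\alpha_2)}$, so the correct decomposition must instead bound each factor by its local decay \emph{everywhere} and integrate the product $\langle x-\bell_{n_1}t\rangle^{-2\alpha_1}\langle x-\bell_{n_2}t\rangle^{-2\alpha_2}$ over all of $\RR^4$ directly.

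So the cleaner route, which I would actually adopt, is: reduce to estimating $I(t):=\int_{\RR^4}\langle x\rangle^{-2\alpha_1}\langle x - \bell t\rangle^{-2\alpha_2}\,dx$ with $|\bell t|\gtrsim t$, $0<\alpha_1\le\alpha_2$, $\alpha_1+\alpha_2>2$. Split $\RR^4 = \{|x|\le |\bell t|/2\}\cup\{|x-\bell t|\le|\bell t|/2\}\cup(\text{rest})$. On the first region $\langle x-\bell t\rangle\gtrsim |\bell t|\gtrsim t$, so the contribution is $\lesssim t^{-2\alpha_2}\int_{|x|\le |\bell t|/2}\langle x\rangle^{-2\alpha_1}\,dx$, and $\int_{|x|\le\rho}\langle x\rangle^{-2\alpha_1}\,dx \asymp 1$ if $\alpha_1>2$, $\asymp\log\rho$ if $\alpha_1=2$, $\asymp\rho^{4-2\alpha_1}$ if $\alpha_1<2$; with $\rho\asymp t$ this yields respectively $t^{-2\alpha_2}$, $t^{-2\alpha_2}\log t$, $t^{4-2\alpha_1-2\alpha_2}$. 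The second region is symmetric with $\alpha_1,\alpha_2$ swapped, giving $t^{-2\alpha_1}$, $t^{-2\alpha_1}\log t$, or $t^{4-2\alpha_1-2\alpha_2}$ according to whether $\alpha_2>2$, $=2$, $<2$. On the third region both factors are $\lesssim t^{-2\alpha_1}$ resp. $\lesssim t^{-2\alpha_2}$ away from both centers, and a dyadic decomposition of the annuli shows its contribution is bounded by the other two. Now assemble: if $\alpha_2>2$ the binding term is $t^{-2\alpha_1}$ (and $t^{4-2\alpha_1-2\alpha_2}<t^{-2\alpha_1}$ since $\alpha_2>2$), giving (i); if $\alpha_2<2$ then $\alpha_1<2$ too, both symmetric contributions are $t^{4-2\alpha_1-2\alpha_2}$ which also dominates $t^{-2\alpha_1}$ and $t^{-2\alpha_2}$ because $\alpha_1,\alpha_2<2$, giving (ii); if $\alpha_2=2$ then $\alpha_1\le 2$; when $\alpha_1<2$ the first region gives $t^{4-2\alpha_1-4}=t^{-2\alpha_1}$ and the second gives $t^{-2\alpha_1}\log t$, which is the binding term, while $\alpha_1=2$ forces $\alpha_1+\alpha_2=4>2$ and both regions give $t^{-4}\log t = t^{-2\alpha_1}\log t$; either way (iii) holds. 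Finally, to go from $I(t)$ back to the stated integral, note $|f_{n_1}|^{\alpha_1}|f_{n_2}|^{\alpha_2}\lesssim \langle x-\bell_{n_1}t\rangle^{-2\alpha_1}\langle x-\bell_{n_2}t\rangle^{-2\alpha_2}$ and change variables $x\mapsto x+\bell_{n_1}t$ so that $\bell = \bell_{n_2}-\bell_{n_1}$, $|\bell t| = |\ell_{n_2}-\ell_{n_1}|\,t\gtrsim t$.

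The main obstacle is purely bookkeeping: keeping track of which of the three regimes ($\alpha_i>2$, $=2$, $<2$) each of the two centers falls into, and checking in every combination consistent with $\alpha_1\le\alpha_2$ and $\alpha_1+\alpha_2>2$ that the claimed exponent is indeed the maximum of the region-by-region exponents. There is no analytic difficulty — it is elementary calculus with $\langle x\rangle^{-p}$ weights — but one must be careful that the ``rest'' region (far from both centers) never dominates, which follows from a routine dyadic-shell estimate using $\alpha_1+\alpha_2>2$.
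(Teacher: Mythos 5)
Your proposal is correct, and its core decomposition --- split $\RR^4$ into two balls of radius $\sim t$ about the centers $\bell_{n_1}t$ and $\bell_{n_2}t$ plus the complementary region, then use that one factor is $\lesssim t^{-2}$ on each ball --- is the same as the paper's, which works with the regions $\Omega_{n_1}$, $\Omega_{n_2}$ and $(\Omega_{n_1}\cup\Omega_{n_2})^{C}$ (defined with radius $10^{-1}|\bell_{n_1}-\bell_{n_2}|t$). The one place the two proofs genuinely diverge is the treatment of the ``far from both centers'' piece: you propose a dyadic-shell estimate there, whereas the paper closes it in a single step with H\"older,
\[
\int_{(\Omega_{n_1}\cup\Omega_{n_2})^{C}}\!|f_{n_1}|^{\alpha_1}|f_{n_2}|^{\alpha_2}\,\d x
\le\Big(\int_{\Omega_{n_1}^{C}}|f_{n_1}|^{\alpha_1+\alpha_2}\,\d x\Big)^{\frac{\alpha_1}{\alpha_1+\alpha_2}}\Big(\int_{\Omega_{n_2}^{C}}|f_{n_2}|^{\alpha_1+\alpha_2}\,\d x\Big)^{\frac{\alpha_2}{\alpha_1+\alpha_2}}\lesssim t^{4-2(\alpha_1+\alpha_2)},
\]
using $\alpha_1+\alpha_2>2$ so that each factor is $\lesssim t^{4-2(\alpha_1+\alpha_2)}$ to the appropriate power; this H\"older trick is a bit slicker, but your dyadic argument is equally valid and gives the same bound. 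One incidental remark: the ``not quite'' that made you abandon your first decomposition was a false alarm, since $t^{-2\alpha_1}\cdot t^{4-2\alpha_2}$ \emph{is} exactly $t^{4-2(\alpha_1+\alpha_2)}$; the genuine gap in that first route was only the loose claim that on the far region one can finish by ``using that at least one of the $f_i$ is integrable to some power,'' which fails when both $\alpha_i\le 2$ (cases (ii) and (iii)), and that is precisely what the dyadic or H\"older step must supply. Your final write-up handles this correctly.
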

\begin{proof} For $k=1,2$, set 
	\begin{equation*}
	r_{k}=x-\bell_{n_{k}}t \quad \mbox{and}\quad 
	\Omega_{n_{k}}(t,x)=\left\{x\in \RR^{4}:|r_{k}|\le 10^{-1}\left|\bell_{n_{1}}-\bell_{n_{2}}\right|t\right\}.
	\end{equation*}
	Let $T_{0}\gg 1$ large enough. For $t\ge T_{0}$, from the decay property of $f_{k}$ in~\eqref{est:f1f2},
		\begin{equation}\label{est:pointf11}
	\left|f_{n_{1}}(t,x)\right|\lesssim \langle  r_{1}\rangle^{-2}\lesssim t^{-2},\quad \quad \quad \ \ \quad \mbox{for}\ x\in \Omega_{n_{1}}^{C}(t,x),
	\end{equation}
		\begin{equation}\label{est:pointf21}
	\left|f_{n_{2}}(t,x)\right|\lesssim \langle  r_{2}\rangle^{-2}\lesssim t^{-2},\quad \quad \quad  \ \ \quad \mbox{for}\ x\in \Omega^{C}_{n_{2}}(t,x),
	\end{equation}
		\begin{equation}\label{est:pointf12}
	\left|f_{n_{1}}(t,x)\right|\lesssim \langle  r_{1}\rangle^{-2}\lesssim \left(\langle r_{2} \rangle +t\right)^{-2},\quad \mbox{for}\ x\in \Omega_{n_{2}}(t,x),
	\end{equation} 
	\begin{equation}\label{est:pointf22}
	\left|f_{n_{2}}(t,x)\right|\lesssim \langle  r_{2}\rangle^{-2}\lesssim \left(\langle r_{1} \rangle +t\right)^{-2},\quad \mbox{for}\ x\in \Omega_{n_{1}}(t,x).
	\end{equation}
	Proof of (i). 
	\emph{Case $0<\alpha_{1}\le 2<\alpha_{2}$.} From~\eqref{est:pointf11},
	\begin{equation*}
	\int_{\Omega_{n_{1}}^{C}}|f_{n_{1}}|^{\alpha_{1}}|f_{n_{2}}|^{\alpha_{2}}\d x\lesssim t^{-2\alpha_{1}}\int_{\Omega_{n_{1}}^{C}}|f_{n_{2}}|^{\alpha_{2}}\d x\lesssim t^{-2\alpha_{1}}.
	\end{equation*}
	Next, by~\eqref{est:f1f2},~\eqref{est:pointf22} and change of variable,
	\begin{equation*}
	\begin{aligned}
	\int_{\Omega_{n_1}}|f_{n_{1}}|^{\alpha_{1}}|f_{n_{2}}|^{\alpha_{2}}\d x
	&\lesssim \int_{\Omega_{n_1}}\langle r_{1}\rangle^{-2\alpha_{1}}\left(\langle r_{1}\rangle +t\right)^{-2\alpha_{2}}\d x\\
	&\lesssim \int_{\RR^{4}}\langle x\rangle^{-2\alpha_{1}}\left(\langle x\rangle +t\right)^{-2\alpha_{2}}\d x\\
	&\lesssim t^{4-2(\alpha_{1}+\alpha_{2})}\log t\lesssim t^{-2\alpha_{1}}.
	\end{aligned}
	\end{equation*}
	Combining the above estimates, we obtain~\eqref{est:tech1}.
	
	\emph{Case $2<\alpha_{1}\le\alpha_{2}$.} 
	By~\eqref{est:pointf11} and~\eqref{est:pointf22}, we have  
	\begin{equation*}
	\begin{aligned}
	\int_{\Omega^{C}_{n_1}}|f_{n_{1}}|^{\alpha_{1}}|f_{n_{2}}|^{\alpha_{2}}\d x
	&\lesssim t^{-2\alpha_{1}}\int_{\Omega^{C}_{n_1}}|f_{n_{2}}|^{\alpha_{2}}\d x\lesssim t^{-2\alpha_{1}},\\
	\int_{\Omega_{n_1}}|f_{n_{1}}|^{\alpha_{1}}|f_{n_{2}}|^{\alpha_{2}}\d x
	&\lesssim t^{-2\alpha_{2}}\int_{\Omega_{n_1}}|f_{n_{1}}|^{\alpha_{1}}\d x\lesssim t^{-2\alpha_{2}}\lesssim t^{-2\alpha_{1}},
	\end{aligned}
	\end{equation*}
	which implies~\eqref{est:tech1}.
	
	Proof of (ii). First, by~\eqref{est:f1f2},~\eqref{est:pointf12} and~\eqref{est:pointf22}, as before,
	\begin{equation*}
	\begin{aligned}
	&\int_{\Omega_{n_1}}|f_{n_{1}}|^{\alpha_{1}}|f_{n_{2}}|^{\alpha_{2}}\d x+\int_{\Omega_{n_2}}|f_{n_{1}}|^{\alpha_{1}}|f_{n_{2}}|^{\alpha_{2}}\d x\\
	&\lesssim \int_{\RR^{4}}\left(\langle x\rangle^{-2\alpha_{1}} (\langle x\rangle+t)^{-2\alpha_{2}}+\langle x\rangle^{-2\alpha_{2}} (\langle x\rangle+t)^{-2\alpha_{1}}\right)\d x\lesssim t^{4-2(\alpha_{1}+\alpha_{2})}.
	\end{aligned}
	\end{equation*}
	Next, by the H\"older inequality, we have 
	\begin{equation*}
	\begin{aligned}
	\int_{\left(\Omega_{n_1}\cup \Omega_{n_2}\right)^{C}}|f_{n_{1}}|^{\alpha_{1}}|f_{n_{2}}|^{\alpha_{2}}\d x
	&\lesssim \left(\int_{\Omega_{n_1}^{C}}|f_{n_{1}}|^{\alpha_{1}+\alpha_{2}}\d x\right)^{\frac{\alpha_{1}}{\alpha_{1}+\alpha_{2}}}\left(\int_{\Omega_{n_2}^{C}}|f_{n_{2}}|^{\alpha_{1}+\alpha_{2}}\d x\right)^{\frac{\alpha_{2}}{\alpha_{1}+\alpha_{2}}}\\
	&\lesssim \int_{|x|>\frac{1}{10}|\bell_{n_{1}}-\bell_{n_{2}}|t}\langle x\rangle^{-2(\alpha_{1}+\alpha_{2})}\d x\lesssim t^{4-2(\alpha_{1}+\alpha_{2})}.
	\end{aligned}
	\end{equation*}
	Combining the above estimates, we obtain~\eqref{est:tech2}.
	
	Proof of (iii). Using~\eqref{est:f1f2},~\eqref{est:pointf12} and~\eqref{est:pointf22} again,
	\begin{equation*}
	\begin{aligned}
	&\int_{\Omega_{n_1}}|f_{n_{1}}|^{\alpha_{1}}|f_{n_{2}}|^{2}\d x+\int_{\Omega_{n_2}}|f_{n_{1}}|^{\alpha_{1}}|f_{n_{2}}|^{2}\d x\\
	&\lesssim \int_{\RR^{4}}\left(\langle x\rangle^{-2\alpha_{1}} (\langle x\rangle+t)^{-4}+\langle x\rangle^{-4} (\langle x\rangle+t)^{-2\alpha_{1}}\right)\d x\lesssim t^{-2\alpha_{1}}\log t.
	\end{aligned}
	\end{equation*}
	Using a similar argument as in the proof of (ii), we have 
	\begin{equation*}
	\int_{\left(\Omega_{n_1}\cup \Omega_{n_2}\right)^{C}}|f_{n_{1}}|^{\alpha_{1}}|f_{n_{2}}|^{2}\d x
	\lesssim \int_{|x|>\frac{1}{10}|\bell_{n_{1}}-\bell_{n_{2}}|t}\langle x\rangle^{-2(\alpha_{1}+2)}\d x
	\lesssim t^{-2\alpha_{1}}.
	\end{equation*}
	Combining the above estimates, we obtain~\eqref{est:tech3}.
	\end{proof}
Now, we prove the following estimates on the nonlinear interaction terms.
\begin{lemma}\label{le:Q1Q2}
There exists $T_{0}\gg1 $, such that the following estimates hold for $t\ge T_{0}$.
\begin{enumerate}
	\item \emph{Estimate on $G_{1}$}. We have
	\begin{equation}\label{est:G1}
	\|G_{1}\|_{L^{2}}\lesssim  t^{-4}.
	\end{equation}
	\item \emph{Estimate on $G_{2}$}. We have
	\begin{equation}\label{est:G2}
	\|G_{2}\|_{L^{2}}\lesssim |\boldsymbol{a}|^{2}+|\boldsymbol{b}|^{2}.
	\end{equation}
	\item \emph{Estimate on $G_{3}$}. We have
	\begin{equation}\label{est:G3}
\|G_{3}\|_{L^{2}}\lesssim |\boldsymbol{a}|^{2}+|\boldsymbol{b}|^{3}+t^{-4}.
	\end{equation}
	\item \emph{Estimate on $G$.} We have 
	\begin{equation}\label{est:G}
	\|G\|_{L^{2}}\lesssim |\ba|^{2}+|\bb|^{2}+t^{-4}.
	\end{equation}
\end{enumerate}
\end{lemma}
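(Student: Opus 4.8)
The plan is to treat the four estimates in turn, using the decomposition $G=G_1+G_2+G_3$ from Lemma~\ref{le:G}, the pointwise bounds on $Q$, $\psi$, $\phi_k$ from Lemmas~\ref{le:asyQ} and~\ref{le:asyker}, and the integral estimates of Lemma~\ref{le:int}. Since $|\ba|+|\bb|\ll1$, we may freely absorb higher powers of $|\ba|,|\bb|$ into lower ones; and since~\eqref{est:G} for $G$ follows from~\eqref{est:G1},~\eqref{est:G2},~\eqref{est:G3} (using $|\bb|^3\lesssim|\bb|^2$), the real work is (i)--(iii).

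For (i), $G_1=3\sum_{n\ne n'}Q_n^2 Q_{n'}+6\sum_{n_1<n_2<n_3}Q_{n_1}Q_{n_2}Q_{n_3}$ is a sum of pure interaction terms between solitons traveling at distinct collinear speeds. For the quadratic-type term $Q_n^2 Q_{n'}$, I bound $\|Q_n^2 Q_{n'}\|_{L^2}^2=\int |Q_n|^4|Q_{n'}|^2\,\d x$ and note that $|Q(x)|\lesssim\langle x\rangle^{-3}\lesssim\langle x\rangle^{-2}$ by~\eqref{est:aspQ}, so the integrand is of the form $|f_{n}|^{\alpha_1}|f_{n'}|^{\alpha_2}$ with $f_1=f_2$ essentially $\langle x\rangle^{-3}$ (so effectively decay exponent $3$ rather than the borderline $2$); with $\alpha_1=4$, $\alpha_2=2$ and the genuine decay $\langle x\rangle^{-3}$ in each factor one gets a clean bound without logarithms, namely $\lesssim t^{-8}$ after rescaling, i.e. $\|Q_n^2Q_{n'}\|_{L^2}\lesssim t^{-4}$. (Concretely: split $\RR^4$ into the region near the $n$-soliton where $|Q_{n'}|\lesssim t^{-3}$ and its complement where $|Q_n|\lesssim t^{-3}$, and use $Q\in L^\infty\cap L^p$ for the remaining factor.) The triple product $Q_{n_1}Q_{n_2}Q_{n_3}$ is even smaller: at every point at least two of the three centers are at mutual distance $\gtrsim t$, so at least two factors are $\lesssim t^{-3}$, giving $\lesssim t^{-6}$ pointwise times an $L^2$-bounded factor, hence $\lesssim t^{-4}$ comfortably. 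Summing over the finitely many index tuples yields~\eqref{est:G1}.

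For (ii), $G_2=\sum_n 3Q_n(a_n\Psi_n+\sum_k b_{n,k}\Phi_{n,k})^2$ involves no interaction between distinct solitons. Expanding the square, each term is $Q_n$ times a product of two functions among $\{\Psi_n,\Phi_{n,k}\}$, all centered at the same point $\bell_n t$. Using $|Q(x)|\lesssim\langle x\rangle^{-3}$, $|\psi(x)|\lesssim\langle x\rangle^{-2}$ and $|\phi_k(x)|\lesssim\langle x\rangle^{-3}$ from~\eqref{est:aspQ},~\eqref{est:phi},~\eqref{est:phipsi}, the worst term is $Q_n\Psi_n^2$, bounded pointwise by $\langle x-\bell_n t\rangle^{-7}\in L^2(\RR^4)$ uniformly in $t$ (translation-invariance of the $L^2$ norm); all other terms decay at least as fast. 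Hence $\|G_2\|_{L^2}\lesssim\sum_n(|a_n|+\sum_k|b_{n,k}|)^2\lesssim|\ba|^2+|\bb|^2$, which is~\eqref{est:G2}.

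For (iii), I go through the four pieces $G_{3,1},\dots,G_{3,4}$ of Lemma~\ref{le:G}. The term $G_{3,2}=(U+V)^3$ is purely cubic in $\ba,\bb$ (with coefficients that are $L^2$-functions, by the decay of $\psi,\phi_k$ as in (ii)), so $\|G_{3,2}\|_{L^2}\lesssim(|\ba|+|\bb|)^3\lesssim|\ba|^3+|\bb|^3\lesssim|\ba|^2+|\bb|^3$. The terms $G_{3,1}$ and $G_{3,3}$ each contain a factor $Q_n$ multiplied by quantities centered at a \emph{different} point $\bell_{n'}t$ (either $(a_{n'}\Psi_{n'}+\cdots)^2$, $Q_n Q_{n'}(U+V)$, or $Q_n^2(a_{n'}\Psi_{n'}+\cdots)$): these are interaction terms, so by Lemma~\ref{le:int}, applied with the decay exponent $\ge2$ available for each factor, they are bounded by $(|\ba|+|\bb|)\cdot t^{-\kappa}$ for some $\kappa>2$ plus (in the $Q_nQ_{n'}(U+V)$ case) by $t^{-4}$; since $|\ba|+|\bb|\ll1$, this is absorbed into $|\ba|^2+|\bb|^2+t^{-4}$ — or more crudely, using $t\ge T_0$ one can trade a power of $t^{-1}$ for a constant and bound these by $C(|\ba|+|\bb|)t^{-2}\lesssim|\ba|^2+|\bb|^2+t^{-4}$. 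Finally $G_{3,4}=3R\sum_{n\ne n'}(\cdots)(\cdots)$ is at least quadratic in $(\ba,\bb)$ and contains the bounded factor $R$ (an $L^\infty$ bound suffices after pairing with an $L^2$ Gaussian-type factor), giving $\lesssim|\ba|^2+|\bb|^2$. Collecting, $\|G_3\|_{L^2}\lesssim|\ba|^2+|\bb|^3+t^{-4}$, which is~\eqref{est:G3}. Then~\eqref{est:G} follows by adding~\eqref{est:G1},~\eqref{est:G2},~\eqref{est:G3} and using $|\bb|^3\lesssim|\bb|^2$.

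The main obstacle is the bookkeeping in part (iii): one must carefully check, term by term in the long expansion of $G_{3,1},G_{3,3},G_{3,4}$, which factors are centered at coinciding versus distinct soliton positions, and then invoke the correct case of Lemma~\ref{le:int} (or a trivial $L^\infty\times L^2$ pairing) so that every contribution lands inside $|\ba|^2+|\bb|^3+t^{-4}$; the only subtle point is ensuring the cross terms involving one power of $(\ba,\bb)$ and one interaction factor are genuinely $o(1)\cdot(|\ba|+|\bb|)$ or $O(t^{-4})$ rather than something larger, which is exactly where the decay rate $\langle x\rangle^{-3}$ of $Q$ — better than the borderline $\langle x\rangle^{-2}$ — is used, consistent with the $t^{-4}$ heuristic highlighted in the introduction.
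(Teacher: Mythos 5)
Your treatment of parts (ii), (iii) and (iv) follows the paper's route (pointwise decay of $\psi,\phi_k$, Lemma~\ref{le:int} for cross-soliton products, Young's inequality to absorb into $|\ba|^2+|\bb|^2+t^{-4}$), and although some intermediate statements are imprecise (e.g.\ $G_{3,1}$ is quadratic, not linear, in $(\ba,\bb)$), the conclusions are attainable that way.

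Part (i), however, has a genuine gap, and it sits precisely at the step where the main structural idea of the paper enters. You claim that the isotropic bound $|Q(x)|\lesssim\langle x\rangle^{-3}$ alone, combined with a Lemma~\ref{le:int}-type estimate, yields
\begin{equation*}
\int_{\RR^4}|Q_n|^4|Q_{n'}|^2\,\d x\lesssim t^{-8},\qquad\text{hence}\qquad\|Q_n^2Q_{n'}\|_{L^2}\lesssim t^{-4}.
\end{equation*}
This is not correct. With an isotropic decay $\langle x\rangle^{-3}$, the dominant contribution is the region $|x-\bell_n t|\lesssim 1$, where $Q_n^4$ carries $O(1)$ mass in $L^1$ while $|Q_{n'}|\lesssim t^{-3}$; the integral there is already of size $t^{-6}$, so one only obtains $\|Q_n^2Q_{n'}\|_{L^2}\lesssim t^{-3}$. (Equivalently, write $Q=g^{3/2}$ with $|g|\lesssim\langle x\rangle^{-2}$ and apply~\eqref{est:tech1} with $\alpha_1=3,\ \alpha_2=6$: the result is $t^{-6}$, not $t^{-8}$.) A rate $t^{-3}$ is fatal: it is exactly at the level of the bootstrap bound $\|\vec{\varphi}\|_{\E}\le C_0 t^{-3}$ in~\eqref{est:boot}, and inserting $\|G_1\|_{L^2}\lesssim t^{-3}$ into the energy computation of \S\ref{SS:ener} degrades $\frac{\d}{\d t}\big(t^2\mathcal{K}\big)$ to order $C_0 t^{-4}$, giving only $\|\vec{\varphi}\|_{\E}\lesssim \sqrt{C_0}\,t^{-5/2}$, which does not close.

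The missing ingredient is the \emph{anisotropic} asymptotic~\eqref{est:asQ}, $|Q(x)-x_4/|x|^4|\lesssim|x|^{-4}$. The solitons are translated along $\boldsymbol{\mathrm{e}}_1$, while the slowest-decaying $|x|^{-3}$ tail of $Q$ is concentrated in the $x_4$-direction. Thus near the $n$-soliton the function $Q_{n'}$ is evaluated at points with first coordinate of size $t$ and with $\bar{x}$ comparatively small, where~\eqref{est:asQ} gives $|Q_{n'}|\lesssim |\bar{x}|/(|\bar{x}|^4+t^4)+t^{-4}\lesssim t^{-4}(|x|+1)$. This upgrades the near-$n$ contribution to $t^{-8}$ (the paper's $H_2$ bound) and is what makes~\eqref{est:G1} true at all; it cannot be recovered from $|Q|\lesssim\langle x\rangle^{-3}$. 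Your separate pointwise argument for the triple products $Q_{n_1}Q_{n_2}Q_{n_3}$ is fine, but the paper simply reduces them to the pair interactions by AM--GM, so no separate argument is needed once~\eqref{est:nnen} is established.
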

\begin{remark}
	As mentioned in Remark~\ref{re:G1G2G3}, $G_{2}$ has a slow decay different from $G_{1}$ and $G_{3}$. This leads us to introduce an additional technical quantity $\mathcal{G}$ in the refined energy functional later (see also Remark~\ref{re:EPGJ}). Since the algebraic decay of $\psi$ and $\phi_{k}$ are different in Lemma~\ref{le:asyker}, the decay of interaction terms related to $\ba$ and $\bb$ are different, which implies the power of $\ba$ and $\bb$ are different in~\eqref{est:G3}.
\end{remark}
\begin{proof}
	Proof of (i). Let $t\ge T_{0}\gg 1$. We prove the following estimate,
	\begin{equation}\label{est:nnen}
	\sum_{n\ne n'}\|Q_{n}^{2}Q_{n'}\|_{L^{2}}\lesssim t^{-4}.
	\end{equation}
	Note that~\eqref{est:G1} is a consequence of~\eqref{est:nnen} and the AM-GM inequality.
	
	 First, for any $n\ne n'$, we consider change of variable
	\begin{equation*}
	\left(\frac{x_{1}-\ell_{n}t}{\sqrt{1-\ell^{2}_{n}}},\bar{x}\right)\mapsto 
	(y_{1},\bar{y})\in \RR^{4}.
	\end{equation*}
	It follows that
	\begin{equation*}
	\int_{\RR^{4}}Q_{n}^{4}Q_{n'}^{2}\d x=H_{1}+H_{2},
	\end{equation*}
	where
	\begin{equation*}
	\begin{aligned}
	H_{1}&=(1-\ell^{2}_{n})^{\frac{1}{2}}\int_{\RR^{3}}\int_{I_{1}}Q^{4}(x_{1},\bar{x})
	Q^{2}\left(\frac{(1-\ell^{2}_{n})^{\frac{1}{2}}x_{1}-(\ell_{n'}-\ell_{n})t}{\sqrt{1-\ell_{n'}^{2}}},\bar{x}\right)\d x_{1}\d \bar{x},\\
	H_{2}&=(1-\ell^{2}_{n})^{\frac{1}{2}}\int_{\RR^{3}}\int_{I_{2}}Q^{4}(x_{1},\bar{x})
	Q^{2}\left(\frac{(1-\ell^{2}_{n})^{\frac{1}{2}}x_{1}-(\ell_{n'}-\ell_{n})t}{\sqrt{1-\ell_{n'}^{2}}},\bar{x}\right)\d x_{1}\d \bar{x},
	\end{aligned}
	\end{equation*}
	and
	\begin{equation*}
	\begin{aligned}
	I_{1}&=\left\{x_{1}\in \RR:(1-\ell_{n}^{2})^{\frac{1}{2}}|x_{1}|\ge\frac{1}{2}|\ell_{n}-\ell_{n'}|t\right\},\\
	I_{2}&=\left\{x_{1}\in \RR:(1-\ell^{2}_{n})^{\frac{1}{2}}|x_{1}|< \frac{1}{2}|\ell_{n}-\ell_{n'}|t\right\}.
	\end{aligned}
	\end{equation*}
	\emph{Estimate on $H_{1}$.} First, from~\eqref{est:aspQ}, for any $x=(x_{1},\bar{x})\in I_{1}\times \RR^{3}$,
	\begin{equation*}
	|Q(x_{1},\bar{x})|\lesssim (t+|\bar{x}|)^{-3}\lesssim t^{-3}.
	\end{equation*}
	Based on above estimate and $Q\in L^{2}$, we have 
	\begin{equation*}
     H_{1}\lesssim \|Q\|^{4}_{L^{\infty}(I_{1}\times \RR^{3})}\int_{\RR^{3}}\int_{I_{1}}Q^{2}\left(\frac{(1-\ell^{2}_{n})^{\frac{1}{2}}x_{1}-(\ell_{n'}-\ell_{n})t}{\sqrt{1-\ell_{n'}^{2}}},\bar{x}\right)\d x_{1}\d \bar{x}\lesssim  t^{-12}.
	\end{equation*}
	\emph{Estimate on $H_{2}$.} Note that, for any $x_{1}\in I_{2}$,
	\begin{equation*}
	\begin{aligned}
	\left|\frac{(1-\ell^{2}_{n})^{\frac{1}{2}}x_{1}
		-(\ell_{n'}-\ell_{n})t}{(1-\ell_{n'}^{2})^{\frac{1}{2}}}\right|
	&\ge\left|\frac{(\ell_{n'}-\ell_{n})t}{(1-\ell_{n'}^{2})^{\frac{1}{2}}}\right|-\left|\frac{(1-\ell^{2}_{n})^{\frac{1}{2}}x_{1}}{(1-\ell_{n'}^{2})^{\frac{1}{2}}}\right|\ge \frac{|\ell_{n'}-\ell_{n}|t}{2(1-\ell_{n'}^{2})^{\frac{1}{2}}}.
	\end{aligned}
	\end{equation*} By~\eqref{est:asQ}, for any $x=(x_{1},\bar{x})\in I_{2}\times \RR^{3}$, we have 
	\begin{equation*}
	\left|Q\left(\frac{(1-\ell^{2}_{n})^{\frac{1}{2}}x_{1}-(\ell_{n'}-\ell_{n})t}{\sqrt{1-\ell_{n'}^{2}}},\bar{x}\right)\right|\lesssim \frac{|\bar{x}|}{|\bar{x}|^{4}+t^{4}}+t^{-4}\lesssim t^{-4}(|x|+1).
	\end{equation*}
	Based on above estimate and~\eqref{est:aspQ}, 
	\begin{equation*}
	H_{2}\lesssim t^{-8}\int_{\RR^{3}}\int_{\RR}\langle x\rangle^{-12}(|x|^{2}+1)\d x_{1}\d \bar{x}\lesssim t^{-8}.
	\end{equation*}
	Combining the estimates on $H_{1}$ and $H_{2}$, we obtain~\eqref{est:nnen}.
	
		Proof of (ii). First, from~\eqref{est:aspQ} and~\eqref{est:phipsi}, we know that 
	\begin{equation*}
	|Q|\psi^{2}+\sum_{k=1}^{K}|Q|\phi_{k}^{2}\in L^{2}.
	\end{equation*}
	Next, from the AM-GM inequality,
	\begin{equation*}
	|G_{2}|\lesssim  \sum_{n=1}^{N}|a_{n}|^{2}|Q_{n}||\Psi_{n}|^{2}+\sum_{(n,k)\in I}|Q_{n}||b_{n,k}|^{2}|\Phi_{n,k}|^{2}.
	\end{equation*}
	It follows that,
	\begin{equation*}
	\|G_{2}\|_{L^{2}}\lesssim \bigg(\sum_{n=1}^{N}a_{n}^{2}+\sum_{(n,k)\in I^{0}}b_{n,k}^{2}\bigg)\left(\|Q\psi^{2}\|_{L^{2}}+\sum_{k=1}^{K}\|Q\phi_{k}^{2}\|_{L^{2}}\right)\lesssim |\boldsymbol{a}|^{2}+|\boldsymbol{b}|^{2}.
	\end{equation*}
	
	Proof of (iii). \textbf{Step 1.} Estimate on $G_{3,1}$.  We claim
	\begin{equation}\label{est:G31}
	\|G_{3,1}\|_{L^{2}}\lesssim |\ba|^{8}+|\bb|^{8}+t^{-4}.
	\end{equation}
	From the AM-GM inequality, 
	\begin{equation*}
	|G_{3,1}|\lesssim \sum_{n\ne n'}|Q_{n}|\bigg(a_{n'}^{2}\Psi^{2}_{n'}+\sum_{k=1}^{K}b_{n',k}^{2}\Phi^{2}_{n',k}\bigg).
	\end{equation*}
	Note that, by~\eqref{est:aspQ},~\eqref{est:phipsi},~\eqref{est:tech1} and the Young's inequality,
	\begin{equation*}
	\begin{aligned}
	\sum_{n\ne n'}\|a_{n'}^{2}\Psi^{2}_{n'}Q_{n}\|_{L^{2}}
	&\lesssim |\ba|^{2}t^{-3}\lesssim |\ba|^{8}+t^{-4},\\
	\sum_{n\ne n'}\sum_{k=1}^{K}\|b_{n',k}^{2}\Phi^{2}_{n',k}Q_{n}\|_{L^{2}}
	&\lesssim |\bb|^{2}t^{-3}\lesssim |\bb|^{8}+t^{-4}.
	\end{aligned}
	\end{equation*}
	Combining above estimates, we find~\eqref{est:G31}.
	
	\textbf{Step 2.} Estimate on $G_{3,2}$. We claim
	\begin{equation}\label{est:G32}
	\|G_{3,2}\|_{L^{2}}\lesssim |\boldsymbol{a}|^{3}+|\boldsymbol{b}|^{3}.
	\end{equation}
	First, from~\eqref{est:ker1}, we know that 
	\begin{equation*}
	|\psi|+\sum_{k=1}^{K}|\phi_{k}|\in L^{6}.
	\end{equation*}
	Note that, from the AM-GM inequality, 
	\begin{equation*}
	|G_{3,2}|\lesssim |U|^{3}+|V|^{3}
	\lesssim \sum_{n=1}^{N}|a_{n}|^{3}|\Psi_{n}|^{3}+\sum_{(n,k)\in I^{0}}|b_{n,k}|^{3}|\Phi_{n,k}|^{3}.
	\end{equation*}
	It follows that,
	\begin{equation*}
	\|G_{3,2}\|_{L^{2}}\lesssim (|\ba|^{3}+|\bb|^{3})\left(\|\psi\|_{L^{6}}^{3}+\sum_{k=1}^{K}\|\phi_{k}\|_{L^{6}}^{3}\right)
	\lesssim |\ba|^{3}+|\bb|^{3},
	\end{equation*}
	which means~\eqref{est:G32}.

	\textbf{Step 3.} Estimate on $G_{3,3}$. We claim
	\begin{equation}\label{est:G33}
	\|G_{3,3}\|_{L^{2}}\lesssim |\ba|^{2}+|\bb|^{4}+t^{-4}.
	\end{equation}
	First, using the AM-GM inequality again,
	\begin{equation*}
	\begin{aligned}
	|G_{3,3}|\lesssim &\sum_{n\ne n'}|a_{n'}Q_{n}^{2}\Psi_{n'}|+\sum_{n\ne n'}|a_{n}Q_{n}Q_{n'}\Psi_{n}|\\
	&+\sum_{k=1}^{K}\sum_{n\ne n'}|b_{n',k}Q_{n}^{2}\Phi_{n',k}|
	+\sum_{k=1}^{K}\sum_{n\ne n'}|b_{n,k}Q_{n}Q_{n'}\Phi_{n,k}|.
	\end{aligned}
	\end{equation*}
	Second, using~\eqref{est:aspQ},~\eqref{est:phipsi},~\eqref{est:tech1} and the Young's inequality again,
	\begin{equation*}
	\sum_{n\ne n'}\|a_{n'}Q_{n}^{2}\Psi_{n'}\|_{L^{2}}+\sum_{n\ne n'}\|a_{n}Q_{n}Q_{n'}\Psi_{n}\|_{L^{2}}
	\lesssim |\ba|(t^{-2}+t^{-3})\lesssim |\ba|^{2}+t^{-4},
	\end{equation*}
	\begin{equation*}
	\sum_{k=1}^{K}\sum_{n\ne n'}\|b_{n',k}Q_{n}^{2}\Phi_{n',k}\|_{L^{2}}
	+\sum_{k=1}^{K}\sum_{n\ne n'}\|b_{n,k}Q_{n}Q_{n'}\Phi_{n,k}\|_{L^{2}}
	\lesssim |\bb|t^{-3}\lesssim |\bb|^{4}+t^{-4}.
	\end{equation*}
	Combining above estimates, we obtain~\eqref{est:G33}.
	
	\textbf{Step 4.} Estimate on $G_{3,4}$. We claim
	\begin{equation}\label{est:G34}
	\|G_{3,4}\|_{L^{2}}\lesssim |\ba|^{4}+|\bb|^{4}+t^{-4}.
		\end{equation}
	By the AM-GM inequality and the definition of $R$,  
	\begin{equation*}
	\begin{aligned}
	|G_{3,4}|
	\lesssim &\sum_{n\ne n'}|Q_{n}|\bigg(a_{n'}^{2}\Psi_{n'}^{2}+\sum_{k=1}^{K}b_{n',k}^{2}\Phi_{n',k}^{2}\bigg)\\
	 &+\sum_{n\ne n'}|a_{n}||Q_{n}||\Psi_{n}|\bigg(|a_{n'}||\Psi_{n'}|
	 +\sum_{k=1}^{K}|b_{n',k}||\Phi_{n',k}|\bigg)\\
	 &+\sum_{n\ne n'}\sum_{k=1}^{K}|b_{n,k}||Q_{n}||\Phi_{n,k}|\bigg(|a_{n'}||\Psi_{n'}|
	 +\sum_{k=1}^{K}|b_{n',k'}||\Phi_{n',k'}|\bigg).
	\end{aligned}
	\end{equation*}
	Arguing as in the Step 2 and Step 3, using~\eqref{est:aspQ},~\eqref{est:phipsi},~\eqref{est:tech1} and the Young's inequality, we obtain~\eqref{est:G34}.
	
	We see that~\eqref{est:G3} follows from~\eqref{est:G31},~\eqref{est:G32},~\eqref{est:G33} and~\eqref{est:G34}.
	
	Proof of (iv). Estimate~\eqref{est:G} is a consequence of~\eqref{est:G1},~\eqref{est:G2} and~\eqref{est:G3}.
	\end{proof}

\subsection{Decomposition of the solution around multi-solitary wave}\label{SS:dec}
In this section, we recall general results on solutions of~\eqref{equ:wave} that are close to sum of $N\ge 2$ decoupled solitary waves.

First, we recall a decomposition result for solutions of~\eqref{equ:wave}.
\begin{proposition}\label{pro:dec}
	There exist $T_{0}\gg 1$ and $0<\gamma_{0}\ll 1$ such that if $\vec{u}(t)=(u(t),\partial_{t} u(t))$ is a solution of~\eqref{equ:wave} on $[T_{1},T_{2}]$, where $1\ll T_{0}\le T_{1}<T_{2}<\infty$, such that
	\begin{equation}\label{est:T1T2}
	\sup_{t\in [T_{1},T_{2}]}\|\vec{u}(t)-\sum_{n=1}^{N}\vec{Q}_{n}(t)\|_{}<\gamma_{0},
	\end{equation}
	then there exist $C^{1}$ functions $\ba=(a_{1},\dots,a_N)$ and $\bb=(b_{n,k})_{(n,k)\in I^{0}}$ such that, $\vec{\varphi}$ being defined by 
	\begin{equation}\label{def:vp}
	\vec{\varphi}=
	\left(\begin{array}{c}
	\vp\\ \vpp
	\end{array}\right)=\vec{u}-\sum_{n=1}^{N}\vec{Q}_{n}-\sum_{n=1}^{N}a_{n}\vec{\Psi}_{n}-\sum_{(n,k)\in I^{0}}b_{n,k}\vec{\Phi}_{n,k},
	\end{equation}
	it satisfies
	\begin{equation*}
	\|\vec{\varphi}\|_{\E}+|\ba|+|\bb|\lesssim \gamma_{0},
	\end{equation*}
	and
	\begin{equation}\label{equ:orth1}
	\begin{aligned}
	\left(\vec{\varphi},\vec{\Phi}_{n',k}\right)_{\E}&=0, \quad \forall(n',k)\in I^{0},\\
	\left(\vec{\varphi},\vec{\Psi}_{n}\right)_{\E}&=0,\quad \forall\  n=1,\dots,N.
	\end{aligned}
	\end{equation}
	\end{proposition}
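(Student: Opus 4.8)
The plan is to construct the decomposition via the implicit function theorem applied to the map that sends the modulation parameters $(\ba,\bb)$ to the list of orthogonality conditions in \eqref{equ:orth1}. First I would fix $T_0$ large and $\gamma_0$ small, and consider a solution $\vec u(t)$ satisfying \eqref{est:T1T2}. For fixed $t$, define the map
\begin{equation*}
\Theta_t:(\ba,\bb)\longmapsto\Bigl(\bigl(\vec\varphi,\vec\Psi_n\bigr)_{\E},\ \bigl(\vec\varphi,\vec\Phi_{n',k}\bigr)_{\E}\Bigr)_{n,(n',k)}\in\RR^{N}\times\RR^{NK},
\end{equation*}
where $\vec\varphi=\vec\varphi(\ba,\bb)$ is given by \eqref{def:vp}. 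At $(\ba,\bb)=0$ we have $\vec\varphi=\vec u-\sum_n\vec Q_n$, so $|\Theta_t(0)|\lesssim\gamma_0$ by \eqref{est:T1T2}. The key computation is the Jacobian: $\partial_{a_m}\vec\varphi=-\vec\Psi_m$ and $\partial_{b_{m,\ell}}\vec\varphi=-\vec\Phi_{m,\ell}$, so the differential $D\Theta_t(0)$ has entries $-(\vec\Psi_m,\vec\Psi_n)_{\E}$, $-(\vec\Phi_{m,\ell},\vec\Psi_n)_{\E}$, etc. The crucial point is that for $T_0$ large, the solitons $\vec Q_n$ and the associated kernel vectors are nearly orthogonal across distinct $n\ne n'$, because they are centered at points $\bell_n t$ separated by distance $\gtrsim t$, and $\vec\Psi_n,\vec\Phi_{n,k}$ decay (polynomially) — so the off-diagonal blocks ($n\ne n'$) are $O(t^{-\kappa})$ for some $\kappa>0$, hence negligible. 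On the diagonal blocks, $D\Theta_t(0)$ restricted to fixed $n$ is, up to the small perturbation, the Gram matrix of $\{\psi_{\ell_n},\phi_{1,\ell_n},\dots,\phi_{K,\ell_n}\}$ in the $\E$-inner product, which is invertible since these functions are linearly independent (Lemma \ref{le:asyker}, transported by the Lorentz map, which is an isomorphism of $\E$). Therefore $D\Theta_t(0)$ is a bounded invertible matrix with inverse bounded uniformly in $t\ge T_0$.

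Next I would quantify the nonlinearity: since $\vec\varphi$ depends \emph{affinely} on $(\ba,\bb)$, the map $\Theta_t$ is in fact a polynomial (quadratic, after expanding the $\E$-pairings — actually affine, since $\vec\varphi$ is affine and the pairings are linear in $\vec\varphi$), so $\Theta_t$ is exactly $\Theta_t(0)+D\Theta_t(0)\cdot(\ba,\bb)$. Hence solving $\Theta_t(\ba,\bb)=0$ is just inverting a linear system: $(\ba,\bb)=-\bigl(D\Theta_t(0)\bigr)^{-1}\Theta_t(0)$, which gives $|\ba|+|\bb|\lesssim\gamma_0$ directly, with no fixed-point argument needed at fixed $t$. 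The smoothness and uniqueness in a neighborhood are immediate. For the time regularity, since $\vec u(t)$ is a $C^0_t$ (indeed the solution map is continuous into $\E$), and $D\Theta_t(0)$, $\Theta_t(0)$ depend continuously — in fact $C^1$, using that $\vec u$ solves the equation so $\partial_t\vec u\in C^0_t(\E^{-1})$ in a suitable weak sense, and the test functions $\vec\Psi_n,\vec\Phi_{n,k}$ are smooth and compactly-decaying so the pairings are differentiable in $t$ — we get $(\ba,\bb)\in C^1$. I would bound $\|\vec\varphi\|_{\E}$ by the triangle inequality: $\|\vec\varphi\|_{\E}\le\|\vec u-\sum\vec Q_n\|_{\E}+|\ba|\sum\|\vec\Psi_n\|_{\E}+|\bb|\sum\|\vec\Phi_{n,k}\|_{\E}\lesssim\gamma_0$.

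The main obstacle is controlling the off-diagonal (cross-soliton) terms in the Jacobian and ensuring the constants are uniform in $t\ge T_0$ and in $T_1,T_2$: one must check that the Gram matrix of the full collection $\{\vec\Psi_n,\vec\Phi_{n,k}:n,k\}$ in the $\E$-pairing is uniformly invertible, which requires both the fixed-$n$ linear independence (an algebraic fact from Lemma \ref{le:asyker}, preserved under the Lorentz boost since $f\mapsto f_\ell$ is a bounded invertible operator on the relevant spaces) and the spatial decoupling estimate $|(\vec\Psi_n,\vec\Psi_{n'})_{\E}|+\cdots\lesssim t^{-\kappa}$ for $n\ne n'$ — which follows from the polynomial decay in \eqref{est:phipsi} together with the separation $|\bell_n t-\bell_{n'}t|\gtrsim t$, via an argument in the spirit of Lemma \ref{le:int} (splitting space into regions near each soliton center). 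Once $T_0$ is taken large enough that this off-diagonal contribution is, say, at most half the smallest singular value of the block-diagonal part, invertibility with uniform bounds follows, completing the proof.
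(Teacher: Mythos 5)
Your proposal is correct and matches the paper's omitted proof: it is the standard Gram-matrix argument, using the linear independence of the kernel functions (Lemma~\ref{le:asyker}) for the diagonal blocks and the decay estimates of Lemma~\ref{le:int} to show the cross-soliton off-diagonal blocks are $O(t^{-\kappa})$. Your observation that $\Theta_t$ is affine in $(\ba,\bb)$ --- so the usual implicit-function step reduces to inverting a single linear system at each $t$ --- is a correct and slightly cleaner rendering of the same argument, valid here because the soliton centers and speeds are frozen at $\bell_n t$ rather than modulated.
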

\begin{proof}
	The proof of the decomposition proposition relies on a standard argument based on Lemma~\ref{le:asyker}, (ii) and (iii) of Lemma~\ref{le:int} and the invertibility of Gram matrix (see~\emph{e.g.}~\cite[Lemma 3]{CMTAMS} and~\cite[Proposition 3.4]{Y5De}), and we omit it.
	\end{proof}
Set 
\begin{equation*}
\begin{aligned}
{\rm{Mod}}_{1}&={\rm{Mod}}_{1,1}+{\rm{Mod}}_{1,2},\\
{\rm{Mod}}_{2}&={\rm{Mod}}_{2,1}+{\rm{Mod}}_{2,2},
\end{aligned}
\end{equation*}
where
\begin{equation*}
\begin{aligned}
{\rm{Mod}_{1,1}}&=\sum_{n=1}^{N}\dot{a}_{n}{\Psi}_{n},\quad \quad \quad \ \ 
{\rm{Mod}}_{2,1}=-\sum_{n=1}^{N}\dot{a}_{n}\ell_{n}\partial_{x_{1}}\Psi_{n},\\
{\rm{Mod}_{1,2}}&=\sum_{(n,k)\in I^{0}}\dot{b}_{n,k}{\Phi}_{n,k},\quad 
{\rm{Mod}}_{2,2}=-\sum_{(n,k)\in I^{0}}\dot{b}_{n,k}\ell_{n}\partial_{x_{1}}\Phi_{n,k}.
\end{aligned}
\end{equation*}
We deduce the equation of $\vec{\varphi}$ from~\eqref{equ:wave} and~\eqref{def:vp}.
\begin{lemma}[Equation of $\vec{\varphi}$]
	The function $\vec{\varphi}$ satisfies 
\begin{equation}\label{equ:vp}
\left\{ \begin{aligned}
&\partial_t\vp=\vpp-{\rm{Mod}}_{1},\\
&\partial_{t}\vpp=\Delta\vp+(R+U+V+\vp)^{3}-(R+U+V)^{3}
-{\rm{Mod}_{2}}+G.
\end{aligned}\right.
\end{equation}
\end{lemma}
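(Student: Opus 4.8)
The proof is a direct substitution of the ansatz \eqref{def:vp} into the system \eqref{equ:wave}, organized around three facts: (a) every building block $Q_n$, $\Psi_n$, $\Phi_{n,k}$ is a Lorentz-boosted profile, i.e.\ of the form $f_{\ell_n}(x-\bell_n t)$, hence satisfies the transport identity $\partial_t[f_{\ell_n}(x-\bell_n t)] = -\ell_n\partial_{x_1}[f_{\ell_n}(x-\bell_n t)]$; (b) the elliptic and spectral equations $-\Delta Q = Q^3$ and $\mathcal{L}\psi = \mathcal{L}\phi_k = 0$ (the latter because $\psi,\phi_k\in\widetilde{\mathcal{Z}}_Q\subset\mathcal{Z}_Q$); and (c) the definition \eqref{def:G} of the interaction term $G$.

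\emph{First component.} I would read off from \eqref{def:vp} that $\vp = u - (R+U+V)$ and $\vpp = \partial_t u + \sum_{n=1}^{N}\ell_n\partial_{x_1}\big(Q_n + a_n\Psi_n + \sum_{k=1}^{K}b_{n,k}\Phi_{n,k}\big)$. Differentiating $\vp$ in $t$ and using (a) together with the product rule for the time-dependent coefficients gives $\partial_t Q_n = -\ell_n\partial_{x_1}Q_n$, $\partial_t(a_n\Psi_n) = \dot a_n\Psi_n - a_n\ell_n\partial_{x_1}\Psi_n$, and likewise for $\sum_k b_{n,k}\Phi_{n,k}$; consequently all the $\ell_n\partial_{x_1}$-terms cancel against those contained in $\vpp$, and what remains is exactly $\partial_t\vp = \vpp - {\rm{Mod}}_{1,1} - {\rm{Mod}}_{1,2} = \vpp - {\rm{Mod}}_1$.

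\emph{Second component.} Differentiating $\vpp$ in $t$ and invoking $\partial_{tt}u = \Delta u + u^3$ from \eqref{equ:wave} with $u = R+U+V+\vp$ yields
\[
\partial_t\vpp = \Delta\vp + \Delta(R+U+V) + (R+U+V+\vp)^3 + \sum_{n=1}^{N}\ell_n\partial_{x_1}\partial_t\Big(Q_n + a_n\Psi_n + \sum_{k=1}^{K}b_{n,k}\Phi_{n,k}\Big).
\]
Applying (a) once more to the last sum produces, on the one hand, $-\sum_{n}\ell_n^2\partial_{x_1}^2\big(Q_n + a_n\Psi_n + \sum_k b_{n,k}\Phi_{n,k}\big)$ and, on the other hand, the modulation terms $\sum_{n}\ell_n\partial_{x_1}\big(\dot a_n\Psi_n + \sum_k\dot b_{n,k}\Phi_{n,k}\big) = -{\rm{Mod}}_{2,1} - {\rm{Mod}}_{2,2} = -{\rm{Mod}}_2$. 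The algebraic heart of the computation is then the \emph{boosted elliptic identities}: the change of variables $x\mapsto x_\ell$ diagonalizes the boosted Laplacian, $(\Delta - \ell^2\partial_{x_1}^2)(f(x_\ell)) = (\Delta f)(x_\ell)$, so that $-\Delta Q = Q^3$, $\mathcal{L}\psi = 0$, $\mathcal{L}\phi_k = 0$, translation by $\bell_n t$, and $\tau_n^2 = 1$ give
\[
(\Delta - \ell_n^2\partial_{x_1}^2)Q_n = -Q_n^3,\qquad (\Delta - \ell_n^2\partial_{x_1}^2)\Psi_n = -3Q_n^2\Psi_n,\qquad (\Delta - \ell_n^2\partial_{x_1}^2)\Phi_{n,k} = -3Q_n^2\Phi_{n,k}.
\]
Summing these against the coefficients $1$, $a_n$, $b_{n,k}$ and recognizing the outcome via \eqref{def:G}, I obtain
\[
\Delta(R+U+V) - \sum_{n=1}^{N}\ell_n^2\partial_{x_1}^2\Big(Q_n + a_n\Psi_n + \sum_{k=1}^{K}b_{n,k}\Phi_{n,k}\Big) = G - (R+U+V)^3.
\]
Substituting this into the expression for $\partial_t\vpp$ and cancelling $(R+U+V)^3$ produces precisely the second line of \eqref{equ:vp}.

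The computation has no genuine obstacle; the only point requiring care is the bookkeeping of the $\ell_n\partial_{x_1}$-corrections in the second slot of $\vec\varphi$. After one more time derivative these turn into the operators $-\ell_n^2\partial_{x_1}^2$, so that each profile is hit by the boosted Laplacian $\Delta - \ell_n^2\partial_{x_1}^2$ rather than by $\Delta$; it is exactly this operator — not $\Delta$ — that is diagonalized by the Lorentz change of variables, which is the structural reason the decomposition \eqref{def:vp} is set up with those corrections in the first place. Once this is kept straight, the rest is a routine expansion and reorganization of terms.
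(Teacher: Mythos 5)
Your proposal is correct and follows essentially the same route as the paper: both first derive the transport/product-rule identities for $Q_n$, $\Psi_n$, $\Phi_{n,k}$, then substitute into $\partial_{tt}u=\Delta u+u^3$, and finally close the computation with the boosted elliptic equations $(\Delta-\ell_n^2\partial_{x_1}^2)Q_n=-Q_n^3$, $(\Delta-\ell_n^2\partial_{x_1}^2)\Psi_n=-3Q_n^2\Psi_n$, $(\Delta-\ell_n^2\partial_{x_1}^2)\Phi_{n,k}=-3Q_n^2\Phi_{n,k}$ (written in the paper as $-(1-\ell_n^2)\partial_{x_1}^2\cdot-\bar\Delta\cdot-\ldots=0$) together with the definition of $G$. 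The bookkeeping of the $\ell_n\partial_{x_1}$ corrections and their evolution into $-\ell_n^2\partial_{x_1}^2$ is handled correctly.
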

\begin{proof}
	First, from the definition of $Q_{n}$, $\Psi_{n}$ and $\Phi_{n,k}$, 
	\begin{equation*}
	\begin{aligned}
	\partial_{t}Q_{n}&=-\ell_{n}\partial_{x_{1}}Q_{n},\\
	\partial_{t}(a_{n}\Psi_{n})&=-\ell_{n}\partial_{x_{1}}\Psi_{n}+\dot{a}_{n}\Psi_{n},\\
	\partial_{t}(b_{n,k}\Phi_{n,k})&=-\ell_{n}\partial_{x_{1}}\Phi_{n,k}+\dot{b}_{n,k}\Phi_{n,k},
	\end{aligned}
	\end{equation*}
	for $n=1,\dots,N$ and $(n,k)\in I^{0}$.
	Therefore, by the definition of $\vec{\varphi}=(\vp,\vpp)$ in~\eqref{def:vp}, we have 
	\begin{equation*}
	\begin{aligned}
	\partial_{t} \vp
	&=\partial_{t} u-\sum_{n=1}^{N}\pt Q_{n}-\sum_{n=1}^{N}\pt (a_{n}\Psi_{n})-\sum_{(n,k)\in I^{0}}\pt (b_{n,k}\Phi_{n,k})\\
	&=\vpp-\sum_{n=1}^{N}\dot{a}_{n}\Psi_{n}-\sum_{(n,k)\in I^{0}}\dot{b}_{n,k}\Phi_{n,k}.
	\end{aligned}
	\end{equation*}
	Second, using the definition of $Q_{n}$, $\Psi_{n}$ and $\Phi_{n,k}$ again, 
	\begin{equation*}
	\begin{aligned}
	\partial_{t}(\ell_{n}\partial_{x_{1}}Q_{n})
	&=-\ell_{n}^{2}\partial_{x_{1}}^{2}Q_{n},\\
	\partial_{t}(a_{n}\ell_{n}\partial_{x_{1}}\Psi_{n})
	&=-{a}_{n}\ell_{n}^{2}\partial_{x_{1}}^{2}\Psi_{n}+\dot{a}_{n}\ell_{n}\partial_{x_{1}}\Psi_{n},\\
	\pt (b_{n,k}\ell_{n}\partial_{x_{1}}\Phi_{n,k})
	&=-b_{n,k}\ell_{n}^{2}\partial^{2}_{x_{1}}\Phi_{n,k}+
	\dot{b}_{n,k}\ell_{n}\partial_{x_{1}}\Phi_{n,k},
	\end{aligned}
	\end{equation*}
	for $n=1,\dots,N$ and $(n,k)\in I^{0}$. Thus, by~\eqref{equ:wave} and the definition of ${\rm{Mod}}_{2}$,
	\begin{equation*}
	\begin{aligned}
	\pt \vpp
	&=\partial_{tt}u+\sum_{n=1}^{N}\partial_{t}(\ell_{n}\partial_{x_{1}}Q_{n})
	+\sum_{n=1}^{N}\partial_{t}(a_{n}\ell_{n}\partial_{x_{1}}\Psi_{n})
	+\sum_{(n,k)\in I^{0}}\pt (b_{n,k}\ell_{n}\partial_{x_{1}}\Phi_{n,k})\\
	&=\Delta u+u^{3}-\sum_{n=1}^{N}\ell_{n}^{2}\partial_{x_{1}}^{2}Q_{n}
	-\sum_{n=1}^{N}{a}_{n}\ell_{n}^{2}\partial_{x_{1}}^{2}\Psi_{n}
	-\sum_{(n,k)\in I^{0}}b_{n,k}\ell_{n}^{2}\partial^{2}_{x_{1}}\Phi_{n,k}-{\rm{Mod}}_{2}.
	\end{aligned}
	\end{equation*}
	Note that, from $-(1-\ell_{n}^{2})\partial_{x_{1}}^{2}Q_{n}-\bar{\Delta}Q_{n}-Q_{n}^{3}=0$,
	$-(1-\ell_{n}^{2})\partial_{x_{1}}^{2}\Psi_{n}-\bar{\Delta}\Psi_{n}-3Q_{n}^{2}\Psi_{n}=0$,
	$-(1-\ell_{n}^{2})\partial_{x_{1}}^{2}\Phi_{n,k}-\bar{\Delta}\Phi_{n,k}-3Q_{n}^{2}\Phi_{n,k}=0$ and the definition of $G$ in~\eqref{def:G},
	\begin{equation*}
	\begin{aligned}
	&\Delta u+u^{3}-\sum_{n=1}^{N}\ell_{n}^{2}\partial_{x_{1}}^{2}Q_{n}
	-\sum_{n=1}^{N}{a}_{n}\ell_{n}^{2}\partial_{x_{1}}^{2}\Psi_{n}
	-\sum_{(n,k)\in I^{0}}b_{n,k}\ell_{n}^{2}\partial^{2}_{x_{1}}\Phi_{n,k}\\
	&=\Delta \vp +(R+U+V+\vp)^{3}-(R+U+V)^{3}+G.
	\end{aligned}
	\end{equation*}
	Based on the above identities, we obtain
	\begin{equation*}
	\pt \vpp=\Delta \vp+(R+U+V+\vp)^{3}-(R+U+V)^{3}-{\rm{Mod}}_{2}+G.
	\end{equation*}
	\end{proof}
We decompose
\begin{equation*}
(R+U+V+\vp)^{3}-(R+U+V)^{3}=3\sum_{n=1}^{N}Q_{n}^{2}\vp+R_{1}+R_{2},
\end{equation*}
where
\begin{equation*}
\begin{aligned}
R_{1}&=3(R+U+V)\vp^{2}+\vp^{3},\\
R_{2}&=3\left((U+V)^{2}+2R(U+V)\right)\vp+3\sum_{n\ne n'}Q_{n}Q_{n'}\vp.
\end{aligned}
\end{equation*}
Based on the above decomposition, the equation of $\vec{\varphi}$ in~\eqref{equ:vp} can be rewritten as
\begin{equation}\label{equ:vvp}
\pt \vec{\varphi}=\vec{\mathcal{L}}\vec{\varphi}-\vec{{\rm{Mod}}}+\vec{G}+\vec{R}_{1}+\vec{R}_{2},
\end{equation}
where
\begin{equation*}
\vec{\mathcal{L}}=\left(\begin{array}{cc}
0&1\\
\Delta+3\sum_{n=1}^{N}Q_{n}^{2}&0
\end{array}\right),\quad 
{\vec{\rm{Mod}}}=\left(\begin{array}{c}
{\rm{Mod}}_{1}\\
{\rm{Mod}}_{2}
\end{array}\right),
\end{equation*}
and
\begin{equation*}
\vec{G}=\left(\begin{array}{c}
0\\G
\end{array}\right),\quad 
\vec{R}_{1}=\left(\begin{array}{c}
0\\R_{1}
\end{array}\right),\quad 
\vec{R}_{2}=\left(\begin{array}{c}
0\\R_{2}
\end{array}\right).
\end{equation*}
Now, we derive the control of $\dot{\ba}$ and $\dot{\bb}$ from the orthogonality conditions
~\eqref{equ:orth1}.
\begin{lemma}\label{le:equab}
	In the context of Proposition~\ref{pro:dec}, we have
	\begin{equation}\label{est:ab}
	|\dot{\ba}|+|\dot{\bb}|\lesssim \|\vec{\varphi}\|_{\E}+|\ba|^{2}+|\bb|^{2}+t^{-4}.
	\end{equation}
\end{lemma}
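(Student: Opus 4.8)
The plan is to differentiate each orthogonality condition in~\eqref{equ:orth1} with respect to $t$ and extract a linear algebraic system for $(\dot{\ba},\dot{\bb})$. Fix $(n',k)\in I^{0}$ and differentiate $(\vec{\varphi},\vec{\Phi}_{n',k})_{\E}=0$ in time. Since $\vec{\Phi}_{n',k}(t,x)=\vec{\phi}_{k,\ell_{n'}}(x-\bell_{n'}t)$ is a traveling profile, $\partial_{t}\vec{\Phi}_{n',k}=-\ell_{n'}\partial_{x_{1}}\vec{\Phi}_{n',k}$, and the spatial translation is an $\E$-isometry infinitesimally, so $(\vec{\varphi},\partial_{t}\vec{\Phi}_{n',k})_{\E}$ is controlled by $\|\vec{\varphi}\|_{\E}$ times a bounded quantity; the same holds for the $\vec{\Psi}_{n}$ conditions. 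Thus the only genuinely new contribution is $(\partial_{t}\vec{\varphi},\vec{\Phi}_{n',k})_{\E}=0$ (up to the harmless term above), and we substitute the equation~\eqref{equ:vvp} for $\partial_{t}\vec{\varphi}$.

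Next I would evaluate each term of $(\vec{\mathcal{L}}\vec{\varphi}-\vec{{\rm{Mod}}}+\vec{G}+\vec{R}_{1}+\vec{R}_{2},\vec{\Phi}_{n',k})_{\E}$. For the linear part, $(\vec{\mathcal{L}}\vec{\varphi},\vec{\Phi}_{n',k})_{\E}$ should be rewritten using integration by parts so that $\vec{\mathcal{L}}$ (or rather $\mathcal{H}_{\ell_{n'}}$) falls on $\vec{\Phi}_{n',k}$; since $\vec{\phi}_{k,\ell_{n'}}\in{\rm{Ker}}\,\mathcal{H}_{\ell_{n'}}$ by~\eqref{equ:ker}, the leading piece vanishes and what remains is an error coming from the difference between $\vec{\mathcal{L}}$ (built from $\sum Q_{n}^{2}$) and the single-soliton operator $\mathcal{H}_{\ell_{n'}}$ (built from $Q_{n'}^{2}$ alone); this difference is an interaction term controlled by $t^{-2}$ or better via~\eqref{est:aspQ} and~\eqref{est:phipsi}, multiplied by $\|\vec{\varphi}\|_{\E}$, hence absorbed. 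The nonlinear remainders: $(\vec{R}_{1},\vec{\Phi}_{n',k})_{\E}$ is quadratic-plus-cubic in $\vp$, so $\lesssim\|\vec{\varphi}\|_{\E}^{2}$ by Sobolev~\eqref{est:Sobo}; $(\vec{R}_{2},\vec{\Phi}_{n',k})_{\E}\lesssim(|\ba|+|\bb|+t^{-2})\|\vec{\varphi}\|_{\E}$, again absorbed; and $(\vec{G},\vec{\Phi}_{n',k})_{\E}\lesssim\|G\|_{L^{2}}\lesssim|\ba|^{2}+|\bb|^{2}+t^{-4}$ by~\eqref{est:G}. (Here one uses that $\vec{\Phi}_{n',k}$ has a bounded second component in $L^{2}$, so pairing in $\E$ with $\vec{G}=(0,G)^{\top}$ reduces to an $L^{2}$ pairing with a fixed function.)

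The main term is $-(\vec{{\rm{Mod}}},\vec{\Phi}_{n',k})_{\E}$. Expanding $\vec{{\rm{Mod}}}=\sum_{n}\dot{a}_{n}\vec{\Psi}_{n}+\sum_{(n,k)\in I^{0}}\dot{b}_{n,k}\vec{\Phi}_{n,k}$ (grouping the two components using the identities $\partial_{t}(a_{n}\Psi_{n})$, etc., so that ${\rm{Mod}}_{1}$ and ${\rm{Mod}}_{2}$ combine into $\E$-pairings with the vector profiles), the diagonal coefficient of $\dot{b}_{n',k}$ is $\|\vec{\Phi}_{n',k}\|_{\E}^{2}$, a fixed positive constant; the off-diagonal coefficients pairing distinct solitons ($n\ne n'$) are interaction terms $\lesssim t^{-2}$, and the coefficients coming from $\vec{\Psi}_{n}$ with $n\ne n'$ are likewise small, while for $n=n'$ one invokes the linear independence of $\{\psi,\phi_{1},\dots,\phi_{K}\}$ in Lemma~\ref{le:asyker} so that the within-soliton Gram matrix is invertible. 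In other words the total system reads $\mathcal{M}(t)\,(\dot{\ba},\dot{\bb})^{\top}=O(\|\vec{\varphi}\|_{\E}+|\ba|^{2}+|\bb|^{2}+t^{-4})$ with $\mathcal{M}(t)$ a perturbation, of size $O(t^{-2})$ for $t\ge T_{0}$ large, of a fixed invertible block-diagonal matrix. Inverting $\mathcal{M}(t)$ yields~\eqref{est:ab}. The delicate point — and the one I expect to require the most care — is the bookkeeping of the interaction error terms: one must confirm that every cross term between profiles centered on distinct solitons decays at least like $t^{-2}$ (so that for $T_{0}$ large enough the Gram matrix stays invertible and the cross contributions are strictly subdominant), which is exactly where the sufficient decay~\eqref{est:aspQ},~\eqref{est:phipsi} of $Q$ and its kernel, together with Lemma~\ref{le:int}, is used.
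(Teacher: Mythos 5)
Your overall strategy coincides with the paper's: differentiate the orthogonality conditions~\eqref{equ:orth1}, substitute~\eqref{equ:vvp} for $\partial_{t}\vec{\varphi}$, estimate the contributions of $\vec{G}$, $\vec{R}_{1}$, $\vec{R}_{2}$ and the transport term $-\ell_{n'}(\vec{\varphi},\partial_{x_{1}}\vec{\Phi}_{n',k})_{\E}$ as you indicate, then read the ${\rm{Mod}}$-contribution as a linear system for $(\dot{\ba},\dot{\bb})$ whose Gram matrix is a $O(t^{-2})$-perturbation of a fixed invertible block-diagonal matrix (the block invertibility coming from linear independence in Lemma~\ref{le:asyker}). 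All of that is exactly what the paper does.

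The one step that does not hold as stated is your treatment of the linear term $(\vec{\mathcal{L}}\vec{\varphi},\vec{\Phi}_{n',k})_{\E}$. You claim that after integrating by parts so that $\mathcal{H}_{\ell_{n'}}$ falls on $\vec{\Phi}_{n',k}$, the leading single-soliton piece vanishes by $\vec{\phi}_{k,\ell_{n'}}\in{\rm{Ker}}\,\mathcal{H}_{\ell_{n'}}$, leaving only an interaction error of order $t^{-2}\|\vec{\varphi}\|_{\E}$. This cancellation is not available here: the orthogonality conditions~\eqref{equ:orth1} are written with the $\E$-pairing, and the kernel relation $\mathcal{H}_{\ell}\vec{\phi}_{\ell}=0$ does not turn the $\E$-pairing of $(\vec{\mathcal{L}}+\ell_{n'}\partial_{x_{1}})\vec{\varphi}$ against $\vec{\Phi}_{n',k}$ into a vanishing $L^{2}$-pairing — one is left with genuinely nonzero lower-order terms of size $O(\|\vec{\varphi}\|_{\E})$. (The quantity preserved by the single-soliton linearized flow $\partial_{t}\vec{v}=\mathrm{J}\mathcal{H}_{\ell}\vec{v}$ is $(\mathcal{H}_{\ell}\vec{v},\vec{\phi}_{\ell})_{L^{2}}$, not $(\vec{v},\vec{\phi}_{\ell})_{\E}$.) The paper simply integrates by parts and bounds $(\vec{\mathcal{L}}\vec{\varphi},\vec{\Psi}_{n})_{\E}=O(\|\vec{\varphi}\|_{\E})$ directly using~\eqref{est:phipsi} and Hardy; no kernel cancellation is invoked or needed. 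Fortunately your misstep is harmless to the lemma: the transport term $-\ell_{n'}(\vec{\varphi},\partial_{x_{1}}\vec{\Phi}_{n',k})_{\E}$ is itself $O(\|\vec{\varphi}\|_{\E})$, so the right-hand side of~\eqref{est:ab} is $O(\|\vec{\varphi}\|_{\E}+\cdots)$ in any case. Note however that this kernel-cancellation mechanism is precisely what is used — correctly — for the exponential directions in the following lemma, where the pairing is an $L^{2}$-pairing and~\eqref{equ:zy} gives the exact algebra; it is specific to that $L^{2}$ setting and should not be transferred to the $\E$-orthogonality conditions.
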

\begin{proof}
	From~\eqref{equ:vvp} and the orthogonality condition~\eqref{equ:orth1}, we have 
	\begin{equation*}
	\begin{aligned}
	0
	=\frac{\d}{\d t}\left(\vec{\varphi},\vec{\Psi}_{n}\right)_{\E}
	=&\left(\pt \vec{\varphi},\vec{\Psi}_{n}\right)_{\E}+\left(\vec{\varphi},\pt \vec{\Psi}_{n}\right)_{\E}\\
	=&\left(\vec{\mathcal{L}}\vec{\varphi},\vec{\Psi}_{n}\right)_{\E}-
	\left(\vec{{\rm{Mod}}},\vec{\Psi}_{n}\right)_{\E}+\left(\vec{G},\vec{\Psi}_{n}\right)_{\E}\\
	&+\left(\vec{R}_{1},\vec{\Psi}_{n}\right)_{\E}+\left(\vec{R}_{2},\vec{\Psi}_{n}\right)_{\E}
	-\ell_{n}\left(\vec{\varphi},\partial_{x_{1}}\vec{\Psi}_{n}\right)_{\E}.
	\end{aligned}
	\end{equation*}
	By integration by parts and the decay property of $\psi$ in~\eqref{est:phipsi}, the first term is
	\begin{equation*}
	\begin{aligned}
	\left(\vec{\mathcal{L}}\vec{\varphi},\vec{\Psi}_{n}\right)_{\E}
	=&\left(\vp,\bigg(\Delta+3\sum_{n'=1}^{N}Q_{n'}^{2}\bigg)(-\ell_{n}\partial_{x_{1}}\Psi_{n})\right)_{L^{2}}
	-\left(\vpp,\Delta\Psi_{n}\right)_{L^{2}}=O\left(\|\vec{\varphi}\|_{\E}\right).
	\end{aligned}
	\end{equation*}
	Next, from the decay properties of $\psi$ and $\phi$ in~\eqref{est:phipsi}, and Lemma~\ref{le:int} (ii) and (iii), 
	\begin{equation*}
	\begin{aligned}
	\left(\vec{\rm{Mod}},\vec{\Psi}_{n}\right)_{\E}
	=&\dot{a}_{n}\left(\vec{\Psi}_{n},\vec{\Psi}_{n}\right)_{\E}+\sum_{k=1}^{K}\dot{b}_{n,k}\left(\vec{\Phi}_{n,k},\vec{\Psi}_{n}\right)_{\E}+O\left(t^{-2}(|\dot{\ba}|+|\dot{\bb}|)\right).
	\end{aligned}
	\end{equation*}
    Using~\eqref{est:G}, the decay property of $\psi$ and the Cauchy-Schwarz inequality,
    \begin{equation*}
    \begin{aligned}
    \left|\left(\vec{G},\vec{\Psi}_{n}\right)_{\E}\right|
    &\lesssim \|G\|_{L^{2}}\|\partial_{x_{1}}\Psi_{n}\|_{L^{2}}\lesssim |\ba|^{2}+|\bb|^{2}+t^{-4}.
    \end{aligned}
    \end{equation*}
    Then, from the expansion of $R_{1}$ and $R_{2}$,  and the Sobolev inequality~\eqref{est:Sobo},
    \begin{equation*}
    \begin{aligned}
    \left|\left(\vec{R}_{1},\vec{\Psi}_{n}\right)_{\E}\right|
    &\lesssim \|\vp\|_{L^{4}}^{2}\|\partial_{x_{1}}\Psi_{n}\|_{L^{4}}\left(\|\vp\|_{L^{4}}+\|R+U+V\|_{L^{4}}\right)\lesssim \|\vec{\varphi}\|_{\E}^{2},\\
    \left|\left(\vec{R}_{2},\vec{\Psi}_{n}\right)_{\E}\right|&\lesssim \|\vp\|_{L^{4}}\|\partial_{x_{1}}\Psi_{n}\|_{L^{4}}\left(\|U\|^{2}_{L^{4}}+\|V\|_{L^{4}}^{2}+\|Q\|_{L^{4}}^{2}\right)\lesssim \|\vec{\varphi}\|_{\E}.
   \end{aligned}
    \end{equation*}
    Last, using the Cauchy-Schwarz inequality again,
    \begin{equation*}
    \begin{aligned}
    \left|\left(\vec{\varphi},\partial_{x_{1}}\vec{\Psi}_{n}\right)_{\E}\right|
    \lesssim \left(\|\nabla \vp\|_{L^{2}}+\|\vpp\|_{L^{2}}\right)\|\partial_{x_{1}}\Psi_{n}\|_{H^{1}}\lesssim 
    \|\vec{\varphi}\|_{\E}.
    \end{aligned}
    \end{equation*}
    Gathering the above estimates, we have 
    \begin{equation*}
    \begin{aligned}
    &\dot{a}_{n}\left(\vec{\Psi}_{n},\vec{\Psi}_{n}\right)_{\E}+\sum_{k=1}^{K}\dot{b}_{n,k}\left(\vec{\Phi}_{n,k},\vec{\Psi}_{n}\right)_{\E}+O\left(t^{-2}(|\dot{\ba}|+|\dot{\bb}|)\right)\\
    &=O\left(\|\vec{\varphi}\|_{\E}+t^{-4}+|\ba|^{2}+|\bb|^{2}\right).
    \end{aligned}
    \end{equation*} 
    Proceeding similarly for all $n=1,\dots,N$ and $(n,k)\in I^{0}$, and then using the invertibility of Gram matrix, we obtain~\eqref{est:ab}.
	\end{proof}
Fix 
\begin{equation}\label{def:sigma}
\sigma=\frac{1}{10}\min_{n\ne n'}|\ell_{n}-\ell_{n'}|>0.
\end{equation}
For $n=1,\dots,N$, let 
\begin{align}
\sigma_{n}=2\pi^{2}(1-\ell_{n}^{2})^{\frac{1}{2}},\quad 
c_{n}(t)=\frac{{\left(\vp,\Psi_{n}\xi_{n}\right)_{L^{2}}}}{\sigma_{n} \log t},\label{def:sigmaRn}
\end{align}
where
\begin{equation}\label{def:xin}
\xi_{n}(t,x)=\xi\left( (\sigma t)^{-1}\left|\bigg(\frac{x_{1}-\ell_{n} t}{\sqrt{1-\ell_{n}^{2}}},\bar{x}\bigg)\right|\right),\quad \mbox{for}\ n=1,\dots,N.
\end{equation}
For $n=1,\dots,N$, we also set
\begin{equation*}
\Omega_{n}(t,x)=\left\{x=(x_{1},\bar{x})\in \RR^{4}:\sigma t\le \left|\left(\frac{x_{1}-\ell_{n} t}{\sqrt{1-\ell^{2}_{n}}},\bar{x}\right)\right|\le 2\sigma t\right\}.
\end{equation*}
Note that, from the definition of $\xi_{n}$ and $\Omega_{n}$, we have the following estimates (${\textbf{1}}_{K}$ denotes the characteristic function of the compact set $K$),
\begin{equation}\label{est:ptpxxin}
\left|\pt \xi_{n}(t,x)\right|+|\partial_{x_{1}} \xi_{n}(t,x)|\lesssim t^{-1}\textbf{1}_{\Omega_{n}}.
\end{equation}
Note also that, from the decay property of $\psi$ in~\eqref{est:phipsi} and the definition of $\Psi_{n}$ and $\xi_{n}$, we have the following estimates,
\begin{align}
\|\Psi_{n}\xi_{n}\|_{L^{2}}&\lesssim \left(\int_{|x|<2\sigma t}\langle x \rangle^{-4}\d x\right)^{\frac{1}{2}}\lesssim \left(\int_{0}^{2\sigma t}\frac{r^{3}}{1+r^{4}}\d r\right)^{\frac{1}{2}}\lesssim \log ^{\frac{1}{2}}t,\label{est:Psi2}\\
\|\Psi_{n}\xi_{n}\|_{L^{\frac{4}{3}}}&\lesssim \left(\int_{|x|<2\sigma t}\langle x\rangle^{-\frac{8}{3}}\d x\right)^{\frac{3}{4}}\lesssim \left(\int_{0}^{2\sigma t}\frac{r^{3}}{1+r^{\frac{8}{3}}}\d r\right)^{\frac{3}{4}}\lesssim t,\label{est:Psi34}\\
\|\Psi_{n}{\textbf{1}}_{\Omega_{n}}\|_{L^{\frac{4}{3}}}&\lesssim \left(\int_{\sigma t<|x|<2\sigma t}\langle x\rangle^{-\frac{8}{3}}\d x\right)^{\frac{3}{4}}\lesssim \left(\int_{\sigma t}^{2\sigma t}\frac{r^{3}}{1+r^{\frac{8}{3}}}\d r\right)^{\frac{3}{4}}\lesssim t.\label{est:Psi134}
\end{align}

We introduce the following technical lemma.
\begin{lemma}\label{le:Psixi}
	For $n=1,\dots,N$ and $n'\ne n$, we have 
	\begin{align}
	\left(\Psi_{n'},\Psi_{n}\xi_{n}\right)_{L^{2}}&=O(1),\label{est:Psin'n}\\
	\left(\Psi_{n},\Psi_{n}\xi_{n}\right)_{L^{2}}&=\sigma_{n}\log t +O(1) \label{est:Psinn}.
	\end{align}
	\end{lemma}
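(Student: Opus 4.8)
The plan is to reduce both identities to explicit radial integrals by means of the Lorentz change of variables adapted to the $n$-th soliton. Fix $n$ and $t$ large enough that $\sigma t>1$, and substitute $y=\big((x_{1}-\ell_{n}t)/\sqrt{1-\ell_{n}^{2}},\bar{x}\big)$, so that $\d x=\sqrt{1-\ell_{n}^{2}}\,\d y$, and by the definitions in \eqref{def:xin} one has $\Psi_{n}(t,x)=\psi(y)$ and $\xi_{n}(t,x)=\xi\big((\sigma t)^{-1}|y|\big)$. Moreover $\Psi_{n'}(t,x)=\psi\big(z(y)\big)$, where $z(y)$ has $\overline{z(y)}=\bar{y}$ and first coordinate $z_{1}(y)=\big(\sqrt{1-\ell_{n}^{2}}\,y_{1}+(\ell_{n}-\ell_{n'})t\big)/\sqrt{1-\ell_{n'}^{2}}$.

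For \eqref{est:Psinn}, the substitution gives $(\Psi_{n},\Psi_{n}\xi_{n})_{L^{2}}=\sqrt{1-\ell_{n}^{2}}\int_{\RR^{4}}\psi(y)^{2}\,\xi\big((\sigma t)^{-1}|y|\big)\,\d y$. By \eqref{est:phi}, $\psi(y)^{2}=|y|^{-4}+O(|y|^{-5})$ for $|y|\ge1$, while $\psi$ is bounded on $\{|y|<1\}$; since $\xi\equiv1$ on $[0,1]$ and $\sigma t>1$, both the contribution of $\{|y|<1\}$ and that of the $O(|y|^{-5})$ remainder are $O(1)$. Passing to polar coordinates in $\RR^{4}$, whose unit sphere $\mathbb{S}^{3}$ has measure $2\pi^{2}$, the main term equals $2\pi^{2}\int_{1}^{\infty}r^{-1}\,\xi\big((\sigma t)^{-1}r\big)\,\d r$; from $\xi\equiv1$ on $[0,1]$, $\xi\equiv0$ on $[2,\infty)$ and $0\le\xi\le1$, this integral lies between $\log(\sigma t)$ and $\log(2\sigma t)$, hence equals $\log t+O(1)$. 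Multiplying by $\sqrt{1-\ell_{n}^{2}}$ and recalling $\sigma_{n}=2\pi^{2}(1-\ell_{n}^{2})^{1/2}$ yields \eqref{est:Psinn}.

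For \eqref{est:Psin'n}, the same substitution gives $(\Psi_{n'},\Psi_{n}\xi_{n})_{L^{2}}=\sqrt{1-\ell_{n}^{2}}\int_{\RR^{4}}\psi\big(z(y)\big)\,\psi(y)\,\xi\big((\sigma t)^{-1}|y|\big)\,\d y$. On the support of $\xi\big((\sigma t)^{-1}|y|\big)$ one has $|y_{1}|\le|y|\le2\sigma t$, whereas $|(\ell_{n}-\ell_{n'})t|\ge10\sigma t$ by the choice \eqref{def:sigma} of $\sigma$; since $0<\sqrt{1-\ell_{n}^{2}}\le1$ and $0<\sqrt{1-\ell_{n'}^{2}}\le1$, this forces $|z(y)|\ge|z_{1}(y)|\ge8\sigma t$ there, and then \eqref{est:phipsi} (with $\alpha=0$) gives $|\psi(z(y))|\lesssim t^{-2}$ uniformly on $\operatorname{supp}\xi_{n}$. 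It therefore suffices to estimate $t^{-2}\int_{|y|\le2\sigma t}|\psi(y)|\,\d y\lesssim t^{-2}\big(1+\int_{1}^{2\sigma t}r\,\d r\big)\lesssim1$, which is \eqref{est:Psin'n}.

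All these computations are elementary; the one point deserving care is the off-diagonal bound, where one exploits that the radius $2\sigma t$ of the cutoff $\xi_{n}$ is, by the very definition \eqref{def:sigma} of $\sigma$, much smaller than the separation $|\ell_{n}-\ell_{n'}|t$ of the soliton centers, so that $\Psi_{n'}$ is pointwise $O(t^{-2})$ on $\operatorname{supp}\xi_{n}$. Without the cutoff the integral $\int_{\RR^{4}}\Psi_{n}\Psi_{n'}\,\d x$ diverges logarithmically, which is exactly why $\xi_{n}$ is present in \eqref{def:sigmaRn}.
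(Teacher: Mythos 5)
Your proof is correct and takes essentially the same route as the paper: the Lorentz change of variable adapted to the $n$-th soliton, the pointwise bound $|\Psi_{n'}|\lesssim t^{-2}$ on $\operatorname{supp}\xi_{n}$ deduced from \eqref{def:sigma} and the $\langle x\rangle^{-2}$ decay of $\psi$, and a radial computation isolating the $|x|^{-4}$ main term via \eqref{est:phi} to produce $2\pi^{2}\log t$. The only cosmetic difference is that the paper splits the diagonal integral into four pieces ($H_{3},\dots,H_{6}$) whereas you squeeze the cutoff between $\mathbf{1}_{[0,\sigma t]}$ and $\mathbf{1}_{[0,2\sigma t]}$; both yield the same $\sigma_{n}\log t+O(1)$.
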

\begin{proof}
	Proof of~\eqref{est:Psin'n}. For any $n'\ne n$, we consider the change of variable,
	\begin{equation*}
	\left(\frac{x_{1}-\ell_{n}t}{\sqrt{1-\ell_{n}^{2}}},\bar{x}\right)\mapsto (y_{1},\bar{y}).
	\end{equation*}
	It follows that 
	\begin{equation*}
	\left(\Psi_{n'},\Psi_{n}\xi_{n}\right)_{L^{2}}=(1-\ell_{n}^{2})^{\frac{1}{2}}\int_{\RR^{4}}\psi(x)\xi\left(\frac{|x|}{\sigma t}\right)\psi\left(\frac{(1-\ell_{n}^{2})^{\frac{1}{2}}x_{1}-(\ell_{n'}-\ell_{n})t}{\sqrt{1-\ell_{n'}^{2}}},\bar{x}\right)\d x.
	\end{equation*}
	From the definitions of $\xi$, $\sigma$ and $\xi_{n}$ respectively in~\eqref{def:xi},~\eqref{def:sigma} and~\eqref{def:xin}, for any $x=(x_{1},\bar{x})\in {\rm{Supp}} \xi ((\sigma t)^{-1}|\cdot|)$, we have 
	\begin{equation*}
	\left|(1-\ell_{n}^{2})^{\frac{1}{2}}x_{1}-(\ell_{n'}-\ell_{n})t\right|\ge 
	\left(\left|\ell_{n'}-\ell_{n}\right|-2\sigma (1-\ell_{n}^{2})^{\frac{1}{2}}\right)t\ge \frac{1}{2}|\ell_{n'}-\ell_{n}|t.
	\end{equation*}
	Based on the above inequality and the decay property of $\psi$ in~\eqref{est:phipsi}, for any $x=(x_{1},\bar{x})\in {\rm{Supp}} \xi ((\sigma t)^{-1}|\cdot|)$,
	\begin{equation*}
	\left|\psi\left(\frac{(1-\ell_{n}^{2})^{\frac{1}{2}}x_{1}-(\ell_{n'}-\ell_{n})t}{\sqrt{1-\ell_{n'}^{2}}},\bar{x}\right)\right|\lesssim |(1-\ell_{n}^{2})^{\frac{1}{2}}x_{1}-(\ell_{n'}-\ell_{n})t|^{-2}\lesssim t^{-2}.
	\end{equation*}
	Therefore, using the decay property of $\psi$ again, we have 
	\begin{equation*}
	\left|\left(\Psi_{n'},\Psi_{n}\xi_{n}\right)_{L^{2}}\right|\lesssim t^{-2}\int_{|x|\le 2\sigma t}|\psi(x)|\d x\lesssim t^{-2}\int_{|x|\le 2\sigma t}\langle x \rangle^{-2}\d x\lesssim 1,
	\end{equation*}
	which means~\eqref{est:Psin'n}.
	
	Proof of~\eqref{est:Psinn}. For $n=1,\dots,N$, we consider the change of variable again,
		\begin{equation*}
	\left(\frac{x_{1}-\ell_{n}t}{\sqrt{1-\ell_{n}^{2}}},\bar{x}\right)\mapsto (y_{1},\bar{y}).
	\end{equation*}
	It follows that 
	\begin{equation*}
	\left(\Psi_{n},\Psi_{n} \xi_{n}\right)_{L^{2}}=(1-\ell_{n}^{2})^{\frac{1}{2}}
	\int_{\RR^{4}}\psi^{2}(x)\xi\left(\frac{|x|}{\sigma t}\right)\d x.
	\end{equation*}
	Using the definition of $\xi$ in~\eqref{def:xi} again, we decompose
	\begin{equation*}
	\int_{\RR^{4}}\psi^{2}(x)\xi\left(\frac{|x|}{\sigma t}\right)\d x=H_{3}+H_{4}+H_{5}+H_{6},
	\end{equation*}
	where
	\begin{equation*}
	\begin{aligned}
	H_{3}&=\int_{|x|<1}\psi^{2}(x)\d x,\quad \quad \ 
	H_{4}=\int_{\sigma t<|x|<2\sigma t}\psi^{2}(x)\xi\left(\frac{|x|}{\sigma t}\right)\d x,\\
	H_{5}&=\int_{1<|x|<\sigma t}|x|^{-4}\d x,\quad 
	H_{6}=\int_{1<|x|<\sigma t}\left(\psi^{2}(x)-|x|^{-4}\right)\d x.
	\end{aligned}
	\end{equation*}
	By the asymptotic and  decay properties of $\psi$ in~\eqref{est:phi} and~\eqref{est:phipsi}, we have 
	\begin{equation*}
	\begin{aligned}
	\left|H_{3}\right|+|H_{4}|+|H_{6}|
	&\lesssim \int_{|x|<1}\langle x \rangle ^{-4}\d x 
	+\int_{\sigma t<|x|<2\sigma t}\langle x\rangle ^{-4}\d x
	+\int_{1<|x|<\sigma t}\langle x \rangle^{-5}\d x\\
	&\lesssim \int_{0}^{1}\frac{r^{3}}{1+r^{4}}\d r+\int_{\sigma t}^{2\sigma t}\frac{r^{3}}{1+r^{4}}\d r
	+\int_{1}^{\sigma t}\frac{r^{3}}{1+r^{5}}\d r\lesssim 1.
	\end{aligned}
	\end{equation*}
	Next, using the spherical coordinate transformation in dimension 4,
	\begin{equation*}
	H_{5}=2\pi^{2}\int_{1}^{\sigma t}\frac{1}{r}\d r=2\pi^{2}\log (\sigma t)=2\pi^{2}\log t+O(1).
	\end{equation*}
	Combining above estimates and using the definition of $\sigma_{n}$ in~\eqref{def:sigmaRn}, we obtain~\eqref{est:Psinn}.
\end{proof}
Now, we derive a refined equation of $\dot{\ba}$ from Lemma~\ref{le:equab} and Lemma~\ref{le:Psixi}.
\begin{lemma}[Refined equation of $\dot{\ba}$] For $n=1,\dots,N$, we have 
	\begin{equation}\label{est:Rdota}
	\left|\dot{a}_{n}+\dot{c}_{n}\right|\lesssim \|\vec{\varphi}\|_{\E}\log^{-\frac{1}{2}}t+|\ba|^{2}+|\bb|^{2}+t^{-4}.
	\end{equation}
\end{lemma}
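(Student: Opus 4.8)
The plan is to differentiate the quantity $c_{n}$ from~\eqref{def:sigmaRn} and to extract from $\frac{\d}{\d t}(\vp,\Psi_{n}\xi_{n})_{L^{2}}$ precisely the term $-\sigma_{n}\dot a_{n}\log t$, all other contributions being of size $\|\vec{\varphi}\|_{\E}\log^{\frac{1}{2}}t$ or $|\dot\ba|+|\dot\bb|$; dividing by $\sigma_{n}\log t$ turns the main term into $-\dot a_{n}$, and feeding in the crude bound of Lemma~\ref{le:equab} then closes the estimate. Concretely, writing $c_{n}=\frac{1}{\sigma_{n}}\,\frac{(\vp,\Psi_{n}\xi_{n})_{L^{2}}}{\log t}$, the product rule gives
\[
\dot c_{n}=\frac{1}{\sigma_{n}\log t}\,\frac{\d}{\d t}\big(\vp,\Psi_{n}\xi_{n}\big)_{L^{2}}-\frac{\big(\vp,\Psi_{n}\xi_{n}\big)_{L^{2}}}{\sigma_{n}\,t\,(\log t)^{2}},
\]
and the second term is harmless: by the Sobolev inequality~\eqref{est:Sobo} and~\eqref{est:Psi34}, $|(\vp,\Psi_{n}\xi_{n})_{L^{2}}|\lesssim\|\vp\|_{L^{4}}\|\Psi_{n}\xi_{n}\|_{L^{4/3}}\lesssim\|\vec{\varphi}\|_{\E}\,t$, so it is $O(\|\vec{\varphi}\|_{\E}\log^{-2}t)$. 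Everything therefore reduces to analysing $\frac{\d}{\d t}(\vp,\Psi_{n}\xi_{n})_{L^{2}}=(\pt\vp,\Psi_{n}\xi_{n})_{L^{2}}+(\vp,\pt(\Psi_{n}\xi_{n}))_{L^{2}}$.

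For the first piece I would use $\pt\vp=\vpp-{\rm{Mod}}_{1}$ from~\eqref{equ:vp}. By Cauchy--Schwarz and~\eqref{est:Psi2}, $(\vpp,\Psi_{n}\xi_{n})_{L^{2}}=O(\|\vec{\varphi}\|_{\E}\log^{\frac{1}{2}}t)$. For $({\rm{Mod}}_{1},\Psi_{n}\xi_{n})_{L^{2}}$ I expand ${\rm{Mod}}_{1}=\sum_{m}\dot a_{m}\Psi_{m}+\sum_{(m,k)\in I^{0}}\dot b_{m,k}\Phi_{m,k}$: by~\eqref{est:Psinn} the diagonal term equals $\dot a_{n}(\sigma_{n}\log t+O(1))$, by~\eqref{est:Psin'n} the off-diagonal $\Psi$-terms are $O(|\dot\ba|)$, and since $\phi_{k}$ decays like $\langle x\rangle^{-3}$ against $\psi\sim\langle x\rangle^{-2}$ (see~\eqref{est:phipsi}), a change of variables and the integrability of $\langle x\rangle^{-5}$ over $\RR^{4}$ give $(\Phi_{m,k},\Psi_{n}\xi_{n})_{L^{2}}=O(1)$, so the $\Phi$-contribution is $O(|\dot\bb|)$. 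Altogether $({\rm{Mod}}_{1},\Psi_{n}\xi_{n})_{L^{2}}=\sigma_{n}\dot a_{n}\log t+O(|\dot\ba|+|\dot\bb|)$.

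For the second piece I split $\pt(\Psi_{n}\xi_{n})=(\pt\Psi_{n})\xi_{n}+\Psi_{n}\pt\xi_{n}$ and use that $\Psi_{n}$ is a travelling wave, $\pt\Psi_{n}=-\ell_{n}\partial_{x_{1}}\Psi_{n}$. The key point is that a direct estimate of $(\vp,(\partial_{x_{1}}\Psi_{n})\xi_{n})_{L^{2}}$ would only give $O(\|\vec{\varphi}\|_{\E}\log^{\frac{3}{4}}t)$, which is too weak once divided by $\log t$; instead I integrate by parts in $x_{1}$,
\[
\big(\vp,(\partial_{x_{1}}\Psi_{n})\xi_{n}\big)_{L^{2}}=-\big(\partial_{x_{1}}\vp,\Psi_{n}\xi_{n}\big)_{L^{2}}-\big(\vp,\Psi_{n}\partial_{x_{1}}\xi_{n}\big)_{L^{2}},
\]
where the first term is $O(\|\vec{\varphi}\|_{\E}\log^{\frac{1}{2}}t)$ by~\eqref{est:Psi2}; using $|\partial_{x_{1}}\xi_{n}|+|\pt\xi_{n}|\lesssim t^{-1}\textbf{1}_{\Omega_{n}}$ from~\eqref{est:ptpxxin} together with~\eqref{est:Psi134}, both the $\partial_{x_{1}}\xi_{n}$-term and the $\pt\xi_{n}$-term are $O(\|\vec{\varphi}\|_{\E})$. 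Hence $(\vp,\pt(\Psi_{n}\xi_{n}))_{L^{2}}=O(\|\vec{\varphi}\|_{\E}\log^{\frac{1}{2}}t)$.

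Combining these and dividing by $\sigma_{n}\log t$ gives $\dot c_{n}=-\dot a_{n}+O\big(\log^{-1}t\,(|\dot\ba|+|\dot\bb|)\big)+O(\|\vec{\varphi}\|_{\E}\log^{-\frac{1}{2}}t)$. Inserting $|\dot\ba|+|\dot\bb|\lesssim\|\vec{\varphi}\|_{\E}+|\ba|^{2}+|\bb|^{2}+t^{-4}$ from Lemma~\ref{le:equab} and using $\log^{-1}t\le\log^{-\frac{1}{2}}t$ for $t\ge T_{0}$ then yields~\eqref{est:Rdota}. The main obstacle is the logarithmic bookkeeping: the $\log t$ in the denominator only barely beats the $\log^{\frac{1}{2}}t$ growth of $\|\Psi_{n}\xi_{n}\|_{L^{2}}$, so one must integrate by parts in the $(\pt\Psi_{n})\xi_{n}$ term rather than estimate it directly, and must verify that the $\Phi$--$\Psi$ overlap integrals are genuinely $O(1)$, which rests on the improved decay of the $\phi_{k}$ recorded in~\eqref{est:phipsi} compared to that of $\psi$.
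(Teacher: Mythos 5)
Your proof is correct and follows essentially the same route as the paper: same product rule for $\dot c_{n}$, same expansion of $\frac{\d}{\d t}(\vp,\Psi_{n}\xi_{n})_{L^{2}}$ via~\eqref{equ:vp}, same use of Lemma~\ref{le:Psixi} for the $\rm{Mod}_{1}$-overlap, and the same integration by parts combined with~\eqref{est:Psi2} and~\eqref{est:Psi134} to control the transport term. The only cosmetic difference is that you retain $|\dot\ba|+|\dot\bb|$ explicitly until the final step instead of substituting~\eqref{est:ab} inline as the paper does, which changes nothing.
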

\begin{proof}
	First, by~\eqref{equ:vp} and direct computation,
	\begin{equation*}
	\begin{aligned}
	\frac{\d}{\d t}\left(\vp,\Psi_{n}\xi_{n}\right)_{L^{2}}
	=&\left(\pt\vp,\Psi_{n}\xi_{n}\right)_{L^{2}}+\left(\vp,\Psi_{n}\pt \xi_{n}\right)_{L^{2}}-\ell_{n}\left(\vp,\xi_{n}\partial_{x_{1}}\Psi_{n}\right)_{L^{2}}\\
	=&\left(\vpp,\Psi_{n}\xi_{n}\right)_{L^{2}}-
	\left({\rm{Mod}}_{1,1},\Psi_{n}\xi_{n}\right)_{L^{2}}
	-\left({\rm{Mod}}_{1,2},\Psi_{n}\xi_{n}\right)_{L^{2}}\\
	&+\left(\vp,\Psi_{n}\pt \xi_{n}\right)_{L^{2}}-\ell_{n}\left(\vp,\xi_{n}\partial_{x_{1}}\Psi_{n}\right)_{L^{2}}.
	\end{aligned}
	\end{equation*}
	From the Cauchy-Schwarz inequality and~\eqref{est:Psi2},
	\begin{equation*}
	\left|\left(\vpp,\Psi_{n}\xi_{n}\right)_{L^{2}}\right|
	\lesssim \|\vpp\|_{L^{2}}\|\Psi_{n}\xi_{n}\|_{L^{2}}
	\lesssim \|\vec{\varphi}\|_{\E}\log^{\frac{1}{2}}t .
	\end{equation*}
	Next, by~\eqref{est:ab},~\eqref{est:Psin'n} and~\eqref{est:Psinn},
	\begin{equation*}
	\begin{aligned}
	\left({\rm{Mod}}_{1,1},\Psi_{n}\xi_{n}\right)_{L^{2}}
	&=\dot{a}_{n}\left(\Psi_{n},\Psi_{n}\xi_{n}\right)_{L^{2}}+\sum_{n'\ne n}\dot{a}_{n'}\left(\Psi_{n'},\Psi_{n}\xi_{n}\right)_{L^{2}}\\
	&=\left(\sigma_{n} \log t\right) \dot{a}_{n}+O\left(\|\vec{\varphi}\|_{\E}+|\ba|^{2}+|\bb|^{2}+t^{-4}\right).
	\end{aligned}
	\end{equation*}
    Using~\eqref{est:ab}, the Young's inequality and the decay properties of $\psi$ and $\phi_{k}$ in~\eqref{est:phipsi},
	\begin{equation*}
	\begin{aligned}
	\left|\left({\rm{Mod}}_{1,2},\Psi_{n}\xi_{n}\right)_{L^{2}}\right|
	&\lesssim |\dot{\bb}|\sum_{(n',k)\in I^{0}}\int_{\RR^{4}}\left(|\Phi_{n',k}|^{\frac{5}{3}}+|\Psi_{n}|^{\frac{5}{2}}\right)\d x\\
	&\lesssim \left(\|\vec{\varphi}\|_{\E}+|\ba|^{2}+|\bb|^{2}+t^{-4}\right)\int_{\RR^{4}}\langle x\rangle^{-5}\d x\\
	&\lesssim \left(\|\vec{\varphi}\|_{\E}+|\ba|^{2}+|\bb|^{2}+t^{-4}\right).
	\end{aligned}
		\end{equation*}
		Note that, from~\eqref{est:ptpxxin},~\eqref{est:Psi134} and the Sobolev inequality~\eqref{est:Sobo}, 
		\begin{equation*}
		\left|\left(\vp,\Psi_{n}\pt \xi_{n}\right)_{L^{2}}\right|\lesssim t^{-1}\|\vp\|_{L^{4}}\|\Psi_{n}{\textbf{1}}_{\Omega_{n}}\|_{L^{\frac{4}{3}}}\lesssim \|\vec{\varphi}\|_{\E}.
		\end{equation*}
		Then, by integration by parts, 
		\begin{equation*}
		\left(\vp,\xi_{n}\partial_{x_{1}}\Psi_{n}\right)_{L^{2}}
		=-\left(\partial_{x_{1}}\vp,\Psi_{n}\xi_{n}\right)_{L^{2}}
		-\left(\vp,\Psi_{n}\partial_{x_{1}}\xi_{n}\right)_{L^{2}}.
		\end{equation*}
		Thus, using~\eqref{est:ptpxxin},~\eqref{est:Psi2},~\eqref{est:Psi134}, the Sobolev inequality~\eqref{est:Sobo} and the Cauchy-Schwarz inequality again,
		\begin{equation*}
		\begin{aligned}
		\left|\left(\vp,\xi_{n}\partial_{x_{1}}\Psi_{n}\right)_{L^{2}}\right|
		&\lesssim 
		\left|\left(\partial_{x_{1}}\vp,\Psi_{n}\xi_{n}\right)_{L^{2}}\right|
		+\left|\left(\vp,\Psi_{n}\partial_{x_{1}}\xi_{n}\right)_{L^{2}}\right|\\
		&\lesssim \|\nabla \vp\|_{L^{2}}\|\Psi_{n}\xi_{n}\|_{L^{2}}+t^{-1}\|\vp\|_{L^{4}}\|\Psi_{n}{\textbf{1}}_{\Omega_{n}}\|_{L^{\frac{4}{3}}}\lesssim \|\vec{\varphi}\|_{\E}\log^{\frac{1}{2}}t.
		\end{aligned}
		\end{equation*}
		Gathering above estimates, we have
		\begin{equation}\label{est:dota1}
		\left|(\sigma_{n} \log t)\dot{a}_{n}+\frac{\d}{\d t}\left(\vp,\Psi_{n}\xi_{n}\right)_{L^{2}}\right|
		\lesssim \|\vec{\varphi}\|_{\E}\log ^{\frac{1}{2}}t+|\ba|^{2}+|\bb|^{2}+t^{-4}.
		\end{equation}
		Last, using~\eqref{est:Psi34},
		\begin{align}
		\dot{c}_{n}
		&=\left(\sigma_{n}\log t\right)^{-1}\frac{\d}{\d t}\left(\vp,\Psi_{n}\xi_{n}\right)_{L^{2}}
		-\left(\sigma_{n}t\log^{2}t\right)^{-1} \left(\vp,\Psi_{n}\xi_{n}\right)_{L^{2}}\nonumber\\
		&=\left(\sigma_{n}\log t\right)^{-1}\frac{\d}{\d t}\left(\vp,\Psi_{n}\xi_{n}\right)_{L^{2}}
		+O\left(\|\vec{\varphi}\|_{\E}\log^{-2} t\right).\label{est:dotc}
		\end{align}
		Dividing both sides of the inequality~\eqref{est:dota1} by $\sigma_{n} \log t$ and then using~\eqref{est:dotc}, we obtain~\eqref{est:Rdota}.
\end{proof}

Last, we consider the equation of the exponential directions.

\begin{lemma}[Exponential directions]
	In the context of Proposition~\ref{pro:dec}, for $(n,j)\in I$, let $z_{n,j}^{\pm}=\left(\vec{\varphi},\vec{Z}_{n,j}^{\pm}\right)_{L^{2}}$ and $\alpha_{n,j}=\lambda_{j}(1-\ell_{n}^{2})^{\frac{1}{2}}>0$. Then, we have 
	\begin{equation}\label{est:dz}
	\left|\frac{\d}{\d t}z_{n,j}^{\pm}\pm \alpha_{n,j} z_{n,j}^{\pm}\right|\lesssim 
	\|\vec{\varphi}\|_{\E}^{2}+|\ba|^{2}+|\bb|^{2}+t^{-4}.
	\end{equation}
\end{lemma}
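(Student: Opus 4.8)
The plan is to run the standard argument controlling the stable and unstable directions of the linearized flow, here in the $N$-soliton moving frame. First I would differentiate $z_{n,j}^{\pm}=(\vec{\varphi},\vec{Z}_{n,j}^{\pm})_{L^{2}}$ in $t$, use that $\vec{Z}_{n,j}^{\pm}(t,x)=\vec{Z}_{j,\ell_{n}}^{\pm}(x-\bell_{n}t)$ solves $\pt\vec{Z}_{n,j}^{\pm}=-\ell_{n}\partial_{x_{1}}\vec{Z}_{n,j}^{\pm}$, and substitute~\eqref{equ:vvp}, to obtain
\begin{equation*}
\frac{\d}{\d t}z_{n,j}^{\pm}=\left(\vec{\mathcal{L}}\vec{\varphi},\vec{Z}_{n,j}^{\pm}\right)_{L^{2}}-\ell_{n}\left(\vec{\varphi},\partial_{x_{1}}\vec{Z}_{n,j}^{\pm}\right)_{L^{2}}-\left(\vec{{\rm{Mod}}},\vec{Z}_{n,j}^{\pm}\right)_{L^{2}}+\left(\vec{G}+\vec{R}_{1}+\vec{R}_{2},\vec{Z}_{n,j}^{\pm}\right)_{L^{2}}.
\end{equation*}

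The core of the argument is to extract the exact coefficient $\mp\alpha_{n,j}$ from the first two terms. Let $\vec{\mathcal{L}}_{n}$ be the translate by $\bell_{n}t$ of ${\rm{J}}\mathcal{H}_{\ell_{n}}$; a direct matrix computation from the definitions of $\vec{\mathcal{L}}$ and $\mathcal{H}_{\ell_{n}}$ gives $\vec{\mathcal{L}}\vec{\varphi}=\vec{\mathcal{L}}_{n}\vec{\varphi}-\ell_{n}\partial_{x_{1}}\vec{\varphi}+\big(0,\,3\sum_{n'\ne n}Q_{n'}^{2}\vp\big)$. After the integration-by-parts cancellation $\ell_{n}\int_{\RR^{4}}\partial_{x_{1}}(\vec{\varphi}\cdot\vec{Z}_{n,j}^{\pm})\,\d x=0$, the first two terms reduce to $(\vec{\mathcal{L}}_{n}\vec{\varphi},\vec{Z}_{n,j}^{\pm})_{L^{2}}$ plus the interaction error $(3\sum_{n'\ne n}Q_{n'}^{2}\vp,\,Z_{n,j}^{\pm,2})_{L^{2}}$, where $Z_{n,j}^{\pm,2}$ denotes the second component of $\vec{Z}_{n,j}^{\pm}$. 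For the leading piece, since in the frame centred at $\bell_{n}t$ we have $\vec{Z}_{n,j}^{\pm}=\mathcal{H}_{\ell_{n}}\vec{\Upsilon}_{j,\ell_{n}}^{\pm}$, I would invoke the self-adjointness of $\mathcal{H}_{\ell_{n}}$, the antisymmetry ${\rm{J}}^{*}=-{\rm{J}}$, the identity~\eqref{equ:zy} together with ${\rm{J}}^{2}=-{\rm{Id}}$, to compute
\begin{equation*}
\left(\vec{\mathcal{L}}_{n}\vec{\varphi},\vec{Z}_{n,j}^{\pm}\right)_{L^{2}}=-\left(\mathcal{H}_{\ell_{n}}\vec{\varphi},{\rm{J}}\mathcal{H}_{\ell_{n}}\vec{\Upsilon}_{j,\ell_{n}}^{\pm}\right)_{L^{2}}=\mp\alpha_{n,j}\left(\mathcal{H}_{\ell_{n}}\vec{\varphi},\vec{\Upsilon}_{j,\ell_{n}}^{\pm}\right)_{L^{2}}=\mp\alpha_{n,j}z_{n,j}^{\pm},
\end{equation*}
so that, transferring $\mp\alpha_{n,j}z_{n,j}^{\pm}$ to the left-hand side, the desired $\pm\alpha_{n,j}z_{n,j}^{\pm}$ appears.

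It then remains to bound the four error terms by the right-hand side of~\eqref{est:dz}, which should all be routine. For the interaction error $(3\sum_{n'\ne n}Q_{n'}^{2}\vp,Z_{n,j}^{\pm,2})_{L^{2}}$ I would apply H\"older with $\vp\in L^{4}$ via~\eqref{est:Sobo} and the rest in $L^{4/3}$; the exponential decay~\eqref{est:decY} of $\vec{Z}_{n,j}^{\pm}$ around $\bell_{n}t$ and the decay of $Q_{\ell_{n'}}$ around $\bell_{n'}t$, whose centres lie at distance $\gtrsim|\ell_{n}-\ell_{n'}|t$, make this term $\lesssim e^{-ct}\|\vec{\varphi}\|_{\E}\lesssim\|\vec{\varphi}\|_{\E}^{2}+t^{-4}$. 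For $(\vec{{\rm{Mod}}},\vec{Z}_{n,j}^{\pm})_{L^{2}}$ I would write $\vec{{\rm{Mod}}}=\sum_{n}\dot{a}_{n}\vec{\Psi}_{n}+\sum_{(n,k)\in I^{0}}\dot{b}_{n,k}\vec{\Phi}_{n,k}$; the $n$-th-soliton terms vanish because $\vec{\Psi}_{n},\vec{\Phi}_{n,k}\in{\rm{Ker}}\,\mathcal{H}_{\ell_{n}}$ (translated) by~\eqref{equ:ker} while $\vec{Z}_{n,j}^{\pm}=\mathcal{H}_{\ell_{n}}\vec{\Upsilon}_{j,\ell_{n}}^{\pm}$, and the remaining ones are $O\big(e^{-ct}(|\dot{\ba}|+|\dot{\bb}|)\big)$ by exponential localization; with~\eqref{est:ab} and Young's inequality this is $\lesssim\|\vec{\varphi}\|_{\E}^{2}+|\ba|^{2}+|\bb|^{2}+t^{-4}$. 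Next, $(\vec{G},\vec{Z}_{n,j}^{\pm})_{L^{2}}\lesssim\|G\|_{L^{2}}\lesssim|\ba|^{2}+|\bb|^{2}+t^{-4}$ by Cauchy--Schwarz and~\eqref{est:G}. Finally, $(\vec{R}_{1},\vec{Z}_{n,j}^{\pm})_{L^{2}}$ is at least quadratic in $\vp$ and so $\lesssim\|\vec{\varphi}\|_{\E}^{2}$ by H\"older and~\eqref{est:Sobo}, while in $(\vec{R}_{2},\vec{Z}_{n,j}^{\pm})_{L^{2}}$ each summand carries either a factor $|\ba|+|\bb|$ coming from $U,V$ or is an exponentially small soliton interaction, hence $\lesssim\|\vec{\varphi}\|_{\E}^{2}+|\ba|^{2}+|\bb|^{2}+t^{-4}$; using $\|\vec{\varphi}\|_{\E}\lesssim\gamma_{0}$ throughout and summing the four bounds yields~\eqref{est:dz}.

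The step I expect to be the main obstacle is the algebraic identity producing the coefficient $\mp\alpha_{n,j}$: one must keep careful track of the non-self-adjointness of $\vec{\mathcal{L}}$, of the cancellation of the two $\ell_{n}\partial_{x_{1}}$ cross terms, and of the fact that $\vec{\mathcal{L}}$ carries the potential $3\sum_{n}Q_{n}^{2}$ rather than the single $3Q_{\ell_{n}}^{2}$, so that the mismatch is a genuine soliton interaction absorbable only thanks to the exponential decay of $\vec{Z}_{n,j}^{\pm}$ away from $\bell_{n}t$.
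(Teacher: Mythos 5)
Your proposal follows essentially the same route as the paper: differentiate $z_{n,j}^{\pm}$ via~\eqref{equ:vvp}, extract $\mp\alpha_{n,j}z_{n,j}^{\pm}$ from the linear part by writing $\vec{\mathcal{L}}$ as the translated ${\rm J}\mathcal{H}_{\ell_n}$ minus $\ell_n\partial_{x_1}$ plus the off-soliton potential $3\sum_{n'\ne n}Q_{n'}^2$, use self-adjointness of $\mathcal{H}_{\ell_n}$ together with ${\rm J}^*=-{\rm J}$, ${\rm J}^2=-{\rm Id}$ and~\eqref{equ:zy}, kill the $n$-th soliton modulation pairings via~\eqref{equ:ker}, and estimate $\vec{G},\vec{R}_1,\vec{R}_2$ exactly as the paper does. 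The algebraic heart is correctly identified and carried out, and the paper likewise produces the interaction error $3\sum_{n'\ne n}(\vp,Q_{n'}^2\Upsilon_{n,j}^{\pm})$.

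One small inaccuracy: you assert that the cross terms $(3\sum_{n'\ne n}Q_{n'}^2\vp,Z_{n,j}^{\pm,2})_{L^2}$ and the $n'\ne n$ modulation pairings are $O(e^{-ct}\cdot\dots)$ by exponential localization. That is too strong. Although $\vec{Z}_{n,j}^{\pm}$ decays exponentially around $\bell_n t$, the other factor ($Q_{n'}$ or $\Psi_{n'}$) has only algebraic tails around $\bell_{n'}t$; the dominant contribution comes from the ball of fixed radius around $\bell_n t$, where $Q_{n'}^2\lesssim t^{-6}$ and $\Psi_{n'}\lesssim t^{-2}$. The paper accordingly obtains polynomial bounds ($t^{-6}$-type for the $Q$ interaction, $t^{-2}$ for the modulation cross pairings) via Lemma~\ref{le:int}, then closes with Young's inequality as you do. Since $t^{-2}\|\vec{\varphi}\|_{\mathcal{H}}\lesssim\|\vec{\varphi}\|_{\mathcal{H}}^2+t^{-4}$, this does not affect the conclusion, but the stated decay rate should be polynomial rather than exponential.
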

\begin{proof}
	First, by~\eqref{equ:vvp}, we compute
	\begin{align*}
	\frac{\d}{\d t}z_{n,j}^{\pm}=&\left(\vec{\mathcal{L}}\vec{\varphi},\vec{Z}_{n,j}^{\pm}\right)_{L^{2}}
	-\ell_{n}\left(\vec{\varphi},\partial_{x_{1}}\vec{Z}_{n,j}^{\pm}\right)_{L^{2}}\\
	&+\left(\vec{G},\vec{Z}_{n,j}^{\pm}\right)_{L^{2}}+\left(\vec{R}_{1},\vec{Z}_{n,j}^{\pm}\right)_{L^{2}}+\left(\vec{R}_{2},\vec{Z}_{n,j}^{\pm}\right)_{L^{2}}-\left(\vec{{\rm{Mod}}},\vec{Z}_{n,j}^{\pm}\right)_{L^{2}}.
	\end{align*}
	From~\eqref{equ:zy} and integration by parts,
	\begin{equation*}
	\begin{aligned}
	&\left(\vec{\mathcal{L}}\vec{\varphi},\vec{Z}_{n,j}^{\pm}\right)_{L^{2}}
	-\ell_{n}\left(\vec{\varphi},\partial_{x_{1}}\vec{Z}_{n,j}^{\pm}\right)_{L^{2}}\\
	&=-\left(\vec{\varphi},\left(\mathcal{H}_{\ell_{n}}{\rm{J}}\vec{Z}^{\pm}_{j,\ell_{n}}\right)(\cdot-\bell_{n} t)\right)_{L^{2}}+3\sum_{n'\ne n}\left(\vp,Q_{n'}^{2}\Upsilon^{\pm}_{n,j}\right)_{L^{2}}\\
	&=\mp \alpha_{n,j}z_{n,j}^{\pm}+3\sum_{n'\ne n}\left(\vp,Q_{n'}^{2}\Upsilon^{\pm}_{n,j}\right)_{L^{2}},
	\end{aligned}
	\end{equation*}
	where
	\begin{equation*}
	\Upsilon_{n,j}^{\pm}=\pm \lambda_{j}(1-\ell_{n}^{2})^{\frac{1}{2}}\Upsilon_{j,\ell_{n}}^{\pm,1}(\cdot-\bell_{n} t).
	\end{equation*}
	Note that, by Lemma~\ref{le:int} (i) and the decay property of $Q$ and ${\Upsilon}^{\pm,1}_{j,\ell}$ in~\eqref{est:aspQ} and~\eqref{est:decY},
	\begin{equation*}
	\sum_{n'\ne n}\left|\left({\varphi}_{1},Q_{n'}^{2}\Upsilon^{\pm}_{n,j}\right)_{L^{2}}\right|
	\lesssim \sum_{n'\ne n}\|\vp\|_{L^{4}}\|Q_{n'}^{2}\Upsilon^{\pm}_{n,j}\|_{L^{\frac{4}{3}}}\lesssim 
	\|\vec{\varphi}\|_{\E}^{2}+t^{-12}.
	\end{equation*}
	Next, from~\eqref{est:decY},~\eqref{est:G} and the expression of $R_{1}$,
	\begin{equation*}
	\begin{aligned}
	\left|\left(\vec{G},\vec{Z}_{n,j}^{\pm}\right)_{L^{2}}\right|
	&\lesssim \|G\|_{L^{2}}\|\Upsilon^{\pm}_{n,j}\|_{L^{2}}\lesssim |\ba|^{2}+|\bb|^{2}+t^{-4},\\
	\left|\left(\vec{R}_{1},\vec{Z}_{n,j}^{\pm}\right)_{L^{2}}\right|
	&\lesssim 
	\|\vp\|_{L^{4}}^{2}\left(\|\vp\|_{L^{4}}+\|R+U+V\|_{L^{4}}\right)\|\Upsilon^{\pm}_{n,j}\|_{L^{4}}\lesssim \|\vec{\varphi}\|_{\E}^{2}.
	\end{aligned}
	\end{equation*}
	Using the decay property of $Q$ in~\eqref{est:aspQ} and Lemma~\ref{le:int} again,
	\begin{equation*}
	\begin{aligned}
	\left|\left(\vec{R}_{2},\vec{Z}_{n,j}^{\pm}\right)_{L^{2}}\right|
	&\lesssim \|\vp\|_{L^{4}}\|\Upsilon^{\pm}_{n,j}\|_{L^{2}}\left(\|U+V\|_{L^{8}}^{2}+\|R(U+V)\|_{L^{4}}\right)\\
	&\quad +\sum_{n'\ne n''}\|\vp\|_{L^{4}}\|\Upsilon^{\pm}_{n,j}\|_{L^{2}}\|Q_{n'}Q_{n''}\|_{L^{4}}\\
	&\lesssim \|\vec{\varphi}\|_{\E}\left(|\ba|+|\bb|+t^{-3}\right)\lesssim \|\vec{\varphi}\|_{\E}^{2}+|\ba|^{2}+|\bb|^{2}+t^{-6}.
	\end{aligned}
	\end{equation*}
	Last, using~\eqref{equ:ker},~\eqref{equ:zy}, change of variable and integration by parts,
	\begin{equation*}
	\begin{aligned}
	&\left(\vec{\Psi}_{n},\vec{Z}_{n,j}^{\pm}\right)_{L^{2}}
	=\left(\vec{\psi}_{\ell_{n}},\mathcal{H}_{\ell_{n}}\vec{\Upsilon}_{j,\ell_{n}}^{\pm}\right)_{L^{2}}
	=\left(\mathcal{H}_{\ell_{n}}\vec{\psi}_{\ell_{n}},\vec{\Upsilon}_{j,\ell_{n}}^{\pm}\right)_{L^{2}}=0,\\
	&\left(\vec{\Phi}_{n,k},\vec{Z}_{n,j}^{\pm}\right)_{L^{2}}
	=\left(\vec{\phi}_{k,\ell_{n}},\mathcal{H}_{\ell_{n}}\vec{\Upsilon}_{j,\ell_{n}}^{\pm}\right)_{L^{2}}
	=\left(\mathcal{H}_{\ell_{n}}\vec{\phi}_{k,\ell_{n}},\vec{\Upsilon}_{j,\ell_{n}}^{\pm}\right)_{L^{2}}=0.
	\end{aligned}
	\end{equation*}
	Therefore, by~\eqref{est:phipsi},~\eqref{est:decY},~\eqref{est:ab} and the Lemma~\ref{le:int} (i),
	\begin{equation*}
	\begin{aligned}
	\left(\vec{{\rm{Mod}}},\vec{Z}_{n,j}^{\pm}\right)_{L^{2}}
	&=\sum_{n'\ne n}\dot{a}_{n'}\left(\vec{\Psi}_{n'},\vec{Z}_{n,j}^{\pm}\right)_{L^{2}}+
	\sum_{n'\ne n}\sum_{k=1}^{K}\dot{b}_{n',k}\left(\vec{\Phi}_{n',k},\vec{Z}_{n,j}^{\pm}\right)_{L^{2}}\\
	&=O\left((\|\vec{\varphi}\|_{\E}+|\ba|^{2}+|\bb|^{2}+t^{-4})t^{-2}\right)\\
	&=O\left(\|\vec{\varphi}\|_{\E}^{2}+|\ba|^{4}+|\bb|^{4}+t^{-4}\right).
	\end{aligned}
	\end{equation*}
	Gathering above estimates, we obtain~\eqref{est:dz}.
\end{proof}

\section{Proof of Theorem~\ref{the:main}}\label{S:Thm}
In this section, we prove the existence of a solution $\vec{u}=(u,\partial_{t}u)$ of~\eqref{equ:wave} satisfying~\eqref{equ:limu} in Theorem~\ref{the:main}. We argue by compactness and obtain the solution $\vec{u}$ as the weak limit of a sequence of approximate excited multi-solitons $\vec{u}_{m}$.

First, we recall a technical lemma which constructs well-prepared initial data at $t=T\gg 1$ with sufficient freedom related to unstable directions.

\begin{lemma}\label{le:ini}
	There exists $T_{0}\gg 1$ such that for any $T\ge T_{0}$ the following is true. 
	For any $\boldsymbol{z}=(z_{n,j})_{(n,j)\in I}\in \mathcal{B}_{\RR^{|I|}}(T^{-\frac{7}{2}})$, there exist
	\begin{equation*}
	Z=(\tilde{z}_{n,j})_{(n,j)\in I},\quad A=(\tilde{a}_{n})_{n=1,\dots,N} \quad 
	\mbox{and}\quad B=(\tilde{b}_{n,k})_{(n,k)\in I^{0}},
	\end{equation*}
	satisfying
	\begin{equation}\label{est:ZAB}
	\sum_{(n,j)\in I}\left|\tilde{z}_{n,j}\right|+\sum_{n=1}^{N}\left|\tilde{a}_{n}\right|
	+\sum_{(n,k)\in I^{0}}\left|\tilde{b}_{n,k}\right|\lesssim T^{-\frac{7}{2}},
	\end{equation}
	such that the function $\vec{\varphi}(T)=(\vp(T),\vpp(T))\in \dot{H}^{1}\times L^{2}$ defined by
	\begin{equation}\label{def:vpT}
	\vec{\varphi}(T)=\sum_{(n,j)\in I}\tilde{z}_{n,j}\vec{Z}_{n,j}^{+}(T)
	+\sum_{n=1}^{N}\tilde{a}_{n}\vec{\Psi}_{n}(T)+\sum_{(n,k)\in I^{0}}\tilde{b}_{n,k}\vec{\Phi}_{n,k}(T)
	\end{equation} 
	satisfies for all $m=1,\dots,K$, $(n,k)\in I^{0}$ and $(n,j)\in I$,
	\begin{equation*}
	(\vec{\varphi}(T),\vec{\Psi}_{n}(T))_{\E}=(\vec{\varphi}(T),\vec{\Phi}_{n,k}(T))_{\E}=0 \quad \mbox{and}\quad  
	\left(\vec{\varphi}(T),\vec{Z}_{n,j}^{+}(T)\right)_{L^{2}}={z}_{n,j}.
	\end{equation*}
	Moreover, the initial data defined by $\vec{u}(T)=\sum_{n=1}^{N}\vec{Q}_{n}(T)+\vec{\varphi}(T)$ is modulated in the sense of Proposition~\ref{pro:dec} with $a_{n}(T)=b_{n,k}(T)=0$ and $z_{n,j}^{+}(T)=z_{n,j}$ for any $n=1,\dots,N$, $(n,k)\in I^{0}$ and $(n,j)\in I$.
\end{lemma}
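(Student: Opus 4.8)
The plan is to reduce the construction of $\vec{\varphi}(T)$ to solving a finite-dimensional linear system for the coefficients $(\tilde{z}_{n,j})_{(n,j)\in I}$, $(\tilde{a}_{n})_{n=1,\dots,N}$ and $(\tilde{b}_{n,k})_{(n,k)\in I^{0}}$ in~\eqref{def:vpT}, and then to invert the associated matrix by a perturbative argument around the single-soliton regime. Plugging the ansatz~\eqref{def:vpT} into the three requirements $(\vec{\varphi}(T),\vec{\Psi}_{n}(T))_{\E}=0$, $(\vec{\varphi}(T),\vec{\Phi}_{n,k}(T))_{\E}=0$ and $(\vec{\varphi}(T),\vec{Z}_{n,j}^{+}(T))_{L^{2}}=z_{n,j}$ produces a system $M(T)v=w$, with unknown $v=(\tilde{z},\tilde{a},\tilde{b})$, right-hand side $w=(\boldsymbol{z},0,0)$, and $M(T)$ the matrix whose entries are the pairwise inner products of the vectors $\{\vec{Z}_{n,j}^{+}(T)\}$, $\{\vec{\Psi}_{n}(T)\}$, $\{\vec{\Phi}_{n,k}(T)\}$, taken in $L^{2}$ for the rows coming from the third requirement and in $\E$ for the rows coming from the first two.

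The main point is that $M(T)$ is block diagonal with respect to the soliton index $n$ up to a small error, and each diagonal block is a fixed invertible matrix. For fixed $n$, the identities $(\vec{\Psi}_{n},\vec{Z}_{n,j}^{+})_{L^{2}}=(\vec{\Phi}_{n,k},\vec{Z}_{n,j}^{+})_{L^{2}}=0$ — which follow from~\eqref{equ:ker} and the symmetry of $\mathcal{H}_{\ell_{n}}$ for the $L^{2}$ pairing, exactly as in the proof of the exponential directions lemma above — show that, discarding the interactions between distinct solitons, the $n$-th diagonal block has the triangular form $\left(\begin{smallmatrix} G_{n}^{Z} & 0 \\ D_{n} & G_{n}^{\Psi}\end{smallmatrix}\right)$, where $G_{n}^{Z}=\big((\vec{Z}_{n,j}^{+},\vec{Z}_{n,j'}^{+})_{L^{2}}\big)_{j,j'}$, $G_{n}^{\Psi}$ is the Gram matrix of $\{\vec{\Psi}_{n},\vec{\Phi}_{n,k}\}$ for $(\cdot,\cdot)_{\E}$, and $D_{n}$ gathers the remaining $\E$-cross terms; these matrices do not depend on $T$ since translations are $\E$- and $L^{2}$-isometries. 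Here $G_{n}^{\Psi}$ is invertible because $\psi,\phi_{1},\dots,\phi_{K}$ are linearly independent by Lemma~\ref{le:asyker}, while $G_{n}^{Z}$ is invertible because by~\eqref{equ:zy} the $\vec{\Upsilon}_{j,\ell_{n}}^{+}$ are eigenvectors of ${\rm{J}}\mathcal{H}_{\ell_{n}}$ associated with the eigenvalues $\lambda_{j}(1-\ell_{n}^{2})^{\frac{1}{2}}$, so that linear independence of the $Y_{j}$ forces $\{\vec{Z}_{n,j}^{+}\}_{j}$ to be linearly independent in $L^{2}\times L^{2}$ and its Gram matrix to be positive definite. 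Hence the block-diagonal matrix $M_{0}$ obtained from $M(T)$ by erasing all entries pairing objects attached to two different solitons is a fixed invertible matrix.

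It then remains to check that $E(T):=M(T)-M_{0}$ is small. Each of its entries is an inner product between a vector centered near $\bell_{n}t$ and one centered near $\bell_{n'}t$ with $n\ne n'$, so it tends to $0$ as $T\to\infty$: the entries involving a factor $\vec{Z}^{+}$ are exponentially small by~\eqref{est:decY}, and those involving only $\vec{\Psi},\vec{\Phi}$ factors are $O(T^{-2})$ by the decay~\eqref{est:phipsi} and Lemma~\ref{le:int}, as in the proof of Proposition~\ref{pro:dec}. Therefore there is $T_{0}\gg1$ such that for all $T\ge T_{0}$ the matrix $M(T)$ is invertible with $\|M(T)^{-1}\|\lesssim 1$; setting $v=M(T)^{-1}w$ gives the unique coefficients with $|v|\lesssim|w|=|\boldsymbol{z}|\lesssim T^{-\frac{7}{2}}$, which is~\eqref{est:ZAB}. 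The associated $\vec{\varphi}(T)$ from~\eqref{def:vpT} belongs to $\dot{H}^{1}\times L^{2}$, satisfies the three families of relations by construction, and obeys $\|\vec{\varphi}(T)\|_{\E}\lesssim T^{-\frac{7}{2}}$ since $\|\vec{\Psi}_{n}\|_{\E}$, $\|\vec{\Phi}_{n,k}\|_{\E}$, $\|\vec{Z}_{n,j}^{+}\|_{\E}$ are bounded uniformly in $T$. In particular $\|\vec{u}(T)-\sum_{n}\vec{Q}_{n}(T)\|_{\E}=\|\vec{\varphi}(T)\|_{\E}<\gamma_{0}$ for $T$ large; since $\vec{\varphi}(T)$ is already $\E$-orthogonal to every $\vec{\Psi}_{n}(T),\vec{\Phi}_{n,k}(T)$ and $|\tilde{a}|+|\tilde{b}|\ll\gamma_{0}$, uniqueness of the modulated decomposition of Proposition~\ref{pro:dec} forces $a_{n}(T)=b_{n,k}(T)=0$, whence $z_{n,j}^{+}(T)=(\vec{\varphi}(T),\vec{Z}_{n,j}^{+}(T))_{L^{2}}=z_{n,j}$.

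I expect the main obstacle to lie in the perturbative step: proving that the $T$-independent diagonal blocks $G_{n}^{Z}$ and $G_{n}^{\Psi}$ are genuinely invertible — especially the linear independence of the $\vec{Z}_{n,j}^{+}$, which uses the eigenvector structure~\eqref{equ:zy} and a careful treatment of possibly repeated $\lambda_{j}$ — and that every cross-soliton inner product, in all the norms involved, decays with a rate sufficient to be absorbed by $\|M_{0}^{-1}\|$ uniformly in $T$.
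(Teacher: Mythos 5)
Your proposal is correct and follows the same standard Gram-matrix argument that the paper invokes (via the reference to \cite[Lemma 4.1]{Y5De}): reduce to a finite linear system for $(\tilde z,\tilde a,\tilde b)$, use the same-soliton $L^2$-cancellations $(\vec{\Psi}_n,\vec{Z}^+_{n,j})_{L^2}=(\vec{\Phi}_{n,k},\vec{Z}^+_{n,j})_{L^2}=0$ to exhibit a $T$-independent lower block-triangular, invertible diagonal part, treat cross-soliton inner products as a perturbation that decays (exponentially for terms involving $\vec Z^+$, like $T^{-2}$ for the $\E$-pairings of $\vec{\Psi},\vec{\Phi}$ by \eqref{est:phipsi} and Lemma~\ref{le:int}), and conclude by Neumann series together with the uniqueness clause of Proposition~\ref{pro:dec}. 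The one point you flag — linear independence of the $\vec{Z}^+_{n,j}$ in $L^2\times L^2$ — is indeed the only place requiring a short extra argument (e.g.\ applying powers of $\mathrm{J}\mathcal{H}_{\ell_n}$ and Vandermonde to separate distinct $\lambda_j$, then $L^2$-orthogonality of the $Y_j$ within each eigenspace), and it goes through.
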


\begin{proof}
	The proof of the technical lemma relies on a standard argument based on the Lemma~\ref{le:ini} and the invertibility of Gram matrix (see~\emph{e.g.}~\cite[Lemma 4.1]{Y5De}), and we omit it.
	\end{proof}

Let $S_{m}=m^{2}$ for any $m\in \mathbb{N}$. For $\boldsymbol{z}_{m}=(z_{n,j}^{m})_{(n,j)}\in \mathcal{B}_{\RR^{|I|}}(S^{-\frac{7}{2}}_{m})$, we consider the backward solution $\vec{u}_{m}\in C\left([T_{\rm{min}}(m),S_{m}];\dot{H}^{1}\times L^{2}\right)$ of~\eqref{equ:wave} with the initial data $\vec{u}_{m}(S_{m})$ given by the Lemma~\ref{le:ini}. Since $\vec{u}_{m}(S_{m})\in \dot{H}^{2}\times \dot{H}^{1}$, by persistence of regularity (see for example~\cite[Remark 2.9]{KMACTA}), we also have $\vec{u}_{m}\in C\left([T_{\rm{min}}(m),S_{m}];\dot{H}^{2}\times \dot{H}^{1}\right)$. Such regularity will allow
energy computations in \S\ref{SS:ener} without a density argument.

The following Proposition is the main part of the proof of Theorem~\ref{the:main}.
\begin{proposition}[Uniform estimates]\label{pro:uni}
	Under the assumptions of Theorem~\ref{the:main}, there exist $m_{0}\in \mathbb{N}$, $T_{0}>0$ and $C>0$ such that the following is true. For any $m\ge m_{0}$, there exists $\boldsymbol{z}_{m}\in \mathcal{B}_{\RR^{|I|}}(S_{m}^{-\frac{7}{2}})$ such that the solution $\vec{u}_{m}$ of~\eqref{equ:wave} with the initial data $\vec{u}_{m}(S_{m})$ given by the Lemma~\ref{le:ini} is well-defined $\dot{H}^{1}\times L^{2}$ on the time interval $[T_{0},S_{m}]$. Moreover, the solution $\vec{u}_{m}$ satisfies
	\begin{equation}\label{est:uni}
	\|\vec{u}_{m}(t)-\sum_{n=1}^{N}\vec{Q}_{n}(t)\|_{\E}\le Ct^{-2},\quad \mbox{for all}\ t\in [T_{0},S_{m}].
	\end{equation}
\end{proposition}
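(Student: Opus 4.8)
The plan is a bootstrap argument on the modulated decomposition of Proposition~\ref{pro:dec}, with the unstable exponential directions selected by a Brouwer-type argument on the free parameters $\boldsymbol{z}_{m}$. For $m$ large and $\boldsymbol{z}_{m}\in\overline{\mathcal{B}}_{\RR^{|I|}}(S_{m}^{-7/2})$, let $\vec{u}_{m}$ be the solution with data at $t=S_{m}$ furnished by Lemma~\ref{le:ini}: this data is modulated with $\ba(S_{m})=\bb(S_{m})=0$, $z_{n,j}^{+}(S_{m})=z_{n,j}^{m}$ and $\|\vec{\varphi}(S_{m})\|_{\E}\lesssim S_{m}^{-7/2}$, and it lies in $\dot{H}^{2}\times\dot{H}^{1}$, so the energy computations below need no density argument. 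I would let $T^{*}=T^{*}(m,\boldsymbol{z}_{m})\in[T_{0},S_{m}]$ be the infimum of the times $\tau$ for which $\vec{u}_{m}$ exists on $[\tau,S_{m}]$, stays $\gamma_{0}$-close to $\sum_{n}\vec{Q}_{n}$, and satisfies, with its modulation parameters, the bootstrap bounds
\begin{equation*}
\|\vec{\varphi}(t)\|_{\E}\le K t^{-3},\qquad |\ba(t)|\le K t^{-2}\log^{-1/2}t,\qquad |\bb(t)|\le K t^{-2},\qquad \Big(\sum_{(n,j)\in I}\big(z_{n,j}^{+}(t)\big)^{2}\Big)^{1/2}\le t^{-7/2}
\end{equation*}
on $[\tau,S_{m}]$, where $K$ is a large constant fixed later. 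Since $\vec{u}_{m}-\sum_{n}\vec{Q}_{n}=\vec{\varphi}+\sum_{n}a_{n}\vec{\Psi}_{n}+\sum_{(n,k)\in I^{0}}b_{n,k}\vec{\Phi}_{n,k}$ with $\|\vec{\Psi}_{n}\|_{\E}+\|\vec{\Phi}_{n,k}\|_{\E}=O(1)$, it is enough to produce $\boldsymbol{z}_{m}$ with $T^{*}=T_{0}$, for then~\eqref{est:uni} holds with $C$ independent of $m$.

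\textbf{Parameter estimates.} On $[T^{*},S_{m}]$, Lemma~\ref{le:equab} gives $|\dot{\ba}|+|\dot{\bb}|\lesssim\|\vec{\varphi}\|_{\E}+|\ba|^{2}+|\bb|^{2}+t^{-4}\lesssim Kt^{-3}$; integrating backward from $S_{m}$, where $\ba,\bb$ vanish, yields $|\bb(t)|\lesssim Kt^{-2}$, and inserting this into the refined equation~\eqref{est:Rdota} together with $|c_{n}(t)|\lesssim\|\vec{\varphi}\|_{\E}\log^{-1/2}t$ (Cauchy--Schwarz and~\eqref{est:Psi2}) yields $|\ba(t)|\lesssim Kt^{-2}\log^{-1/2}t$; both strictly improve the bootstrap for $T_{0}$ large. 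For the exponential modes, \eqref{est:dz} reads $|\tfrac{\d}{\d t}z_{n,j}^{\pm}\pm\alpha_{n,j}z_{n,j}^{\pm}|\lesssim\|\vec{\varphi}\|_{\E}^{2}+|\ba|^{2}+|\bb|^{2}+t^{-4}\lesssim K^{2}t^{-4}$; integrating $e^{-\alpha_{n,j}t}z_{n,j}^{-}$ backward from $S_{m}$ controls the modes stable for the backward flow, $|z_{n,j}^{-}(t)|\lesssim t^{-4}+e^{\alpha_{n,j}(t-S_{m})}S_{m}^{-7/2}\lesssim t^{-4}$, while the $z_{n,j}^{+}$ (prescribed by Lemma~\ref{le:ini}) are handled in the last step.

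\textbf{Energy estimate --- the main obstacle.} I would build a refined energy functional $\mathcal{F}(t)$: schematically, the $\zeta$-weighted energy--momentum quantity of $\vec{\varphi}$ associated to the potentials $3Q_{n}^{2}$ and the speeds $\ell_{n}$, minus a correction $\mathcal{G}(t)$ of the form $\sum_{n}(\vp,G_{2,n})_{L^{2}}$. The point of $\mathcal{G}$ is that each $G_{2,n}$ is transported by the $n$-th soliton, $\partial_{t}G_{2,n}\approx-\ell_{n}\partial_{x_{1}}G_{2,n}$ (Remark~\ref{re:G1G2G3}), so the slowly decaying source that $G_{2}$ would contribute to $\tfrac{\d}{\d t}\mathcal{F}$ is, up to terms proportional to $\dot{\ba},\dot{\bb}$, an exact time derivative, which $-\tfrac{\d}{\d t}\mathcal{G}$ cancels. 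Using~\eqref{equ:orth1} (which annihilates the $\widetilde{\mathcal{Z}}_{Q}$ directions), the localized coercivity Lemma~\ref{le:locacoer} for $\gamma$ small, and the control of the $\vec{Z}_{n,j}^{\pm}$ directions from the previous step, one shows $\mathcal{F}(t)$ controls $\|\vec{\varphi}(t)\|_{\E}^{2}$ up to $\sum_{\pm,(n,j)\in I}(z_{n,j}^{\pm}(t))^{2}$. For $\tfrac{\d}{\d t}\mathcal{F}$, using~\eqref{equ:vvp}: $\vec{R}_{1}$ contributes $O(\|\vec{\varphi}\|_{\E}^{3})$; $\vec{R}_{2}$, $\vec{\mathrm{Mod}}$ and the $\zeta$-localization errors contribute higher-order quantities (for $\vec{\mathrm{Mod}}$ because its leading part vanishes by~\eqref{equ:ker} and~\eqref{equ:orth1}, for $\vec{R}_{2}$ using the interaction bounds on $\|Q_{n}Q_{n'}\|$, for the $\zeta$-errors using the smallness of $\gamma$); and the genuine interaction source is $\lesssim\|\vec{\varphi}\|_{\E}\big(\|G_{1}\|_{L^{2}}+\|G_{3}\|_{L^{2}}\big)\lesssim\|\vec{\varphi}\|_{\E}\,t^{-4}$ by~\eqref{est:G1},~\eqref{est:G3} and the bootstrap --- here the $\ba$-part of $G_{3,3}$ is absorbed using the $\log^{-1/2}t$ gain on $|\ba|$ and the $G_{2}$-part into $\mathcal{G}$. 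This gives $|\tfrac{\d}{\d t}\mathcal{F}^{1/2}|\lesssim t^{-4}+o(t^{-3})$, hence, integrating backward from $S_{m}$ with $\mathcal{F}(S_{m})\lesssim S_{m}^{-7}$, $\|\vec{\varphi}(t)\|_{\E}\lesssim t^{-3}$ with a constant independent of $K$; choosing $K$ large then $T_{0}$ large closes the $\vec{\varphi}$-bootstrap. I expect this step --- designing $\mathcal{F}$ and $\mathcal{G}$ so that the weighted coercivity survives the slow $|x|^{-2}$ decay of $\psi$, and checking that the time-derivative estimate closes at rate $t^{-3}$ --- to be the main difficulty.

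\textbf{Topological argument.} The bound on the $z_{n,j}^{+}$ is the only one that does not self-improve. If $T^{*}>T_{0}$, the previous two steps show that at $t=T^{*}$ all bounds but the last are strict, so $\sum_{(n,j)\in I}(z_{n,j}^{+}(T^{*}))^{2}=(T^{*})^{-7}$, and by~\eqref{est:dz} and the bootstrap
\begin{equation*}
\frac{\d}{\d t}\Big(t^{7}\sum_{(n,j)\in I}\big(z_{n,j}^{+}\big)^{2}\Big)\bigg|_{t=T^{*}}=7(T^{*})^{-1}-2(T^{*})^{7}\sum_{(n,j)\in I}\alpha_{n,j}\big(z_{n,j}^{+}(T^{*})\big)^{2}+O\big((T^{*})^{-1/2}\big)<0
\end{equation*}
for $T_{0}$ large; thus the rescaled vector $(z_{n,j}^{+})$ exits $\mathcal{B}_{\RR^{|I|}}(t^{-7/2})$ transversally at $T^{*}$. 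By transversality and continuous dependence of the flow, $T^{*}$ depends continuously on $\boldsymbol{z}_{m}$ on $\{T^{*}>T_{0}\}$, and there the map $\boldsymbol{z}_{m}\mapsto(T^{*})^{7/2}\big(z_{n,j}^{+}(T^{*})\big)_{(n,j)\in I}$ is continuous with values in $\mathcal{S}_{\RR^{|I|}}(1)$ and equals $S_{m}^{7/2}\boldsymbol{z}_{m}$ on the boundary sphere $\{|\boldsymbol{z}_{m}|=S_{m}^{-7/2}\}$ (where $T^{*}=S_{m}$), i.e.\ the identity up to scaling. Were $T^{*}>T_{0}$ for every $\boldsymbol{z}_{m}\in\overline{\mathcal{B}}_{\RR^{|I|}}(S_{m}^{-7/2})$, composing with scalings would give a continuous retraction of the closed ball onto its boundary, contradicting Brouwer's theorem. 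Hence some $\boldsymbol{z}_{m}$ satisfies $T^{*}(m,\boldsymbol{z}_{m})=T_{0}$; all bootstrap bounds then hold on $[T_{0},S_{m}]$ and~\eqref{est:uni} follows. Except for the energy step, the argument follows the scheme of~\cite{CMTAMS,MM,Y5De}.
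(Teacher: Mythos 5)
Your overall architecture matches the paper's: the same bootstrap rates, backward integration of the modulation parameters and exponential modes from the prepared data at $S_m$, a $\chi_N$-localized energy--momentum functional with a $\mathcal{G}$-correction to cancel the slowly-decaying $G_2$ source, and a Brouwer no-retraction argument on the unstable directions $z^{+}_{n,j}$. The parameter estimates, the handling of $z^{-}_{n,j}$, and the topological step are all correct.

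The gap is in the energy step, specifically in your claim that $\vec{\rm{Mod}}$ "contributes higher-order quantities" because its leading part vanishes by \eqref{equ:ker} and \eqref{equ:orth1}. The kernel/orthogonality structure does organize the $\dot{a}_n$-terms, but the $\chi_N$-localization in the momentum functional destroys the exact cancellation and leaves the residual
\begin{equation*}
-2\sum_{n=1}^{N}\dot{a}_n\int_{\RR^4}\left(\ell_n\partial_{x_1}\vp-\vpp\right)\left(\ell_n-\chi_N\right)\partial_{x_1}\Psi_n\,\d x.
\end{equation*}
Because $\psi$ decays only like $|x|^{-2}$, so $\partial_{x_1}\Psi_n\sim\langle x\rangle^{-3}$, estimate \eqref{est:L1W2} gives $\|(\ell_n-\chi_N)\partial_{x_1}\Psi_n\|_{L^{2}}\lesssim t^{-1}$ and no better, while $|\dot a_n|$ admits no logarithmic gain (it is $\lesssim\|\vec{\varphi}\|_{\E}$ by Lemma~\ref{le:equab}; the refinement \eqref{est:Rdota} improves the \emph{integral} of $\dot a_n$ via $c_n$, not the pointwise bound). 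Under the bootstrap $\|\vec{\varphi}\|_{\E}\le K t^{-3}$, the residual is of size $K^{2}t^{-7}$; multiplying by $t^{2}$ and integrating gives $\mathcal{F}(t)\lesssim K^{2}t^{-6}$, hence only $\|\vec{\varphi}(t)\|_{\E}\lesssim K t^{-3}$, which does not strictly improve the bootstrap no matter how large $K$ is. This is precisely what Remark~\ref{re:EPGJ} calls "the main difficulty in the energy estimate for the 4D case". The paper removes it by adding a second correction,
\begin{equation*}
\mathcal{J}=\sum_{n=1}^{N}2a_n\int_{\RR^4}\left(\ell_n\partial_{x_1}\vp-\vpp\right)\left(\ell_n-\chi_N\right)\partial_{x_1}\Psi_n\,\d x,
\end{equation*}
whose time derivative produces exactly $+2\sum_n\dot{a}_n\int(\ell_n\partial_{x_1}\vp-\vpp)(\ell_n-\chi_N)\partial_{x_1}\Psi_n\,\d x$ up to $O(t^{-7})$ and cancels the residual. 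Crucially, $\mathcal{J}$ itself is controlled by $|\ba|\cdot\|\vec{\varphi}\|_{\E}\cdot t^{-1}\lesssim C_0^{3}t^{-6}\log^{-1/2}t$, and the $\log^{-1/2}t$ gain on $|\ba|$ --- which you do invoke for $G_{3,3}$, but not here --- is exactly what makes $\mathcal{J}$ absorbable into the $t^{-6}$ coercivity error. Without $\mathcal{J}$ the bootstrap does not close; once you add it, the rest of your sketch reproduces the paper's Lemmas 4.3--4.5 and the proof goes through.
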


The rest of the section is organized as follows. First, in \S\ref{SS:Boot}-\S\ref{SS:Pro}, we devote to the proof of Proposition~\ref{pro:uni} based on the energy method. Last, in \S\ref{SS:thm}, we prove the Theorem~\ref{the:main} from Proposition~\ref{pro:uni} by compactness argument.
\subsection{Bootstrap setting}\label{SS:Boot}
For $0<t\le S_{m}$, as long as $\vec{u}_{m}(t)$ is well-defined in $\dot{H}^{1}\times L^{2}$ and satisfies~\eqref{est:T1T2}, we decompose $\vec{u}_{m}(t)$ as in Proposition~\ref{pro:dec}. In particular, we denote by $\vec{\varphi}(t)$ and $(a_{n}(t),b_{n,k}(t))_{(n,k)\in I^{0}}$ the remainder term and the parameters of the decomposition of $\vec{u}_{m}(t)$.

The proof of Proposition~\ref{pro:uni} is based on the following bootstrap estimates: for $C_{0}\gg 1$ to be chosen,
\begin{equation}\label{est:boot}
\left\{\begin{aligned}
&|\ba|\le C_{0}^{2}t^{-2}\log^{-\frac{1}{2}}t,\\
&|\bb|\le C_{0}^{2}t^{-2},\quad  \|\vec{\varphi}\|_{\E}\le C_{0}t^{-3},\\
&\sum_{(n,j)\in I}|{z}^{-}_{n,j}|^{2}\le t^{-6},\quad 
\sum_{(n,j)\in I}|{z}^{+}_{n,j}|^{2}\le t^{-7}.
\end{aligned}
\right.
\end{equation}
For any $\boldsymbol{z}_{m}\in \mathcal{B}_{\RR^{|I|}}(S_{m}^{-\frac{7}{2}})$, set 
\begin{equation*}
T^{*}=T_{m}^{*}(\boldsymbol{z}_{m})=\inf \left\{t\in [T_{0},S_{m}]; \vec{u}_{m}\ \mbox{satisfies}~\eqref{est:T1T2} \ \mbox{and}\ ~\eqref{est:boot}\ \mbox{on}\ [t,S_{m}]\right\}.
\end{equation*}
Note that, for the proof of Proposition~\ref{pro:uni}, we just need to prove that there exist $T_{0}\gg 1$ (independent with $m$) large enough and at least one choice of $\boldsymbol{z}_{m}=(z^{m}_{n,j})_{(n,j)\in I}\in \mathcal{B}_{\RR^{|I|}}(S^{-\frac{7}{2}}_{m})$ such that $T^{*}=T_{0}$.

In the rest of the section, the implied constants in $\lesssim $ and $O$ do not depend on the constant $C_{0}$ appearing in the bootstrap assumption~\eqref{est:boot}.

\subsection{Energy functional}\label{SS:ener}
Recall that,
\begin{equation*}
-\bar{\ell}<\ell_{1}<\ell_{2}<\dots<\ell_{N}<\bar{\ell}\quad \mbox{where}\quad  \bar{\ell}=\max \left(|\ell_{1}|,|\ell_{N}|\right)<1.
\end{equation*}
Fix
\begin{equation}\label{def:delta}
0<\delta=\frac{1}{40}(1-\bar{\ell})\times \min (\ell_{2}-\ell_{1},\dots,\ell_{N}-\ell_{N-1})<\frac{1}{20}.
\end{equation}
We denote
\begin{equation*}
\begin{aligned}
&{\bar{\ell}}_{n}=\ell_{n}+\delta(\ell_{n+1}-\ell_{n}),\quad \mbox{for $n=1,\dots,N-1$},\\
&\underline{\ell}_{n}=\ell_{n}-\delta(\ell_{n+1}-\ell_{n}),\quad \mbox{for $n=2,\dots,N$}.
\end{aligned}
\end{equation*}
For $t>0$, denote
\begin{equation}\label{def:Omega}
\Omega(t)=\left(\big(\bar{\ell}_{1}t,\underline{\ell}_{2}t\big)\cup\ldots\cup\big(\bar{\ell}_{N-1}t,\underline{\ell}_{N}t\big)\right)\times \mathbb{R}^{3},
\quad \Omega^{C}(t)=\mathbb{R}^{4}\setminus \Omega(t).
\end{equation}
We define the continuous function $\chi_{N}(t,x)=\chi_{N}(t,x_{1})$ as follow (see~\cite[section 4.3]{MM} for a similar choice of cut-off function),
for $t>0$,
\begin{equation}\label{defchiN}
\left\{\begin{aligned}
&\hbox{$\chi_N(t,x) =  \ell_1$ for $x_1\in (-\infty,  \bar{\ell}_1 t]$},\\
& \hbox{$\chi_N(t,x) = \ell_n $ for $x_1\in [\underline{\ell}_n t, \bar{\ell}_n t]$, for $n\in \{2,\ldots,N-1\}$,}
\\ & \hbox{$\chi_N(t,x)= \ell_N$ for $x_1\in [\underline{\ell}_{N} t,+\infty)$},\\
& \chi_N(t,x) = \frac{x_1}{(1-2\delta)t}  - \frac {\delta}{1-2 \delta} (\ell_{n+1}+\ell_n) 
\hbox{ for $x_1 \in [\bar{\ell}_{n} t,\underline{\ell}_{n+1} t ]$, $n\in \{1,\ldots,N-1\}$}.
\end{aligned}
\right.
\end{equation}
Note that 
\begin{equation}\label{derchi}\left\{\begin{aligned}
\overline{\nabla}\chi_{N}(t,x)&=0,\quad  \mbox{for}\ x\in \Omega(t),\\
 \partial_{x_1} \chi_N(t,x)&= \frac{1}{(1-2\delta)t},  \quad \hbox{for $x\in \Omega(t)$},\\
 \partial_{t} \chi_N(t,x)&= -\frac 1t \frac{x_1}{(1-2\delta)t}, \quad \hbox{for $x\in \Omega(t)$},\\
\partial_t \chi_N(t,x)&=0,\quad  \nabla \chi_N(t,x)=0, \quad \hbox{for $x\in \Omega^C(t)$}.\\
\end{aligned} \right.\end{equation}

We define
\begin{equation*}
\mathcal{K}(t)=\mathcal{E}(t)+\mathcal{P}(t)+\mathcal{G}(t)+\mathcal{J}(t),
\end{equation*}
where
\begin{equation*}
\begin{aligned}
\mathcal{G}(t)&=-2\int_{\RR^{4}}\vp(t,x) G_{2}(t,x)\d x,\\
\mathcal{P}(t)&=2\int_{\RR^{4}}\big(\chi_{N}(t,x)\partial_{x_{1}}\vp(t,x))\vpp(t,x)\d x,\\
\mathcal{E}(t)&=\int_{\RR^{4}}\left\{|\nabla\vp|^{2}+\vpp^{2}-\frac{1}{2}\left(R+U+V+\vp\right)^{4}\right.\\
&\quad \quad \quad  \quad\left. +\frac{1}{2}\left(R+U+V\right)^{4}+2(R+U+V)^{3}\vp\right\}\d x,
\end{aligned}
\end{equation*}
and
\begin{equation*}
\mathcal{J}=\sum_{n=1}^{N}\mathcal{J}_{n},\quad \mbox{with}\
\mathcal{J}_{n}=2a_{n}\int_{\RR^{4}}\left(\ell_{n}\partial_{x_{1}}\vp-\vpp\right)(\ell_{n}-\chi_{N})\partial_{x_{1}}\Psi_{n}\d x.
\end{equation*}

\begin{remark}\label{re:EPGJ}
	The quantity $\mathcal{E}$ and $\mathcal{P}$ are standard energy and momentum functionals for the remainder term $\vec{\varphi}$. We introduce $\mathcal{G}$ here to remove the effect of $G_{2}$ on the time variation of the quantity $\mathcal{E}+\mathcal{P}$ (see Lemma~\ref{le:timeEPGJ} (iii)). Moreover, we introduce $\mathcal{J}$ to remove the effect of $\left(\Psi_{n}\right)_{n=1,\dots,N}$ which is the main difficulty in the energy estimate for the 4D case (see Lemma~\ref{le:timeEPGJ} (iv)).
\end{remark}

Denote
\begin{equation*}
\mathcal{N}_{\Omega}=\int_{\Omega}\left(|\nabla \vp|^{2}+\vpp^{2}+2\chi_{N}(\partial_{x_{1}}\vp)\vpp\right)\d x,\quad 
\mathcal{N}_{\Omega^{C}}=\int_{\Omega^{C}}\left(|\nabla \vp|^{2}+\vpp^{2}\right)\d x.
\end{equation*}
For $n=1,\dots,N$, we set 
\begin{equation*}
\zeta_{n}(t,x)=\zeta(x_{1}-\ell_{n}t,\bar{x})=\left(1+\left((x_{1}-\ell_{n} t)^{2}+|\bar{x}|^{2}\right)^{\frac{1}{2}}\right)^{-\gamma},
\end{equation*}
where $\gamma$ is introduced in Lemma~\ref{le:locacoer}.

Note that, from $|\chi_{N}|\le \bar{\ell}<1$,
\begin{equation}\label{est:Nomega}
\mathcal{N}_{\Omega}\ge (1-\bar{\ell})\int_{\Omega}\left(|\nabla \vp|^{2}+\vpp^{2}\right)\d x.
\end{equation}
Note also that, from the definition of $\chi_{N}$ and $\zeta_{n}$, 
\begin{equation}\label{est:zetan}
\sum_{n=1}^{N}\|(\chi_{N}-\ell_{n})\zeta^{2}_{n}\|_{L^{\infty}}+\sum_{n=1}^{N}\|\zeta_{n}^{2}\|_{L^{\infty}(\Omega)}+\left(\sum_{n=1}^{N}\zeta_{n}^{2}-1\right)\lesssim t^{-2\gamma}.
\end{equation}
First, we introduce a technical lemma for future reference.
\begin{lemma}
	Let $f$ be a continuous function such that
	\begin{equation}\label{est:W}
	|f(x)|\lesssim \langle x\rangle^{-(3+\alpha)}\quad \mbox{on}\ \RR^{4},
	\end{equation}
	where $\alpha\ge 0$. For all $n=1,\dots,N$, we have
	\begin{equation}\label{est:L1W2}
	\|f_{n}\|^{2}_{L^{2}(\Omega)}+\|(\ell_{n}-\chi_{N})f_{n}\|^{2}_{L^{2}}+\int_{\RR^{4}}|\ell_{n}-\chi_{N}|f_{n}^{2}\d x\lesssim t^{-(2+2\alpha)},
	\end{equation}
	where $f_{n}(t,x)=f(x_{1}-\ell_{n} t,\bar{x})$.
\end{lemma}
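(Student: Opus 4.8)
The plan is to prove the three-term bound in~\eqref{est:L1W2} by reducing everything to estimating the $L^2$-mass of $f_n$ on a region that is translated away from the origin at a linear-in-$t$ rate. The key geometric observation is that on $\Omega(t)$, and more generally on $\{\ell_n\ne\chi_N\}$, the point $x$ lies at distance $\gtrsim t$ from the center $\bell_n t = (\ell_n t,0,0,0)$ of the $n$-th soliton. Indeed, from the definition~\eqref{def:Omega} of $\Omega(t)$ and the choice~\eqref{def:delta} of $\delta$, if $x\in\Omega(t)$ then $x_1$ lies in one of the intervals $[\bar\ell_m t,\underline\ell_{m+1}t]$, none of which contains $\ell_n t$; a short computation using $\bar\ell_m=\ell_m+\delta(\ell_{m+1}-\ell_m)$ and $\underline\ell_{m+1}=\ell_{m+1}-\delta(\ell_{m+1}-\ell_m)$ shows $|x_1-\ell_n t|\gtrsim t$, hence $|x-\bell_n t|\gtrsim t$. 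Likewise, on $\mathrm{Supp}(\ell_n-\chi_N)\subset\{x_1\le\underline\ell_n t\}\cup\{x_1\ge\bar\ell_n t\}$ (the set where $\chi_N$ is not locked at the value $\ell_n$), the same bound $|x-\bell_n t|\gtrsim t$ holds for $t$ large.

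With this in hand, I would estimate each of the three terms as follows. For the first, $\|f_n\|_{L^2(\Omega)}^2\le\int_{|x-\bell_n t|\gtrsim t}|f(x-\bell_n t)|^2\,\d x = \int_{|y|\gtrsim t}|f(y)|^2\,\d y$ after the change of variables $y=x-\bell_n t$; using~\eqref{est:W}, this is $\lesssim\int_{|y|\gtrsim t}\langle y\rangle^{-2(3+\alpha)}\,\d y\lesssim\int_t^\infty r^{3}r^{-2(3+\alpha)}\,\d r=\int_t^\infty r^{-3-2\alpha}\,\d r\lesssim t^{-2-2\alpha}$. For the second term, $|\ell_n-\chi_N|\le\bar\ell_n-\underline\ell_n\le 2\delta<1$ is bounded, and its support is contained in $\{|x-\bell_n t|\gtrsim t\}$, so $\|(\ell_n-\chi_N)f_n\|_{L^2}^2\lesssim\|f_n\|_{L^2(\{|x-\bell_n t|\gtrsim t\})}^2$, which is controlled exactly as the first term. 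The third term is handled identically since $|\ell_n-\chi_N|\lesssim 1$, giving $\int|\ell_n-\chi_N|f_n^2\,\d x\lesssim\int_{|y|\gtrsim t}\langle y\rangle^{-2(3+\alpha)}\,\d y\lesssim t^{-2-2\alpha}$. Summing the three contributions yields~\eqref{est:L1W2}, and summing over the finitely many $n=1,\dots,N$ costs only a harmless constant.

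The only point requiring genuine care — and the step I would flag as the main obstacle — is the separation estimate $|x_1-\ell_n t|\gtrsim t$ on the relevant supports. One must check it on every interval $[\bar\ell_m t,\underline\ell_{m+1}t]$ for $m\ne n$ as well as on the two unbounded pieces $(-\infty,\bar\ell_1 t]$ and $[\underline\ell_N t,+\infty)$, and confirm that the implied constant depends only on the fixed gaps $\ell_{m+1}-\ell_m$ and on $\delta$, not on $t$. This is where the specific value of $\delta$ in~\eqref{def:delta}, which is a small fraction of $\min_m(\ell_{m+1}-\ell_m)$, is used: it guarantees that the "transition" intervals where $\chi_N$ interpolates are thin enough that no such interval comes within distance $c\,t$ of any $\ell_n t$. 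Once this is established, the rest is a routine change of variables and a radial integral, and the argument is essentially the same spatial-decay computation already used in Lemma~\ref{le:int} and in the estimates of Lemma~\ref{le:Q1Q2}.
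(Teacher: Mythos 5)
Your argument is correct and follows the same route as the paper: on $\Omega(t)$ and on the support of $\ell_n-\chi_N$, the choice of $\delta$ in~\eqref{def:delta} forces $|x_1-\ell_n t|\gtrsim t$, and after the change of variables $y=x-\bell_n t$ the decay~\eqref{est:W} gives the bound $t^{-(2+2\alpha)}$ by a radial integral. One cosmetic slip: the pointwise bound $|\ell_n-\chi_N|\le\bar\ell_n-\underline\ell_n$ is not true in general (since $\chi_N$ takes all values in $[\ell_1,\ell_N]$, the difference can be as large as $\ell_N-\ell_1$), but your argument only needs $|\ell_n-\chi_N|\lesssim 1$, which does hold, so the conclusion is unaffected.
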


\begin{proof} We prove the estimate~\eqref{est:L1W2} for~$\|f_{n}\|^{2}_{L^{2}(\Omega)}$; other estimates are proved same. First, from the definition of $\delta$ and $\Omega$ respectively in~\eqref{def:delta} and~\eqref{def:Omega}, we have 
	\begin{equation*}
	|x_{1}-\ell_{n}t|\ge \delta^{2}t,\quad \mbox{for any}\ x=(x_{1},\bar{x})\in \Omega.
	\end{equation*}
	Therefore, from the decay assumption of $f$ in~\eqref{est:W} and change of variables,
	\begin{equation*}
	\begin{aligned}
	\|f_{n}\|^{2}_{L^{2}(\Omega)}
	&\lesssim \int_{\RR^{3}}\int_{|x_{1}-\ell_{n} t|\ge \delta^{2}t}f^{2}(x_{1}-\ell_{n} t,\bar{x})\d x_{1}\d \bar{x}\\
	&\lesssim \int_{|x|\ge \delta^{2}t} \langle x \rangle^{-(6+2\alpha)}\d x\lesssim \int_{\delta^{2}t}^{\infty}r^{-(3+2\alpha)}\d r\lesssim t^{-(2+2\alpha)}.
	\end{aligned}
	\end{equation*}
	The proof of~\eqref{est:L1W2} is complete.
\end{proof}

Second, we introduce the coercivity property of $\mathcal{K}$.
\begin{lemma} There exists $\nu>0$ such that the following estimates hold.
	\begin{enumerate}
		\item \emph{Bound on $\mathcal{G}$ and $\mathcal{J}$}. We have 
		\begin{equation}\label{est:G+J}
		\left|\mathcal{G}\right|+\left|\mathcal{J}\right|\lesssim C_{0}^{5}t^{-7}+C_{0}^{3}t^{-6}\log^{-\frac{1}{2}}t.
		\end{equation}
		
		\item \emph{Coercivity of $\mathcal{E}+\mathcal{P}$}. We have 
		\begin{equation}\label{est:coerEP}
		\mathcal{E}+\mathcal{P}\ge \mathcal{N}_{\Omega}+\nu \mathcal{N}_{\Omega^{C}}-\nu^{-1}\left(t^{-6}+C_{0}^{4}t^{-(6+2\gamma)}\right).
		\end{equation}
		
		\item \emph{Coercivity of $\mathcal{K}$.} We have 
		\begin{equation}\label{est:coerK}
		\mathcal{K}\ge \mathcal{N}_{\Omega}+\nu \mathcal{N}_{\Omega^{C}}-\nu^{-1}\left(t^{-6}+C_{0}^{5}t^{-6}\log^{-\frac{1}{2}}t\right).
		\end{equation}
	\end{enumerate}
\end{lemma}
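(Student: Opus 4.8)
The plan is to prove (i) by Cauchy--Schwarz together with the bootstrap bounds~\eqref{est:boot}, (ii) by reducing $\mathcal{E}+\mathcal{P}$ to a localizable quadratic form and invoking the localized coercivity of Lemma~\ref{le:locacoer}, and (iii) by combining (i) and (ii). For~\eqref{est:G+J}: since $\mathcal{G}=-2(\varphi_1,G_2)_{L^2}$, bound $|\mathcal{G}|\le 2\|\varphi_1\|_{L^4}\|G_2\|_{L^{4/3}}$; by~\eqref{est:aspQ} and~\eqref{est:phipsi} the profiles $|Q|\psi^2$ and $|Q|\phi_k^2$ decay fast enough to lie in $L^{4/3}$, so the computation of Lemma~\ref{le:Q1Q2}(ii) gives $\|G_2\|_{L^{4/3}}\lesssim|\ba|^2+|\bb|^2$, and~\eqref{est:boot} yields $|\mathcal{G}|\lesssim\|\vec{\varphi}\|_{\E}(|\ba|^2+|\bb|^2)\lesssim C_0^5 t^{-7}$. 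For $\mathcal{J}=\sum_n\mathcal{J}_n$, estimate $|\mathcal{J}_n|\lesssim|a_n|\,\|\vec{\varphi}\|_{\E}\,\|(\ell_n-\chi_N)\partial_{x_1}\Psi_n\|_{L^2}$; since $\partial_{x_1}\Psi_n$ has the decay~\eqref{est:W} with $\alpha=0$ by~\eqref{est:phipsi}, estimate~\eqref{est:L1W2} gives $\|(\ell_n-\chi_N)\partial_{x_1}\Psi_n\|_{L^2}\lesssim t^{-1}$, hence $|\mathcal{J}_n|\lesssim|a_n|\,\|\vec{\varphi}\|_{\E}\,t^{-1}\lesssim C_0^3 t^{-6}\log^{-1/2}t$ by~\eqref{est:boot}. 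Summing over $n$ proves~\eqref{est:G+J}.

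For~\eqref{est:coerEP}, first expand the quartic to get $\mathcal{E}=\int(|\nabla\varphi_1|^2+\varphi_2^2-3(R+U+V)^2\varphi_1^2-2(R+U+V)\varphi_1^3-\tfrac12\varphi_1^4)\,\d x$, so that $\mathcal{E}+\mathcal{P}=\widetilde{B}-3\int\mathcal{R}\varphi_1^2\,\d x-\int(2(R+U+V)\varphi_1^3+\tfrac12\varphi_1^4)\,\d x$ with $\widetilde{B}=\int(|\nabla\varphi_1|^2+\varphi_2^2+2\chi_N\partial_{x_1}\varphi_1\varphi_2-3\sum_n Q_n^2\varphi_1^2)\,\d x$ and $\mathcal{R}=(R+U+V)^2-\sum_n Q_n^2=\sum_{n\ne n'}Q_nQ_{n'}+2R(U+V)+(U+V)^2$. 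The cubic and quartic integrals are $O(\|\vec{\varphi}\|_{\E}^3)=O(C_0^3 t^{-9})$ since $\|R\|_{L^4}+\|U\|_{L^4}+\|V\|_{L^4}\lesssim 1$ by the decay of $Q,\psi,\phi_k$ and~\eqref{est:boot}. For $\mathcal{R}\varphi_1^2$, write $|\int\mathcal{R}\varphi_1^2|\lesssim(\sum_{n\ne n'}\|Q_nQ_{n'}\|_{L^2}+\|R(U+V)\|_{L^2}+\|(U+V)^2\|_{L^2})\|\varphi_1\|_{L^4}^2$; using $\|Q_nQ_{n'}\|_{L^2}\lesssim t^{-2}\log^{1/2}t$ from Lemma~\ref{le:int}(iii), $\|Q_n\Psi_n\|_{L^2}\lesssim 1$, $\|\Psi_n^2\|_{L^2}\lesssim 1$ and the bootstrap, this is $\lesssim C_0^4 t^{-(6+2\gamma)}$ for $t$ large.

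It remains to bound $\widetilde{B}$. Decompose $\mathbb{R}^4=\Omega\sqcup\bigsqcup_n P_n$ where $P_n\subset\Omega^C$ is the plateau on which $\chi_N\equiv\ell_n$; since $\mathcal{N}_\Omega=\int_\Omega(|\nabla\varphi_1|^2+\varphi_2^2+2\chi_N\partial_{x_1}\varphi_1\varphi_2)$ one has exactly $\widetilde{B}=\mathcal{N}_\Omega+\sum_n\big(\int_{P_n}(|\nabla\varphi_1|^2+\varphi_2^2+2\ell_n\partial_{x_1}\varphi_1\varphi_2)-3\int_{\mathbb{R}^4}Q_n^2\varphi_1^2\big)$. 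Apply the localized coercivity~\eqref{est:localcoer}, translated to $\bell_n t$, with $v=\varphi_1$, $z=\varphi_2$, $\ell=\ell_n$, $\zeta=\zeta_n$: the orthogonality conditions~\eqref{equ:orth1} annihilate the kernel corrections $(\vec{\varphi},\vec{\Psi}_n)_{\E}$ and $(\vec{\varphi},\vec{\Phi}_{n,k})_{\E}$, and $\sum_{\pm,j}(\vec{\varphi},\vec{Z}_{n,j}^\pm)_{L^2}^2\lesssim t^{-6}$ by~\eqref{est:boot}, so $3\int_{\mathbb{R}^4}Q_n^2\varphi_1^2\le(1-\mu)\int_{\mathbb{R}^4}\zeta_n^2(|\nabla\varphi_1|^2+\varphi_2^2)+2\ell_n\int_{\mathbb{R}^4}\zeta_n^2\partial_{x_1}\varphi_1\varphi_2+O(t^{-6})$. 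From the definitions~\eqref{def:Omega} and~\eqref{defchiN} one checks $|x-\bell_n t|\gtrsim t$ on $P_n^C$, hence $\zeta_n\lesssim t^{-\gamma}$ there by~\eqref{est:zetan}, so restricting these two $\zeta_n$-integrals to $P_n$ costs only $O(t^{-2\gamma}\|\vec{\varphi}\|_{\E}^2)$; substituting back, the $P_n$-integrand becomes $(1-(1-\mu)\zeta_n^2)(|\nabla\varphi_1|^2+\varphi_2^2)+2\ell_n(1-\zeta_n^2)\partial_{x_1}\varphi_1\varphi_2$, which is pointwise $\ge\nu(|\nabla\varphi_1|^2+\varphi_2^2)$ with $\nu=\min(\mu,1-\bar{\ell})>0$ because $1-(1-\mu)\zeta_n^2\ge\mu$, $0\le\zeta_n\le 1$ and $|\ell_n|\le\bar{\ell}<1$. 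Summing over $n$ gives $\widetilde{B}\ge\mathcal{N}_\Omega+\nu\mathcal{N}_{\Omega^C}-C(t^{-6}+C_0^2 t^{-(6+2\gamma)})$, which with the estimates on $\mathcal{R}\varphi_1^2$ and on the cubic and quartic terms yields~\eqref{est:coerEP}. I expect this matching of the \emph{global} inequality~\eqref{est:localcoer} with the plateau splitting to be the main difficulty: one must control how much of $\zeta_n^2$ leaks off $P_n$ so that, after the cross terms recombine, the coercivity constant $\nu$ is genuinely uniform in $n$ and $t$.

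Finally,~\eqref{est:coerK} follows by writing $\mathcal{K}=\mathcal{E}+\mathcal{P}+\mathcal{G}+\mathcal{J}$ and applying~\eqref{est:coerEP} and~\eqref{est:G+J}: for $t\ge T_0$ with $T_0$ large (depending on $C_0$) one has $C_0^4 t^{-(6+2\gamma)}\le t^{-6}$ and $C_0^5 t^{-7}\le C_0^5 t^{-6}\log^{-1/2}t$, so all the error terms are absorbed into $\nu^{-1}(t^{-6}+C_0^5 t^{-6}\log^{-1/2}t)$ after relabelling $\nu$.
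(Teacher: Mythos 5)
Your proof of (i) and (iii) follows the paper exactly. For (ii) you take a genuinely different (but correct) route to the coercivity of $\mathcal{E}+\mathcal{P}$. The paper first isolates $\mathcal{F}_3=\int(|\nabla\vp|^2+\vpp^2+2\chi_N\partial_{x_1}\vp\,\vpp-3\sum_n Q_n^2\vp^2)$ and then inserts a weighted partition of unity $1=\sum_n\zeta_n^2+(1-\sum_n\zeta_n^2)$ over all of $\RR^4$: this produces four correction terms $\mathcal{F}_{3,1},\ldots,\mathcal{F}_{3,4}$, the last of which is estimated by applying \eqref{est:localcoer} globally for each $n$; the coercivity on $\Omega^C$ is then extracted from the scalar lower bound $|1-\sum_n\zeta_n^2|+\tfrac{\mu}{2}\sum_n\zeta_n^2\ge\text{const}$. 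You instead cut $\RR^4$ geometrically into $\Omega$ and the plateaus $P_n$ where $\chi_N\equiv\ell_n$, apply \eqref{est:localcoer} translated to the $n$-th soliton, restrict to $P_n$ (paying $O(t^{-2\gamma}\|\vec{\varphi}\|_{\E}^2)$ because $\zeta_n\lesssim t^{-\gamma}$ off $P_n$, exactly the content of \eqref{est:zetan}), and verify the resulting quadratic form $(1-(1-\mu)\zeta_n^2)(|\nabla\vp|^2+\vpp^2)+2\ell_n(1-\zeta_n^2)\partial_{x_1}\vp\,\vpp$ is pointwise coercive with the explicit constant $\nu=\min(\mu,1-\bar{\ell})$. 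Both routes pass through the same localized coercivity lemma and land at identical error sizes $O(t^{-6}+C_0^4 t^{-(6+2\gamma)})$. Your version has the minor advantage of producing $\nu$ explicitly; the paper's version avoids the need to verify a pointwise inequality by recombining global integrals. A small inefficiency: your bound $\|Q_nQ_{n'}\|_{L^2}\lesssim t^{-2}\log^{1/2}t$ from Lemma~\ref{le:int}(iii) is suboptimal (the paper obtains $t^{-3}$, and one can get this by feeding $|Q|^{2/3}\lesssim\langle x\rangle^{-2}$ into Lemma~\ref{le:int}(i) with $\alpha_1=\alpha_2=3$), but it is still absorbable into the stated $C_0^4 t^{-(6+2\gamma)}$ error for $\gamma$ small, so this does not affect correctness.
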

\begin{proof}
	Proof of (i). Note that, from~\eqref{est:boot},~\eqref{est:L1W2}, the definition of $G_{2}$ in Lemma~\ref{le:G} and the Sobolev inequality~\eqref{est:Sobo}, we have
	\begin{equation*}
	\begin{aligned}
	\left|\mathcal{G}\right|&\lesssim \|\vp\|_{L^{4}}\|G_{2}\|_{L^{\frac{4}{3}}}\lesssim \left(|\ba|^{2}+|\bb|^{2}\right)\|\vec{\varphi}\|_{\E}\lesssim C_{0}^{5}t^{-7},\\
	\left|\mathcal{J}\right|&\lesssim \sum_{n=1}^{N}|\ba|\left(\|\nabla \vp\|_{L^{2}}+\|\vpp\|_{L^{2}}\right)
	\|(\ell_{n}-\chi_{N})\partial_{x_{1}}\Psi_{n}\|_{L^{2}}\lesssim C^{3}_{0}t^{-6}\log^{-\frac{1}{2}}t,
	\end{aligned}
	\end{equation*}
	which implies~\eqref{est:G+J}.
	
	Proof of (ii). We decompose 
	\begin{equation*}
	\mathcal{E}+\mathcal{P}=\mathcal{F}_{1}+\mathcal{F}_{2}+\mathcal{F}_{3},
	\end{equation*}
	where
	\begin{equation*}
	\begin{aligned}
	\mathcal{F}_{1}&=-\frac{1}{2}\int_{\RR^{4}}\vp^{3}\left(\vp+4(R+U+V)\right)\d x,\\
	\mathcal{F}_{2}&=-3\int_{\RR^{4}}\vp^{2}\bigg(\sum_{n\ne n'}Q_{n}Q_{n'}+2R(U+V)+(U+V)^{2}\bigg)\d x,\\
	\mathcal{F}_{3}&=\int_{\RR^{4}}\left(|\nabla \vp|^{2}-3\sum_{n=1}^{N}Q_{n}^{2}\vp^{2}+\vpp^{2}+2\chi_{N}(\partial_{x_{1}}\vp)\vpp\right)\d x.
	\end{aligned}
	\end{equation*}
	
	\emph{Estimate on $\mathcal{F}_{1}$}. From~\eqref{est:boot} and the Sobolev inequality~\eqref{est:Sobo},
	\begin{equation*}
	\left|\mathcal{F}_{1}\right|\lesssim \|\vp\|_{L^{4}}^{3}\left(\|\vp\|_{L^{4}}+\|R+U+V\|_{L^{4}}\right)\lesssim C_{0}^{3}t^{-9}.
	\end{equation*}
	
	\emph{Estimate on $\mathcal{F}_{2}$}. Note that, using~\eqref{est:boot}, the Sobolev inequality~\eqref{est:Sobo}, the Lemma~\ref{le:int} and the decay property of $Q$ in~\eqref{est:aspQ} again,
	\begin{equation*}
	\begin{aligned}
	\left|\mathcal{F}_{2}\right|
	&\lesssim \|\vp\|_{L^{4}}^{2}\bigg(\sum_{n\ne n'}\|Q_{n}Q_{n'}\|_{L^{2}}+\|R\|_{L^{4}}\|U+V\|_{L^{4}}+\|U+V\|_{L^{4}}^{2}\bigg)\\
	&\lesssim \|\vec{\varphi}\|_{\E}^{2}\left(t^{-3}+|\ba|+|\bb|\right)
	\lesssim C_{0}^{2}t^{-6}\left(t^{-3}+C_{0}^{2}t^{-2}\log^{-\frac{1}{2}}t+C_{0}^{2}t^{-2}\right)\lesssim C_{0}^{4}t^{-8}.
	\end{aligned}
	\end{equation*}
	\emph{Estimate on $\mathcal{F}_{3}$}. We decompose
	\begin{equation*}
	\mathcal{F}_{3}=\mathcal{N}_{\Omega}+\mathcal{F}_{3,1}+\mathcal{F}_{3,2}+\mathcal{F}_{3,3}+\mathcal{F}_{3,4},
	\end{equation*}
	where
	\begin{equation*}
	\begin{aligned}
	\mathcal{F}_{3,1}&=2\sum_{n=1}^{N}\int_{\RR^{4}}(\chi_{N}-\ell_{n})(\partial_{x_{1}}\vp)\vpp \zeta^{2}_{n}\d x,\\
	\mathcal{F}_{3,2}&=-\int_{\Omega}\left(|\nabla \vp|^{2}+\vpp^{2}+2\chi_{N}(\partial_{x_{1}}\vp)\vpp\right)\bigg(\sum_{n=1}^{N}\zeta_{n}^{2}\bigg)\d x,
	\end{aligned}
	\end{equation*}
	and
	\begin{equation*}
	\begin{aligned}
	\mathcal{F}_{3,3}&=\int_{\Omega^{C}}\left(|\nabla \vp|^{2}+\vpp^{2}+2\chi_{N}(\partial_{x_{1}}\vp)\vpp\right)\bigg(1-\sum_{n=1}^{N}\zeta_{n}^{2}\bigg)\d x,\\
	\mathcal{F}_{3,4}&=\sum_{n=1}^{N}\int_{\RR^{4}}\left(|\nabla\vp|^{2}\zeta_{n}^{2}-3Q_{n}^{2}\vp^{2}+\vpp^{2}\zeta_{n}^{2}+2\ell_{n}(\partial_{x_{1}}\vp)\vpp\zeta_{n}^{2}\right)\d x.
	\end{aligned}
	\end{equation*}
	Note that, from~\eqref{est:boot} and~\eqref{est:zetan},
	\begin{equation*}
	\left|\mathcal{F}_{3,1}\right|+\left|\mathcal{F}_{3,2}\right|\lesssim 
	\sum_{n=1}^{N}\left(\|(\chi_{N}-\ell_{n})\zeta_{n}^{2}\|_{L^{\infty}}+\|\zeta_{n}^{2}\|_{L^{\infty}(\Omega)}\right)\|\vec{\varphi}\|^{2}_{\E}\lesssim C_{0}^{2}t^{-(6+2\gamma)}.
	\end{equation*}
	Then, using~\eqref{est:boot} and~\eqref{est:zetan} again,
	\begin{equation*}
	\mathcal{F}_{3,3}\ge \int_{\Omega^{C}}\left(|\nabla \vp|^{2}+\vpp^{2}+2\chi_{N}(\partial_{x_{1}}\vp)\vpp\right)\bigg|1-\sum_{n=1}^{N}\zeta_{n}^{2}\bigg|\d x
	+O\left(C_{0}^{2}t^{-(6+2\gamma)}\right).
	\end{equation*}
	Last, from~\eqref{est:localcoer},~\eqref{equ:orth1},~\eqref{est:boot} for $\mathcal{F}_{3,4}$ and $|\chi_{N}|<1$, we have 
	\begin{equation*}
	\begin{aligned}
	\mathcal{F}_{3,4}&\ge \mu \int_{\RR^{4}}\left(|\nabla \vp|^{2}+\vpp^{2}\right)\bigg(\sum_{n=1}^{N}\zeta_{n}^{2}\bigg)\d x-\mu^{-1}t^{-6}\\
	&\ge \frac{\mu}{2} \int_{\Omega^{C}}
	\left(|\nabla \vp|^{2}+\vpp^{2}+2\chi_{N}(\partial_{x_{1}}\vp)\vpp\right)\bigg(\sum_{n=1}^{N}\zeta_{n}^{2}\bigg)\d x-\mu^{-1}t^{-6}.
	\end{aligned}
	\end{equation*}
	Combining above estimates and taking $\nu>0$ small enough, we obtain~\eqref{est:coerEP}.
	
	Proof of (iii). Estimate~\eqref{est:coerK} is a consequence of~\eqref{est:G+J} and~\eqref{est:coerEP}.
	\end{proof}

Third, we derive the time variation of $\mathcal{K}$.
\begin{lemma}\label{le:timeEPGJ}
	There exists $T_{0}$ large enough such that the following estimates hold.
	\begin{enumerate}
		\item \emph{Time variation of $\mathcal{E}$.} We have 
		\begin{equation}\label{est:dE}
		\begin{aligned}
		\frac{\d}{\d t}\mathcal{E}
		=&-2\sum_{n=1}^{N}\dot{a}_{n}\int_{\RR^{4}}(\ell_{n}\partial_{x_{1}}\vp-\vpp)\ell_{n}\partial_{x_{1}}\Psi_{n}\d x\\
		&+2\int_{\RR^{4}}\vp\left(\Delta {\rm{Mod}}_{1,2}+3R^{2}{\rm{Mod}}_{1,2}\right)\d x
		-2\int_{\RR^{4}}\vpp {\rm{Mod}}_{2,2}\d x\\
		&+2\int_{\RR^{4}}\vpp G_{2}\d x+6\sum_{n=1}^{N}\int_{\RR^{4}}\left(\ell_{n}Q_{n}\partial_{x_{1}}Q_{n}\right)\vp^{2}\d x
		+O(C_{0}t^{-7}).
		\end{aligned}
		\end{equation}
		
		\item \emph{Time variation of $\mathcal{P}$}. We have 
		\begin{equation}\label{est:dP}
		\begin{aligned}
		\frac{\d}{\d t}\mathcal{P}
		=&-\frac{1}{(1-2\delta)t}\int_{\Omega}
		\left((\partial_{x_{1}}\vp)^{2}+\vpp^{2}
		+2\frac{x_{1}}{t}(\partial_{x_{1}}\vp)\vpp-|\overline{\nabla}\vp|^{2}\right)\d x\\
		&-6\sum_{n=1}^{N}\int_{\RR^{4}}\ell_{n}(Q_{n}\partial_{x_{1}} Q_{n})\vp^{2}\d x+2\int_{\RR^{4}}\chi_{N}(\partial_{x_{1}}\vp)G_{2}\d x\\
		&-2\int_{\RR^{4}}\chi_{N}{\rm{Mod}}_{2,2}\partial_{x_{1}}\vp\d x-2\int_{\RR^{4}}\left(\chi_{N} \partial_{x_{1}}{\rm{Mod}}_{1,2}\right)\vpp\d x
		\\
		&+2\sum_{n=1}^{N}\dot{a}_{n}\int_{\RR^{4}}\left(\ell_{n}\partial_{x_{1}}\vp-\vpp\right)\chi_{N}\partial_{x_{1}}\Psi_{n} \d x+O(C_{0}t^{-7}).
		\end{aligned}
		\end{equation}
		
		\item \emph{Time variation of $\mathcal{G}$}. We have
		\begin{equation}\label{est:dG}
		\frac{\d}{\d t}\mathcal{G}=-2\int_{\RR^{4}}\vpp G_{2}\d x-2\int_{\RR^{4}}\chi_{N}(\partial_{x_{1}}\vp)G_{2}\d x+O(t^{-7}).
		\end{equation}
		
		\item \emph{Time variation of $\mathcal{J}_{n}$}. For $n=1,\dots,N$, we have 
		\begin{equation}\label{est:dtJ}
		\frac{\d }{\d t}\mathcal{J}_{n}=2\dot{a}_{n}\int_{\RR^{4}}
		\left(\ell_{n}\partial_{x_{1}}\vp-\vpp\right)\left(\ell_{n}-\chi_{N}\right)\partial_{x_{1}}\Psi_{n} \d x
		+O(t^{-7}).
		\end{equation}
	\end{enumerate}
	
\end{lemma}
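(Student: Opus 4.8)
The plan is to differentiate each of the four functionals $\mathcal{E}$, $\mathcal{P}$, $\mathcal{G}$, $\mathcal{J}_{n}$ in time, substitute the equation of $\vec{\varphi}$ from~\eqref{equ:vp} together with the elementary identities $\pt R=-\sum_{n}\ell_{n}\partial_{x_{1}}Q_{n}$, $\pt U={\rm{Mod}}_{1,1}-\sum_{n}a_{n}\ell_{n}\partial_{x_{1}}\Psi_{n}$ and $\pt V={\rm{Mod}}_{1,2}-\sum_{(n,k)}b_{n,k}\ell_{n}\partial_{x_{1}}\Phi_{n,k}$, integrate by parts in space (legitimate since $\vec{u}_{m}\in\dot{H}^{2}\times\dot{H}^{1}$), and use the elliptic equations satisfied by $Q_{n}$, $\Psi_{n}$, $\Phi_{n,k}$ to cancel the leading terms. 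Every leftover term is then absorbed into $O(C_{0}t^{-7})$ or $O(t^{-7})$ by combining the bootstrap assumptions~\eqref{est:boot}, the parameter estimate $|\dot{\ba}|+|\dot{\bb}|\lesssim\|\vec{\varphi}\|_{\E}+|\ba|^{2}+|\bb|^{2}+t^{-4}$ of Lemma~\ref{le:equab}, the interaction bounds of Lemma~\ref{le:Q1Q2}, the decay estimates of Lemmas~\ref{le:asyQ}--\ref{le:asyker}, the localized decay~\eqref{est:L1W2}, the Sobolev inequality~\eqref{est:Sobo} and Young's inequality, choosing $T_{0}$ large enough (depending on $C_{0}$) so that the $\log^{-\frac{1}{2}}t$ gains defeat the extra powers of $C_{0}$ in parts (iii) and (iv).

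\textbf{Parts (i) and (ii).} For $\mathcal{E}$, integrating by parts the $\int|\nabla\vp|^{2}$-derivative into $-2\int(\Delta\vp)\pt\vp$ and substituting~\eqref{equ:vp} (so that the $2\int\vpp\,\Delta\vp$ and $-2\int(\Delta\vp)\vpp$ pieces cancel), the algebraic cancellations leave exactly the terms displayed in~\eqref{est:dE}: the $\dot{a}_{n}$-term with weight $\ell_{n}$, the ${\rm{Mod}}_{1,2}$ and ${\rm{Mod}}_{2,2}$ terms, the $2\int\vpp G_{2}$ term, the quadratic term $6\sum_{n}\int\ell_{n}Q_{n}\partial_{x_{1}}Q_{n}\,\vp^{2}$, and an $O(C_{0}t^{-7})$ remainder; here $G_{1}$ and the interaction part of $G_{3}$ are estimated by $\|G_{1}\|_{L^{2}}+\|G_{3}\|_{L^{2}}\lesssim|\ba|^{2}+|\bb|^{3}+t^{-4}$, the cubic terms $R_{1},R_{2}$ by~\eqref{est:Sobo} and the bootstrap, and the $Q_{n}Q_{n'}$ interactions by Lemma~\ref{le:int}. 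For $\mathcal{P}=2\int(\chi_{N}\partial_{x_{1}}\vp)\vpp$, the derivative falling on $\chi_{N}$ gives --- using $\partial_{x_{1}}\chi_{N}=\frac{1}{(1-2\delta)t}$, $\partial_{t}\chi_{N}=-\frac{1}{t}\frac{x_{1}}{(1-2\delta)t}$ on $\Omega$ and $\nabla\chi_{N}=\partial_{t}\chi_{N}=0$ on $\Omega^{C}$, from~\eqref{derchi} --- the first ``virial'' line of~\eqref{est:dP} after an integration by parts in $x_{1}$ (which also produces the $|\overline{\nabla}\vp|^{2}-(\partial_{x_{1}}\vp)^{2}$ structure from the Laplacian); substituting $\pt\vp,\pt\vpp$ in the remaining terms produces $-6\sum_{n}\int\ell_{n}(Q_{n}\partial_{x_{1}}Q_{n})\vp^{2}$ (which cancels the matching term in~\eqref{est:dE}), together with the displayed $G_{2}$, ${\rm{Mod}}_{2,2}$, ${\rm{Mod}}_{1,2}$ and $\dot{a}_{n}$ (weight $\chi_{N}$) terms, all other contributions being $O(C_{0}t^{-7})$. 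Care is needed because $\chi_{N}$ is only Lipschitz, so each integration by parts is carried out on the region where $\chi_{N}$ is smooth, the overlap strips being controlled by~\eqref{derchi}.

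\textbf{Parts (iii) and (iv).} For $\mathcal{G}=-2\int\vp\,G_{2}$ one uses Remark~\ref{re:G1G2G3}: each $G_{2,n}=3Q_{n}(a_{n}\Psi_{n}+\sum_{k}b_{n,k}\Phi_{n,k})^{2}$ satisfies $\pt G_{2,n}=-\ell_{n}\partial_{x_{1}}G_{2,n}+(\text{terms in }\dot{a}_{n},\dot{b}_{n,k})$. Writing $\frac{\d}{\d t}\mathcal{G}=-2\int(\pt\vp)G_{2}-2\int\vp(\pt G_{2})$, substituting $\pt\vp=\vpp-{\rm{Mod}}_{1}$, integrating by parts $\int\vp\,\ell_{n}\partial_{x_{1}}G_{2,n}=-\int(\partial_{x_{1}}\vp)\ell_{n}G_{2,n}$, splitting $\ell_{n}=\chi_{N}+(\ell_{n}-\chi_{N})$ and bounding the $(\ell_{n}-\chi_{N})$-part by~\eqref{est:L1W2}, and the $\dot{a}_{n},\dot{b}_{n,k}$ errors and $\int{\rm{Mod}}_{1}G_{2}$ by the decay of $\psi,\phi_{k}$ in~\eqref{est:phipsi} and the bootstrap, one obtains~\eqref{est:dG} with an $O(t^{-7})$ remainder; the two surviving terms $-2\int\vpp G_{2}$ and $-2\int\chi_{N}(\partial_{x_{1}}\vp)G_{2}$ are precisely those cancelled in (i) and (ii). For $\mathcal{J}_{n}$, differentiation gives the displayed $\dot{a}_{n}$-term; every other contribution is multiplied either by $a_{n}(\ell_{n}-\chi_{N})\partial_{x_{1}}\Psi_{n}$ (resp.\ by its derivatives, after an integration by parts on the $\Delta\vp$-term) or by $a_{n}(\partial_{t}\chi_{N})\partial_{x_{1}}\Psi_{n}$, and since $\partial_{x_{1}}\Psi_{n}$ and its derivatives decay like $\langle\cdot\rangle^{-3}$ (resp.\ faster), \eqref{est:L1W2} together with $|a_{n}|\lesssim C_{0}^{2}t^{-2}\log^{-\frac{1}{2}}t$, $\|\vec{\varphi}\|_{\E}\lesssim C_{0}t^{-3}$, $|\partial_{t}\chi_{N}|\lesssim t^{-1}$ on $\Omega$ and the equation~\eqref{equ:vp} make each $O(t^{-7})$, yielding~\eqref{est:dtJ}.

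\textbf{Main obstacle.} The difficulty is less any individual estimate than the bookkeeping forced by the slow decay $\Psi_{n}\sim|x|^{-2}$: the $\dot{a}_{n}$-terms produced in $\frac{\d}{\d t}(\mathcal{E}+\mathcal{P})$ are only of size $t^{-6}\log^{-\frac{1}{2}}t$, not $t^{-7}$. The role of $\mathcal{J}$ is exactly that $\frac{\d}{\d t}\mathcal{J}_{n}$ contributes $2\dot{a}_{n}\int(\ell_{n}\partial_{x_{1}}\vp-\vpp)(\ell_{n}-\chi_{N})\partial_{x_{1}}\Psi_{n}$, which combines with the $\dot{a}_{n}$-term of weight $\ell_{n}$ coming from~\eqref{est:dE} and the $\dot{a}_{n}$-term of weight $\chi_{N}$ coming from~\eqref{est:dP} so that, in the sum $\frac{\d}{\d t}(\mathcal{E}+\mathcal{P}+\mathcal{J})$, these slowly-decaying $\dot{a}_{n}$-contributions cancel identically. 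Getting this three-way cancellation exactly right, and then verifying that every genuinely remaining $\Psi_{n}$-term is $O(C_{0}t^{-7})$ after invoking Lemma~\ref{le:equab} and~\eqref{est:L1W2}, is the crux of the lemma.
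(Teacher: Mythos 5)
Your proposal is correct and follows essentially the same route as the paper: differentiate each functional, substitute the system~\eqref{equ:vp} for $\pt\vp,\pt\vpp$, integrate by parts using the elliptic equations satisfied by $Q_n,\Psi_n,\Phi_{n,k}$, exploit the transport structure of $G_{2,n}$ and the derivative formulas~\eqref{derchi} for $\chi_N$, and close all residual terms with the bootstrap~\eqref{est:boot}, Lemma~\ref{le:equab}, the localized decay~\eqref{est:L1W2} and Sobolev. Your observation that the $\dot{a}_n$-weighted $\partial_{x_1}\Psi_n$ terms from~\eqref{est:dE}, \eqref{est:dP}, \eqref{est:dtJ} cancel identically (weights $-\ell_n+\chi_N+(\ell_n-\chi_N)=0$) is exactly the cancellation exploited in Lemma~\ref{le:timeK}, correctly identified as the reason $\mathcal{J}$ is introduced.
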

\begin{proof}
	Proof of (i). We decompose
	\begin{equation*}
	\frac{\d}{\d t}\mathcal{E}=\mathcal{I}_{1}+\mathcal{I}_{2}+\mathcal{I}_{3},
	\end{equation*}
	where
	\begin{equation*}
	\begin{aligned}
	\mathcal{I}_{1}&=2\int_{\RR^{4}}\left(\nabla \vp\cdot \nabla \pt \vp+\vpp\pt \vpp\right)\d x,\\
	\mathcal{I}_{2}&=-2\int_{\RR^{4}}\pt \vp\big(\left(R+U+V+\vp\right)^{3}-\left(R+U+V\right)^{3}\big)\d x,\\
	\mathcal{I}_{3}&=-2\int_{\RR^{4}}\left(\pt R+\pt U+\pt V\right)\left(3(R+U+V)\vp^{2}+\vp^{3}\right)\d x.
	\end{aligned}
	\end{equation*}
	
	\smallskip
	\emph{Estimate on $\mathcal{I}_{1}$}. We claim
	\begin{equation}\label{est:I1}
	\begin{aligned}
	\mathcal{I}_{1}
	=&-2\sum_{n=1}^{N}\dot{a}_{n}\int_{\RR^{4}}\left(\ell_{n}\partial_{x_{1}}\vp-\vpp\right)\ell_{n}\partial_{x_{1}}\Psi_{n}\d x+2\int_{\RR^{4}}\vpp G_{2}\d x\\
	&-6\sum_{n=1}^{N}\dot{a}_{n}\int_{\RR^{4}}Q_{n}^{2}\Psi_{n}\vp\d x+2\int_{\RR^{4}}\vp \Delta {\rm{Mod}}_{1,2}\d x-2\int_{\RR^{4}}\vpp {\rm{Mod}}_{2,2}\d x\\
	&+2\int_{\RR^{4}}\vpp\left((R+U+V+\vp)^{3}-(R+U+V)^{3}\right)\d x+O(C_{0}t^{-7}).
	\end{aligned}
	\end{equation}
	Indeed, from~\eqref{equ:vp} and integration by parts, 
	\begin{equation*}
	\begin{aligned}
		\mathcal{I}_{1}
	=&2\int_{\RR^{4}}\vp \Delta {\rm{Mod}}_{1,1}\d x-2\int_{\RR^{4}}\vpp {\rm{Mod}}_{2,1}\d x\\
	&+2\int_{\RR^{4}}\vp \Delta {\rm{Mod}}_{1,2}\d x-2\int_{\RR^{4}}\vpp {\rm{Mod}}_{2,2}\d x+2\int_{\RR^{4}}\vpp G_{2}\d x\\
	&+2\int_{\RR^{4}}\vpp\left((R+U+V+\vp)^{3}-(R+U+V)^{3}\right)\d x+2\sum_{i=1,3}\int_{\RR^{4}}\vpp G_{i}\d x.
	\end{aligned}
	\end{equation*}
	By integration by parts, $-(1-\ell^{2}_{n})\partial_{x_{1}}^{2}\Psi_{n}-\bar{\Delta}\Psi_{n}-3Q_{n}^{2}\Psi_{n}=0$ and the definition of $\rm{Mod}_{1,1}$ and $\rm{Mod}_{2,1}$, 
	\begin{equation*}
	\begin{aligned}
	&2\int_{\RR^{4}}\vp \Delta {\rm{Mod}}_{1,1}\d x-2\int_{\RR^{4}}\vpp {\rm{Mod}}_{2,1}\d x\\
	&=-2\sum_{n=1}^{N}\dot{a}_{n}\int_{\RR^{4}}\left(\ell_{n}\partial_{x_{1}}\vp-\vpp\right)\ell_{n}\partial_{x_{1}}\Psi_{n}\d x
	-6\sum_{n=1}^{N}\dot{a}_{n}\int_{\RR^{4}}Q_{n}^{2}\Psi_{n}\vp\d x.
	\end{aligned}
	\end{equation*}
	Then, from~\eqref{est:G1},~\eqref{est:G3},~\eqref{est:boot} and the Cauchy-Schwarz inequality, for $T_{0}$ large enough (depending on $C_{0}$), we have
	\begin{equation*}
	\begin{aligned}
	\sum_{i=1,3}\left|\int_{\RR^{4}}\vpp G_{i}\d x\right|
	&\lesssim \|\vpp\|_{L^{2}}\left(\|G_{1}\|_{L^{2}}+\|G_{3}\|_{L^{2}}\right)\\
	&\lesssim \|\vec{\varphi}\|_{\E}\left(|\ba|^{2}+|\bb|^{3}+t^{-4}\right)\\
	&\lesssim C_{0}t^{-3}\left(C_{0}^{4}t^{-4}\log^{-1}t+C_{0}^{6}t^{-6}+t^{-4}\right)\lesssim C_{0}t^{-7}.
	\end{aligned}
	\end{equation*}
	 We see that~\eqref{est:I1} follows from above identities and estimate.
	 
	\vspace*{1mm}
	\emph{Estimate on $\mathcal{I}_{2}$.} We claim
	\begin{equation}\label{est:I2}
	\begin{aligned}
	\mathcal{I}_{2}=
	&-2\int_{\RR^{4}}\vpp\left((R+U+V+\vp)^{3}-(R+U+V)^{3}\right)\d x\\
	&+6\sum_{n=1}^{N}\dot{a}_{n}\int_{\RR^{4}}Q_{n}^{2}\Psi_{n}\vp\d x+6\int_{\RR^{4}}\left(R^{2}{\rm{Mod}}_{1,2}\right)\vp\d x+O(t^{-7}).
	\end{aligned}
	\end{equation}
	First, using~\eqref{equ:vp} again, we decompose,
\begin{equation*}
\begin{aligned}
\mathcal{I}_{2}=
&-2\int_{\RR^{4}}\vpp\left((R+U+V+\vp)^{3}-(R+U+V)^{3}\right)\d x\\
&+6\sum_{n=1}^{N}\dot{a}_{n}\int_{\RR^{4}}Q_{n}^{2}\Psi_{n}\vp\d x+6\int_{\RR^{4}}\left(R^{2}{\rm{Mod}}_{1,2}\right)\vp\d x+\mathcal{I}_{2,1}+\mathcal{I}_{2,2}+\mathcal{I}_{2,3},
\end{aligned}
\end{equation*}
where
\begin{equation*}
\begin{aligned}
\mathcal{I}_{2,1}&=6\sum_{n=1}^{N}\dot{a}_{n}\int_{\RR^{4}}\Psi_{n}(R^{2}-Q^{2}_{n})\vp \d x,\\
\mathcal{I}_{2,2}&=2\int_{\RR^{4}}{\rm{Mod}}_{1}\vp^{2}\left(\vp+3(R+U+V)\right)\d x,\\
\mathcal{I}_{2,3}&=6\int_{\RR^{4}}{\rm{Mod}}_{1}\vp\left(2(U+V)R+(U+V)^{2}\right)\d x.
\end{aligned}
\end{equation*}
By an elementary computation,
\begin{equation*}
\mathcal{I}_{2,1}
=6\sum_{n=1}^{N}\sum_{n'\ne n}\dot{a}_{n}\int_{\RR^{4}}
\left(2\Psi_{n}Q_{n}Q_{n'}+\Psi_{n}Q_{n'}^{2}\right)\vp \d x.
\end{equation*}
Therefore, from~\eqref{est:ab},~\eqref{est:boot}, the Lemma~\ref{le:int} (i), the Sobolev inequality~\eqref{est:Sobo} and the decay properties of $\psi$ and $Q$ respectively in~\eqref{est:aspQ} and~\eqref{est:phipsi}, for $T_{0}$ large enough, 
\begin{equation*}
\begin{aligned}
\left|\mathcal{I}_{2,1}\right|
&\lesssim \sum_{n\ne n'}|\dot{\ba}|\|\vp\|_{L^{4}}\left(\|\Psi_{n}Q_{n}Q_{n'}\|_{L^{\frac{4}{3}}}+\|\Psi_{n}Q_{n'}^{2}\|_{L^{\frac{4}{3}}}\right)\\
&\lesssim t^{-2}\|\vec{\varphi}\|_{\E}\left(\|\vec{\varphi}\|_{\E}+|\ba|^{2}+|\bb|^{2}+t^{-4}\right)\\
&\lesssim C_{0}t^{-5}\left(C_{0}t^{-3}+C_{0}^{4}t^{-4}\log^{-1}t+C_{0}^{4}t^{-4}+t^{-4}\right)\lesssim t^{-7}.
\end{aligned}
\end{equation*}
Then, using~\eqref{est:ab},~\eqref{est:boot} and the Sobolev inequality in~\eqref{est:Sobo} again,
\begin{equation*}
\begin{aligned}
\left|\mathcal{I}_{2,2}\right|
&\lesssim \|\vp\|^{2}_{L^{4}}\|{\rm{Mod}}_{1}\|_{L^{4}}\left(\|\vp\|_{L^{4}}+\|R+U+V\|_{L^{4}}\right)\\
&\lesssim \|\vec{\varphi}\|_{\E}^{2}
\left(\|\vec{\varphi}\|_{\E}+|\ba|^{2}+|\bb|^{2}+t^{-4}\right)\\
&\lesssim C_{0}^{2}t^{-6}\left(C_{0}t^{-3}+C_{0}^{4}t^{-4}\log^{-1}t+C_{0}^{4}t^{-4}+t^{-4}\right)\lesssim t^{-7},
\end{aligned}
\end{equation*}
\begin{equation*}
\begin{aligned}
\left|\mathcal{I}_{2,3}\right|
&\lesssim \|\vp\|_{L^{4}}\|{\rm{Mod}}_{1}\|_{L^{4}}\|U+V\|_{L^{4}}\left(\|R\|_{L^{4}}+\|U+V\|_{L^{4}}\right)\\
&\lesssim \|\vec{\varphi}\|_{\E}(|\ba|+|\bb|)\left(\|\vec{\varphi}\|_{\E}+|\ba|^{2}+|\bb|^{2}+t^{-4}\right)\\
&\lesssim C^{3}_{0}t^{-5}\left(C_{0}t^{-3}+C_{0}^{4}t^{-4}\log^{-1}t+C_{0}^{4}t^{-4}+t^{-4}\right)\lesssim t^{-7}.
\end{aligned}
\end{equation*}
Combining above estimates, we obtain~\eqref{est:I2}

\vspace*{1mm}
\emph{Estimate on $\mathcal{I}_{3}$.} We claim
\begin{equation}\label{est:I3}
\mathcal{I}_{3}=6\sum_{n=1}^{N}\int_{\RR^{4}}\left(\ell_{n}Q_{n}\partial_{x_{1}}Q_{n}\right)\vp^{2}\d x
+O(t^{-7}).
\end{equation}
From $\pt R=-\sum_{n=1}^{N}\ell_{n}\partial_{x_{1}} Q_{n}$ and an elementary computation, we decompose
\begin{equation*}
\mathcal{I}_{3}=6\sum_{n=1}^{N}\int_{\RR^{4}}\left(\ell_{n}Q_{n}\partial_{x_{1}}Q_{n}\right)\vp^{2}\d x+\mathcal{I}_{3,1}+\mathcal{I}_{3,2}+\mathcal{I}_{3,3},
\end{equation*}
where
\begin{equation*}
\begin{aligned}
\mathcal{I}_{3,1}&=6\sum_{n\ne n'}\int_{\RR^{4}}\left(\ell_{n}Q_{n'}\partial_{x_{1}}Q_{n}\right)\vp^{2}\d x,\\
\mathcal{I}_{3,2}&=-6\int_{\RR^{4}}R\vp^{2}\left(\pt U+\pt V\right)\d x,\\
\mathcal{I}_{3,3}&=-2\int_{\RR^{4}}\vp^{2}\left(\pt R+\pt U+\pt V\right)(3(U+V)+\vp)\d x.
\end{aligned}
\end{equation*}
By~\eqref{est:boot}, the Lemma~\ref{le:int} (i) and the Sobolev inequality~\eqref{est:Sobo},
\begin{equation*}
\left|\mathcal{I}_{3,1}\right|\lesssim \sum_{n\ne n'}\|Q_{n'}\partial_{x_{1}}Q_{n}\|_{L^{2}}\|\vp\|_{L^{4}}^{2}\lesssim C_{0}^{2}t^{-9}.
\end{equation*}
Recall that,
\begin{align*}
\pt V&=-\sum_{(n,k)\in I^{0}}b_{n,k}\ell_{n}\partial_{x_{1}} \Phi_{n,k}+{\rm{Mod}}_{1,2},\\
\pt U&=-\sum_{n=1}^{N}a_{n}\ell_{n}\partial_{x_{1}} \Psi_{n}+{\rm{Mod}}_{1,1},\quad 
\pt R=-\sum_{n=1}^{N}\ell_{n}\partial_{x_{1}} Q_{n}.
\end{align*}
Therefore, using~\eqref{est:ab},~\eqref{est:boot} and the Sobolev inequality~\eqref{est:Sobo} again,
\begin{equation*}
\begin{aligned}
\left|\mathcal{I}_{3,2}\right|
&\lesssim \|R\|_{L^{4}}\|\vp\|^{2}_{L^{4}}\|\pt U+\pt V\|_{L^{4}}\\
&\lesssim \|\vec{\varphi}\|_{\E}^{2}(\|\vec{\varphi}\|_{\E}+|\ba|+|\bb|+t^{-4})\\
&\lesssim C_{0}^{2}t^{-6}\left(C_{0}t^{-3}+C_{0}^{2}t^{-2}+t^{-4}\right)\lesssim C_{0}^{4}t^{-8},
\end{aligned}
\end{equation*}
\begin{equation*}
\begin{aligned}
\left|\mathcal{I}_{3,3}\right|
&\lesssim \|\vp\|_{L^{4}}^{2}\left(\|U+V\|_{L^{4}}+\|\vp\|_{L^{4}}\right)\left(\|\pt R+\pt U+\pt V\|_{L^{4}}\right)\\
&\lesssim \|\vec{\varphi}\|_{\E}^{2}(|\ba|+|\bb|+\|\vec{\varphi}\|_{\E})\lesssim C_{0}^{4}t^{-8}.
\end{aligned}
\end{equation*}
Combining the above estimates, we obtain~\eqref{est:I3} for $T_{0}$ large enough.

We see that~\eqref{est:dE} follows from~\eqref{est:I1},~\eqref{est:I2} and~\eqref{est:I3}.

Proof of (ii). We decompose
\begin{equation*}
\begin{aligned}
\frac{\d}{\d t}\mathcal{P}=&2\int_{\RR^{4}}\left(\pt \chi_{N}\right)(\partial_{x_{1}} \vp)\vpp \d x+2\int_{\RR^{4}}\chi_{N}\left(\partial_{x_{1}}\pt \vp\right)\vpp\d x\\
&+
2\int_{\RR^{4}}\chi_{N}\left(\partial_{x_{1}}\vp\right)\pt \vpp\d x=
\mathcal{I}_{4}+\mathcal{I}_{5}+\mathcal{I}_{6}.
\end{aligned}
\end{equation*}

\smallskip
\emph{Estimate on $\mathcal{I}_{4}$}. By~\eqref{derchi}, we have 
\begin{equation}\label{est:I4}
\mathcal{I}_{4}=-\frac{1}{(1-2\delta)t}\int_{\Omega}2\frac{x_{1}}{t}(\partial_{x_{1}}\vp)\vpp \d x.
\end{equation}

\smallskip
\emph{Estimate on $\mathcal{I}_{5}$.} By~\eqref{equ:vp},~\eqref{derchi} and integration by parts,
\begin{align}
\mathcal{I}_{5}
&=2\int_{\RR^{4}}\chi_{N}(\partial_{x_{1}}\vpp)\vpp\d x-2\int_{\RR^{4}}\chi_{N}\vpp\left(\partial_{x_{1}}{\rm{Mod}}_{1,1}+\partial_{x_{1}}{\rm{Mod}}_{1,2}\right)\d x\nonumber\\
&=-\frac{1}{(1-2\delta)t}\int_{\Omega}\vpp^{2}\d x+2\int_{\RR^{4}}\vpp\left(-\chi_{N}\partial_{x_{1}}{\rm{Mod}}_{1,2}\right)\d x\nonumber\\
&\quad -2\sum_{n=1}^{N}\dot{a}_{n}\int_{\RR^{4}}\vpp\left(\chi_{N}\partial_{x_{1}}\Psi_{n}\right)\d x\label{est:I5}.
\end{align}

\smallskip
\emph{Estimate on $\mathcal{I}_{6}$.} We claim
\begin{equation}\label{est:I6}
\begin{aligned}
\mathcal{I}_{6}=
&-\frac{1}{(1-2\delta)t}\int_{\Omega}\left((\partial_{x_{1}} \vp)^{2}-|\overline{\nabla}\vp|^{2}\right)\d x+2\int_{\RR^{4}}\chi_{N}(\partial_{x_{1}}\vp)G_{2}\d x\\
&+2\sum_{n=1}^{N}\dot{a}_{n}\int_{\RR^{4}}\left(\ell_{n}\partial_{x_{1}}\vp\right)\left(\chi_{N}\partial_{x_{1}}\Psi_{n}\right)\d x
-2\int_{\RR^{4}}\chi_{N}(\partial_{x_{1}}\vp){\rm{Mod}}_{2,2}\d x\\
&-6\sum_{n=1}^{N}\int_{\RR^{4}}\left(\ell_{n}Q_{n}\partial_{x_{1}} Q_{n}\right)\vp^{2}\d x
+O(C_{0}t^{-7}).
\end{aligned}
\end{equation}
First, using~\eqref{equ:vp} again, we decompose,
\begin{equation*}
\begin{aligned}
\mathcal{I}_{6}
=&2\int_{\RR^{4}}\chi_{N}(\partial_{x_{1}}\vp)\Delta\vp \d x+
6\int_{\RR^{4}}\chi_{N}R^{2}(\partial_{x_{1}}\vp)\vp \d x\\
&+2\int_{\RR^{4}}\chi_{N}\left(\partial_{x_{1}}\vp\right)G_{2}\d x
-2\int_{\RR^{4}}\chi_{N}(\partial_{x_{1}}\vp){\rm{Mod}}_{2,2}\d x\\
&+2\sum_{n=1}^{N}\dot{a}_{n}\int_{\RR^{4}}\left(\ell_{n}\partial_{x_{1}}\vp\right)\left(\chi_{N}\partial_{x_{1}}\Psi_{n}\right)\d x+\mathcal{I}_{6,1}+\mathcal{I}_{6,2}+\mathcal{I}_{6,3},
\end{aligned}
\end{equation*}
where
\begin{equation*}
\begin{aligned}
\mathcal{I}_{6,1}
&=2\int_{\RR^{4}}\chi_N (\partial_{x_{1}}\vp)\vp^{3}\d x,\quad \mathcal{I}_{6,2}
=2\sum_{i=1,3}\int_{\RR^{4}}\chi_{N}\left(\partial_{x_{1}}\vp\right)G_{i}\d x,\\
\mathcal{I}_{6,3}
&=6\int_{\RR^{4}}\chi_N \vp(\partial_{x_{1}}\vp)\left((U+V)^{2}+2R(U+V)+(R+U+V)\vp\right)\d x.
\end{aligned}
\end{equation*}
Note that, from~\eqref{derchi} and integration by parts, we have 
\begin{equation}\label{est:I6delta}
\begin{aligned}
2\int_{\RR^{4}}\chi_{N}(\partial_{x_{1}}\vp)\Delta\vp \d x=
-\frac{1}{(1-2\delta)t}\int_{\Omega}\left((\partial_{x_{1}}\vp)^{2}-|\overline{\nabla}\vp|^{2}\right)\d x.
\end{aligned}
\end{equation}
Using integration by parts again, 
\begin{equation*}
\begin{aligned}
6&\int_{\RR^{4}}\chi_{N}R^{2}(\partial_{x_{1}}\vp)\vp\d x\\
=&-6\int_{\RR^{4}}\chi_{N}(R\partial_{x_{1}}R)\vp^{2}\d x
-3\int_{\RR^{N}}(\partial_{x_{1}}\chi_{N})R^{2}\vp^{2}\d x\\
=&-6\sum_{n=1}^{N}\int_{\RR^{4}}(\ell_{n}Q_{n}\partial_{x_{1}}Q_{n})\vp^{2}\d x-\frac{3}{(1-2\delta)t}\int_{\Omega}R^{2}\vp^{2}\d x\\
&+6\sum_{n=1}^{N}\int_{\RR^{4}}(\ell_{n}-\chi_{N})(Q_{n}\partial_{x_{1}}Q_{n})\vp^{2}\d x-6\sum_{n\ne n'}\int_{\RR^{4}}\chi_{N}(Q_{n'}\partial_{x_{1}}Q_{n})\vp^{2}\d x.
\end{aligned}
\end{equation*}
From~\eqref{est:boot},~\eqref{est:L1W2}, the Lemma~\ref{le:int} (i), the decay property of $Q$ in~\eqref{est:aspQ} and the Sobolev inequality~\eqref{est:Sobo},
\begin{equation*}
\left|\frac{3}{(1-2\delta)t}\int_{\Omega}R^{2}\vp^{2}\d x\right|
\lesssim \sum_{n=1}^{N}t^{-1}\|\vp\|_{L^{4}}^{2}\|Q_{n}^{2}\|_{L^{2}(\Omega)}\lesssim C_{0}^{2}t^{-11},
\end{equation*}
\begin{equation*}
\left|\sum_{n\ne n'}\int_{\RR^{4}}\chi_{N}(Q_{n'}\partial_{x_{1}}Q_{n})\vp^{2}\d x\right|\lesssim
\sum_{n\ne n'}\|\vp\|_{L^{4}}^{2}\|Q_{n'}\partial_{x_{1}}Q_{n}\|_{L^{2}}\lesssim C_{0}^{2}t^{-9},
\end{equation*}
\begin{equation*}
\left|\sum_{n=1}^{N}\int_{\RR^{4}}(\ell_{n}-\chi_{N})(Q_{n}\partial_{x_{1}}Q_{n})\vp^{2}\d x\right|
\lesssim \sum_{n=1}^{N}\|\vp\|_{L^{4}}^{2}\|(\ell_{n}-\chi_{N})(Q_{n}\partial_{x_{1}}Q_{n})\|_{L^{2}}\lesssim C^{2}_{0}t^{-11}.
\end{equation*}
Based on the above estimates, for $T_{0}$ large enough, we have 
\begin{equation}\label{est:I6U2}
6\int_{\RR^{4}}\chi_{N}R^{2}(\partial_{x_{1}}\vp)\vp\d x
=-6\sum_{n=1}^{N}\int_{\RR^{4}}(\ell_{n}Q_{n}\partial_{x_{1}}Q_{n})\vp^{2}\d x+O(t^{-7}).
\end{equation}
Then, using~\eqref{est:boot},~\eqref{derchi}, integration by parts and the Sobolev inequality~\eqref{est:Sobo} again,
\begin{equation}\label{est:I61}
\mathcal{I}_{6,1}=-\frac{1}{(1-2\delta)t}\int_{\Omega}\vp^{4}\d x=O\left(t^{-1}\|\vec{\varphi}\|_{\E}^{4}\right)=O\left(C_{0}^{4}t^{-13}\right).
\end{equation}
From~\eqref{est:G1},~\eqref{est:G3} and~\eqref{est:boot}, for $T_{0}$ large enough
\begin{equation}\label{est:I62}
\begin{aligned}
\left|\mathcal{I}_{6,2}\right|
&\lesssim \sum_{i=1,3}\|\partial_{x_{1}}\vp\|_{L^{2}}\|G_{i}\|_{L^{2}}\\
&\lesssim \|\vec{\varphi}\|_{\E}\left(|\ba|^{2}+|\bb|^{3}+t^{-4}\right)\\
&\lesssim C_{0}t^{-3}\left(C_{0}^{4}t^{-4}\log^{-1} t+C_{0}^{6}t^{-6}+t^{-4}\right)\lesssim C_{0}t^{-7}.
\end{aligned}
\end{equation}
Last, using~\eqref{est:boot} and the Sobolev inequality~\eqref{est:Sobo} again,
\begin{equation}\label{est:I63}
\begin{aligned}
\left|\mathcal{I}_{6,3}\right|&\lesssim \|\vp\|_{L^{4}}\|\partial_{x_{1}}\vp\|_{L^{2}}\|U+V\|_{L^{8}}
\left(\|U+V\|_{L^{8}}+\|R\|_{L^{8}}\right)\\
&\quad +\|\vp\|^{2}_{L^{4}}\|\partial_{x_{1}}\vp\|_{L^{2}}\|R+U+V\|_{L^{\infty}}\\
&\lesssim \|\vec{\varphi}\|_{\E}^{2}\left(|\ba|+|\bb|+\|\vec{\varphi}\|_{\E}\right)\lesssim C_{0}^{4}t^{-8}.
\end{aligned}
\end{equation}
Gathering estimates~\eqref{est:I6delta},~\eqref{est:I6U2},~\eqref{est:I61},~\eqref{est:I62} and~\eqref{est:I63}, we obtain~\eqref{est:I6} for $T_{0}$ large enough.
 
We see that~\eqref{est:dP} follows from~\eqref{est:I4},~\eqref{est:I5} and~\eqref{est:I6}.

Proof of (iii). We decompose
\begin{equation}
\frac{\d}{\d t}\mathcal{G}=-2\int_{\RR^{4}}(\pt \vp) G_{2}\d x-2\int_{\RR^{4}}\vp \pt G_{2}\d x=\mathcal{I}_{7}+\mathcal{I}_{8}.
\end{equation}

\emph{Estimate on $\mathcal{I}_{7}$.} We claim
\begin{equation}\label{est:I7}
\mathcal{I}_{7}=-2\int_{\RR^{4}}\vpp G_{2}\d x+O(t^{-7}).
\end{equation}
Indeed, from~\eqref{equ:vp},
\begin{equation*}
\mathcal{I}_{7}=-2\int_{\RR^{4}}\vpp G_{2}\d x+2\int_{\RR^{4}}{\rm{Mod}}_{1}G_{2}\d x.
\end{equation*}
Note that, by~\eqref{equ:can} and change of variables, we have 
\begin{equation*}
2\int_{\RR^{4}}{\rm{Mod}}_{1}G_{2}\d x=2\sum_{n\ne n'}\dot{a}_{n'}\int_{\RR^{4}}\Psi_{n'}G_{2,n}\d x+2\sum_{n\ne n'}\sum_{k=1}^{K}\dot{b}_{n',k}\int_{\RR^{4}}\Phi_{n',k}G_{2,n}\d x.
\end{equation*}
Therefore, using~\eqref{est:ab},~\eqref{est:boot}, the Lemma~\ref{le:int} (i) and the decay property of $\psi$ and $\phi_{k}$ in~\eqref{est:phipsi}, for any $n\ne n'$ and $k=1,\dots, K$, we have 
\begin{equation*}
\begin{aligned}
\left|\dot{a}_{n'}\int_{\RR^{4}}\Psi_{n'}G_{2,n}\d x\right|
&\lesssim t^{-2}(|\ba|^{2}+|\bb|^{2})(\|\vec{\varphi}\|_{\E}+|\ba|^{2}+|\bb|^{2}+t^{-4})\\
&\lesssim C_{0}^{4}t^{-6}\left(C_{0}t^{-3}+C_{0}^{4}t^{-4}\log^{-1}t+C_{0}^{4}t^{-4}+t^{-4}\right)
\lesssim C_{0}^{8}t^{-9},
\end{aligned}
\end{equation*}
\begin{equation*}
\begin{aligned}
\left|\dot{b}_{n',k}\int_{\RR^{4}}\Phi_{n',k}G_{2,n}\d x\right|
&\lesssim t^{-3}(|\ba|^{2}+|\bb|^{2})(\|\vec{\varphi}\|_{\E}+|\ba|^{2}+|\bb|^{2}+t^{-4})\\
&\lesssim C_{0}^{4}t^{-7}\left(C_{0}t^{-3}+C_{0}^{4}t^{-4}\log^{-1}t+C_{0}^{4}t^{-4}+t^{-4}\right)
\lesssim C_{0}^{8}t^{-10}.
\end{aligned}
\end{equation*}
Combining above estimates and taking $T_{0}$ large enough, we obtain~\eqref{est:I7}.

\emph{Estimate on $\mathcal{I}_{8}$.} We claim
\begin{equation}\label{est:I8}
\mathcal{I}_{8}=-2\int_{\RR^{4}}\chi_{N}(\partial_{x_1}\vp)G_{2}\d x+O(t^{-7}).
\end{equation}
Note that, from the definition of $G_{2,n}$ in Lemma~\ref{le:G}, 
\begin{equation*}
\begin{aligned}
\pt G_{2,n}=&-\ell_{n}\partial_{x_{1}} G_{2,n}
+6\sum_{k=1}^{K}(\dot{a}_{n}b_{n,k}+a_{n}\dot{b}_{n,k})Q_{n}\Psi_{n}\Phi_{n,k}\\
&+6\dot{a}_{n}a_{n}Q_{n}\Psi_{n}^{2}
+6\sum_{k,k'=1}^{K}\dot{b}_{n,k}b_{n,k'}Q_{n}\Phi_{n,k}\Phi_{n,k'}.
\end{aligned}
\end{equation*}
Therefore, by integration by parts, we have 
\begin{equation*}
\mathcal{I}_{8}=-2\int_{\RR^{4}}\chi_{N}(\partial_{x_1}\vp)G_{2}\d x+\mathcal{I}_{8,1}+\mathcal{I}_{8,2}+\mathcal{I}_{8,3}+\mathcal{I}_{8,4},
\end{equation*}
where
\begin{equation*}
\begin{aligned}
\mathcal{I}_{8,1}&=2\sum_{n=1}^{N}\int_{\RR^{4}}(\chi_{N}-\ell_{n})(\partial_{x_{1}}\vp)G_{2,n}\d x,\qquad \qquad\qquad \\
\mathcal{I}_{8,2}&=-12\sum_{n=1}^{N}\dot{a}_{n}a_{n}\int_{\RR^{4}}\chi_{N} \vp (Q_{n}\Psi_{n}\Psi_{n})\d x,
\end{aligned}
\end{equation*}
\begin{equation*}
\begin{aligned}
\quad \quad \quad \mathcal{I}_{8,3}&=-12\sum_{n=1}^{N}\sum_{k,k'=1}^{K}\dot{b}_{n,k}b_{n,k'}\int_{\RR^{4}}\chi_{N}\vp (Q_{n}\Phi_{n,k}\Phi_{n,k'})\d x,\\
\mathcal{I}_{8,4}&=-12\sum_{(n,k)\in I^{0}}(\dot{a}_{n}b_{n,k}+a_{n}\dot{b}_{n,k})\int_{\RR^{4}}\chi_{N} \vp (Q_{n}\Psi_{n}\Phi_{n,k})\d x.\\
\end{aligned}
\end{equation*}
Using~\eqref{est:boot},~\eqref{est:L1W2}, the definition of $G_{2,n}$ in Lemma~\ref{le:G} and the decay properties of $Q$, $\psi$ and~$\phi_{k}$ in~\eqref{est:aspQ} and~\eqref{est:phipsi}, we have 
\begin{equation*}
\begin{aligned}
\left|\mathcal{I}_{8,1}\right|
&\lesssim \sum_{n=1}^{N}\left(|\ba|^{2}+|\bb|^{2}\right)\|\nabla \vp\|_{L^{2}}\|(\chi_{N}-\ell_{n})Q_{n}\Psi_{n}^{2}\|_{L^{2}}\\
&\quad +\sum_{(n,k)\in I^{0}}\left(|\ba|^{2}+|\bb|^{2}\right)\|\nabla \vp\|_{L^{2}}\|(\chi_{N}-\ell_{n})Q_{n}\Phi_{n,k}^{2}\|_{L^{2}}\\
&\lesssim C_{0}t^{-8}\left(C_{0}^{4}t^{-4}\log^{-1}t+C_{0}^{4}t^{-4}\right)\lesssim C_{0}^{5}t^{-12}.
\end{aligned}
\end{equation*}
From~\eqref{est:ab},~\eqref{est:boot} and the Sobolev inequality~\eqref{est:Sobo},
\begin{equation*}
\begin{aligned}
\left|\mathcal{I}_{8,2}\right|+\left|\mathcal{I}_{8,3}\right|+\left|\mathcal{I}_{8,4}\right|
&\lesssim
\left(|\dot{\ba}|+|\dot{\bb}|\right)\left(|{\ba}|+|{\bb}|\right)\|\vp\|_{L^{4}}\left(\|Q\psi^{2}\|_{L^{\frac{4}{3}}}+\sum_{k=1}^{K}\|Q\phi^{2}_{k}\|_{L^{\frac{4}{3}}}\right)\\
&\lesssim \|\vec{\varphi}\|_{\E}\left(|{\ba}|+|{\bb}|\right)(\|\vec{\varphi}\|_{\E}+|{\ba}|^{2}+|{\bb}|^{2}+t^{-4})\lesssim C_{0}^{7}t^{-8}.
\end{aligned}
\end{equation*}
Combining above estimates and taking $T_{0}$ large enough, we obtain~\eqref{est:I8}.

We see that~\eqref{est:dG} follows from~\eqref{est:I7} and~\eqref{est:I8}.

Proof of (iv). We decompose
\begin{equation*}
\frac{\d}{\d t}\mathcal{J}_{n}=2\dot{a}_{n}\int_{\RR^{4}}
\left(\ell_{n}\partial_{x_{1}}\vp-\vpp\right)\left(\ell_{n}-\chi_{N}\right)\partial_{x_{1}}\Psi_{n} \d x+\mathcal{I}_{9}+\mathcal{I}_{10}+\mathcal{I}_{11}+\mathcal{I}_{12},
\end{equation*}
where
\begin{equation*}
\begin{aligned}
\mathcal{I}_{9}&=2a_{n}\int_{\RR^{4}}\left(\pt\vpp\right) (\chi_{N}-\ell_{n})\partial_{x_{1}}\Psi_{n}\d x,\qquad \quad\\
\mathcal{I}_{10}&=2a_{n}\int_{\RR^{4}}\left(\ell_{n}\partial_{x_{1}}\pt\vp\right) (\ell_{n}-\chi_{N})\partial_{x_{1}}\Psi_{n}\d x,
\end{aligned}
\end{equation*}
\begin{equation*}
\begin{aligned}
\quad \mathcal{I}_{11}&=2a_{n}\int_{\RR^{4}}\left(\ell_{n}\vp-\vpp\right) (\ell_{n}-\chi_{N})\partial_{t}\partial_{x_{1}}\Psi_{n}\d x,\\
\mathcal{I}_{12}&=2a_{n}\int_{\RR^{{4}}}\left(\ell_{n}\vp-\vpp\right)\left(\pt (\ell_{n}-\chi_{N})\right)\partial_{x_{1}}\Psi_{n}\d x.
\end{aligned}
\end{equation*}
\emph{Estimate on $\mathcal{I}_{9}$.} We claim
\begin{equation}\label{est:I9}
\mathcal{I}_{9}=O(t^{-7}).
\end{equation}
Indeed, from~\eqref{equ:vp}, we have 
\begin{equation*}
\mathcal{I}_{9}=\mathcal{I}_{9,1}+\mathcal{I}_{9,2}+\mathcal{I}_{9,3}+\mathcal{I}_{9,4}+\mathcal{I}_{9,5},
\end{equation*}
where
\begin{equation*}
\begin{aligned}
\mathcal{I}_{9,1}&=2a_{n}\int_{\RR^{4}}G (\chi_{N}-\ell_{n})\partial_{x_{1}}\Psi_{n}\d x,\qquad \qquad \qquad \quad \\
\mathcal{I}_{9,2}&=2a_{n}\int_{\RR^{4}}{\rm{Mod}}_{2}
(\ell_{n}-\chi_{N})\partial_{x_{1}}\Psi_{n}\d x,
\end{aligned}
\end{equation*}
\begin{equation*}
\begin{aligned}
\quad \quad \quad \mathcal{I}_{9,3}&=2a_{n}\int_{\RR^{4}}\left(\Delta \vp\right) (\chi_{N}-\ell_{n})\partial_{x_{1}}\Psi_{n}\d x,\\
\mathcal{I}_{9,4}&=6a_{n}\int_{\RR^{4}}\left((R+U+V)^{2}\vp\right) (\chi_{N}-\ell_{n})\partial_{x_{1}}\Psi_{n}\d x,\\
\mathcal{I}_{9,5}&=2a_{n}\int_{\RR^{4}}\left(3(R+U+V)\vp^{2}+\vp^{3}\right) (\chi_{N}-\ell_{n})\partial_{x_{1}}\Psi_{n}\d x.
\end{aligned}
\end{equation*}
First, by~\eqref{est:G},~\eqref{est:boot},~\eqref{est:L1W2}, the decay property of $\psi$ in~\eqref{est:phipsi} and the Cauchy-Schwarz inequality,
\begin{equation*}
\begin{aligned}
\left|\mathcal{I}_{9,1}\right|
&\lesssim |\ba|\|G\|_{L^{2}}\|(\chi_{N}-\ell_{n})\partial_{x_{1}}\Psi_{n}\|_{L^{2}}\\
&\lesssim C_{0}^{2}t^{-3}\log^{-\frac{1}{2}}t\left(C_{0}^{4}t^{-4}\log^{-1}t+C_{0}^{4}t^{-4}+t^{-4}\right)
\lesssim C_{0}^{6}t^{-7}\log^{-\frac{1}{2}}t.
\end{aligned}
\end{equation*}
Second, from the expression of $\rm{Mod}_{2}$, we decompose
\begin{equation*}
\mathcal{I}_{9,2}=\mathcal{I}_{9,2,1}+\mathcal{I}_{9,2,2}+\mathcal{I}_{9,2,3}+\mathcal{I}_{9,2,4},
\end{equation*}
where
\begin{equation*}
\begin{aligned}
	\mathcal{I}_{9,2,1}=&2a_{n}\dot{a}_{n}\int_{\RR^{4}}
	\ell_{n}(\chi_{N}-\ell_{n})\left(\partial_{x_{1}}\Psi_{n}\right)^{2}\d x,\qquad \qquad \qquad \\
	\mathcal{I}_{9,2,2}=&2\sum_{n\ne n'}a_{n}\dot{a}_{n'}\int_{\RR^{4}}\ell_{n'}(\chi_{N}-\ell_{n})
	(\partial_{x_{1}}\Psi_{n})(\partial_{x_{1}}\Psi_{n'})\d x,\\
	\end{aligned}
	\end{equation*}
	\begin{equation*}
	\begin{aligned}
	\quad \quad \mathcal{I}_{9,2,3}=&2\sum_{k=1}^{K}a_{n}\dot{b}_{n,k}\int_{\RR^{4}}\ell_{n}(\chi_{N}-\ell_{n})
	\left(\partial_{x_{1}}\Psi_{n}\right)\left(\partial_{x_{1}}\Phi_{n,k}\right)\d x,\\
	\mathcal{I}_{9,2,4}=&2\sum_{n\ne n'}\sum_{k=1}^{K}a_{n}\dot{b}_{n',k}\int_{\RR^{4}}\ell_{n'}(\chi_{N}-\ell_{n})(\partial_{x_{1}}\Psi_{n})
	(\partial_{x_{1}}\Phi_{n',k})\d x.
	\end{aligned}
\end{equation*} 
Using~\eqref{est:ab},~\eqref{est:boot},~\eqref{est:L1W2},~the Lemma~\ref{le:int} (i), (iii) and the decay properties of $\phi_{k}$ and $\psi$ in~\eqref{est:phipsi}, we have 
\begin{equation*}
\begin{aligned}
\left|\mathcal{I}_{9,2,1}\right|
&\lesssim 
|\ba|\left(|\dot{\ba}|+|\dot{\bb}|\right)\int_{\RR^{4}}
\left|\chi_{N}-\ell_{n}\right| \left(\partial_{x_{1}}\Psi_{n}\right)^{2}\d x\\
&\lesssim C_{0}^{2}t^{-4}\log ^{-\frac{1}{2}}t \left(C_{0}t^{-3}+C_{0}^{4}t^{-4}\log^{-1} t+C_{0}^{4}t^{-4}+t^{-4}\right)\lesssim C_{0}^{6}t^{-7}\log ^{-\frac{1}{2}}t,
\end{aligned}
\end{equation*}
\begin{equation*}
\begin{aligned}
\left|\mathcal{I}_{9,2,2}\right|
&\lesssim 
|\ba|\left(|\dot{\ba}|+|\dot{\bb}|\right)\sum_{n'\ne n}\int_{\RR^{4}}
\left|\partial_{x_{1}}\Psi_{n}\right|\left|\partial_{x_{1}}\Psi_{n'}\right|\d x\\
&\lesssim C_{0}^{2}t^{-4}\log ^{-\frac{1}{2}}t \left(C_{0}t^{-3}+C_{0}^{4}t^{-4}\log^{-1} t+C_{0}^{4}t^{-4}+t^{-4}\right)\lesssim C_{0}^{6}t^{-7}\log ^{-\frac{1}{2}}t,
\end{aligned}
\end{equation*}
\begin{equation*}
\begin{aligned}
\left|\mathcal{I}_{9,2,3}\right|
&\lesssim 
|\ba|\left(|\dot{\ba}|+|\dot{\bb}|\right)\sum_{k=1}^{K}\int_{\RR^{4}}
\left|\chi_{N}-\ell_{n}\right| \left|\partial_{x_{1}}\Psi_{n}\right|\left|\partial_{x_{1}}\Phi_{n,k}\right|\d x\\
&\lesssim C_{0}^{2}t^{-5}\log ^{-\frac{1}{2}}t \left(C_{0}t^{-3}+C_{0}^{4}t^{-4}\log^{-1} t+C_{0}^{4}t^{-4}+t^{-4}\right)\lesssim C_{0}^{6}t^{-8}\log ^{-\frac{1}{2}}t,
\end{aligned}
\end{equation*}
and
\begin{equation*}
\begin{aligned}
\left|\mathcal{I}_{9,2,4}\right|
&\lesssim 
|\ba|\left(|\dot{\ba}|+|\dot{\bb}|\right)\sum_{n'\ne n}\sum_{k=1}^{K}\int_{\RR^{4}}
 \left|\partial_{x_{1}}\Psi_{n}\right|\left|\partial_{x_{1}}\Phi_{n',k}\right|\d x\\
&\lesssim C_{0}^{2}t^{-5}\log ^{\frac{1}{2}}t \left(C_{0}t^{-3}+C_{0}^{4}t^{-4}\log^{-1} t+C_{0}^{4}t^{-4}+t^{-4}\right)\lesssim C_{0}^{6}t^{-8}\log ^{\frac{1}{2}}t.
\end{aligned}
\end{equation*}

Third, by integration by parts and~\eqref{derchi}, we decompose
\begin{equation*}
\mathcal{I}_{9,3}=\mathcal{I}_{9,3,1}+\mathcal{I}_{9,3,2},
\end{equation*}
where
\begin{equation*}
\begin{aligned}
\mathcal{I}_{9,3,1}=&-2a_{n}\int_{\RR^{4}}\left(\partial_{x_{1}} \vp\right) (\partial_{x_{1}}\chi_{N})\partial_{x_{1}}\Psi_{n}\d x,\\
\mathcal{I}_{9,3,2}=&-2a_{n}\sum_{i=1}^{4}\int_{\RR^{4}}\left(\partial_{x_{i}} \vp\right) (\chi_{N}-\ell_{n})\partial_{x_{i}}\partial_{x_{1}}\Psi_{n}\d x.
\end{aligned}
\end{equation*}
From~\eqref{est:boot},~\eqref{derchi},~\eqref{est:L1W2} and the Cauchy-Schwarz inequality,
\begin{equation*}
\begin{aligned}
\left|\mathcal{I}_{9,3,1}\right|&\lesssim t^{-1}|\ba|\|\nabla \vp\|_{L^{2}}\|\partial_{x_{1}}\Psi_{n}\|_{L^{2}(\Omega)}\lesssim C_{0}^{3}t^{-7}\log^{-\frac{1}{2}}t,\\
\left|\mathcal{I}_{9,3,2}\right|&\lesssim \sum_{i=1}^{4}|\ba|\|\nabla \vp\|_{L^{2}}\|(\chi_{N}-\ell_{n})\partial_{x_{i}}\partial_{x_{1}}\Psi_{n}\|_{L^{2}}\lesssim C_{0}^{3}t^{-7}\log^{-\frac{1}{2}}t.
\end{aligned}
\end{equation*}
Fourth, by an elementary computation, we decompose
\begin{equation*}
\mathcal{I}_{9,4}=\mathcal{I}_{9,4,1}+\mathcal{I}_{9,4,2},
\end{equation*}
where
\begin{equation*}
\begin{aligned}
\mathcal{I}_{9,4,1}&=6a_{n}\int_{\RR^{4}}\vp (\chi_{N}-\ell_{n})R^{2}\partial_{x_{1}}\Psi_{n}\d x,\\
\mathcal{I}_{9,4,2}&=6a_{n}\int_{\RR^{4}}\left(2R(U+V)+(U+V)^{2}\right)\vp (\chi_{N}-\ell_{n})\partial_{x_{1}}\Psi_{n}\d x.
\end{aligned}
\end{equation*}
Note that, from~\eqref{est:boot},~\eqref{est:L1W2}, the Lemma~\ref{le:int} (i) and the Sobolev inequality~\eqref{est:Sobo},
\begin{equation*}
\begin{aligned}
\left|\mathcal{I}_{9,4,1}\right|
&\lesssim |\ba|\|\vp\|_{L^{4}}\|Q\|_{L^{4}}\left(\|(\chi_{N}-\ell_{n})Q_{n}\partial_{x_{1}}\Psi_{n}\|_{L^{2}}
+\sum_{n'\ne n}\|Q_{n'}\partial_{x_{1}}\Psi_{n}\|_{L^{2}}\right)\\
&\lesssim C_{0}^{3}t^{-5}\log^{-\frac{1}{2}}t\left(t^{-4}+t^{-3}\right)\lesssim C_{0}^{3}t^{-8}\log^{-\frac{1}{2}}t.
\end{aligned}
\end{equation*}
Note also that, using~\eqref{est:boot} and~\eqref{est:L1W2} again, 
\begin{equation*}
\begin{aligned}
\left|\mathcal{I}_{9,4,2}\right|
&\lesssim |\ba|\|\vp\|_{L^{4}}\|U+V\|_{L^{8}}\|(\chi_{N}-\ell_{n})\partial_{x_{1}}\Psi_{n}\|_{L^{2}}\left(\|R\|_{L^{8}}+\|U+V\|_{L^{8}}\right)\\
&\lesssim |\ba|(|\ba|+|\bb|)\|\vec{\varphi}\|_{\E}\|(\chi_{N}-\ell_{n})\partial_{x_{1}}\Psi_{n}\|_{L^{2}}\lesssim
C_{0}^{5}t^{-8}\log t^{-\frac{1}{2}}t.
\end{aligned}
\end{equation*}
Last, using~\eqref{est:boot} and the Sobolev inequality~\eqref{est:Sobo} again,
\begin{equation*}
\left|\mathcal{I}_{9,5}\right|\lesssim |\ba|\|\vp\|^{2}_{L^{4}}\|\partial_{x_{1}}\Psi_{n}\|_{L^{4}}\left(\|\vp\|_{L^{4}}+\|R+U+V\|_{L^{4}}\right)
\lesssim C_{0}^{4}t^{-8}\log^{-\frac{1}{2}}t.
\end{equation*}
Combining above estimates and taking $T_{0}$ large enough, we obtain~\eqref{est:I9}.

\emph{Estimates on $\mathcal{I}_{10}$.} We claim
\begin{equation}\label{est:I10}
\mathcal{I}_{10}=O(t^{-7}).
\end{equation}
First, from~\eqref{equ:vp} and integration by parts, we decompose
\begin{equation*}
\mathcal{I}_{10}=\mathcal{I}_{10,1}+\mathcal{I}_{10,2}+\mathcal{I}_{10,3},
\end{equation*}
where
\begin{equation*}
\begin{aligned}
\mathcal{I}_{10,1}&=2a_{n}\ell_{n}\int_{\RR^{4}}\vpp (\partial_{x_{1}}\chi_{N})(\partial_{x_{1}}\Psi_{n})\d x,\qquad \qquad \qquad\qquad  \qquad \\
\mathcal{I}_{10,2}&=2a_{n}\ell_{n}\int_{\RR^{4}}\vpp(\chi_{N}-\ell_{n})\partial_{x_{1}}^{2}\Psi_{n}\d x,
\end{aligned}
\end{equation*}
\begin{equation*}
\begin{aligned}
\mathcal{I}_{10,3}
=&2a_{n}\ell_{n}\sum_{n'=1}^{N}\dot{a}_{n'}\int_{\RR^{4}}(\chi_{N}-\ell_{n})(\partial_{x_{1}}\Psi_{n})(\partial_{x_{1}}\Psi_{n'})\d x\\
&+2a_{n}\ell_{n}\sum_{(n',k)\in I^{0}}\dot{b}_{n',k}\int_{\RR^{4}}(\chi_{N}-\ell_{n})(\partial_{x_{1}}\Psi_{n})(\partial_{x_{1}}\Phi_{n',k})\d x.
\end{aligned}
\end{equation*}
From~\eqref{est:boot},~\eqref{derchi} and~\eqref{est:L1W2},
\begin{equation*}
\begin{aligned}
\left|\mathcal{I}_{10,1}\right|&\lesssim t^{-1}|\ba|\|\vpp\|_{L^{2}}\|\partial_{x_{1}}\Psi_{n}\|_{L^{2}(\Omega)}\lesssim C_{0}^{3}t^{-7}\log^{-\frac{1}{2}}t,\\
\left|\mathcal{I}_{10,2}\right|&\lesssim |\ba|\|\vpp\|_{L^{2}}
\|(\chi_{N}-\ell_{n})\partial^{2}_{x_{1}}\Psi_{n}\|_{L^{2}}\lesssim C_{0}^{3}t^{-7}\log^{-\frac{1}{2}}t.
\end{aligned}
\end{equation*}
Then, using the same arguments as the estimates of $\mathcal{I}_{9,2}$, we also have 
\begin{equation*}
\left|\mathcal{I}_{10,3}\right|\lesssim C_{0}^{6}t^{-7}\log^{-\frac{1}{2}}t+C_{0}^{6}t^{-8}\log^{\frac{1}{2}}t.
\end{equation*}
Combining above estimates and taking $T_{0}$ large enough, we obtain~\eqref{est:I10}.

\emph{Estimate on $\mathcal{I}_{11}$.} Recall that 
\begin{equation*}
\partial_{t} \partial_{x_{1}}\Psi_{n}=-\ell_{n}\partial_{x_{1}}^{2}\Psi_{n}\quad \mbox{for all}\ n=1,\dots,N.
\end{equation*}
Thus, using~\eqref{est:boot} and~\eqref{est:L1W2} again,
\begin{equation}\label{est:I11}
\left|\mathcal{I}_{11}\right| \lesssim |\ba|\left(\|\nabla \vp\|_{L^{2}}+\|\vpp\|_{L^{2}}\right)
\|(\chi_{N}-\ell_{n})\partial_{x_{1}}^{2}\Psi_{n}\|_{L^{2}}\lesssim C_{0}^{3}t^{-7}\log^{-\frac{1}{2}}t.
\end{equation}

\emph{Estimate on $\mathcal{I}_{12}$.} From~\eqref{est:boot},~\eqref{derchi} and~\eqref{est:L1W2}, we have 
\begin{equation}\label{est:I12}
\left|\mathcal{I}_{12}\right|\lesssim t^{-1}|\ba|\left(\|\nabla \vp\|_{L^{2}}+\|\vpp\|_{L^{2}}\right)\|\partial_{x_{1}}\Psi_{n}\|_{L^{2}(\Omega)}\lesssim C_{0}^{3}t^{-7}\log^{-\frac{1}{2}}t.
\end{equation}
We see that~\eqref{est:dtJ} follows from~\eqref{est:I9},~\eqref{est:I10},~\eqref{est:I11} and~\eqref{est:I12}.
	\end{proof}

Last, we conclude the time variation of $\mathcal{K}$ from Lemma~\ref{le:timeEPGJ}.

\begin{lemma}[Time variation of $\mathcal{K}$]\label{le:timeK}
	 There exists $T_{0}$ large enough such that
	\begin{equation}\label{est:dK}
	-\frac{\d}{\d t}(t^{2}\mathcal{K})\lesssim C_{0} t^{-5}.
	\end{equation}
\end{lemma}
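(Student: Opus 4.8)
The plan is to add the four time derivatives furnished by Lemma~\ref{le:timeEPGJ}, use the cancellations for which the auxiliary quantities $\mathcal{G}$ and $\mathcal{J}$ were introduced, bound the residual modulation terms, and then combine the surviving localized integral over $\Omega$ with the coercivity~\eqref{est:coerK} through the identity $-\frac{\d}{\d t}(t^{2}\mathcal{K})=-2t\mathcal{K}-t^{2}\frac{\d}{\d t}\mathcal{K}$.

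First, summing~\eqref{est:dE},~\eqref{est:dP},~\eqref{est:dG} and $\sum_{n=1}^{N}$~\eqref{est:dtJ}, three groups of terms disappear: the two $\pm 2\int_{\RR^{4}}\vpp G_{2}\,\d x$ and the two $\pm 2\int_{\RR^{4}}\chi_{N}(\partial_{x_{1}}\vp)G_{2}\,\d x$ cancel (the purpose of $\mathcal{G}$); the terms $\pm 6\sum_{n}\int_{\RR^{4}}(\ell_{n}Q_{n}\partial_{x_{1}}Q_{n})\vp^{2}\,\d x$ from $\mathcal{E}$ and $\mathcal{P}$ cancel; and the $\dot a_{n}$-terms involving $\partial_{x_{1}}\Psi_{n}$, which appear as $2\sum_{n}\dot a_{n}\int_{\RR^{4}}(\ell_{n}\partial_{x_{1}}\vp-\vpp)\,c_{n}\,\partial_{x_{1}}\Psi_{n}\,\d x$ with $c_{n}\in\{-\ell_{n},\ \chi_{N},\ \ell_{n}-\chi_{N}\}$ coming from $\mathcal{E},\mathcal{P},\mathcal{J}$ respectively, sum to zero (the purpose of $\mathcal{J}$). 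What survives is
\begin{equation*}
\frac{\d}{\d t}\mathcal{K}=-\frac{1}{(1-2\delta)t}\int_{\Omega}\Big((\partial_{x_{1}}\vp)^{2}+\vpp^{2}+2\frac{x_{1}}{t}(\partial_{x_{1}}\vp)\vpp-|\overline{\nabla}\vp|^{2}\Big)\,\d x+\mathcal{M}+O(C_{0}t^{-7}),
\end{equation*}
where $\mathcal{M}$ gathers the ${\rm Mod}_{1,2}$ and ${\rm Mod}_{2,2}$ contributions left over from $\mathcal{E}$ and $\mathcal{P}$.

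Next, using the equation $-(1-\ell_{n}^{2})\partial_{x_{1}}^{2}\Phi_{n,k}-\bar{\Delta}\Phi_{n,k}-3Q_{n}^{2}\Phi_{n,k}=0$ together with integration by parts, $\mathcal{M}$ recombines, up to interaction terms carrying the factor $R^{2}-Q_{n}^{2}$, so that every remaining term contains the factor $(\chi_{N}-\ell_{n})\partial_{x_{1}}\Phi_{n,k}$; since the latter is small in the sense of~\eqref{est:L1W2} and $|\dot{\bb}|\lesssim\|\vec{\varphi}\|_{\E}+|\ba|^{2}+|\bb|^{2}+t^{-4}$ by~\eqref{est:ab}, the bootstrap~\eqref{est:boot} gives $|\mathcal{M}|\lesssim C_{0}^{2}t^{-8}$, whence $t^{2}\mathcal{M}=O(C_{0}t^{-5})$ for $T_{0}$ large depending on $C_{0}$. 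Then I decompose $-\frac{\d}{\d t}(t^{2}\mathcal{K})=-2t\mathcal{K}-t^{2}\frac{\d}{\d t}\mathcal{K}$: by~\eqref{est:coerK} together with $\mathcal{N}_{\Omega}\ge 0$ (from~\eqref{est:Nomega}) and $\mathcal{N}_{\Omega^{C}}\ge 0$ one gets $-2t\mathcal{K}\le-2t\mathcal{N}_{\Omega}-2t\nu\mathcal{N}_{\Omega^{C}}+O(C_{0}t^{-5})$ for $T_{0}$ large, while the previous line gives $-t^{2}\frac{\d}{\d t}\mathcal{K}=\frac{t}{1-2\delta}\int_{\Omega}\big((\partial_{x_{1}}\vp)^{2}+\vpp^{2}+2\frac{x_{1}}{t}(\partial_{x_{1}}\vp)\vpp-|\overline{\nabla}\vp|^{2}\big)\,\d x+O(C_{0}t^{-5})$. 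It then remains to prove the pointwise inequality on $\Omega$
\begin{equation*}
-2\big(|\nabla\vp|^{2}+\vpp^{2}+2\chi_{N}(\partial_{x_{1}}\vp)\vpp\big)+\frac{1}{1-2\delta}\big((\partial_{x_{1}}\vp)^{2}+\vpp^{2}+2\tfrac{x_{1}}{t}(\partial_{x_{1}}\vp)\vpp-|\overline{\nabla}\vp|^{2}\big)\le 0,
\end{equation*}
since it yields $-2t\mathcal{N}_{\Omega}+\frac{t}{1-2\delta}\int_{\Omega}(\cdots)\,\d x\le 0$, and together with $-2t\nu\mathcal{N}_{\Omega^{C}}\le 0$ this gives~\eqref{est:dK}. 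Writing $p=\partial_{x_{1}}\vp$, $q=\vpp$ and $|\nabla\vp|^{2}=p^{2}+|\overline{\nabla}\vp|^{2}$, the left-hand side equals $\alpha(p^{2}+q^{2})+\beta pq-\big(2+\frac{1}{1-2\delta}\big)|\overline{\nabla}\vp|^{2}$ with $\alpha=\frac{4\delta-1}{1-2\delta}<0$ and, by~\eqref{derchi} (on $\Omega$, $\chi_{N}=\frac{x_{1}}{(1-2\delta)t}-\frac{\delta(\ell_{n+1}+\ell_{n})}{1-2\delta}$), $\beta=\frac{-2x_{1}/t+4\delta(\ell_{n+1}+\ell_{n})}{1-2\delta}$; since $|x_{1}/t|\le\bar\ell$ and $|\ell_{n}|\le\bar\ell$ on $\Omega$, $|\beta|\le\frac{2\bar\ell(1+4\delta)}{1-2\delta}$, and the choice $\delta<\frac{1-\bar\ell}{20}\le\frac{1-\bar\ell}{4(1+\bar\ell)}$ in~\eqref{def:delta} forces $2|\alpha|>|\beta|$, so this quadratic form is negative definite.

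The main obstacle is this last step: the controlled negativity of the leftover localized quadratic form on $\Omega$ is precisely what dictates that $\chi_{N}$ be affine with slope $\frac{1}{(1-2\delta)t}$ on the transition zones and that $\delta$ be chosen so small compared with $1-\bar\ell$; the cancellations in Step~1 are routine once the definitions of $\mathcal{G}$ and $\mathcal{J}$ are unwound. A secondary, purely bookkeeping difficulty is to ensure that every residual term is $o(C_{0}t^{-5})$ with an implied constant independent of $C_{0}$; this forces $T_{0}$ to be taken large in terms of $C_{0}$ — in particular $\log T_{0}\gtrsim C_{0}^{8}$, so as to absorb the $C_{0}^{5}t^{-6}\log^{-\frac12}t$ error of~\eqref{est:coerK} into $C_{0}t^{-5}$.
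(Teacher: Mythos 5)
Your proof is correct, and the structure mirrors the paper's: you exploit the cancellations built into $\mathcal{G}$ and $\mathcal{J}$, bound the residual ${\rm{Mod}}_{1,2},{\rm{Mod}}_{2,2}$ terms, and then play the coercivity of $\mathcal{K}$ against the surviving $\Omega$-integral through $-\frac{\d}{\d t}(t^{2}\mathcal{K})=-2t\mathcal{K}-t^{2}\frac{\d}{\d t}\mathcal{K}$.

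The one place you diverge from the paper is the final absorption of the $\Omega$-term. The paper shows
\[
-t\,\mathcal{I}_{15}\le \frac{1}{1-2\delta}\Big(\mathcal{N}_{\Omega}+2\int_{\Omega}\big(\tfrac{x_{1}}{t}-\chi_{N}\big)(\partial_{x_{1}}\vp)\vpp\,\d x\Big)\le \tfrac{10}{9}\big(1+16\delta(1-\bar\ell)^{-1}\big)\mathcal{N}_{\Omega}\le 2\mathcal{K}+2\nu^{-1}(\cdots),
\]
using the uniform bound $|x_{1}/t-\chi_{N}|\le 8\delta$ on $\Omega$ and $\mathcal{N}_{\Omega}\ge(1-\bar\ell)\int_{\Omega}(|\nabla\vp|^{2}+\vpp^{2})$, and then lets the $2t\mathcal{K}$ cancel against $-2t\mathcal{K}$. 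You instead compare integrands directly and reduce to the pointwise negativity of the quadratic form $\alpha(p^{2}+q^{2})+\beta pq-(2+\frac{1}{1-2\delta})|\overline{\nabla}\vp|^{2}$ on $\Omega$. Your computation of $\alpha,\beta$ and the verification $|\beta|\le 2|\alpha|$ under $\delta<\frac{1-\bar\ell}{4(1+\bar\ell)}$ (guaranteed by~\eqref{def:delta}) are correct, and this pointwise route is marginally sharper — it gives $t(-t\mathcal{I}_{15})\le 2t\mathcal{N}_{\Omega}$ rather than $2t\mathcal{K}+\text{error}$ — and makes the role of the smallness of $\delta$ relative to $1-\bar\ell$ slightly more transparent. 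Conceptually, though, it is the same computation. Minor notes: your bound $|\mathcal{M}|\lesssim C_{0}^{2}t^{-8}$ is in fact consistent with (indeed a bit tighter than) the paper's $|\mathcal{I}_{13}|+|\mathcal{I}_{14}|\lesssim C_{0}^{5}t^{-8}$; either way, $t^{2}|\mathcal{M}|=O(C_{0}t^{-5})$ once $T_{0}$ is taken large in terms of $C_{0}$, as you say, and your observation $\log T_{0}\gtrsim C_{0}^{8}$ correctly quantifies the absorption of the $C_{0}^{5}t^{-6}\log^{-\frac12}t$ error.
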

\begin{proof}
	Combining the estimates~\eqref{est:dE},~\eqref{est:dP},~\eqref{est:dG} and~\eqref{est:dtJ}, we decompose
	\begin{equation*}
	\frac{\d}{\d t}\mathcal{K}=\frac{\d}{\d t}\mathcal{E}+\frac{\d}{\d t}\mathcal{P}+\frac{\d}{\d t}\mathcal{G}+\sum_{n=1}^{N}\frac{\d}{\d t}\mathcal{J}_{n}=\mathcal{I}_{13}+\mathcal{I}_{14}+\mathcal{I}_{15}
	+O(C_{0}t^{-7}).
	\end{equation*}
	where
	\begin{equation*}
	\begin{aligned}
	\mathcal{I}_{13}&=-2\int_{\RR^{4}}\vpp\left({\rm{Mod}}_{2,2}+\chi_{N}\partial_{x_{1}}{\rm{Mod}}_{1,2}\right)\d x,\\
	\mathcal{I}_{14}&=2\int_{\RR^{4}}\vp\left(\Delta {\rm{Mod}_{1,2}}+3R^{2}{\rm{Mod}_{1,2}}\right)\d x-2\int_{\RR^{4}}\chi_{N}{\rm{Mod}_{2,2}}\partial_{x_{1}}\vp \d x,\\
	\mathcal{I}_{15}&=-\frac{1}{(1-2\delta)t}\int_{\Omega}
	\left((\partial_{x_{1}}\vp)^{2}+\vpp^{2}
	+2\frac{x_{1}}{t}(\partial_{x_{1}}\vp)\vpp-|\overline{\nabla}\vp|^{2}\right)\d x.
	\end{aligned}
	\end{equation*}
	Note that, from the definition of ${\rm{Mod}}_{1,2}$, ${\rm{Mod}}_{2,2}$ and $\chi_{N}$,
	\begin{equation*}
	\mathcal{I}_{13}=2\sum_{(n,k)\in I^{0}}\dot{b}_{n,k}\int_{\RR^{4}}\vpp\left(\ell_{n}-\chi_{N}\right) \partial_{x_{1}}\Phi_{n,k}\d x.
	\end{equation*}
	Therefore, from~\eqref{est:boot},~\eqref{est:L1W2} and the decay property of $\phi_{k}$ in~\eqref{est:phipsi},
	\begin{equation*}
	\begin{aligned}
	\left|\mathcal{I}_{13}\right|
	&\lesssim \sum_{(n,k)\in I^{0}}|\dot{\bb}|\|\vpp\|_{L^{2}}\|\left(\ell_{n}-\chi_{N}\right) \partial_{x_{1}}\Phi_{n,k}\|_{L^{2}}\\
	&\lesssim t^{-2}\|\vec{\varphi}\|_{\E}\left(\|\vec{\varphi}\|_{\E}+|\ba|^{2}+|\bb|^{2}+t^{-4}\right)\lesssim C_{0}^{5}t^{-8}.
	\end{aligned}
	\end{equation*}
	Note also that, by $-(1-\ell_{n}^{2})\partial^{2}_{x_{1}}\Phi_{n,k}-\bar{\Delta}\Phi_{n,k}-3Q_{n}^{2}\Phi_{n,k}=0$ and integration by parts,
	\begin{equation*}
	\begin{aligned}
	\mathcal{I}_{14}=&2\sum_{(n,k)\in I^{0}}\dot{b}_{n,k}\int_{\RR^{4}}\ell_{n}\partial_{x_{1}}\vp\left(\chi_{N}-\ell_{n}\right)\partial_{x_{1}}\Phi_{n,k}\d x\\
	&+6\sum_{(n,k)\in I^{0}}\sum_{n'\ne n}\dot{b}_{n,k}\int_{\RR^{4}}\vp\Phi_{n,k}\left(2Q_{n}Q_{n'}+Q_{n'}^{2}\right)\d x
	\end{aligned}
	\end{equation*}
	Therefore, using~\eqref{est:ab},~\eqref{est:boot},~\eqref{est:L1W2}, the Lemma~\ref{le:int} and the decay property of $\phi_{k}$ in~\eqref{est:phipsi} again, we have 
	\begin{equation*}
	\begin{aligned}
	\left|\mathcal{I}_{14}\right|&\lesssim  \sum_{(n,k)\in I^{0}}|\dot{\bb}|\|\nabla \vp\|_{L^{2}}\|\left(\ell_{n}-\chi_{N}\right) \partial_{x_{1}}\Phi_{n,k}\|_{L^{2}}\\
	&\quad +\sum_{(n,k)\in I^{0}}\sum_{n'\ne n}|\dot{\bb}|\|\vp\|_{L^{4}}\left(\|Q_{n}Q_{n'}\Phi_{n,k}\|_{L^{\frac{4}{3}}}+\|Q_{n'}^{2}\Phi_{n,k}\|_{L^{\frac{4}{3}}}\right)\\
	&\lesssim t^{-2}\|\vec{\varphi}\|_{\E}\left(\|\vec{\varphi}\|_{\E}+|\ba|^{2}+|\bb|^{2}+t^{-4}\right)\lesssim C_{0}^{5}t^{-8}.
	\end{aligned}
	\end{equation*}
	From $0<\delta<\frac{1}{20}$ and the definition of $\chi_{N}$, we have, for all $x=(x_{1},\bar{x})\in \Omega$,
	\begin{equation*}
	\left|\frac{x_{1}}{t}-\chi_{N}\right|\le \frac{2\delta}{1-2\delta}\left|\frac{x_{1}}{t}\right|+\frac{2\bar{\ell}\delta}{1-2\delta}\le 8\delta.
	\end{equation*}
	Thus, from~\eqref{est:Nomega},~\eqref{est:coerK} and the definition of $\delta$ and $\mathcal{N}_{\Omega}$,
	\begin{equation*}
	\begin{aligned}
	-t\mathcal{I}_{15}&\le \frac{1}{1-2\delta}\left(\mathcal{N}_{\Omega}+2\int_{\Omega}\left(\frac{x_{1}}{t}-\chi_{N}\right)(\partial_{x_{1}}\vp) \vpp\right)\d x\\
	&\le \frac{10}{9}\left(1+16\delta(1-\bar{\ell})^{-1}\right)\mathcal{N}_{\Omega}\le 2\mathcal{K}+2\nu^{-1}\left(t^{-6}+C_{0}^{5}t^{-6}\log^{-\frac{1}{2}}t\right).
	\end{aligned}
	\end{equation*}
	Gathering above estimates we have,
	\begin{equation*}
	-\frac{\d}{\d t}(t^{2}\mathcal{K})\lesssim C_{0}t^{-5}+C_{0}^{5}t^{-6}+C_{0}^{5}t^{-5}\log^{-\frac{1}{2}}t,
	\end{equation*} 
	which implies~\eqref{est:dK} for $T_{0}$ large enough.
	\end{proof}

\subsection{End of the proof of Propostion~\ref{pro:uni}}\label{SS:Pro}
Recall that, for the proof of Proposition~\ref{pro:uni}, we just need to prove the existence of $\boldsymbol{z}_{m}\in \mathcal{B}_{\RR^{|I|}}(S_{m}^{-\frac{7}{2}})$ such that $T^{*}(\boldsymbol{z}_{m})=T_{0}$. We start by closing all bootstrap estimates except the one for the unstable modes. Last, we prove the existence of suitable parameters $\boldsymbol{z}_{m}=(z_{n,j}^{m})_{(n,j)\in I}\in \mathcal{B}_{\RR^{|I|}}(S_{m}^{-\frac{7}{2}})$ using a topological argument.

\begin{proof}[Proof of Proposition~\ref{pro:uni}]
	For all $\boldsymbol{z}_{m}=(z_{n,j}^{m})_{(n,j)\in I}\in \mathcal{B}_{\RR^{|I|}}(S_{m}^{-\frac{7}{2}})$, we consider the solution $\vec{u}$ with the initial data at $T=S_{m}$ as defined in Lemma~\ref{le:ini}. From the definition of the initial data, we have 
	\begin{equation}\label{est:iniSm}
	\|\vec{\varphi}(S_{m})\|_{\E}\lesssim S_{m}^{-\frac{7}{2}}\quad \mbox{and}\quad a_{n}(S_{m})=b_{n,j}(S_{m})=0,
	\end{equation}
	for all $n=1,\dots, N$ and $(n,k)\in I^{0}$.
	
	\textbf{Step 1.} Closing the estimate in $\vec{\varphi}$. Integrating~\eqref{est:dK} on $[t,S_{m}]$ for any $t\in [T^{*},S_{m}]$ and using~\eqref{est:iniSm}, we have 
	\begin{equation*}
	\mathcal{K}(t)\lesssim C_{0}t^{-6}+\left(\frac{S_{m}}{t}\right)^{2}\left|\mathcal{K}(S_{m})\right|\lesssim C_{0}t^{-6}.
	\end{equation*}
	Thus, from~\eqref{est:Nomega} and~\eqref{est:coerK}, for $T_{0}$ large enough (depending on $C_{0}$),
	\begin{equation*}
	\|\vec{\varphi}(t)\|_{\E}^{2}\lesssim \mathcal{K}(t)+t^{-6}+C_{0}^{5}t^{-6}\log^{-\frac{1}{2}}t
	\lesssim C_{0}t^{-6}+C_{0}^{5}t^{-6}\log^{-\frac{1}{2}}t\lesssim C_{0}t^{-6}.
	\end{equation*}
	This strictly improves the estimate on $\vec{\varphi}$ in~\eqref{est:boot} for $C_{0}$ large enough.
	
	\textbf{Step 2.} Closing the estimate in $\ba$. First, from~\eqref{est:Psi34} and~\eqref{est:boot}, for all $t\in[T^{*},S_{m}]$,
	\begin{equation}\label{est:cn}
	\sum_{n=1}^{N}\left|c_{n}(t)\right|\lesssim \log^{-1}t \left(\|\vp\|_{L^{4}}\|\Psi_{n}\xi_{n}\|_{L^{\frac{4}{3}}}\right)\lesssim C_{0}t^{-2}\log^{-1}t.
	\end{equation}
	Then, from~\eqref{est:Rdota} and~\eqref{est:boot}, we have 
	\begin{equation*}
	\sum_{n=1}^{N}|\dot{a}_{n}+\dot{c}_{n}|\lesssim C_{0}t^{-3}\log^{-\frac{1}{2}}t+C_{0}^{2}t^{-4}.
	\end{equation*}
	Integrating above estimate on $[t,S_{m}]$ for any $t\in[T^{*},S_{m}]$ and then using~\eqref{est:cn}, 
	\begin{equation*}
	\begin{aligned}
	\sum_{n=1}^{N}|a_{n}(t)|&\lesssim \sum_{n=1}^{N}\left(|c_{n}(t)|+|c_{n}(S_{m})|\right)+C_{0}t^{-2}\log^{-\frac{1}{2}}t+C_{0}^{2}t^{-3}\\
	&\lesssim C_{0}t^{-2}\log^{-1}t+C_{0}t^{-2}\log^{-\frac{1}{2}}t+C_{0}^{2}t^{-3}\lesssim C_{0}t^{-2}\log^{-\frac{1}{2}}t,
	\end{aligned}
	\end{equation*}
	for $T_{0}$ large enough. This strictly improves the estimate on $\ba$ in~\eqref{est:boot} for taking $C_{0}$ large enough.
	
	\textbf{Step 3.} Closing the estimate in $\bb$. From~\eqref{est:ab} and~\eqref{est:boot}, we have 
	\begin{equation*}
	\sum_{(n,k)\in I^{0}}|\dot{b}_{n,k}|\lesssim C_{0}t^{-3}+C_{0}^{4}t^{-4}.
	\end{equation*}
		Integrating above estimate on $[t,S_{m}]$ for any $t\in[T^{*},S_{m}]$ and then using~\eqref{est:iniSm}, 
	\begin{equation*}
	\sum_{(n,k)\in I^{0}}|b_{n,k}(t)|\lesssim C_{0}t^{-2}+C_{0}^{4}t^{-3}\lesssim C_{0}t^{-2},
	\end{equation*}
	for $T_{0}$ large enough. This strictly improves the estimate on $\bb$ in~\eqref{est:boot} for taking $C_{0}$ large enough.
	
	\textbf{Step 4.} Closing the estimate in $(z_{n,j}^{-})_{(n,j)\in I}$. By~\eqref{est:dz} and~\eqref{est:boot}, for all $(n,j)\in I$, we have 
	\begin{equation*}
	\begin{aligned}
	\frac{\d}{\d t}\left(z_{n,j}^{-}\right)^{2}&=2\alpha_{n,j}\left(z_{n,j}^{-}\right)^{2}+O\left(\|\vec{\varphi}\|_{\E}\left(\|\vec{\varphi}\|_{\E}^{2}+|\ba|^{2}+|\bb|^{2}+t^{-4}\right)\right)\\
	&=2\alpha_{n,j}\left(z_{n,j}^{-}\right)^{2}+O\left(C_{0}^{3}t^{-9}+C_{0}^{5}t^{-7}\right)
	=2\alpha_{n,j}\left(z_{n,j}^{-}\right)^{2}+O\left(C_{0}^{5}t^{-7}\right).
	\end{aligned}
	\end{equation*}
	Integrating above estimate on $[t,S_{m}]$ for any $t\in[T^{*},S_{m}]$ and then using~\eqref{est:iniSm}
	\begin{equation*}
	\begin{aligned}
	\left(z_{n,j}^{-}\right)^{2}(t)
	&\lesssim e^{-2\alpha_{n,j}(S_{m}-t)}\left(z_{n,j}^{-}\right)^{2}(S_{m})+C_{0}^{5}\int_{t}^{S_{m}}e^{2\alpha_{n,j}(t-s)}s^{-7}\d s\lesssim C_{0}^{5}t^{-7}.
	\end{aligned}
	\end{equation*}
	This strictly improves the estimate on $(z_{n,j}^{-})_{(n,j)\in I}$ in~\eqref{est:boot} for taking $T_{0}$ and $C_{0}$ large enough.
	
	\textbf{Step 5.} Final argument on the unstable parameters. Let
	\begin{equation*}
	\mathcal{A}(t)=\sum_{(n,j)\in I}\left(z_{n,j}^{+}\right)^{2}\quad \mbox{and}\quad 
	\bar{\alpha}=\min_{(n,j)\in I}\alpha_{n,j}>0.
	\end{equation*}
	We claim, for any time $t\in [T^{*},S_{m}]$ where it holds $\mathcal{A}(t)=t^{-7}$, the following  transversality property holds,
	\begin{equation}\label{est:trans}
	\frac{\d}{\d t}\left(t^{7}\mathcal{A}(t)\right)\le -\bar{\alpha}.
	\end{equation}
	Indeed, from~\eqref{est:dz} and~\eqref{est:boot}, for any $t\in[T^{*},S_{m}]$ where it holds $\mathcal{A}(t)=t^{-7}$,
	\begin{equation*}
	\begin{aligned}
	\frac{\d}{\d t}\mathcal{A}(t)
	&=-2\sum_{(n,j)\in I}\alpha_{n,j}\left(z^{+}_{n,j}\right)^{2}+O\bigg(\sum_{(n,j)\in I}|z^{+}_{n,j}|\left(\|\vec{\varphi}\|_{\E}^{2}+|\ba|^{2}+|\bb|^{2}+t^{-4}\right)\bigg)\\
	&\le -2\bar{\alpha}t^{-7}+O\left(C_{0}^{2}t^{-\frac{19}{2}}+C_{0}^{4}t^{-\frac{15}{2}}\right)\le 
	-2\bar{\alpha}t^{-7}+C_{0}^{5}t^{-\frac{15}{2}},
	\end{aligned}
	\end{equation*}
	which implies~\eqref{est:trans} for taking $T_{0}$ large enough. The transversality relation~\eqref{est:trans} is enough to justify the existence of at least a couple $\boldsymbol{z}_{m}=(z_{n,j}^{m})_{(n,j)\in I}\in \mathcal{B}_{\RR^{|I|}}\left(S_{m}^{-\frac{7}{2}}\right)$ such that $T^{*}(\boldsymbol{z}_{m})=T_{0}$.
	
	The proof is by contradiction, we assume that for all $\boldsymbol{z}_{m}=(z_{n,j}^{m})_{(n,j)\in I}\in \mathcal{B}_{\RR^{|I|}}\left(S_{m}^{-\frac{7}{2}}\right)$, it holds $T_{0}<T_{*}(\boldsymbol{z}_{m})$. Then, a contradiction follows from the following discussion (see for instance more details in~\cite[Section 3.1]{CMkg} and~\cite[Lemma 4.2]{MM}).
	
	\emph{Continuity of $T^{*}(\boldsymbol{z}_{m})$.} The above transversality condition~\eqref{est:trans} implies that the map
	\begin{equation*}
	\boldsymbol{z}_{m}\in \mathcal{B}_{\RR^{|I|}}\left(S_{m}^{-\frac{7}{2}}\right)\mapsto T^{*}(\boldsymbol{z}_{m})\in [T_{0},S_{m}]
	\end{equation*}
	is continuous and 
	\begin{equation*}
	T^{*}({\boldsymbol{z}}_{m})=S_{m}\quad \mbox{for}\quad \boldsymbol{z}_{m}\in \mathcal{S}_{\RR^{|I|}}\left(S_{m}^{-\frac{7}{2}}\right).
	\end{equation*}
	
	\emph{Construction of a retraction}. We define
	\begin{align*}
	\mathcal{M}:\ \bar{\mathcal{B}}_{\RR^{|I|}}\left(S_{m}^{-\frac{7}{2}}\right)&\mapsto \mathcal{S}_{\RR^{|I|}}\left(S_{m}^{-\frac{7}{2}}\right)\\
	\boldsymbol{z}_{m}=(z_{n,j}^{m})_{(n,j)\in I}&\mapsto \left(\frac{T^{*}(\boldsymbol{z}_{m})}{S_{m}}\right)^{\frac{7}{2}}\left(z_{n,j}^{+}\left(T^{*}(\boldsymbol{z}_{m})\right)\right)_{(n,j)\in I}.
	\end{align*}
	From what precedes, $\mathcal{M}$ is continuous. Moreover, $\mathcal{M}$ restricted to
	$\mathcal{S}_{\RR^{|I|}}\left(S_{m}^{-\frac{7}{2}}\right)$ is the identity. The
	existence of such a map is contradictory with the no retraction theorem for continuous maps from the ball to
	the sphere. Therefore, the existence of $\boldsymbol{z}_{m}\in {\mathcal{B}}_{\RR^{|I|}}\left(S_{m}^{-\frac{7}{2}}\right)$ have proved. Then, the uniform estimate~\eqref{est:uni} is a consequence of bootstrap estimates~\eqref{est:boot}. The proof of Proposition~\ref{pro:uni} is complete.
	
	\end{proof}

\subsection{Proof of Theorem~\ref{the:main} from Proposition~\ref{pro:uni}}\label{SS:thm}
The proof is based on a standard compactness argument (see for example~\cite{CMTAMS,JM,Y5De}).
The main point is the following proposition of weak continuity of the flow near a compact set.

\begin{proposition}\label{pro:weak}
	There exists a constant $\varepsilon>0$ such that the following holds. Let $\mathcal{S}\subset \dot{H}^{1}\times L^{2}$ be a compact set and let $\vec{u}_{m}:[T_{1},T_{2}]\to \dot{H}^{1}\times L^{2}$ be a sequence of solutions of~\eqref{equ:wave} such that 
	\begin{equation*}
	\sup_{m\in \mathbb{N}}\left(\max_{t\in \left[T_{1},T_{2}\right]}{\rm{dist}}\left(\vec{u}_{m}(t),\mathcal{S}\right)\right)\le \varepsilon.
	\end{equation*}
	Suppose that $\vec{u}_{m}(T_{0})\rightharpoonup \vec{u}_{0}$ weakly in $\dot{H}^{1}\times L^{2}$. Then the solution $\vec{u}(t)$ of~\eqref{equ:wave} with the initial data $\vec{u}(T_{1})=\vec{u}_{0}$ is well-defined on $[T_{1},T_{2}]$ and 
	\begin{equation*}
	\vec{u}_{m}(t)\rightharpoonup \vec{u}(t),\quad \mbox{weakly in}\ \dot{H}^{1}\times L^{2}\quad \mbox{for all}\ t\in [T_{1},T_{2}].
	\end{equation*}
\end{proposition}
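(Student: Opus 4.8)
The plan is to run the standard compactness argument for weak continuity of the flow of an energy-critical wave equation near a compact set, combining the local Cauchy theory and the long-time perturbation estimates of~\cite{KMACTA} with soft functional analysis. Write $\vec S(t)$ for the free wave group on $\dot H^1\times L^2$ and $\vec N(u)=(0,u^3)$, so that any finite-energy solution obeys the Duhamel formula $\vec u(t)=\vec S(t-T_1)\vec u(T_1)+\int_{T_1}^t\vec S(t-s)\vec N(u(s))\,\d s$; let $\|u\|_{S(I)}=\|u\|_{L^3_tL^6_x(I\times\RR^4)}$ be the scaling-critical Strichartz norm, for which $\|u^3\|_{L^1_tL^2_x(I\times\RR^4)}=\|u\|_{S(I)}^3$.

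\textbf{Step 1 (uniform space-time bounds).} First I would establish that, for $\varepsilon$ small enough depending only on $\mathcal S$ and $T_2-T_1$, one has $\sup_m\|\vec u_m\|_{L^\infty([T_1,T_2];\dot H^1\times L^2)}+\sup_m\|u_m\|_{S([T_1,T_2])}<\infty$. By compactness of $\mathcal S$ fix a finite $\varepsilon$-net $\vec\sigma_1,\dots,\vec\sigma_L$ and a partition $T_1=s_0<\dots<s_J=T_2$ fine enough that $\|\vec S(\cdot)\vec\sigma_i\|_{S([s_{j-1},s_j])}$ lies below the small-data threshold for all $i,j$; since $\vec u_m(s_{j-1})$ is $2\varepsilon$-close in $\E$ to some $\vec\sigma_i$, the Strichartz inequality for the free evolution keeps $\|\vec S(\cdot-s_{j-1})\vec u_m(s_{j-1})\|_{S([s_{j-1},s_j])}$ below the threshold, so small-data theory controls $\|u_m\|_{S([s_{j-1},s_j])}$ and $\sup_{[s_{j-1},s_j]}\|\vec u_m(t)-\vec S(t-s_{j-1})\vec u_m(s_{j-1})\|_\E$ by an absolute constant; iterating over $j=1,\dots,J$ gives a bound depending only on $J$. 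Also, $\mathrm{dist}(\vec u_0,\mathcal S)\le\varepsilon$: choosing near-optimal $\vec w_m\in\mathcal S$ with $\|\vec u_m(T_1)-\vec w_m\|_\E\le\varepsilon$, passing to a strongly convergent subsequence $\vec w_m\to\vec w_\ast$, and using weak lower semicontinuity of the norm yields $\|\vec u_0-\vec w_\ast\|_\E\le\varepsilon$.

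\textbf{Step 2 (extraction and identification of the limit).} Along a subsequence $\vec u_m\rightharpoonup^*\vec v$ in $L^\infty([T_1,T_2];\dot H^1\times L^2)$. Since the equation and Step~1 bound $\{\partial_t\vec u_m\}$ (in $L^2\times\dot H^{-1}$, say), the maps $t\mapsto\langle\vec u_m(t),\vec\phi\rangle$ are equi-Lipschitz on $[T_1,T_2]$ for each fixed $\vec\phi$, so Arzel\`a--Ascoli in the weak topology gives $\vec u_m(t)\rightharpoonup\vec v(t)$ uniformly on $[T_1,T_2]$, hence $\vec v\in C_w([T_1,T_2];\dot H^1\times L^2)$ and, comparing with the hypothesis, $\vec v(T_1)=\vec u_0$. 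Next, a Rellich--Aubin--Lions argument (using $u_m$ bounded in $L^\infty_t\dot H^1$ and $\partial_t u_m$ in $L^\infty_tL^2$) gives, up to a further subsequence, $u_m\to v$ strongly in $L^3$ on every bounded cylinder of $[T_1,T_2]\times\RR^4$, hence $u_m^3\to v^3$ in $L^1_{\mathrm{loc}}$. Passing to the limit in the distributional form of~\eqref{equ:wave} shows that $\vec v$ solves~\eqref{equ:wave}; since $\vec v$ has finite $S([T_1,T_2])$ norm by weak lower semicontinuity together with Step~1, the uniqueness part of the Cauchy theory of~\cite{KMACTA} identifies $\vec v$ with the solution $\vec u$ of~\eqref{equ:wave} with data $\vec u_0$---in particular that solution is well-defined on all of $[T_1,T_2]$, which is the first assertion.

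\textbf{Step 3 (full sequence) and the main obstacle.} Every subsequence of $(\vec u_m)$ admits a further subsequence to which Step~2 applies, and the limit produced is always the same $\vec u$; therefore the whole sequence satisfies $\vec u_m(t)\rightharpoonup\vec u(t)$ in $\dot H^1\times L^2$ for every $t\in[T_1,T_2]$, which is the claim. The delicate point is Step~1: obtaining space-time bounds uniform in $m$ on the \emph{entire} interval $[T_1,T_2]$. This is where compactness of $\mathcal S$ and the energy-critical perturbation theory are essential---without it one could neither guarantee that the weak limit has finite Strichartz norm (needed for uniqueness) nor rule out energy escaping to spatial infinity when passing to the limit in the cubic term in Step~2. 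Once Step~1 holds, Steps~2--3 are routine.
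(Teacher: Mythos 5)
Your proof is correct, but it deliberately avoids the tool the paper leans on. The paper's proof is a citation to the profile-decomposition machinery of~\cite{DKMJEMS} and the appendices of~\cite{JJ,JM}: one decomposes $\vec u_m(T_1)$ into linear profiles, uses the nonlinear profile decomposition and the pre-compactness hypothesis to show that all concentrating profiles must vanish (otherwise the trajectory leaves the $\varepsilon$-neighborhood of $\mathcal S$), and concludes that the only surviving profile is the weak limit $\vec u_0$ with a remainder whose Strichartz norm tends to zero. Your Step~1 replaces that entirely by a finite $\varepsilon$-net of $\mathcal S$ together with a time partition chosen so that the free evolution of each net point has sub-threshold $L^3_tL^6_x$ norm on each subinterval; this is a clean way to get the uniform Strichartz bound and is legitimate because the hypothesis controls ${\rm dist}(\vec u_m(t),\mathcal S)$ at \emph{every} $t$, not just at $T_1$, so you never have to propagate closeness to $\mathcal S$. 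Your Step~2 then uses Aubin--Lions on compact cylinders plus distributional limits, which is more classical than profile decomposition. The one place you should be a little more careful is the final identification: what you produce is a distributional solution $\vec v\in C_w([T_1,T_2];\dot H^1\times L^2)$ with $v^3\in L^1_tL^2_x$; to invoke the uniqueness of~\cite{KMACTA} you must first upgrade this to a Duhamel (mild) solution. That step is standard --- subtract the Duhamel solution with the same data and forcing and apply uniqueness of energy-class distributional solutions of the homogeneous wave equation --- but it is worth saying explicitly, since the Kenig--Merle uniqueness is stated for integral-equation solutions. Overall your route is more elementary and self-contained, at the cost of not being robust to situations where the pre-compactness hypothesis holds only at one time (where profile decomposition would still apply); the paper's route is heavier but reuses existing machinery with no extra work.
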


\begin{proof}
The proof relies on an argument based on the result of profile decomposition stated in~\cite[Proposition 2.8]{DKMJEMS}. See more details in~\cite[Appendix A.2]{JJ} and~\cite[Appendix]{JM}.
\end{proof}

We are in a position to complete the proof of Theorem~\ref{the:main}.

\begin{proof}[Proof of Theorem~\ref{the:main}]
	We consider the sequence of solutions $(\vec{u}_{m})_{m\in \mathbb{N}}$ given by Proposition~\ref{pro:uni}.
	On the time interval $[T_{0},S_{m}]$, the solution $\vec{u}_{m}$ is well-defined. Moreover, for $T_{0}$ large enough, we have 
	\begin{equation*}
	\max_{t\in \left[T_{0},S_{m}\right]}\|\vec{u}_{m}(t)-\sum_{n=1}^{N}\vec{Q}_{n}(t)\|_{\E}\le C t^{-2}\le \varepsilon,
    \end{equation*}
    where $\varepsilon>0$ is the constant of Proposition~\ref{pro:weak}.
    
    Since the sequence $(\vec{u}_{m}(T_{0}))_{m\in \mathbb{N}}$ is bounded in $\dot{H}^{1}\times L^{2}$, up to the extraction of a subsequence, there exists $\vec{u}_{0}\in \dot{H}^{1}\times L^{2}$ such that $\vec{u}_{m}(T_{0})\rightharpoonup \vec{u}_{0}$ weakly in $\dot{H}^{1}\times L^{2}$. Fix $S_{M}=M^{2}>T_{0}$ for $M\in \mathbb{N}$. Applied the Proposition~\ref{pro:weak} to the following compact set 
    \begin{equation*}
    \mathcal{S}_{M}=\left\{\sum_{n=1}^{N}\vec{Q}_{n}(t)\in \dot{H}^{1}\times L^{2}, t\in[T_{0},S_{M}]\right\},
    \end{equation*}
    we have, the solution $\vec{u}(t)$ of~\eqref{equ:wave} with the initial data $\vec{u}(T_{0})=\vec{u}_{0}$ is well-defined on $[T_{0},S_{M}]$ and $\vec{u}_{m}(t)\rightharpoonup \vec{u}(t)$ weakly in $\dot{H}^{1}\times L^{2}$ for all $t\in [T_{0},S_{M}]$. From~\eqref{est:uni} and the properties of weak convergence, we have 
    \begin{equation*}
    \max_{t\in \left[T_{0},S_{M}\right]}\left(\|\vec{u}(t)-\sum_{n=1}^{N}\vec{Q}_{n}(t)\|_{\E}\right)\le Ct^{-2}.
    \end{equation*}
    Since $S_{M}=M^{2}>T_{0}$ is arbitrary, the solution $\vec{u}(t)$ is well-defined and satisfies the conclusion of Theorem~\ref{the:main} on $[T_{0},\infty)$. The proof of Theorem~\ref{the:main} is complete.
    \end{proof}

\end{document}